\providecommand{\tabularnewline}{\\}
\providecommand{\algorithmname}{Algorithm}
\setlist[itemize]{leftmargin=1.5em}
\setlist[enumerate]{leftmargin=1.5em}
\DeclareMathOperator{\ind}{\mathds{1}}  
\definecolor{yxc}{RGB}{255,0,0}
\newcommand{\yxc}[1]{\textcolor{yxc}{[YXC: #1]}}
\definecolor{yc}{RGB}{255,2,255}
\definecolor{cxc}{RGB}{0, 150, 0}
\begin{document}
\theoremstyle{plain} \newtheorem{lemma}{\textbf{Lemma}} \newtheorem{prop}{\textbf{Proposition}}\newtheorem{theorem}{\textbf{Theorem}}\setcounter{theorem}{0}
\newtheorem{corollary}{\textbf{Corollary}} \newtheorem{example}{\textbf{Example}}
\newtheorem{definition}{\textbf{Definition}} \newtheorem{fact}{\textbf{Fact}}
\newtheorem{claim}{\textbf{Claim}}\newtheorem{assumption}{\textbf{Assumption}}

\theoremstyle{definition}

\theoremstyle{remark}\newtheorem{remark}{\textbf{Remark}}\newtheorem{conjecture}{Conjecture}\newtheorem{condition}{\textbf{Condition}}
\title{Subspace Estimation from Unbalanced and Incomplete Data Matrices:
$\ell_{2,\infty}$ Statistical Guarantees\footnotetext{Corresponding
author: Yuxin Chen.}}
\author{Changxiao Cai\thanks{Department of Electrical Engineering, Princeton University, Princeton,
NJ 08544, USA; email: \texttt{\{ccai,poor,yuxin.chen\}@princeton.edu}.} \and Gen Li\thanks{Department of Electronic Engineering, Tsinghua University, Beijing,
China 100086, USA; email: \texttt{g-li16@mails.tsinghua.edu.cn}.} \and Yuejie Chi\thanks{Department of Electrical and Computer Engineering, Carnegie Mellon
University, Pittsburgh, PA 15213, USA; email: \texttt{yuejiechi@cmu.edu}.} \and H.~Vincent Poor\footnotemark[1] \and Yuxin Chen\footnotemark[1]}

\date{October 2019; \quad Revised: June 2020}

\maketitle
\begin{abstract}
This paper is concerned with estimating the column space of an unknown
low-rank matrix $\bm{A}^{\star}\in\mathbb{R}^{d_{1}\times d_{2}}$,
given noisy and partial observations of its entries. There is no shortage
of scenarios where the observations --- while being too noisy to
support faithful recovery of the entire matrix --- still convey 
sufficient information to enable reliable estimation of the column
space of interest. This is particularly evident and crucial for the
highly unbalanced case where the column dimension $d_{2}$ far exceeds
the row dimension $d_{1}$, which is the focal point of the current paper.

We investigate an efficient spectral method, which operates upon the
sample Gram matrix with diagonal deletion. While this algorithmic idea has been studied before, we establish new statistical
guarantees for this method in terms of both $\ell_{2}$ and $\ell_{2,\infty}$
estimation accuracy, which improve upon prior results
if $d_{2}$ is substantially larger than $d_{1}$. To illustrate the
effectiveness of our findings, we derive matching minimax lower bounds with respect to the noise levels, and develop consequences of our general
theory for three applications of practical importance: (1) tensor
completion from noisy data, (2) covariance estimation\,/\,principal component analysis
with missing data, and (3) community recovery in bipartite graphs.
Our theory leads to improved performance guarantees for all three cases.  
\end{abstract}

\noindent\textbf{Keywords:} spectral method, principal component analysis with missing data, tensor completion, covariance estimation, spectral clustering, leave-one-out analysis

\tableofcontents{}

\section{Introduction\label{sec:Introduction}}

Consider the problem of estimating the \emph{column space }of a low-rank
matrix $\bm{A}^{\star}=[A_{i,j}^{\star}]_{1\leq i\leq d_{1},1\leq j\leq d_{2}}$,
based on noisy and highly incomplete observations of its entries.
To set the stage, suppose that we observe
\begin{equation}
A_{i,j}=A_{i,j}^{\star}+N_{i,j},\qquad\forall\left(i,j\right)\in\Omega,\label{eq:A-model}
\end{equation}
where $\Omega\subseteq\left\{ 1,\cdots,d_{1}\right\} \times\left\{ 1,\cdots,d_{2}\right\} $
is the sampling set, and $A_{i,j}$ denotes the observed entry at
location $\left(i,j\right)$, which is corrupted by noise $N_{i,j}$.
In contrast to the classical matrix completion problem that aims to
fill in all missing entries \cite{candes2009exact,KesMonSew2010,chi2018nonconvex,chen2018harnessing},
the current paper focuses solely on estimating the column space of
$\bm{A}^{\star}$, which is oftentimes a less stringent requirement.

\paragraph{Motivating applications.} A problem of this kind arises
in numerous applications. We immediately point out several representative
examples as follows, with precise descriptions postponed to Section~\ref{subsec:examples}.
\begin{itemize}
\item \emph{Tensor completion and estimation.} 
Imagine we seek to estimate a low-rank symmetric tensor from partial observations of its entries
\cite{barak2016noisy,xia2017statistically,montanari2018spectral},
a task that spans various applications like visual data inpainting \cite{liu2013tensor}
and medical imaging \cite{semerci2014tensor}. Consider, for example,
an order-$3$ tensor $\bm{T}^{\star}=\sum_{s=1}^{r}\bm{w}_{s}^{\star}\otimes\bm{w}_{s}^{\star}\otimes\bm{w}_{s}^{\star}\in\mathbb{R}^{d\times d\times d}$,   
where $\left\{ \bm{w}_{s}^{\star}\right\}$
represents a collection of tensor factors.\footnote{For any vectors $\bm{a},\bm{b},\bm{c}\in\mathbb{R}^{d}$,
we use $\bm{a}\otimes\bm{b}\otimes\bm{c}$ to denote a $d\times d\times d$
array whose $\left(i,j,k\right)$-th entry is given by $a_{i}b_{j}c_{k}$.
} An alternative representation of $\bm{T}^{\star}$ can be obtained
by unfolding the tensor of interest into a low-rank structured matrix  $\bm{A}^{\star}\in \mathbb{R}^{d\times d^2}$. 
Consequently, estimation of the subspace spanned by $\left\{ \bm{w}_{s}^{\star}\right\} $
from partial noisy entries of $\bm{T}^{\star}$ --- which serves
as a common and crucial step for tensor completion \cite{montanari2018spectral,cai2019nonconvex}
 ---  is equivalent to estimating the column space of $\bm{A}^{\star}$
from incomplete and noisy data; see Section~\ref{subsec:tensor-completion} for details. Notably, the unfolded matrix 
becomes extremely fat as the dimension $d$ grows.

\item \emph{Covariance estimation\,/\,principal component analysis with missing data.}
Suppose we have available a sequence of $n$ independent sample vectors
$\left\{ \bm{x}_{i}\in\mathbb{R}^{d}\right\} _{i=1}^{n}$, whose covariance matrix exhibits certain low-dimensional structure. 
Several  statistical models fall in the same
vein of this model, e.g.~the generalized spiked
model \cite{bai2012sample} and the factor model \cite{fan2018robust}. An important task amounts to estimating
the principal subspace of the covariance matrix of interest, possibly in the presence of missing
data (where we only get to see highly incomplete entries of $\{\bm{x}_i\}$).
If a substantial amount of data are missing, then individual sample vectors cannot possibly be recovered, thus enabling privacy protection for individual data. Fortunately, one might still hope to estimate the principal subspace, provided that a large number of sample vectors are queried (which might yield a fat data matrix $\bm{X}:=[\bm{x}_1,\cdots,\bm{x}_n]$). 
See Section~\ref{subsec:Covariance-estimation}.

\item \emph{Community recovery in bipartite graphs. }Community recovery
is often concerned with clustering a collection of individuals or
nodes into different communities, based on similarities
between pairs of nodes. In many complex networks, such pairwise interactions
might only occur when the two nodes involved belong to two disjoint
groups (denoted by $\mathcal{U}$ and $\mathcal{V}$ respectively). 
This calls for community recovery in bipartite networks (sometimes referred to as biclustering) \cite{dhillon2001co,larremore2014efficiently,alzahrani2014community,zhou2018optimal}.
As we shall detail in Section~\ref{subsec:BSBM}, the biclustering problem is tightly connected to subspace estimation; for instance, the column subspace of some bi-adjacency matrix $\bm{A}\in\mathbb{R}^{|\mathcal{U}|\times|\mathcal{V}|}$  
 (which is a noisy copy of a low-rank matrix) reveals the  community memberships  in $\mathcal{U}$. When the size of $\mathcal{V}$ is substantially larger than that of $\mathcal{U}$,
one might encounter a situation where only the nodes in $\mathcal{U}$
(rather than those in $\mathcal{V}$) can be reliably clustered. This
calls for development of ``one-sided'' community recovery algorithms,
that is, the type of algorithms that guarantee reliable clustering
of $\mathcal{U}$ without worrying about the clustering accuracy in
$\mathcal{V}$. 
\end{itemize}

\paragraph{Contributions.} Since we concentrate primarily on estimating the column space of $\bm{A}^{\star}$,
it is natural to expect a reduced sample complexity as well as a weaker
requirement on the signal-to-noise ratio, in comparison to the conditions required
for reliable reconstruction of the whole matrix --- particularly
for those highly unbalanced problems with drastically different dimensions $d_{1}$
and $d_{2}$.  Focusing on a spectral method applied to the Gram matrix $\bm{A}\bm{A}^{\top}$ with diagonal deletion (whose variants have been studied in multiple contexts \cite{lounici2014high,florescu2016spectral,loh2012high,montanari2018spectral,elsener2019sparse}),
we establish new statistical guarantees in terms of the sample complexity
and the estimation accuracy, both of which strengthen prior theory. Our results
deliver optimal $\ell_{2,\infty}$ estimation risk bounds with respect to the noise level, which are previously unavailable. 
All this is accomplished via a powerful leave-one-out\,/\,leave-two-out analysis framework.   Further, we develop minimax lower bounds under Gaussian noise, revealing that the sample complexity and the signal-to-noise ratio (SNR) required for spectral methods to achieve consistent estimation are both minimax optimal (up to some logarithmic factor).   
Finally, we develop concrete
consequences of our general theory for all three applications mentioned
above, leading to optimal performance guarantees that improve upon prior
literature.

It is worth noting that low-rank subspace estimation from noisy and incomplete data  has been extensively studied in a large number of prior work (e.g.~\cite{lounici2014high,abbe2017entrywise, cho2017asymptotic, zhang2018heteroskedastic,zhu2019high}). While many of these prior results allow $d_1$ and $d_2$ to differ, they typically fall short of establishing optimal dependency on $d_1$ and $d_2$ in the highly unbalanced scenarios. The focal point of this paper is thus to characterize the effect of such unbalancedness (as reflected by the aspect ratio $d_2/d_1$) upon consistent subspace estimation.

\paragraph{Paper organization.} The rest of this paper is organized as follows. Section~\ref{sec:Problem-formulation}
formulates the problem and introduces basic definitions and notations.
In Section~\ref{sec:Main-results}, we present our theoretical guarantees for a spectral method, as well as minimax lower bounds. Section~\ref{subsec:examples} applies our general theorem to the aforementioned applications, and corroborate our theory by numerical experiments. Section~\ref{sec:Prior-art}
provides an overview of related prior works. The proof of our main theory is outlined
in Section~\ref{sec:Analysis}, with the proofs of many auxiliary
lemmas postponed to the appendix. We conclude the paper with a discussion
of future directions in Section~\ref{sec:discussion}.

\section{Problem formulation\label{sec:Problem-formulation}}

\subsection{Models}

\label{subsec:Models}

\paragraph{Low-rank matrix.} Suppose that the unknown matrix $\bm{A}^{\star}\in\mathbb{R}^{d_{1}\times d_{2}}$
is rank-$r$, where the row dimension $d_{1}$ and the column dimension
$d_{2}$ are allowed to be drastically different. Assume that the
(compact) singular value decomposition (SVD) of $\bm{A}^{\star}$
is given by
\begin{align}
	\bm{A}^{\star} & =\bm{U}^{\star}\bm{\Sigma}^{\star}\bm{V}^{\star\top}=\sum_{i=1}^{r}\sigma_{i}^{\star}\bm{u}_{i}^{\star}\bm{v}_{i}^{\star\top}.
	\label{def:A_star}
\end{align}
Here, $\sigma_{1}^{\star}\geq\sigma_{2}^{\star}\geq\cdots\geq\sigma_{r}^{\star}>0$
represent the $r$ nonzero singular values of $\bm{A}^{\star}$, and
$\bm{\Sigma}^{\star}\in\mathbb{R}^{r\times r}$ is a diagonal matrix
whose diagonal entries are given by $\{\sigma_{1}^{\star},\cdots,\sigma_{r}^{\star}\}$.
The columns of $\bm{U}^{\star}=\left[\bm{u}_{1}^{\star},\cdots,\bm{u}_{r}^{\star}\right]\in\mathbb{R}^{d_{1}\times r}$
(resp.~$\bm{V}^{\star}=\left[\bm{v}_{1}^{\star},\cdots,\bm{v}_{r}^{\star}\right]\in\mathbb{R}^{d_{2}\times r}$)
are orthonormal, which are the top-$r$ left (resp.~right) singular vectors
of $\bm{A}^{\star}$. We define and denote the condition number of
$\bm{A}^{\star}$ as follows
\begin{equation}
	\kappa:=\sigma_{1}^{\star}\,/\,\sigma_{r}^{\star}, 
	\label{eq:defn-kappa-d}
\end{equation}
and take
\begin{equation}
	d:=\max\left\{ d_{1},d_{2}\right\} .
	\label{eq:defn-d}
\end{equation}

\paragraph{Incoherence.} Further, we impose certain incoherence conditions
on the unknown matrix $\bm{A}^{\star}$, which are commonly adopted
in the matrix completion literature (e.g.~\cite{candes2009exact,KesMonSew2010,chi2018nonconvex}).

\begin{definition}[Incoherence parameters]
\label{definition-mu0-mu1-mu2}
Define the incoherence parameters $\mu_{0}$, $\mu_{1}$ and $\mu_{2}$
as follows
\begin{align}
\mu_{0}:=\frac{d_{1}d_{2}\underset{1\leq i\leq d_{1},1\leq j\leq d_{2}}{\max}\left|A_{i,j}^{\star}\right|^{2}}{\left\Vert \bm{A}^{\star}\right\Vert _{\mathrm{F}}^{2}} & ,\label{eq:def-col-mu0}\\
\mu_{1}:=\frac{d_{1}}{r}\max_{1\leq i\leq d_{1}}\left\Vert \bm{U}^{\star\top}\bm{e}_{i}\right\Vert _{2}^{2}\qquad & \text{and}\qquad\mu_{2}:=\frac{d_{2}}{r}\max_{1\leq i\leq d_{2}}\left\Vert \bm{V}^{\star\top}\bm{e}_{i}\right\Vert _{2}^{2},\label{def:coh}
\end{align}
where $\bm{e}_{i}$ is the $i$-th standard basis vector of compatible
dimensionality. 
\end{definition}

Intuitively, when $\mu_{0}$, $\mu_{1}$ and $\mu_{2}$ are all small,
the energies of the matrices $\bm{A}^{\star}$, $\bm{U}^{\star}$
and $\bm{V}^{\star}$ are (nearly) evenly spread out across all entries,
rows, and columns. For notational simplicity, we shall set 
\begin{equation}
\mu:=\max\left\{ \mu_{0},\mu_{1},\mu_{2}\right\} .\label{eq:defn-mu}
\end{equation}

\paragraph{Random sampling and random noise.} Suppose that we have only
collected noisy observations of the entries of $\bm{A}^{\star}$ over a
sampling set $\Omega\subseteq\{1,\cdots,d_{1}\}\times\{1,\cdots,d_{2}\}$.
Specifically, we observe
\begin{equation}
A_{i,j}=\begin{cases}
A_{i,j}^{\star}+N_{i,j},\qquad & (i,j)\in\Omega,\\
0, & \text{else},
\end{cases}\label{eq:A-observation}
\end{equation}
where $N_{i,j}$ denotes the noise at location $(i,j)$. For notational
simplicity, we shall write
\begin{align}
\bm{A}=\mathcal{P}_{\Omega}(\bm{A}) & =\mathcal{P}_{\Omega}(\bm{A}^{\star})+\mathcal{P}_{\Omega}(\bm{N}),\label{def:A}
\end{align}
where $\mathcal{P}_{\Omega}$ represents the Euclidean projection
onto the subspace of matrices supported on $\Omega$. In addition,
this paper concentrates on random sampling and random noise as follows.

\begin{assumption}[Random sampling]\label{assumption:random-sampling}Each
$(i,j)$ is included in the sampling set $\Omega$ independently with probability $p$.
\end{assumption}

\begin{assumption}[Random noise]\label{assumption:random-noise}The
noise $N_{i,j}$'s are independent random variables and satisfy the
following conditions: for each $1\leq i\leq d_{1},1\leq j\leq d_{2}$,

\begin{itemize}

\item[(1)] \textbf{ (Zero mean) $\mathbb{E}\left[N_{i,j}\right]=0$};

\item[(2)] \textbf{ (Variance)} $\mathsf{Var}\left(N_{i,j}\right)\leq\sigma^{2}$;

\item[(3)] \textbf{ (Magnitude) }Each $N_{i,j}$ satisfies either
of the following condition:

\begin{itemize} 

\item[(a)]  $\left|N_{i,j}\right|\leq R$;

\item[(b)]  $N_{i,j}$ has a symmetric distribution satisfying $\mathbb{P}\left\{ \left|N_{i,j}\right|>R\right\} \leq c_{\mathrm{r}}d^{-12}$
for some universal constant $c_{\mathrm{r}}>0$.

\end{itemize}

Here, $R$ is some quantity obeying\textbf{
\begin{equation}
\frac{R^{2}}{\sigma^{2}}\leq C_{\mathrm{r}}\frac{\min\left\{ p\sqrt{d_{1}d_{2}},\,pd_{2}\right\} }{\log d}\label{eq:assumption-noise-spike}
\end{equation}
}for some universal constant $C_{\mathrm{r}}>0$.

\end{itemize}

\end{assumption}
\noindent As a remark, Assumption \ref{assumption:random-noise}
allows the largest possible size $R$ of each noise component to be
substantially larger than its typical size $\sigma$. For example,
if $p\asymp1$, then $R$ can be $\min{\{(d_{1}d_{2})^{1/4},\sqrt{d_{2}}\}}$
times larger than $\sigma$ (ignoring any logarithmic factor). In
addition, the noise $N_{i,j}$'s do not necessarily have identical
variance; in fact, our formulation allows us to accommodate the heteroscedasticity
of noise (i.e.~the scenario where the noise has location-varying
variance). 

\paragraph{Goal.} Given incomplete and noisy observations about
$\bm{A}^{\star}\in\mathbb{R}^{d_{1}\times d_{2}}$ (cf.~(\ref{eq:A-observation})),
we seek to estimate $\bm{U}^{\star}\in\mathbb{R}^{d_{1}\times r}$
modulo some global rotation. We emphasize once again that the aim
here is not to estimate the entire matrix. In truth, there are many
unbalanced cases with $d_{2}\gg d_{1}$ such that (1) reliable estimation
of $\bm{U}^{\star}$ is feasible, but (2) faithful estimation of the
whole matrix $\bm{A}^{\star}$ is information theoretically impossible.

\subsection{Notation}

\label{subsec:Notations}

We denote $\left[n\right]:=\{1,\cdots,n\}$. For any matrix $\bm{A}\in\mathbb{R}^{d_{1}\times d_{2}}$,
we use $\sigma_{i}\left(\bm{A}\right)$ and $\lambda_{i}\left(\bm{A}\right)$
to represent the $i$-th largest singular value and the $i$-th largest
eigenvalue of $\bm{A}$, respectively. Let $\bm{A}_{i,:}$ and $\bm{A}_{:,j}$
denote respectively the $i$-th row and the $j$-th column of $\bm{A}$.
Let $\left\Vert \bm{A}\right\Vert $ (resp.~$\left\Vert \bm{A}\right\Vert _{\mathrm{F}}$)
represent the spectral norm (resp.~the Frobenius norm) of $\bm{A}$.
We also denote by $\left\Vert \bm{A}\right\Vert _{2,\infty}:=\max_{i\in[d_{1}]}\|\bm{A}_{i,:}\|_{2}$
and $\left\Vert \bm{A}\right\Vert _{\infty}:=\max_{i\in[d_{1}],j\in[d_{2}]}\big|A_{i,j}\big|$
the $\ell_{2,\infty}$ norm and the entrywise $\ell_{\infty}$ norm
of $\bm{A}$, respectively. Similarly, for any tensor $\bm{T}$, we use $\|\bm{T}\|_{\infty}$ to represent the largest magnitude of the entries of $\bm{T}$.  
Moreover, we denote by $\mathcal{P}_{\mathsf{diag}}$
the projection onto the subspace that vanish outside the diagonal,
and define $\mathcal{P}_{\mathsf{off}\text{-}\mathsf{diag}}$ such
that $\mathcal{P}_{\mathsf{off}\text{-}\mathsf{diag}}(\bm{A}):=\bm{A}-\mathcal{P}_{\mathsf{diag}}(\bm{A})$.
Let $\mathcal{O}^{r\times r}$ stand for the set of $r\times r$ orthonormal
matrices. In addition, we use $\mathsf{diag}\left(\bm{a}\right)$
to represent a diagonal matrix whose $(i,i)$-th entry is equal to
$a_{i}$. Throughout this paper, the notations $C,C_{1},\cdots,c,c_{1},\cdots$
denote absolute positive constants whose values may change from line
to line. 

Furthermore, for any real-valued functions $f(d_{1},d_{2})$ and $g(d_{1},d_{2})$,
$f(d_{1},d_{2})\lesssim g(d_{1},d_{2})$ or $f(d_{1},d_{2})=O\left(g(d_{1},d_{2})\right)$
mean that $\left|f(d_{1},d_{2})/g(d_{1},d_{2})\right|\leq C_{1}$
for some constant $C_{1}>0$; $f(d_{1},d_{2})\gtrsim g(d_{1},d_{2})$
means that $\left|f(d_{1},d_{2})/g(d_{1},d_{2})\right|\geq C_{2}$
for some universal constant $C_{2}>0$; $f(d_{1},d_{2})\asymp g(d_{1},d_{2})$
means that $C_{1}\leq\left|f(d_{1},d_{2})/g(d_{1},d_{2})\right|\leq C_{2}$
for some universal constants $C_{1},C_{2}>0$; $f(d_{1},d_{2})=o\left(g(d_{1},d_{2})\right)$ means that $f(d_{1},d_{2})/g(d_{1},d_{2})\rightarrow0$ as $\min\left\{ d_{1},d_{2}\right\} \rightarrow\infty$.
In addition, $f(d_{1},d_{2})\ll g(d_{1},d_{2})$ (resp.~$f(d_{1},d_{2})\gg g(d_{1},d_{2})$)
means that there exists some sufficiently small (resp.~large) constant
$c_{1}>0$ (resp.~$c_{2}>0$) such that $f(d_{1},d_{2})\leq c_{1}g(d_{1},d_{2})$
(resp.~$f(d_{1},d_{2})\geq c_{2}g(d_{1},d_{2})$) holds true for
all sufficiently large $d_{1}$ and $d_{2}$.

\section{Main results\label{sec:Main-results}}

\subsection{Algorithm: a spectral method with diagonal deletion\label{subsec:Algorithm}}

Recall that $\bm{A}=[A_{i,j}]_{1\leq i\leq d_{1},1\leq j\leq d_{2}}$
is the zero-padded data matrix (see (\ref{eq:A-observation})). It
is easily seen that, under our random sampling model (i.e.~Assumption
\ref{assumption:random-sampling}), $p^{-1}\bm{A}$ serves as an unbiased
estimator of $\bm{A}^{\star}$. One might thus expect the left singular
subspace of $\bm{A}$ to form a reasonably good estimator of the subspace
spanned by $\bm{U}^{\star}$. As it turns out, when $\bm{A}^{\star}$
is a very fat matrix (namely, $d_{2}\gg d_{1}$), this approach might
fail to work when the sample complexity is not sufficiently large
or when the noise size is not sufficiently small.

This paper adopts an alternative route by resorting to the sample Gram matrix
$\bm{A}\bm{A}^{\top}$ (properly rescaled). Straightforward calculation
reveals that
\begin{equation}
\frac{1}{p^{2}}\mathbb{E}\left[\bm{A}\bm{A}^{\top}\right]=\bm{A}^{\star}\bm{A}^{\star\top}+\underset{\text{a diagonal matrix}}{\underbrace{\left(\frac{1}{p}-1\right)\mathcal{P}_{\mathsf{diag}}\left(\bm{A}^{\star}\bm{A}^{\star\top}\right)+\frac{1}{p}\mathsf{\mathsf{diag}}\Bigg(\Bigg[\sum_{j=1}^{d_{2}}\mathsf{Var}(N_{i,j})\Bigg]_{1\leq i\leq d_{1}}\Bigg)}},\label{eq:E-AA-calculation}
\end{equation}
where $\mathsf{diag}\left(\bm{a}\right)$ with $\bm{a}\in\mathbb{R}^{d_{1}}$
represents a diagonal matrix whose $(i,i)$-th entry equals $a_{i}$.
The identity (\ref{eq:E-AA-calculation}) implies that the diagonal
components of $p^{-2}\mathbb{E}\left[\bm{A}\bm{A}^{\top}\right]$
are significantly inflated, which might need to be properly suppressed.

In order to remedy the above-mentioned diagonal inflation issue, we
adopt a simple strategy that zeros out all diagonal entries; that
is, performing the spectral method on the following matrix 
\begin{equation}
\bm{G}=\frac{1}{p^{2}}\mathcal{P}_{\mathsf{off}\text{-}\mathsf{diag}}\big(\bm{A}\bm{A}^{\top}\big)\label{eq:defn-G}
\end{equation}
with $\mathcal{P}_{\mathsf{off}\text{-}\mathsf{diag}}(\bm{M}):=\bm{M}-\mathcal{P}_{\mathsf{diag}}(\bm{M})$
denoting projection onto the set of zero-diagonal matrices. This clearly
satisfies
\[
\mathbb{E}\left[\bm{G}\right]=\mathcal{P}_{\mathsf{off}\text{-}\mathsf{diag}}\left(\bm{A}^{\star}\bm{A}^{\star\top}\right)=\mathcal{P}_{\mathsf{off}\text{-}\mathsf{diag}}\left(\bm{U}^{\star}\bm{\Sigma}^{\star2}\bm{U}^{\star\top}\right).
\]
If the diagonal entries of $\bm{A}^{\star}\bm{A}^{\star\top}$ are
not too large, then one has $\bm{A}^{\star}\bm{A}^{\star\top}\approx\mathcal{P}_{\mathsf{off}\text{-}\mathsf{diag}}\left(\bm{A}^{\star}\bm{A}^{\star\top}\right)$
and, as a result, the rank-$r$ eigen-subspace of $\bm{G}$ might
form a reliable estimate of the subspace spanned by $\bm{U}^{\star}$. 
The procedure is summarized in Algorithm~\ref{alg:spectral}.

\begin{algorithm}
\caption{The spectral method on the diagonal-deleted Gram matrix}
\label{alg:spectral}
\begin{algorithmic}[1]
\STATE{{\bf Input:} sampling  set $\Omega$, observed entries $\left\{ A_{i, j} \mid \left( i, j \right) \in \Omega \right\}$,  sampling rate $p$, rank $r$.}
\STATE{{\bf Compute} the (truncated) rank-$r$ eigen-decomposition  $\bm{U}\bm{\Lambda}\bm{U}^\top$ of $\bm{G}$, where $\bm{U}\in \mathbb{R}^{d_1\times r}$, $\bm{\Lambda}\in \mathbb {R}^{r\times r}$, and
\begin{align}
\label{def:G}
\bm{G} := \mathcal{P}_{\mathsf{off}\text{-}\mathsf{diag}} \left( \tfrac{1}{p^2} \bm{A}\bm{A}^\top \right).
\end{align}
Here, $\bm{A}$ is defined in \eqref{def:A} and $\mathcal{P}_{\mathsf{off}\text{-}\mathsf{diag}} \left(\bm{M}\right)$ zeros out the diagonal entries of a matrix $\bm{M}$.}
\STATE{{\bf Output} $\bm{U}$ as the subspace estimate, and $\bm{\Sigma} = \bm{\Lambda}^{1/2}$ as the spectrum estimate.}

\end{algorithmic}
\end{algorithm}

We remark that this is clearly not a new algorithmic idea. In fact, proper handling of the diagonal entries (e.g.~diagonal deletion, diagonal reweighting) has already been recommended in several different applications, including bipartite stochastic block models \cite{florescu2016spectral}, covariance estimation \cite{lounici2013sparse,lounici2014high,loh2012high,elsener2019sparse}, tensor completion \cite{montanari2018spectral}, to name just a few. 

\subsection{Theoretical guarantees\label{subsec:Theoretical-guarantees}}

In general, one can only hope to estimate $\bm{U}^{\star}$ up to
global rotation. With this in mind, we introduce the following rotation
matrix
\begin{equation}
\bm{R}\,:=\,\underset{\bm{Q}\in\mathcal{O}^{r\times r}}{\arg\min}\text{ }\left\Vert \bm{U}\bm{Q}-\bm{U}^{\star}\right\Vert _{\mathrm{F}},\label{def:R}
\end{equation}
where $\mathcal{O}^{r\times r}$ stands for the set of $r\times r$
orthonormal matrices. In words, $\bm{R}$ is the global rotation matrix
that best aligns $\bm{U}$ and $\bm{U}^{\star}$. Equipped with this
notation, the following theorem delivers upper bounds on the difference
between the obtained estimate $\bm{U}$ and the ground truth $\bm{U}^{\star}$.
The proof is postponed to Section \ref{sec:Analysis}.

\begin{theorem}\label{thm:U_loss}Assume that the following conditions
hold
\begin{equation}
p\geq c_{0}\max\left\{ \frac{\mu\kappa^{4}r\log^{2}d}{\sqrt{d_{1}d_{2}}},\frac{\mu\kappa^{8}r\log^{2}d}{d_{2}}\right\} ,\,\,\frac{\sigma}{\sigma_{r}^{\star}}\leq c_{1}\min\left\{ \frac{\sqrt{p}}{\kappa\sqrt[4]{d_{1}d_{2}}\,\sqrt{\log d}},\frac{1}{\kappa^{3}}\sqrt{\frac{p}{d_{1}\log d}}\right\} \,\,\text{and}\,\,r\leq\frac{c_{2}d_{1}}{\mu_{1}\kappa^{4}},\label{asmp}
\end{equation}
where $c_{0}>0$ is some sufficiently large constant and $c_{1},c_{2}>0$
are some sufficiently small constants. Then with probability at least
$1-O\left(d^{-10}\right)$, the matrices $\bm{U}$ and $\bm{\Sigma}$
returned by Algorithm \ref{alg:spectral} satisfy
\begin{subequations}
\begin{align}
\left\Vert \bm{U}\bm{R}-\bm{U}^{\star}\right\Vert  & \lesssim \mathcal{E}_{\mathsf{general}},\label{claim:U_op_loss}\\
\left\Vert \bm{U}\bm{R}-\bm{U}^{\star}\right\Vert _{2,\infty} & \lesssim\kappa^{2}\sqrt{\frac{\mu r}{d_{1}}}\cdot \mathcal{E}_{\mathsf{general}},\label{claim:U_2inf_loss}\\
\left\Vert \bm{\Sigma}-\bm{\Sigma}^{\star}\right\Vert  & \lesssim\sigma_{r}^{\star}\cdot\mathcal{E}_{\mathsf{general}},\label{claim:Lambda_loss}
\end{align}
\end{subequations}
where $\bm{R}$ is defined in (\ref{def:R}), and
\begin{equation}
 \mathcal{E}_{\mathsf{general}}:=\underset{\mathrm{missing}\text{ }\mathrm{data}\text{ }\mathrm{effect}}{\underbrace{\frac{\mu\kappa^{2}r\log d}{\sqrt{d_{1}d_{2}}\,p}+\sqrt{\frac{\mu\kappa^{4}r\log d}{d_{2}p}}}}+\underset{\mathrm{noise}\text{ }\mathrm{effect}}{\underbrace{\frac{\sigma^{2}}{\sigma_{r}^{\star2}}\frac{\sqrt{d_{1}d_{2}}\,\log d}{p}+\frac{\sigma\kappa}{\sigma_{r}^{\star}}\sqrt{\frac{d_{1}\log d}{p}}}}+ \hspace{-1em} \underset{\mathrm{diagonal}\text{ }\mathrm{deletion}\text{ }\mathrm{effect}}{\underbrace{\frac{\mu_{1}\kappa^{2}r}{d_{1}}}}.
\label{eq:defn-Err-UB}
\end{equation}
\end{theorem}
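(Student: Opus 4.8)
The plan is to write $\bm{G} = \bm{G}^{\star}+\bm{E}$, where $\bm{G}^{\star}:=\bm{A}^{\star}\bm{A}^{\star\top}=\bm{U}^{\star}\bm{\Sigma}^{\star 2}\bm{U}^{\star\top}$ has top-$r$ eigenspace $\bm{U}^{\star}$ with eigengap exactly $\sigma_{r}^{\star 2}$, and
$$
\bm{E}\;=\;\mathcal{P}_{\mathsf{off}\text{-}\mathsf{diag}}\!\left(\tfrac{1}{p^{2}}\bm{A}\bm{A}^{\top}-\bm{A}^{\star}\bm{A}^{\star\top}\right)-\mathcal{P}_{\mathsf{diag}}\!\left(\bm{A}^{\star}\bm{A}^{\star\top}\right).
$$
The first step is a spectral-norm bound on $\bm{E}$. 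Expanding $\bm{A}=\mathcal{P}_{\Omega}(\bm{A}^{\star})+\mathcal{P}_{\Omega}(\bm{N})$ breaks $\tfrac1{p^2}\bm{A}\bm{A}^{\top}-\bm{A}^{\star}\bm{A}^{\star\top}$ into a signal$\times$signal fluctuation, two signal$\times$noise cross terms, and a noise$\times$noise term; each is controlled by the matrix Bernstein inequality, with the heavy-tailed case of Assumption~\ref{assumption:random-noise}(3b) reduced to the bounded case by a truncation argument that \eqref{eq:assumption-noise-spike} is tailored to absorb. The deterministic bias $\|\mathcal{P}_{\mathsf{diag}}(\bm{A}^{\star}\bm{A}^{\star\top})\|=\max_i\|\bm{A}^{\star}_{i,:}\|_2^2$ is bounded via the incoherence parameters and gives the ``diagonal deletion effect'' term $\mu_1\kappa^2 r/d_1$; the signal$\times$signal term gives the ``missing data'' terms; the noise cross terms give the ``noise effect'' terms. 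Altogether $\|\bm{E}\|\lesssim\sigma_{r}^{\star 2}\mathcal{E}_{\mathsf{general}}$, and \eqref{asmp} forces $\mathcal{E}_{\mathsf{general}}\ll1$, hence $\|\bm{E}\|\le\sigma_r^{\star2}/4$.

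Given this spectral bound, the operator-norm claim \eqref{claim:U_op_loss} follows from the Davis--Kahan $\sin\Theta$ theorem for the top-$r$ eigenspaces of the symmetric matrices $\bm{G}$ and $\bm{G}^{\star}$ (whose eigengap is $\sigma_r^{\star2}$), and the spectrum claim \eqref{claim:Lambda_loss} follows from Weyl's inequality applied to $\bm{G}$ versus $\bm{G}^{\star}$ together with the elementary estimate $|\sqrt{a}-\sqrt{b}|\le|a-b|/\sqrt{b}$. The remaining claim \eqref{claim:U_2inf_loss} is the hard part, because Davis--Kahan is far too lossy at the row level.

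For \eqref{claim:U_2inf_loss} I would run a leave-one-out / leave-two-out argument. From $\bm{U}\bm{\Lambda}=\bm{G}\bm{U}$ one extracts a first-order expansion $\bm{U}\bm{R}-\bm{U}^{\star}=\bm{E}\bm{U}^{\star}(\bm{\Sigma}^{\star 2})^{-1}\bm{R}+(\text{higher-order terms})$, so that the $m$-th row is driven by $\bm{E}_{m,:}\bm{U}^{\star}$ (equivalently, by $\bm{E}_{m,:}\bm{U}$). Because $\bm{A}\bm{A}^{\top}$ is bilinear in the data, $\bm{E}_{m,:}$ couples to every row of $\bm{U}$ through the cross products $\bm{A}_{m,:}\bm{A}_{i,:}^{\top}$; to break this I would, for each $m$, resample the randomness in row $m$ of $\bm{A}$ to form an auxiliary matrix $\bm{G}^{(m)}$ with top-$r$ eigenspace $\bm{U}^{(m)}$, which is then statistically independent of the $m$-th row of $\bm{A}$. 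Conditional on $\bm{U}^{(m)}$, the key quantity $\bm{E}_{m,:}\bm{U}^{(m)}$ (more precisely $\tfrac1p\bm{A}_{m,:}\cdot\tfrac1p\bm{A}^{(m)\top}\bm{U}^{(m)}$ with the self-term and diagonal correction removed) is a sum of independent random vectors, so a Bernstein/Freedman-type inequality, together with the incoherence of $\bm{U}^{(m)}$ and $\bm{V}^{\star}$, produces a contribution of order $\kappa^2\sqrt{\mu r/d_1}\cdot\mathcal{E}_{\mathsf{general}}$ after normalizing by $\sigma_r^{\star2}$.

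The main obstacle is that this scheme is not self-contained: the conditional concentration above needs a priori control of $\|\bm{U}^{(m)}\|_{2,\infty}$ and of the discrepancy $\|\bm{U}\bm{R}-\bm{U}^{(m)}\bm{R}^{(m)}\|$, both of which are intertwined with the very $\ell_{2,\infty}$ quantity we are after; moreover, the residual coupling between a row-$i$ random factor and $\bm{U}^{(m)}_{i,:}$ forces a second, ``leave-two-out'' auxiliary eigenspace $\bm{U}^{(m,i)}$ that is independent of rows $m$ and $i$. I therefore expect the heart of the argument to be a simultaneous bootstrapping that propagates, along the leave-one-out and leave-two-out sequences, the bounds on $\|\bm{U}^{(m)}\bm{R}^{(m)}-\bm{U}^{\star}\|$, $\|\bm{U}^{(m)}\bm{R}^{(m)}-\bm{U}^{(m,i)}\bm{R}^{(m,i)}\|$, $\|\bm{U}^{(m)}\|_{2,\infty}$ and $\|\bm{U}\bm{R}-\bm{U}^{\star}\|_{2,\infty}$ in lockstep, each round invoking the conditional-independence concentration. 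A union bound over $m\in[d_1]$ (and over $i$) then delivers \eqref{claim:U_2inf_loss}, and the three conditions in \eqref{asmp} are exactly what make the bootstrapping contraction close and let all higher-order terms be absorbed.
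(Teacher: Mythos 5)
Your high-level plan is exactly the paper's: decompose $\bm{G}=\bm{G}^{\star}+\bm{E}$, split $\bm{E}$ into a signal fluctuation, cross terms, a quadratic noise term, and the deterministic diagonal bias; use truncated matrix Bernstein for the spectral-norm bound $\|\bm{E}\|\lesssim\sigma_r^{\star 2}\mathcal{E}_{\mathsf{general}}$; obtain \eqref{claim:U_op_loss} from Davis--Kahan and \eqref{claim:Lambda_loss} from Weyl plus $|\sqrt{a}-\sqrt{b}|\le|a-b|/(a^{1/2}+b^{1/2})$; and establish \eqref{claim:U_2inf_loss} via a first-order expansion $\bm{U}\bm{R}-\bm{U}^{\star}\approx\bm{E}\bm{U}^{\star}(\bm{\Sigma}^{\star})^{-2}$ plus a leave-one-out device closed by a contraction. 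The paper organizes the contraction around the un-normalized transform $\bm{H}=\bm{U}^{\top}\bm{U}^{\star}$ (rather than $\bm{R}$) and then passes to $\bm{R}$ via a $\|\bm{H}-\mathsf{sgn}(\bm{H})\|$ estimate, but that is a cosmetic difference from what you describe.

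There is, however, a genuine conceptual error in your leave-two-out step. You propose a second auxiliary estimate $\bm{U}^{(m,i)}$ obtained by removing \emph{two rows} $m$ and $i$, motivated by the residual coupling between $\bm{A}_{i,:}$ and $\bm{U}^{(m)}_{i,:}$. That coupling is real, but a second row leave-out does not close the argument: if you write $\bm{G}_{m,:}(\bm{U}^{(m)}\bm{H}^{(m)}-\bm{U}^{\star})=\sum_{i\neq m}\langle \bm{A}^{\mathsf{s}}_{m,:},\bm{A}^{\mathsf{s}}_{i,:}\rangle\,(\bm{U}^{(m)}\bm{H}^{(m)}-\bm{U}^{\star})_{i,:}$ and replace $\bm{U}^{(m)}$ by $\bm{U}^{(m,i)}$ inside the $i$-th summand, the summands are still jointly dependent because each $\bm{U}^{(m,i)}$ depends on all the remaining rows $\{\bm{A}_{i',:}\}_{i'\neq m,i}$, so no Bernstein over $i$ is available. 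The paper instead sums over the \emph{columns} of $\bm{A}$: the $m$-th row contribution is $\bm{A}^{\mathsf{s}}_{m,:}\,[\mathcal{P}_{-m,:}(\bm{A}^{\mathsf{s}})]^{\top}\bm{U}^{(m)}\bm{H}^{(m)}=\sum_{j}E_{m,j}\big([\mathcal{P}_{-m,:}(\bm{A}^{\mathsf{s}})]^{\top}\bm{U}^{(m)}\bm{H}^{(m)}\big)_{j,:}$, which conditional on $\bm{A}^{(m)}$ is a sum of independent vectors over $j$; the Bernstein bound then requires an $\ell_{2,\infty}$ control of $[\mathcal{P}_{-m,:}(\bm{A}^{\mathsf{s}})]^{\top}\bm{U}^{(m)}\bm{H}^{(m)}$, which is indexed by columns $l$, and each such row is $(\bm{A}^{\mathsf{s}}_{:,l})^{\top}\bm{U}^{(m)}\bm{H}^{(m)}$ with the column $\bm{A}^{\mathsf{s}}_{:,l}$ correlated with $\bm{U}^{(m)}$. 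This is the dependency the leave-\emph{two}-out must break, so the correct auxiliary object is $\bm{U}^{(m,l)}$ with $m$ a row index and $l$ a \emph{column} index --- a leave-one-row-one-column-out --- and the proximity $\|\bm{U}^{(m)}\bm{U}^{(m)\top}-\bm{U}^{(m,l)}\bm{U}^{(m,l)\top}\|$ is what you bootstrap in lockstep with $\|\bm{U}^{(m)}\bm{H}^{(m)}\|_{2,\infty}$. With a second row leave-out the argument does not close; with a row-plus-column leave-out it does.
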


\begin{remark} If there is no missing data, namely, $p=1$, then Theorem~\ref{thm:U_loss} continues to hold if the first two terms on the right-hand side of \eqref{eq:defn-Err-UB} are removed.  
\end{remark}

In a nutshell, Theorem~\ref{thm:U_loss} asserts that Algorithm~\ref{alg:spectral}
produces reliable estimates of the column subspace of $\bm{A}^{\star}$
--- with respect to both the spectral norm and the $\left\Vert \cdot\right\Vert _{2,\infty}$
norm --- under certain conditions imposed on the sample size and the noise
size. For instance, consider the settings where $\mu,\kappa \asymp 1$ and $r\ll d_1\leq d_2$. Then as long as the following condition holds:
\begin{align}
	\label{eq:consistent-estimation-spectral}
	p \gtrsim  \frac{r\log^{2}d}{\sqrt{d_{1}d_{2}}}  \qquad \text{and}\qquad \frac{\sigma^2}{\sigma_{r}^{\star 2}} =o\left(  \frac{p}{\sqrt{d_{1}d_{2}}\,{\log d}} \right) ,
\end{align}
the proposed spectral method achieves consistent estimation with high probability, namely,
\begin{align}
	\underset{\bm{Q}\in\mathcal{O}^{r\times r}}{\min} \frac{ \left\Vert \bm{U}\bm{Q}-\bm{U}^{\star}\right\Vert }{ \|\bm{U}^{\star}\| } = o \left( 1 \right),\quad
	\underset{\bm{Q}\in\mathcal{O}^{r\times r}}{\min} \frac{ \left\Vert \bm{U}\bm{Q}-\bm{U}^{\star}\right\Vert _{2,\infty} }{ \|\bm{U}^{\star}\|_{2,\infty} } = o \left( 1 \right)
	\quad
	\text{and}\quad
	 \frac{ \left\Vert \bm{\Sigma}-\bm{\Sigma}^{\star}\right\Vert }{ \| \bm{\Sigma}^{\star} \| } = o \left( 1 \right). 
	 \label{eq:consistent-estimation-defn}
\end{align}

Our upper bound (\ref{eq:defn-Err-UB}) on the spectral norm
error contains five terms. The first two terms of (\ref{eq:defn-Err-UB})
are incurred by missing data; the third and the fourth terms of (\ref{eq:defn-Err-UB})
represent the influence of observation noise; and the last term of
(\ref{eq:defn-Err-UB}) arises due to the bias caused by diagonal deletion.
In particular, the last term is expected to be vanishingly small in
the low-rank and incoherent case. 
Interestingly, both the missing data effect and the noise effect are
captured by two different terms, which we shall interpret in what follows. Note that a primary focus of this paper is to demonstrate the feasibility of obtaining a tight control of the $\ell_{2,\infty}$ statistical error. 
This is particularly evident for the low-rank,
incoherent, and well-conditioned case with $r,\mu,\kappa=O(1)$, in which our theory (cf.~\eqref{claim:U_op_loss} and \eqref{claim:U_2inf_loss}) reveals that the $\ell_{2,\infty}$ error can be a factor of $\sqrt{1/d_1}$ smaller than the spectral norm error.
The discussion below focuses on this case (namely, $r,\mu,\kappa=O(1)$), 
with all logarithmic factors omitted for simplicity of presentation. 
\begin{itemize}
\item Let us first examine the influence of observation noise, which reads
\begin{align}
\frac{\sigma^{2}}{\sigma_{r}^{\star2}}\frac{\sqrt{d_{1}d_{2}}}{p}+\frac{\sigma}{\sigma_{r}^{\star}}\sqrt{\frac{d_{1}}{p}}.
\label{eq:noise-effect-simple}
\end{align}
This contains a quadratic term as well as a linear term w.r.t.~$\sigma/\sigma_{r}^{\star}$. 
To interpret this, consider, for example, the case without missing data (i.e.~$p=1$) and decompose 
\[
\bm{A}\bm{A}^{\top}=\bm{A}^{\star}\bm{A}^{\star\top}+\underset{\text{linear perturbation}}{\underbrace{\bm{A}^{\star}\bm{N}^{\top}+\bm{N}\bm{A}^{\star\top}}}+\underset{\text{quadratic perturbation}}{\underbrace{\bm{N}\bm{N}^{\top}}},
\]
which clearly explains why eigenspace perturbation bounds depend both linearly
and quadratically on the noise magnitudes. In general, the quadratic
term $\frac{\sigma^{2}}{\sigma_{r}^{\star2}}\frac{\sqrt{d_{1}d_{2}}}{p}$
is dominant when the signal-to-noise ratio (SNR) is not large enough; as
the noise decreases to a sufficiently low level, the linear term starts
to enter the picture. See Table \ref{tab:dominant-noise} for a more precise summary. As we shall demonstrate momentarily, 
the terms \eqref{eq:noise-effect-simple} match the minimax limits (up to some logarithmic factor),  meaning that it is generally impossible to get rid of either the linear term or the quadratic term.

\begin{table}
\centering %
\begin{tabular}{c|c|c}
\hline 
 & large-noise regime (i.e.~$\sigma/\sigma_{r}^{\star}\gtrsim\sqrt{p/d_{2}}$) & small-noise regime (i.e.~$\sigma/\sigma_{r}^{\star}\lesssim\sqrt{p/d_{2}}$)\tabularnewline
\hline 
dominant term & $\frac{\sigma^{2}}{\sigma_{r}^{\star2}}\frac{\sqrt{d_{1}d_{2}}}{p}$ & $\frac{\sigma}{\sigma_{r}^{\star}}\sqrt{\frac{d_{1}}{p}}$\tabularnewline
\hline 
\end{tabular}\caption{The dominant term of the noise effect in $\frac{\sigma^{2}}{\sigma_{r}^{\star2}}\frac{\sqrt{d_{1}d_{2}}}{p}+\frac{\sigma}{\sigma_{r}^{\star}}\sqrt{\frac{d_{1}}{p}}$
if $d_{2}\protect\geq d_{1}$ (omitting logarithmic factors and assuming
$r,\kappa,\mu\asymp1$).\label{tab:dominant-noise}}
\end{table}

\item Next, we examine the influence of missing data and assume $\sigma=0$ to simplify the discussion. If we view $\bm{N}_{\mathrm{missing}}=\frac{1}{p}\bm{A}-\bm{A}^{\star}$
as a zero-mean perturbation matrix, then one can write
\[
\tfrac{1}{p^{2}}\bm{A}\bm{A}^{\top}=\bm{A}^{\star}\bm{A}^{\star\top}+\underset{\text{linear perturbation}}{\underbrace{\bm{A}^{\star}\bm{N}_{\mathrm{missing}}^{\top}+\bm{N}_{\mathrm{missing}}\bm{A}^{\star\top}}}+\underset{\text{quadratic perturbation}}{\underbrace{\bm{N}_{\mathrm{missing}}\bm{N}_{\mathrm{missing}}^{\top}}}.
\]
Similar to the above noisy case with $p=1$, this decomposition explains
why the influence of missing data contains two terms as well (see
Table \ref{tab:dominant-missing data})
\[
\underset{\text{quadratic term in }1/\sqrt{p}}{\underbrace{\frac{1}{\sqrt{d_{1}d_{2}}\,p}}}+\underset{\text{linear term in }1/\sqrt{p}}{\underbrace{\frac{1}{\sqrt{d_{2}p}}}}.
\]
\begin{table}
\centering %
\begin{tabular}{c|c|c}
\hline 
 & high-missingness regime (i.e.~$p\lesssim1/d_{1}$) & low-missingness regime (i.e.~$p\gtrsim1/d_{1}$)\tabularnewline
\hline 
dominant term & $\frac{1}{\sqrt{d_{1}d_{2}}\,p}$ & $\frac{1}{\sqrt{d_{2}p}}$\tabularnewline
\hline 
\end{tabular}\caption{The dominant term of the missing data effect in $\frac{1}{\sqrt{d_{1}d_{2}}\,p}+\frac{1}{\sqrt{d_{2}p}}$
if $d_{2}\protect\geq d_{1}$ (omitting logarithmic factors and assuming
$r,\kappa,\mu\asymp1$).\label{tab:dominant-missing data}}
\end{table}
\end{itemize}

\paragraph{Comparison with prior results.} To demonstrate the effectiveness
of our theory, we take a moment to compare them with several prior results.
Once again, the discussion below focuses on the case
with $\max\left\{ \mu,\kappa,r\right\} \asymp1$.  To be fair, it is worth noting that most papers discussed below either have different objectives (e.g.~aiming at matrix estimation rather than subspace estimation \cite{KesMonSew2010,CanTao10,chen2019noisy}), or work with different (and possibly more general) model assumptions (e.g.~square matrices \cite{abbe2017entrywise}, or heteroskedastic noise \cite{zhang2018heteroskedastic}). Our purpose here is not to argue that our results are always stronger than the previous ones, but rather to point out the insufficiency of prior theory when directly applied to some basic settings.

\begin{itemize}

\item To begin with, we compare our spectral norm bound with that required
for matrix completion \cite{KesMonSew2010,CanTao10,abbe2017entrywise,chi2018nonconvex,chen2019noisy}
in the noise-free case (i.e.~$\sigma=0$), in order to show how much saving can be harvested when we move from matrix estimation to subspace estimation.  Suppose that $d_{2}\geq d_{1}$. As is well known, for both spectral and optimization-based
methods, the sample complexities required for faithful matrix completion
need to satisfy $pd_{1}d_{2}\gtrsim d_{2}\mathrm{poly}\log d$. In
comparison, faithful estimation of the column
subspace becomes feasible under the sample size $pd_{1}d_{2}\gtrsim\sqrt{d_{1}d_{2}}\,\mathrm{poly}\log d$, which can be much lower than that required for matrix completion (i.e.~by a factor
of $\sqrt{d_{2}/d_{1}}$). 
Further, we compare our $\|\cdot\|_{2,\infty}$ bound with the
theory derived in \cite{abbe2017entrywise} when $d_{2}\gtrsim  d_{1}\log^2 d$. The theory in \cite[Theorem~3.4]{abbe2017entrywise}
requires the sample size and the noise level to satisfy $p\gtrsim d_{1}^{-1}\log d$
		and $\sigma/\sigma_{r}^{\star}\lesssim\sqrt{\frac{p}{d_{2}\log d}}$, both of which
are more stringent requirements than ours (namely, $p\gtrsim\frac{\log^2 d}{\sqrt{d_{1}d_{2}}}$
and $\sigma/\sigma_{r}^{\star}\lesssim\frac{\sqrt{p}}{\sqrt[4]{d_{1}d_{2}}\,\sqrt{\log d}}$). 
Again, this arises primarily because \cite{abbe2017entrywise} seeks to estimate the whole matrix as opposed to its column subspace.

\item We then compare our results with \cite{montanari2018spectral}, which studies a diagonal-rescaling algorithm for the noise-free case (i.e.~$\sigma = 0$). Combining \cite[Theorem 6.2]{montanari2018spectral} with the standard Davis-Kahan matrix perturbation theory, we can easily see that their spectral norm bound for subspace estimation reads 
\begin{align*}
	\frac{\mathrm{poly}\log d}{\sqrt{d_{1}d_{2}} \, p}+\frac{\mathrm{poly}\log d}{\sqrt{d_{2} p}}. 
\end{align*}
This coincides with our bound except for the last term of \eqref{eq:defn-Err-UB} (due to the bias incurred by diagonal deletion). In comparison, our theory offers additional $\ell_{2,\infty}$ statistical guarantees and covers the noisy case, thus strengthening the theory presented in \cite{montanari2018spectral}.

\item Additionally,  we compare our spectral norm bound with the results derived
in \cite{zhang2018heteroskedastic}. Consider the noiseless case where
$\sigma=0$. It is proven in \cite[Theorem~6]{zhang2018heteroskedastic} (see also the remark that follows)
that: if the sample size satisfies $pd_{1}d_{2}\gtrsim\max\big\{ d_{1}^{1/3}d_{2}^{2/3},d_{1}\big\}\mathrm{poly}\log d$,
then the $\mathsf{HeteroPCA}$ estimator is consistent in estimating
the column subspace (namely, achieving a relative $\ell_{2}$ estimation
error not exceeding $o(1)$). In comparison, our theory claims that Algorithm \ref{alg:spectral} is guaranteed to yield consistent column subspace estimation
as long as the sample size obeys $pd_{1}d_{2}\gtrsim\sqrt{d_{1}d_{2}}\,\mathrm{poly}\log d$. Consequently,
if we omit logarithmic terms, then our sample complexity improves
upon the theoretical support of $\mathsf{HeteroPCA}$ by a factor of $(d_{2}/d_{1})^{1/6}$
if $d_{2}\geq d_{1}$.  Once again, the comparison here focuses on the effect of the aspect ratio $d_{2}/d_{1}$, without accounting for the influence of other parameters like $\mu,\kappa,r$. 

\end{itemize}

\paragraph{SVD applied directly to $\bm{A}$?} Finally, another natural spectral method that comes immediately into mind is to compute the rank-$r$ SVD of $\bm{A}$, and return the matrix containing the $r$ left singular vectors as the column subspace estimate. The $\ell_2$ risk analysis of this approach is typically based on classical matrix perturbation theory like Wedin's theorem \cite{wedin1973perturbation}.  We caution, however, that this approach becomes highly sub-optimal  when the aspect ratio $d_2 / d_1$ grows. Take the case with Gaussian noise and no missing data (i.e.~$p=1$) for example: in order for Wedin's theorem to be applicable, a basic requirement is $\|\bm{N}\|< \sigma_r^{\star}$, which translates to the condition
\begin{align}
	\frac{\sigma}{\sigma_{r}^{\star}}\lesssim \frac{1}{\sqrt{d_{2}}} 
	\label{eq:sigma-UB-svd}
\end{align}
since $\|\bm{N}\|\lesssim \sigma \sqrt{d_2}$. In comparison, our theory covers the range $\frac{\sigma}{\sigma_{r}^{\star}}\lesssim\frac{1}{(d_{1}d_{2})^{1/4}}$ (modulo some log factor), which allows the noise level to be  
$(d_2/d_1)^{1/4}$ times larger than the upper bound \eqref{eq:sigma-UB-svd} derived for the above SVD approach. The sub-optimality of this approach can also be easily seen from numerical experiments as well; see Section~\ref{subsec:examples} for details.

\subsection{Minimax lower bounds}

It is natural to wonder whether our theoretical guarantees are tight, and whether there are other estimators that can potentially improve the performance of Algorithm~\ref{alg:spectral}. To answer these questions, we develop the following minimax lower bounds under Gaussian noise; the proof is deferred to Appendix~\ref{sec:proof-minimax-lower-bounds-noise}.  
\begin{theorem}\label{thm:minimax-lower-bound}
Suppose $1\leq r\leq d_{1}/2$,
and $N_{i,j}\overset{\mathrm{i.i.d.}}{\sim}\mathcal{N}(0,\sigma^{2})$.  
Define 
\[
\mathcal{M}^{\star}:=\left\{ \bm{B}\in\mathbb{R}^{d_{1}\times d_{2}}\mid\mathsf{rank}(\bm{B})=r,\text{ }\sigma_{r}(\bm{B})\in[0.9\sigma_{r}^{\star},1.1\sigma_{r}^{\star}]\right\} .
\]
Denote by $\bm{U}(\bm{B})\in\mathbb{R}^{d_{1}\times r}$ the matrix
containing the $r$ left singular vectors of $\bm{B}$. Then there
exists some universal constant $c_{\mathsf{lb}}>0$ such that
\begin{subequations}
\label{eq:minimax-lower-bounds}
\begin{align}
\inf_{\widehat{\bm{U}}}\sup_{\bm{A}^{\star}\in\mathcal{M}^{\star}}\mathbb{E}\Big[\min_{\bm{R}\in\mathcal{O}^{r\times r}}\big\|\widehat{\bm{U}}\bm{R}-\bm{U}(\bm{A}^{\star})\big\|\Big] & \geq c_{\mathsf{lb}}\min\Bigg\{\frac{\sigma^{2}}{\sigma_{r}^{\star2}}\frac{\sqrt{d_{1}d_{2}}}{p}+\frac{\sigma}{\sigma_{r}^{\star}}\sqrt{\frac{d_{1}}{p}},\,1\Bigg\},  
\label{eq:minimax-L2-subspace}\\
\inf_{\widehat{\bm{U}}}\sup_{\bm{A}^{\star}\in\mathcal{M}^{\star}}\mathbb{E}\Big[\min_{\bm{R}\in\mathcal{O}^{r\times r}}\big\|\widehat{\bm{U}}\bm{R}-\bm{U}(\bm{A}^{\star})\big\|_{2,\infty}\Big] & \geq c_{\mathsf{lb}}\min\Bigg\{\frac{\sigma^{2}}{\sigma_{r}^{\star2}}\frac{\sqrt{d_{1}d_{2}}}{p}+\frac{\sigma}{\sigma_{r}^{\star}}\sqrt{\frac{d_{1}}{p}},\,1\Bigg\}\frac{1}{\sqrt{d_{1}}},
\label{eq:minimax-Linf-subspace}
\end{align}
\end{subequations}
where the infimum is taken over all estimators for $\bm{U}(\bm{A}^{\star})$
based on the observation $\mathcal{P}_{\Omega}(\bm{A}^{\star}+\bm{N})$.
\end{theorem}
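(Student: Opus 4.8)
The plan is to establish the two summands of~\eqref{eq:minimax-L2-subspace} separately --- each by reducing to a multiple-hypothesis testing problem and invoking Fano's inequality --- and then to combine them. The combination is routine once the two pieces are available: using $\max\{a,b\}\ge\tfrac{1}{2}(a+b)$, a lower bound of order $\tfrac{\sigma}{\sigma_r^\star}\sqrt{d_1/p}$ together with one of order $\tfrac{\sigma^2}{\sigma_r^{\star2}}\tfrac{\sqrt{d_1d_2}}{p}$ yields a lower bound of order their sum; and because $\widehat{\bm U}$ and $\bm U(\bm A^\star)$ may be taken to have orthonormal columns, $\min_{\bm R\in\mathcal O^{r\times r}}\|\widehat{\bm U}\bm R-\bm U(\bm A^\star)\|\le 2$ always, so the ``$\wedge\,1$'' truncation is automatic (equivalently, each construction below caps the separation parameter $\delta$ at a small absolute constant $c_0$). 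Moreover, the $\ell_{2,\infty}$ bound~\eqref{eq:minimax-Linf-subspace} is a free consequence of~\eqref{eq:minimax-L2-subspace}: since $\|\bm M\|_{2,\infty}\ge d_1^{-1/2}\|\bm M\|_{\mathrm F}\ge d_1^{-1/2}\|\bm M\|$ holds pointwise, one gets $\min_{\bm R}\|\widehat{\bm U}\bm R-\bm U(\bm A^\star)\|_{2,\infty}\ge d_1^{-1/2}\min_{\bm R}\|\widehat{\bm U}\bm R-\bm U(\bm A^\star)\|$, and taking $\mathbb E$, then $\sup_{\bm A^\star\in\mathcal M^\star}$, then $\inf_{\widehat{\bm U}}$ simply multiplies the $\ell_2$ bound by $1/\sqrt{d_1}$. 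Both constructions will use a standard local packing of the Grassmannian: for every $\delta\in(0,c_0]$ there exist orthonormal $\bm U^{(1)},\dots,\bm U^{(M)}\in\mathbb R^{d_1\times r}$ with $M\ge e^{c_1 r(d_1-r)}$ such that, after aligning rotations, all pairwise distances $\min_{\bm R}\|\bm U^{(i)}\bm R-\bm U^{(j)}\|$ have order $\delta$ (with all $r$ principal angles of order $\delta$); since $r\le d_1/2$, $\log M\gtrsim r d_1$.

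For the linear term $\tfrac{\sigma}{\sigma_r^\star}\sqrt{d_1/p}$, fix $\bm\Sigma^\star=\sigma_r^\star\bm I_r$ and an arbitrary fixed $\bm V^\star$ with orthonormal columns, and take the hypotheses $\bm A^{(i)}:=\sigma_r^\star\bm U^{(i)}\bm V^{\star\top}\in\mathcal M^\star$, whose left singular subspace is $\mathrm{col}(\bm U^{(i)})$. Conditioned on $\Omega$, the observation is Gaussian with mean $\mathcal{P}_\Omega(\bm A^{(i)})$ and covariance $\sigma^2\bm I$, so $\mathrm{KL}(P^{(i)}\,\|\,P^{(j)})=\tfrac{1}{2\sigma^2}\mathbb E_\Omega\|\mathcal{P}_\Omega(\bm A^{(i)}-\bm A^{(j)})\|_{\mathrm F}^2=\tfrac{p}{2\sigma^2}\|\bm A^{(i)}-\bm A^{(j)}\|_{\mathrm F}^2\asymp\tfrac{p\,\sigma_r^{\star2}\,r\,\delta^2}{\sigma^2}$. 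Fano's inequality then yields a risk lower bound of order $\delta$ as soon as the average pairwise KL is $\lesssim\log M\asymp r d_1$, i.e.\ up to $\delta\asymp\min\{\tfrac{\sigma}{\sigma_r^\star}\sqrt{d_1/p},\,c_0\}$, which is the desired linear term.

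The quadratic term $\tfrac{\sigma^2}{\sigma_r^{\star2}}\tfrac{\sqrt{d_1d_2}}{p}$ is the crux, and here a fixed $\bm V^\star$ is the wrong choice: it lets the first-order perturbation $\bm A^\star\bm N^\top$ reveal $\bm U^\star$ ``too cheaply'' (this is exactly what limits the previous argument to the linear rate). Instead I would place a rotationally invariant prior on $\bm V^\star$ --- Haar on the Stiefel manifold, or equivalently i.i.d.\ $\mathcal N(0,1/d_2)$ entries conditioned on the probability-$(1-e^{-cd_2})$ event that $\bm V^\star$ has near-orthonormal columns (so that $\bm A^\star=\sigma_r^\star\bm U^{(i)}\bm V^{\star\top}$ still lies in $\mathcal M^\star$) --- while keeping $\bm U^\star$ uniform over the local packing and $\bm\Sigma^\star=\sigma_r^\star\bm I_r$. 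The crucial effect of this prior is that, after integrating out $\bm V^\star$, the likelihood ratio of the data against pure noise depends on the hypothesis only through the spectrum of $\bm U^{(i)\top}\bm A\bm A^\top\bm U^{(i)}$ --- for the Gaussian version it equals, exactly, $(1+\rho)^{-rd_2/2}\exp\!\big(\tfrac{\sigma_r^{\star2}}{2\sigma^4 d_2(1+\rho)}\,\|\bm A^\top\bm U^{(i)}\|_{\mathrm F}^2\big)$ with $\rho:=\sigma_r^{\star2}/(d_2\sigma^2)$ --- so the first-order noise is averaged away and what remains behaves like a spiked-Wishart problem whose noise level in the diagonal-deleted Gram matrix is of order $\sigma^2\sqrt{d_2}$ per entry. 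One then evaluates $\chi^2(P^{(i)}\,\|\,P^{(j)})+1=\mathbb E_{\bm N}[L_i(\bm N)^2/L_j(\bm N)]$: the Gaussian integrals over $\bm V^\star$ are in closed form, the ``bulk'' factors $(1+\rho)^{-rd_2/2}$ and $(1-\tfrac{\rho}{1+\rho})^{-rd_2/2}$ cancel identically, and the estimate reduces to bounding the exponential moment of the residual quadratic form $\langle\bm N\bm N^\top,\,\bm U^{(i)}\bm U^{(i)\top}-\bm U^{(j)}\bm U^{(j)\top}\rangle$ under a Gaussian change of measure. This should give $\mathrm{KL}(P^{(i)}\,\|\,P^{(j)})\lesssim \tfrac{r\,\sigma_r^{\star4}\,\delta^2}{d_2\sigma^4}$ in the regime $\rho\lesssim1$ (and for $\rho\gtrsim1$ the quadratic term has the same order as, or is smaller than, the already-established linear term, so no separate argument is needed), whence Fano produces a risk lower bound up to $\delta\asymp\min\{\tfrac{\sigma^2\sqrt{d_1d_2}}{\sigma_r^{\star2}},\,c_0\}$. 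I expect the main difficulties to be (a) making this $\chi^2$/moment-generating-function estimate rigorous, in particular handling the interaction between the tilt $e^{\,\mathrm{const}\cdot\|\bm N^\top\bm U^{(i)}\|_{\mathrm F}^2}$ and the quadratic-in-$\bm N$ residual (for which one decomposes $\bm N^\top\bm U^{(i)}$ along $\mathrm{col}(\bm U^{(j)})$ and its orthogonal complement); and (b) reinstating the random sampling when $p<1$: one repeats the argument with the column-exchangeable statistic $\tfrac{1}{p^2}\mathcal{P}_{\mathsf{off}\text{-}\mathsf{diag}}(\mathcal{P}_\Omega(\bm A)\mathcal{P}_\Omega(\bm A)^\top)$ and the (column-exchangeable) i.i.d.\ prior on $\bm V^\star$, after which the computation goes through with $\sigma^2$ replaced by $\sigma^2/p$, yielding the stated $\tfrac{\sigma^2\sqrt{d_1d_2}}{p\,\sigma_r^{\star2}}$.
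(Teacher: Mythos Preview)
Your linear-term construction (fixed $\bm V^\star$, Gaussian KL, Fano) is correct, and your deduction of the $\ell_{2,\infty}$ bound from the spectral-norm bound via $\|\bm M\|_{2,\infty}\ge d_1^{-1/2}\|\bm M\|$ is exactly what the paper does.

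For the quadratic term you have identified the right idea --- a Gaussian prior on $\bm V^\star$ so that the first-order term averages out --- but the execution you sketch has a genuine gap and is harder than necessary. The gap is your handling of $p<1$: proposing to ``repeat the argument with the column-exchangeable statistic $\tfrac{1}{p^2}\mathcal P_{\mathsf{off}\text{-}\mathsf{diag}}(\mathcal P_\Omega(\bm A)\mathcal P_\Omega(\bm A)^\top)$'' cannot produce a lower bound. By the data-processing inequality, the divergence between pushforwards through any statistic is at most the divergence between the full data laws, which is the wrong direction for Fano; and that statistic is not sufficient once entries are missing (the likelihood ratio after integrating out a Gaussian $\bm V^\star$ and conditioning on $\Omega$ depends on $\{\langle\bm u^i_{\Omega_k},\bm A_{\Omega_k,k}\rangle\}_k$, not on any fixed function of $\bm A\bm A^\top$). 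The heuristic ``$\sigma^2\to\sigma^2/p$'' is therefore unjustified as written.

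The paper's route avoids both your acknowledged difficulty (a) and this gap (b) simultaneously, and is worth comparing. It reduces to rank $1$, places a Gaussian prior on $\bm v^\star$, and works directly with the KL divergence rather than $\chi^2$: conditioned on $\Omega$, each column of the observation is a zero-mean Gaussian with rank-one spiked covariance $\sigma^2\bm I+\tfrac{1}{d_2}\bm u^i_{\Omega_k}\bm u^{i\top}_{\Omega_k}$, and the KL between two such laws has a short closed form. The crucial trick is the packing itself: each $\bm u^i$ is constant on the second half of its coordinates and $\pm\delta/\sqrt{d_1/2}$ on the first half, which forces $\|\bm u^i_{\Omega_k}\|_2=\|\bm u^j_{\Omega_k}\|_2$ for every $k$ and every realization of $\Omega$, collapsing the KL to a single ratio that can be bounded in two different ways. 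One bound gives $\mathrm{KL}\lesssim\delta^2 p/\sigma^2$ and the other $\mathrm{KL}\lesssim\delta^2 p^2/(\sigma^4 d_2)$; taking $\delta$ accordingly yields both the linear and quadratic terms from a \emph{single} construction, with the $p$-dependence entering naturally through $\mathbb E_\Omega[\cdot]$ (via Jensen, $\mathrm{KL}(\mathbb E_\Omega P^i_\Omega\|\mathbb E_\Omega P^j_\Omega)\le\mathbb E_\Omega\,\mathrm{KL}(P^i_\Omega\|P^j_\Omega)$) rather than through any ad hoc substitution. Your separate linear-term argument is then unnecessary, and no exponential-moment estimate for indefinite quadratics is ever needed.
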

If we again consider the case where $r,\kappa,\mu \asymp 1$, then the above lower bounds \eqref{eq:minimax-lower-bounds} match the noise effect terms in Theorem~\ref{thm:U_loss} (or equivalently, \eqref{eq:noise-effect-simple}) up to logarithmic factors. This unveils a fundamental reason why the linear and the quadratic terms in \eqref{eq:noise-effect-simple} are both essential in determining the estimation risk.

Another information-theoretic limit that concerns only the influence of subsampling is supplied as follows; the proof is postponed to Appendix~\ref{sec:proof-minimax-lower-bounds-sampling}. 
\begin{theorem}
\label{thm:lower-bounds-bipartite}
Suppose  $d_1\leq d_2$ and $p<\frac{1-\epsilon}{\sqrt{d_1d_2}}$ for any small constant $0<\epsilon <1$. With probability approaching one, there exist unit vectors $\bm{u}^{\star}, \widetilde{\bm{u}}^{\star} \in \mathbb{R}^{d_1}$ and $\bm{v}^{\star}, \widetilde{\bm{v}}^{\star}\in \mathbb{R}^{d_2}$ such that
\begin{itemize}
\item 
	$\min \|  \bm{u}^{\star} \pm \widetilde{\bm{u}}^{\star} \|_2  \asymp 1$ and $\|  \bm{u}^{\star}\bm{v}^{\star\top} - \widetilde{\bm{u}}^{\star} \widetilde{\bm{v}}^{\star\top} \|_{\mathrm{F}}  \asymp 1$;
		
\item one cannot distinguish  $\bm{u}^{\star}\bm{v}^{\star\top}$ and $\widetilde{\bm{u}}^{\star}\widetilde{\bm{v}}^{\star\top}$ from the entries in $\Omega$, i.e.~$\mathcal{P}_{\Omega}( \bm{u}^{\star}\bm{v}^{\star\top} ) = \mathcal{P}_{\Omega}( \widetilde{\bm{u}}^{\star} \widetilde{\bm{v}}^{\star\top} ) $.   

\end{itemize}

\end{theorem}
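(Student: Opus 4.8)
The plan is to exploit the fact that the postulated sampling rate $p<\frac{1-\epsilon}{\sqrt{d_1d_2}}$ is so small that a \emph{constant} fraction of the $d_2$ columns of the (zero-padded) data matrix receive no sample at all; one can then hide an arbitrary rank-one discrepancy entirely inside those empty columns, so that it is invisible to $\mathcal{P}_{\Omega}$.

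First I would establish the following combinatorial fact: with probability approaching one, the number of \emph{empty columns} --- indices $j\in[d_2]$ with $(i,j)\notin\Omega$ for every $i\in[d_1]$ --- is at least $\frac{\epsilon}{2}d_2$. The key (elementary) observation is that, since $d_1\le d_2$, the hypothesis $p<\frac{1-\epsilon}{\sqrt{d_1d_2}}$ forces $pd_1<1-\epsilon$; hence each column is empty with probability $(1-p)^{d_1}\ge 1-pd_1>\epsilon$. Because the entries are included in $\Omega$ independently, the indicators $\{$column $j$ is empty$\}_{j\in[d_2]}$ are mutually independent, so the count of empty columns stochastically dominates a $\mathrm{Binomial}(d_2,\epsilon)$ random variable, and a Chernoff/Hoeffding bound delivers the claim with probability $1-e^{-cd_2}\to 1$. (Intuitively this is unavoidable: the expected number of observed entries $pd_1d_2$ is below $\sqrt{d_1d_2}\le d_2$, i.e.\ there are fewer observations than columns.)

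Next, on the above event, fix any set $E\subseteq[d_2]$ of empty columns with $m:=|E|=\lfloor\frac{\epsilon}{2}d_2\rfloor$ (which exceeds $1$ once $d_2$ is large enough), and take
\[
\bm v^{\star}=\widetilde{\bm v}^{\star}:=\frac{1}{\sqrt m}\sum_{j\in E}\bm e_j\in\mathbb{R}^{d_2},
\]
together with any two orthonormal vectors $\bm u^{\star},\widetilde{\bm u}^{\star}\in\mathbb{R}^{d_1}$ (these exist because $d_1\ge 2$; one may choose them with flat entries to keep the instance incoherent), so that all four vectors have unit norm. Both $\bm u^{\star}\bm v^{\star\top}$ and $\widetilde{\bm u}^{\star}\widetilde{\bm v}^{\star\top}$ are supported on $[d_1]\times E$, and since every column of $E$ is empty we have $\Omega\cap([d_1]\times E)=\emptyset$, whence $\mathcal{P}_{\Omega}(\bm u^{\star}\bm v^{\star\top})=\bm 0=\mathcal{P}_{\Omega}(\widetilde{\bm u}^{\star}\widetilde{\bm v}^{\star\top})$ --- the two matrices produce identical observations. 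Orthonormality gives $\min_{\pm}\|\bm u^{\star}\pm\widetilde{\bm u}^{\star}\|_2=\sqrt2\asymp1$, and since $\bm v^{\star}=\widetilde{\bm v}^{\star}$ has unit norm, $\|\bm u^{\star}\bm v^{\star\top}-\widetilde{\bm u}^{\star}\widetilde{\bm v}^{\star\top}\|_{\mathrm F}=\|(\bm u^{\star}-\widetilde{\bm u}^{\star})\bm v^{\star\top}\|_{\mathrm F}=\|\bm u^{\star}-\widetilde{\bm u}^{\star}\|_2=\sqrt2\asymp1$, which verifies both bullet points. Choosing $\bm u^{\star},\widetilde{\bm u}^{\star}$ flat moreover makes $\mu_0,\mu_1,\mu_2\asymp1$ and $\kappa=r=1$, so the hard instance is incoherent and well-conditioned --- precisely what is needed for this to be a genuine matching lower bound for the sample-complexity term $\frac{\mu\kappa^2 r\log d}{\sqrt{d_1d_2}\,p}$ in Theorem~\ref{thm:U_loss}.

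The argument has no serious obstacle; the single point requiring care is the inequality $pd_1<1$ buried in the hypothesis (via $d_1\le d_2$), since it is exactly what upgrades ``there is some empty column'' to ``a constant fraction of the columns are empty'', thereby leaving enough room to build an \emph{incoherent} rank-one perturbation rather than a degenerate one. Everything else is a routine concentration estimate plus an explicit, one-line construction.
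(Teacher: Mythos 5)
Your proof is correct but takes a genuinely different and more elementary route than the paper's. The paper invokes a theorem of Johansson on component sizes in the sparse random bipartite graph $\mathcal{G}(d_1,d_2,p)$: below the threshold $p<(1-\epsilon)/\sqrt{d_1d_2}$, every connected component is small, and the paper constructs $\widetilde{\bm{u}}^{\star},\widetilde{\bm{v}}^{\star}$ by flipping the signs of flat Rademacher vectors $\bm{u}^{\star},\bm{v}^{\star}$ independently on each connected component; the product $u_i^{\star}v_j^{\star}$ on any observed entry $(i,j)\in\Omega$ is invariant under such a flip (both endpoints of an edge lie in the same component), yielding indistinguishability, while the independent component-wise sign choices produce $\Theta(1)$ separation with high probability. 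You instead observe that $p<(1-\epsilon)/\sqrt{d_1d_2}$ together with $d_1\le d_2$ forces $pd_1<1-\epsilon$, so by Bernoulli's inequality and a Chernoff bound a constant fraction of the columns is completely unobserved; supporting both rank-one matrices on those empty columns and taking $\bm{u}^{\star}\perp\widetilde{\bm{u}}^{\star}$ then makes indistinguishability tautological and gives a separation of exactly $\sqrt{2}$. Your route is shorter and fully self-contained, sidestepping the external random-graph theorem; the paper's construction has the mild advantage that both singular vectors are fully dense Rademacher vectors (so $\mu_2=1$), whereas your $\bm{v}^{\star}$ is supported on roughly an $\epsilon/2$ fraction of the columns (so $\mu_2\asymp 1/\epsilon$, still a constant since $\epsilon$ is fixed). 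Either construction is incoherent, well-conditioned, and rank one, and both suffice for the theorem exactly as stated.
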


In words, Theorem~\ref{thm:lower-bounds-bipartite} asserts that one cannot hope to achieve consistent subspace estimation (in the sense of \eqref{eq:consistent-estimation-defn}) at all, as soon as the sampling rate $p$ falls below the threshold $1/\sqrt{d_1d_2}$. Putting Theorem~\ref{thm:minimax-lower-bound}-\ref{thm:lower-bounds-bipartite} together reveals that: consistent estimation can by no means be guaranteed unless  
\begin{align}
p\gtrsim\frac{1}{\sqrt{d_{1}d_{2}}}\qquad\text{and}\qquad\frac{\sigma^{2}}{\sigma_{r}^{\star2}}\lesssim\frac{p}{\sqrt{d_{1}d_{2}}} ,
\end{align}
which agrees with our theoretical guarantees \eqref{eq:consistent-estimation-spectral} (up to some logarithmic term). As a result, our minimax lower bounds confirm the near optimality of Algorithm~\ref{alg:spectral} in enabling consistent estimation.

On the other hand, 
it is widely recognized that spectral methods are typically unable to
achieve exact recovery or optimal estimation accuracy in the presence 
of missing data, even in the balanced case with $d_1= d_2$.  For instance, if there is no noise, namely $\sigma=0$, 
the spectral methods fail to achieve perfect recovery as long as $p<1$ (basically the first two terms of \eqref{eq:defn-Err-UB} do not vanish) \cite{keshavan2010matrix}, whereas exact recovery might sometimes be feasible with the aid of optimization-based approaches \cite{candes2009exact}. 
More often than not, spectral methods are employed to produce a rough initial 
estimate that outperforms the random guess, which can then
be refined via other algorithms (e.g.~nonconvex optimization algorithms
like gradient descent and alternating minimization \cite{keshavan2010matrix,sun2016guaranteed,ma2017implicit,cai2019nonconvex}).

\section{Consequences for concrete applications\label{subsec:examples}}

We showcase the consequence of Theorem \ref{thm:U_loss} in three concrete applications previously
introduced in Section \ref{sec:Introduction} in relatively simple settings.  Rather than striving for full generality, our purpose is to highlight the broad applicability of our main results.


\subsection{Noisy tensor completion \label{subsec:tensor-completion}}

\paragraph{Problem settings.} We begin by considering the problem of symmetric tensor completion.
Consider an unknown order-3 tensor 
\[
\bm{T}^{\star}=\sum_{s=1}^{r}\bm{w}_{s}^{\star}\otimes\bm{w}_{s}^{\star}\otimes\bm{w}_{s}^{\star}:=\sum_{s=1}^{r}\left(\bm{w}_{s}^{\star}\right)^{\otimes3}\in\mathbb{R}^{d\times d\times d},
\]
with canonical polyadic (CP) rank $r$. The goal is to estimate the subspace
spanned by $\left\{ \bm{w}_{s}^{\star}\right\} _{s=1}^{r}$, on the
basis of the noisy tensor $\bm{T}=[T_{i,j,k}]_{1\leq i,j,k\leq d}$
obeying
\begin{equation}
T_{i,j,k}=\begin{cases}
T_{i,j,k}^{\star}+N_{i,j,k}, & \left(i,j,k\right)\in\Omega,\\
0, & \left(i,j,k\right)\notin\Omega.
\end{cases}\label{eq:TC-observation}
\end{equation}
Here, $T_{i,j,k}$ is the observed entry in location $\left(i,j,k\right)$,
$N_{i,j,k}$ is the associated independent random noise satisfying Assumption
\ref{assumption:random-noise}, and $\Omega\subseteq\left[d\right]^{3}$
stands for a sampling set obtained via uniform random sampling with
sampling rate $p$ (namely, each entry is observed independently with
probability $p$).

\paragraph{Algorithm.} Observe that the mode-1 matricization of $\bm{T}^{\star}$ is given by\footnote{We let $\bm{a}\otimes\bm{b}:=\tiny\left[\begin{array}{c}
a_{1}\bm{b}\\
\vdots\\
a_{d}\bm{b}
\end{array}\right]$ represent a $d^{2}$-dimensional vector.}
\begin{equation}
	\bm{A}^{\star}=\sum_{s=1}^{r}\bm{w}_{s}^{\star}\left(\bm{w}_{s}^{\star}\otimes\bm{w}_{s}^{\star}\right)^{\top}\in\mathbb{R}^{d\times d^{2}}, 
	\label{eq:A-unfold}
\end{equation}
indicating that the column subspace of $\bm{A}^{\star}$ is essentially the subspace spanned by the tensor factors $\{\bm{w}_s^{\star}\}_{s=1}^r$. Therefore, if we denote by $\bm{A}\in\mathbb{R}^{d\times d^{2}}$ the mode-1 matricization of $\bm{T}$, then we can invoke our general spectral method to estimate the column subspace of $\bm{A}^{\star}$ given $\bm{A}$. This procedure is summarized in Algorithm~\ref{alg:spectral-tc}. 
\begin{algorithm}[H]
\caption{The spectral method for tensor completion}
\label{alg:spectral-tc}
\begin{algorithmic}[1]
\STATE{{\bf Input:} sampling  set $\Omega$, observed entries $\left\{ T_{i, j, k} \mid \left( i, j, k \right) \in \Omega \right\}$,  sampling rate $p$, CP-rank $r$.}
\STATE{Let $\bm{A}\in\mathbb{R}^{d\times d^{2}}$ be the mode-1 matricization of the observed tensor $\bm{T}$ (see \eqref{eq:TC-observation}), namely, set $A_{i,\left(j-1\right)d+k}=T_{i,j,k}$ for each $\left(i,j,k\right)\in\left[d\right]^{3}$, and employ $\bm{A}$ as the input of Algorithm~\ref{alg:spectral}.}
\STATE{{\bf Output} $\bm{U} \in \mathbb{R}^{d\times r}$ returned by Algorithm~\ref{alg:spectral} as the subspace estimate.}
\end{algorithmic}
\end{algorithm}

\paragraph{Theoretical guarantees.} In order to provide theoretical support for Algorithm~\ref{alg:spectral-tc}, we introduce a few more notation. First, 
we introduce the following quantities
\begin{equation}
\kappa_{\mathsf{tc}}:= \lambda_{\max}^{\star} \, / \, \lambda_{\min}^{\star},\qquad\lambda_{\min}^{\star}:=\min_{1\leq i\leq r}\left\Vert \bm{w}_{i}^{\star}\right\Vert _{2}^{3},\qquad\lambda_{\max}^{\star}:=\max_{1\leq i\leq r}\left\Vert \bm{w}_{i}^{\star}\right\Vert _{2}^{3}.\label{eq:defn-TC-cond}
\end{equation}
Note that $\left\Vert \bm{w}_{i}^{\star}\right\Vert _{2}^{3}$ is precisely the Frobenius norm of the rank-$1$ tensor $\bm{w}_{i}^{\star \otimes 3}$ --- the $i$-th tensor component.  Informally, $\kappa_{\mathsf{tc}}$ captures the condition number of
the unknown tensor. Additionally, similar to matrix completion, we
introduce the following incoherence definitions  that enable efficient
tensor completion:

\begin{definition}[Incoherence] Define the incoherence parameters $\mu_3, \mu_4, \mu_5$ for the tensor $\bm{T}^{\star}$
and its tensor factors $\left\{ \bm{w}_{s}^{\star}\right\} _{s=1}^{r}$ as follows:
\begin{subequations}
\label{asmp:tensor}
\begin{align}
\mu_3 := \frac{d^3 \left\Vert \bm{T}^{\star}\right\Vert_{\infty}^2 }{ \left\Vert \bm{T}^{\star}\right\Vert _{\mathrm{F}}^2}, \qquad
\mu_4 := \max_{1\leq i \leq r}\frac{d \left\Vert\bm{w}_{i}^{\star}\right\Vert_{\infty}^2}{\left\Vert\bm{w}_{i}^{\star}\right\Vert_{2}^2}, \qquad
\mu_5 := \max_{1\leq i \neq j \leq r} \frac{d \left\langle \bm{w}_{i}^{\star},\bm{w}_{j}^{\star}\right\rangle^2}{\left\Vert \bm{w}_{i}^{\star}\right\Vert_{2}^2 \left\Vert \bm{w}_{j}^{\star}\right\Vert_{2}^2 }
\end{align}
\end{subequations}
\end{definition}

For notational convenience, we also set
\begin{equation}
\mu_{\mathsf{tc}}:=\max\left\{ \mu_{3,}\mu_{4}^{2}\right\} .\label{eq:defn-lambda-min-mu}
\end{equation}
Given that the tensor factors $\{\bm{w}_{s}^{\star}\}_{1\leq s\leq r}$
are in general not orthogonal to each other, we introduce the following
orthonormal matrix $\bm{U}^{\star}\in\mathbb{R}^{d\times r}$ to represent
the subspace spanned by $\{\bm{w}_{s}^{\star}\}_{1\leq s\leq r}$:
\begin{equation}
\bm{U}^{\star}:=\bm{W}^{\star}\big(\bm{W}^{\star\top}\bm{W}^{\star}\big)^{-1/2},\qquad\bm{W}^{\star}:=\left[\bm{w}_{1}^{\star},\cdots,\bm{w}_{r}^{\star}\right]\in\mathbb{R}^{d\times r}.\label{eq:defn-Ustar-TC}
\end{equation}
Note that the particular choice of $\bm{U}^{\star}$ in \eqref{eq:defn-Ustar-TC} is not pivotal, and can be replaced by any $d_1\times r$ orthonormal matrix that spans the same column space as  $\bm{W}^{\star}$.  With these in place, we are now ready to quantify the estimation error of this spectral
algorithm. The proof is deferred to Appendix~\ref{subsec:pf:tc}.

\begin{corollary}[Symmetric tensor completion]\label{cor:tensor-completion}
Consider the above
tensor completion model. There exist some universal constants $c_{0},c_{1},c_{2}>0$
such that if
\[
	p\geq c_{0}\max\left\{ \frac{\mu_{\mathsf{tc}}\kappa_{\mathsf{tc}}^{4}r\log^{2}d}{d^{3/2}},\frac{\mu_{\mathsf{tc}}\kappa_{\mathsf{tc}}^{8}r\log^{2}d}{d^{2}}\right\} ,\quad \,\,\frac{\sigma}{\lambda_{\min}^\star}\leq c_{1}\min\left\{ \frac{\sqrt{p}}{\kappa_{\mathsf{tc}}d^{3/4} \sqrt{\log d}},\frac{1}{\kappa_{\mathsf{tc}}^{3}}\sqrt{\frac{p}{d\log d}}\right\} \]
 
\[ 
	\text{and} \quad r\leq c_{2}\min\left\{ \frac{d}{\kappa_{\mathsf{tc}}^{4}\mu_{4}},\frac{1}{\kappa_{\mathsf{tc}}^2}\sqrt{\frac{d}{\mu_{5}}}\right\} ,
\]
then with probability exceeding $1-O(d^{-10})$, Algorithm \ref{alg:spectral-tc} yields \begin{subequations}
\begin{align}
\left\Vert \bm{U}\bm{R}-\bm{U}^{\star}\right\Vert  & \lesssim \mathcal{E}_{\mathsf{tc}},\label{claim:U_op_loss_TC}\\
\left\Vert \bm{U}\bm{R}-\bm{U}^{\star}\right\Vert _{2,\infty} & \lesssim\kappa_{\mathsf{tc}}^{2}\sqrt{\frac{\mu_{\mathsf{tc}}r}{d}}\cdot \mathcal{E}_{\mathsf{tc}},\label{claim:U_2inf_loss_TC}
\end{align}
\end{subequations}where $\bm{R}\,:=\,\underset{\bm{Q}\in\mathcal{O}^{r\times r}}{\arg\min}\text{ }\left\Vert \bm{U}\bm{Q}-\bm{U}^{\star}\right\Vert _{\mathrm{F}}$ and
\begin{equation}
\mathcal{E}_{\mathsf{tc}}:=\frac{\mu_{\mathsf{tc}}\kappa_{\mathsf{tc}}^{2}r\log d}{d^{3/2}p}+\sqrt{\frac{\mu_{\mathsf{tc}}\kappa_{\mathsf{tc}}^{4}r\log d}{d^{2}p}}+\frac{\sigma^{2}}{\lambda_{\min}^{\star2}}\frac{d^{3/2}\log d}{p}+\frac{\sigma\kappa_{\mathsf{tc}}}{\lambda_{\min}^{\star}}\sqrt{\frac{d\log d}{p}}+\frac{\mu_{4}\kappa_{\mathsf{tc}}^{2}r}{d}.\label{def:err-TC}
\end{equation}
\end{corollary}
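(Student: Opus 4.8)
The plan is to derive the corollary directly from Theorem~\ref{thm:U_loss}, applied with the mode-1 matricization $\bm{A}^{\star}=\sum_{s=1}^{r}\bm{w}_{s}^{\star}(\bm{w}_{s}^{\star}\otimes\bm{w}_{s}^{\star})^{\top}\in\mathbb{R}^{d\times d^{2}}$ playing the role of the unknown low-rank matrix, so that $d_{1}=d$ and $d_{2}=d^{2}$ (hence $\sqrt{d_{1}d_{2}}=d^{3/2}$ and $\sqrt[4]{d_{1}d_{2}}=d^{3/4}$). First I would note that matricization is merely a reindexing of entries, so uniform sampling of $\bm{T}$ at rate $p$ becomes i.i.d.\ Bernoulli$(p)$ sampling of the entries of $\bm{A}$, and the matricized noise $\bm{N}$ is a $d\times d^{2}$ matrix with independent entries inheriting the zero-mean, variance, and magnitude properties of Assumption~\ref{assumption:random-noise} (now with ambient dimension $\max\{d_{1},d_{2}\}=d^{2}$, under which \eqref{eq:assumption-noise-spike} reads $R^{2}/\sigma^{2}\lesssim pd^{3/2}/\log d$). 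Consequently Theorem~\ref{thm:U_loss} applies, and the remaining work is a change of variables: translate $\sigma_{r}^{\star},\kappa,\mu,\mu_{1}$ into their tensor-level counterparts $\lambda_{\min}^{\star},\kappa_{\mathsf{tc}},\mu_{\mathsf{tc}},\mu_{4}$, and check that the hypotheses and error bound of Theorem~\ref{thm:U_loss} turn into exactly those claimed in the corollary.

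The central computation I would carry out first introduces $\bm{B}:=\bm{W}^{\star\top}\bm{W}^{\star}$, the Gram matrix of the tensor factors, and uses the identity $\bm{A}^{\star}\bm{A}^{\star\top}=\bm{W}^{\star}(\bm{B}\circ\bm{B})\bm{W}^{\star\top}$, where $\circ$ denotes the entrywise (Schur) product. The matrix $\bm{B}\circ\bm{B}$ has diagonal entries $\|\bm{w}_{s}^{\star}\|_{2}^{4}$ and off-diagonal entries $\langle\bm{w}_{s}^{\star},\bm{w}_{t}^{\star}\rangle^{2}\le(\mu_{5}/d)\|\bm{w}_{s}^{\star}\|_{2}^{2}\|\bm{w}_{t}^{\star}\|_{2}^{2}$ by the $\mu_{5}$-incoherence; combining this with the rank bound $r\lesssim\kappa_{\mathsf{tc}}^{-2}\sqrt{d/\mu_{5}}$ and a Gershgorin / Frobenius-norm estimate shows that $\bm{B}$ and $\bm{B}\circ\bm{B}$ are spectrally within a constant factor of $\mathsf{diag}(\|\bm{w}_{s}^{\star}\|_{2}^{2})$ and $\mathsf{diag}(\|\bm{w}_{s}^{\star}\|_{2}^{4})$, respectively. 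Weyl's inequality then gives that the $i$-th largest singular value of $\bm{A}^{\star}$ is comparable to the $i$-th largest element of $\{\|\bm{w}_{s}^{\star}\|_{2}^{3}\}_{s=1}^{r}$, so $\sigma_{r}^{\star}\asymp\lambda_{\min}^{\star}$, $\sigma_{1}^{\star}\asymp\lambda_{\max}^{\star}$ and $\kappa\asymp\kappa_{\mathsf{tc}}$; the same calculation confirms that $\bm{U}^{\star}$ from \eqref{eq:defn-Ustar-TC} spans $\mathrm{col}(\bm{W}^{\star})=\mathrm{col}(\bm{A}^{\star})$, hence coincides with the top-$r$ left singular matrix of $\bm{A}^{\star}$ up to an $\mathcal{O}^{r\times r}$ rotation, which is all Theorem~\ref{thm:U_loss} requires. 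I expect this step --- quantifying the near-diagonality of $\bm{B}$ and $\bm{B}\circ\bm{B}$ despite the non-orthogonality of the factors $\{\bm{w}_{s}^{\star}\}$ --- to be the main obstacle; everything else is bookkeeping.

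Next I would handle the incoherence. Since matricization preserves individual entries and the Frobenius norm, $\mu_{0}=\mu_{3}$ exactly. For $\mu_{1}$, writing $\bm{U}^{\star}=\bm{W}^{\star}\bm{B}^{-1/2}$ and using $\bm{B}^{-1}\preceq(1+o(1))\,\mathsf{diag}(\|\bm{w}_{s}^{\star}\|_{2}^{-2})$ from the previous step, I obtain $\|\bm{U}^{\star\top}\bm{e}_{i}\|_{2}^{2}\lesssim\sum_{s}(w_{s,i}^{\star})^{2}/\|\bm{w}_{s}^{\star}\|_{2}^{2}\le r\mu_{4}/d$, so $\mu_{1}\lesssim\mu_{4}$. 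Since the row space of $\bm{A}^{\star}$ is spanned by $\{\bm{w}_{s}^{\star}\otimes\bm{w}_{s}^{\star}\}$ with Gram matrix $\bm{B}\circ\bm{B}$, the analogous estimate gives $\|\bm{V}^{\star\top}\bm{e}_{(j,k)}\|_{2}^{2}\lesssim\sum_{s}(w_{s,j}^{\star})^{2}(w_{s,k}^{\star})^{2}/\|\bm{w}_{s}^{\star}\|_{2}^{4}\le r\mu_{4}^{2}/d^{2}$, so $\mu_{2}\lesssim\mu_{4}^{2}$; as $\mu_{4}\ge1$, this yields $\mu=\max\{\mu_{0},\mu_{1},\mu_{2}\}\lesssim\max\{\mu_{3},\mu_{4}^{2}\}=\mu_{\mathsf{tc}}$.

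Finally, substituting $d_{1}=d$, $d_{2}=d^{2}$, $\sigma_{r}^{\star}\asymp\lambda_{\min}^{\star}$, $\kappa\asymp\kappa_{\mathsf{tc}}$, $\mu\lesssim\mu_{\mathsf{tc}}$ and $\mu_{1}\lesssim\mu_{4}$ into the hypotheses \eqref{asmp} of Theorem~\ref{thm:U_loss} reproduces the sample-size and noise conditions of the corollary (up to absolute constants) together with $r\lesssim d/(\mu_{4}\kappa_{\mathsf{tc}}^{4})$, while the remaining hypothesis $r\lesssim\kappa_{\mathsf{tc}}^{-2}\sqrt{d/\mu_{5}}$ is precisely what was used above to validate the parameter comparisons. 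The same substitutions turn the five summands of $\mathcal{E}_{\mathsf{general}}$ in \eqref{eq:defn-Err-UB} into the five summands of $\mathcal{E}_{\mathsf{tc}}$ in \eqref{def:err-TC}, and turn the prefactor $\kappa^{2}\sqrt{\mu r/d_{1}}$ in \eqref{claim:U_2inf_loss} into $\asymp\kappa_{\mathsf{tc}}^{2}\sqrt{\mu_{\mathsf{tc}}r/d}$; invoking \eqref{claim:U_op_loss}--\eqref{claim:U_2inf_loss} then delivers \eqref{claim:U_op_loss_TC}--\eqref{claim:U_2inf_loss_TC}.
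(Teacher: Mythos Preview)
Your proposal is correct and follows essentially the same route as the paper: reduce to Theorem~\ref{thm:U_loss} with $d_{1}=d$, $d_{2}=d^{2}$, then compare $\sigma_{i}^{\star}$ to $\|\bm{w}_{(i)}^{\star}\|_{2}^{3}$ via the near-orthogonality of the factors (using the $\mu_{5}$ hypothesis and the rank constraint $r\lesssim\kappa_{\mathsf{tc}}^{-2}\sqrt{d/\mu_{5}}$), bound $\mu_{0},\mu_{1},\mu_{2}$ by $\mu_{3},\mu_{4},\mu_{4}^{2}$, and substitute. The only cosmetic difference is your use of the Schur-product identity $\bm{A}^{\star}\bm{A}^{\star\top}=\bm{W}^{\star}(\bm{B}\circ\bm{B})\bm{W}^{\star\top}$ with $\bm{B}=\bm{W}^{\star\top}\bm{W}^{\star}$, whereas the paper first normalizes the factors and writes $\overline{\bm{W}}^{\star\top}\overline{\bm{W}}^{\star}=\bm{I}_{r}+\bm{C}$, $\widetilde{\bm{W}}^{\star\top}\widetilde{\bm{W}}^{\star}=\bm{I}_{r}+\widetilde{\bm{C}}$ with $\|\bm{C}\|\le r\sqrt{\mu_{5}/d}$ and $\|\widetilde{\bm{C}}\|\le r\mu_{5}/d$; these are equivalent formulations of the same perturbation estimate.
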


As discussed in several related work (e.g.~\cite{xia2017statistically,jain2014provable,montanari2018spectral,xia2017polynomial,cai2019nonconvex}),
once we obtain reliable estimates of the subspace spanned by the tensor
factors, we can further exploit the tensor structure to estimate
the unknown tensor. Indeed, in many tensor completion algorithms,
subspace estimation serves as a crucial initial step for tensor completion.
Moreover, while prior works only provide $\ell_{2}$ estimation error
bounds, Corollary~\ref{cor:tensor-completion} further delivers $\ell_{2,\infty}$ statistical
guarantees, which reflect
a stronger sense of statistical accuracy.
We note that \cite[Theorem~4]{xia2019sup} derived an appealing $\ell_{2,\infty}$ statistical error bound for an algorithm called HOSVD, under the tensor de-noising setting. In comparison to the Gaussian noise considered therein, our results accommodate the case with missing data and possibly spiky noise.


%

\paragraph{Implications.} In what follows, we discuss the sample size and the signal-to-noise (SNR) required for achieving consistent tensor estimation (namely, obtaining an $o \, (1)$ relative estimation error). For convenience of presentation, we again focus on the low-rank, incoherent, and well-conditioned case with $r,\mu,\kappa_{\mathsf{tc}}\asymp1$. In this case, our
results in Corollary~\ref{cor:tensor-completion} indicate that
\begin{align}
\underset{\bm{Q}\in\mathcal{O}^{r\times r}}{\min}\left\Vert \bm{U}\bm{Q}-\bm{U}^{\star}\right\Vert  & =o \, (1)\qquad\text{and}\qquad\underset{\bm{Q}\in\mathcal{O}^{r\times r}}{\min}\left\Vert \bm{U}\bm{Q}-\bm{U}^{\star}\right\Vert _{2,\infty}=o \, \big(1/\sqrt{d}\big)
\end{align}
with high probability, provided that the sample size and the noise
satisfy
\begin{equation}
p\gtrsim \frac{\log^{2}d}{d^{3/2}}\qquad\text{and}\qquad\frac{\sigma}{\lambda_{\min}^\star} = o\left( \sqrt{\frac{p}{d^{3/2}\log d}} \right)  .\label{eq:sample-size-noise-TC}
\end{equation}
Several remarks are in order.
\begin{itemize}

\item \emph{Sample complexity.} It is widely conjectured that the sample
complexity $pd^{3}$ required to reconstruct a order-3 tensor in polynomial
time --- even in the noiseless case --- is at least $d^{3/2}$ (or
equivalently, $p\gtrsim1/d^{3/2}$) \cite{barak2016noisy,xia2017statistically,montanari2018spectral}.
Therefore, our theory reveals that spectral methods achieve consistent
estimation (w.r.t.~both $\|\cdot\|$ and $\|\cdot\|_{2,\infty}$),
as long as the sample size is slightly above the (conjectured) computational
limit. Moreover, it is easily seen that the bias incurred by deleting
the diagonal is much smaller than the error due to missing data, which
justifies the rationale that diagonal deletion does not harm the performance
by much.

\item \emph{Noise size.} We now comment on the noise size requirement. It
is easily seen that the maximum magnitude of the entries of $\bm{T}^{\star}$
in this case is $\|\bm{T}^{\star}\|_{\infty}\asymp\lambda_{\max}^\star/d^{3/2}$.
As a result, the noise size condition in (\ref{eq:sample-size-noise-TC})
is equivalent to
\[
\frac{\sigma}{\|\bm{T}^{\star}\|_{\infty}}\lesssim \sqrt{pd^{3/2}}.
\]
Taken together with our sample size requirement $p\gtrsim\frac{\log^{2}d}{d^{3/2}}$,
this condition allows the noise magnitude in each observed entry to
significantly exceed the size of the corresponding entry, which covers
a broad range of scenarios of practical interest. In addition, in
the fully-observed case (i.e.~$p=1$) with i.i.d.~Gaussian noise, the authors in \cite{zhang2018tensor}
showed that the noise size condition (\ref{eq:sample-size-noise-TC}) --- up to some log factor --- is
necessary for any polynomial-time algorithm to achieve consistent estimation, provided that a certain hypergraphic planted clique conjecture holds.   



\end{itemize}

Finally, we remark that in the fully-observed case (i.e.~$p=1$) with i.i.d.~Gaussian noise, 
it can be seen from \cite[Theorem~1]{zhang2018tensor} that (\ref{claim:U_op_loss_TC})
is suboptimal; in fact, the minimax risk  consists only of
the linear term in $\sigma$ (namely, $\frac{\sigma}{\lambda_{\min}^{\star}}\sqrt{d}$, if we omit log factors and assume $r,\mu_{\mathsf{tc}}, \mu_5, \kappa \asymp 1$).
This is a typical drawback of the spectral method for tensor estimation, since it falls short of exploiting the low-complexity structure in the row subspace. However, the spectral estimate offers a reasonably good initial estimate for this problem, and one can often employ optimization-based iterative refinement paradigms (like gradient descent \cite{cai2019nonconvex}) to obtain minimax optimal estimates.

\subsection{Covariance estimation\,/\,principal component analysis with missing data \label{subsec:Covariance-estimation}}

\paragraph{Model and algorithm.} Next, we study covariance
estimation with missing data, as previously introduced in Section~\ref{sec:Introduction}. For concreteness, imagine a set of independent sample vectors obeying
\[
\bm{x}_{i}=\bm{B}^{\star}\bm{f}_{i}^{\star}+\bm{\eta}_{i}\in\mathbb{R}^{d},\qquad\bm{f}_{i}^{\star}\overset{\mathrm{i.i.d.}}{\sim}\mathcal{N}(\bm{0},\bm{I}_{r}),\qquad1\leq i\leq n .
\]
Here $\bm{B}^{\star}\in\mathbb{R}^{d\times r}$ encodes the $r$-dimensional principal
subspace underlying the data (sometimes referred to as the factor
loading matrix in factor models \cite{lawley1962factor,fan2018robust}),
$\bm{f}_{i}^{\star}\sim\mathcal{N}(\bm{0},\bm{I}_{r})$ represents
some random coefficients, and the noise vector $\bm{\eta}_{i}=[\eta_{i,j}]_{1\leq j\leq d}$
consists of independent Gaussian components\footnote{Here, we assume $\bm{f}_i^\star$ and $\bm{\eta}_i$ to be Gaussian for simplicity of presentation. The results in this subsection continue to hold if they are sub-Gaussian random vectors.} obeying
\[
\mathbb{E}[\eta_{i,j}]=0\qquad\text{and}\qquad\mathsf{Var}\left[\eta_{i,j}\right]\leq\sigma^{2}.
\]
%
What we observe is
a partial set of entries of $\bm{x}_{i}=[x_{i,j}]_{1\leq j\leq d}$,
namely, we only observe
$
 x_{i,j}
$
for any $(i,j)\in\Omega$,
where $\Omega$ is  obtained by random sampling with rate
$p$.  The goal is to estimate the subspace spanned by
$\bm{B}^{\star}$, or even $\bm{B}^{\star}\bm{B}^{\star\top}$.

If we write $\bm{F}^{\star}=\left[\bm{f}_{1,}^{\star}\cdots,\bm{f}_{n}^{\star}\right]\in\mathbb{R}^{r\times n}$
and $\bm{N}=\left[\bm{\eta}_{1},\cdots,\bm{\eta}_{n}\right]\in\mathbb{R}^{d\times n}$,
then it boils down to estimating the column space of $\bm{A}^{\star}:=\bm{B}^{\star}\bm{F}^{\star}$ from
the data $	\mathcal{P}_{\Omega}(\bm{X}) =\mathcal{P}_{\Omega}\big( \bm{B}^{\star}\bm{F}^{\star}+\bm{N} \big)$. 
Our spectral method for covariance estimation is  summarized in Algorithm~\ref{alg:spectral-ce}.
\begin{algorithm}
\caption{The spectral method for covariance estimation}
\label{alg:spectral-ce}
\begin{algorithmic}[1]
\STATE{{\bf Input:} sampling  set $\Omega$, observed entries $\left\{ X_{i, j} \mid \left( i, j \right) \in \Omega \right\}$,  sampling rate $p$, rank $r$.}
\STATE{Let $\bm{A}=\mathcal{P}_{\Omega}(\bm{X})\in\mathbb{R}^{d\times n}$ with $\bm{X}=[\bm{x}_{1,}\cdots,\bm{x}_{n}]$,  and use $\bm{A}$ as the input of Algorithm \ref{alg:spectral}. Let $\bm{U}\in\mathbb{R}^{d\times r}$ and $\bm{\Sigma}\in\mathbb{R}^{r\times r}$ be the estimates returned by Algorithm \ref{alg:spectral}, and set $\bm{B} := \frac{1}{\sqrt{n}}\bm{U}\bm{\Sigma}$.}
\STATE{{\bf Output} $\bm{U}$ as the subspace estimate and $\bm{S}:=\bm{B}\bm{B}^{\top}$ as the covariance estimate.}
\end{algorithmic}
\end{algorithm}

\paragraph{Theoretical guarantees.} 
In order to present our theory, we make a few more definitions.  Without loss of generality, we shall define
\begin{equation}
\bm{S}^{\star}:=\bm{B}^{\star}\bm{B}^{\star\top}=\bm{U}^{\star}\bm{\Lambda}^{\star}\bm{U}^{\star\top} \qquad\text{and}\qquad \bm{B}^{\star}=\bm{U}^{\star}\bm{\Lambda}^{\star1/2} , \label{eq:defn-B-U}
\end{equation}
where $\bm{U}^{\star}\in\mathbb{R}^{d\times r}$ consists of orthonormal
columns, and $\bm{\Lambda}^{\star}=\mathsf{diag}(\lambda_{1}^{\star},\cdots,\lambda_{r}^{\star})\in\mathbb{R}^{r\times r}$
is a diagonal matrix with $\lambda_1^{\star}\geq \cdots \geq \lambda_r^{\star}\geq 0$. We also define the condition number and the incoherence parameter as 
\begin{align}
	\kappa_{\mathsf{ce}} & :=\lambda_{1}^{\star}\,/\,\lambda_{r}^{\star} \qquad \text{and} \qquad \mu_{\mathsf{ce}} := \frac{d}{r} \left\Vert \bm{U}^{\star}\right\Vert _{2,\infty}^2.
\label{def:kappa_ce}
\end{align}
%
%

We are now positioned to derive statistical estimation guarantees
using our general theorem. The following result is a consequence of
Theorem \ref{thm:U_loss}; the proof is postponed to Appendix~\ref{subsec:pf:cov}.

\begin{corollary}[Covariance estimation]\label{cor:cov}Consider
the above covariance estimation model with missing data. There exist
universal constants $c_{0},c_{1}>0$ such that if $r\leq c_{1}\frac{d}{\mu_{\mathsf{ce}}\kappa_{\mathsf{ce}}^{2}}$ and
\begin{align}
\label{eq:sample-size-assumption-ce}
	n\geq c_{0}\max\left\{ \frac{\mu_{\mathsf{ce}}^{2}\kappa_{\mathsf{ce}}^{6}r^{2}\log^{6}\left(n+d\right)}{dp^{2}},\frac{\mu_{\mathsf{ce}}\kappa_{\mathsf{ce}}^{5}r\log^{3}\left(n+d\right)}{p}, \frac{\sigma^{4}}{\lambda_{r}^{\star2}}\frac{\kappa_{\mathsf{ce}}^{2}d\log^{2}\left(n+d\right)}{p^{2}},\frac{\sigma^{2}}{\lambda_{r}^{\star}}\frac{\kappa_{\mathsf{ce}}^{3}d\log\left(n+d\right)}{p}\right\} , 
\end{align}
then with probability exceeding $1-O((n+d)^{-10})$, Algorithm \ref{alg:spectral-ce} yields 
\begin{subequations}
\begin{align}
\left\Vert \bm{U}\bm{R}-\bm{U}^{\star}\right\Vert  & \lesssim\mathcal{E}_{\mathsf{ce}},\label{claim:U_op_loss_cov}\\
\left\Vert \bm{U}\bm{R}-\bm{U}^{\star}\right\Vert _{2,\infty} & \lesssim\kappa_{\mathsf{ce}}^{3/2}\sqrt{\frac{\mu_{\mathsf{ce}}r\log \left(n+d\right)}{d}}\cdot\mathcal{E}_{\mathsf{ce}},\label{claim:U_2inf_loss_cov}\\
\left\Vert \bm{S}-\bm{S}^{\star}\right\Vert  & \lesssim\kappa_{\mathsf{ce}}\lambda_{1}^{\star}\cdot\mathcal{E}_{\mathsf{ce}},\label{claim:S_op_loss}\\
\left\Vert \bm{S}-\bm{S}^{\star}\right\Vert _{\infty} & \lesssim\frac{\kappa_{\mathsf{ce}}\mu_{\mathsf{ce}}r\log \left(n+d\right)}{d}\lambda_{1}^{\star}\cdot\mathcal{E}_{\mathsf{ce}}.\label{claim:S_inf_loss}
\end{align}
\end{subequations}
Here, $\bm{R}\,:=\,\underset{\bm{Q}\in\mathcal{O}^{r\times r}}{\arg\min}\text{ }\left\Vert \bm{U}\bm{Q}-\bm{U}^{\star}\right\Vert _{\mathrm{F}}$ and
\begin{equation}
\mathcal{E}_{\mathsf{ce}}:=\frac{\mu_{\mathsf{ce}}\kappa^2_{\mathsf{ce}}r\log^{2}\left(n+d\right)}{\sqrt{dn}\,p}+\sqrt{\frac{\mu_{\mathsf{ce}}\kappa_{\mathsf{ce}}^{3}r\log^{2}\left(n+d\right)}{np}}+\frac{\sigma^{2}}{\lambda_{r}^{\star}}\sqrt{\frac{d}{n}}\,\frac{\log\left(n+d\right)}{p}+\frac{\sigma}{\sqrt{\lambda_{r}^{\star}}}\sqrt{\frac{d}{n}}\sqrt{\frac{\kappa_{\mathsf{ce}}\log\left(n+d\right)}{p}}+\frac{\mu_{\mathsf{ce}}\kappa_{\mathsf{ce}}r}{d}.\label{def:err-CE}
\end{equation}

\end{corollary}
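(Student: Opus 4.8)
The plan is to deduce Corollary~\ref{cor:cov} from Theorem~\ref{thm:U_loss} by conditioning on the random coefficient matrix $\bm{F}^{\star}$. Since both the sampling set $\Omega$ and the noise matrix $\bm{N}=[\bm{\eta}_{1},\cdots,\bm{\eta}_{n}]$ are independent of $\bm{F}^{\star}$, once we freeze $\bm{F}^{\star}$ (on the event that it has full row rank $r$) the matrix $\bm{A}^{\star}:=\bm{B}^{\star}\bm{F}^{\star}\in\mathbb{R}^{d\times n}$ is a deterministic rank-$r$ matrix, and the observations $\bm{A}=\mathcal{P}_{\Omega}(\bm{A}^{\star}+\bm{N})$ with i.i.d.\ Gaussian noise fall exactly within the scope of Theorem~\ref{thm:U_loss} with $d_{1}=d$ and $d_{2}=n$. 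It therefore suffices to (i) express the singular values, condition number, and incoherence parameters of $\bm{A}^{\star}$ in terms of $\lambda_{1}^{\star}\geq\cdots\geq\lambda_{r}^{\star}$, $\mu_{\mathsf{ce}}$, and $\kappa_{\mathsf{ce}}$; (ii) check that \eqref{eq:sample-size-assumption-ce} together with $r\leq c_{1}d/(\mu_{\mathsf{ce}}\kappa_{\mathsf{ce}}^{2})$ implies condition~\eqref{asmp} and that the resulting $\mathcal{E}_{\mathsf{general}}$ in \eqref{eq:defn-Err-UB} coincides (up to constants) with $\mathcal{E}_{\mathsf{ce}}$ in \eqref{def:err-CE}; and (iii) post-process the outputs $(\bm{U},\bm{\Sigma})$ of Algorithm~\ref{alg:spectral} to obtain the guarantees for $\bm{S}=\bm{B}\bm{B}^{\top}$.

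For step~(i), write $\bm{B}^{\star}=\bm{U}^{\star}\bm{\Lambda}^{\star1/2}$ as in \eqref{eq:defn-B-U}, so that $\bm{A}^{\star}\bm{A}^{\star\top}=\bm{U}^{\star}\bm{\Lambda}^{\star1/2}(\bm{F}^{\star}\bm{F}^{\star\top})\bm{\Lambda}^{\star1/2}\bm{U}^{\star\top}$; in particular, the left singular subspace of $\bm{A}^{\star}$ equals that of $\bm{U}^{\star}$, and the nonzero singular values of $\bm{A}^{\star}$ are the square roots of the eigenvalues of $\bm{\Lambda}^{\star1/2}(\bm{F}^{\star}\bm{F}^{\star\top})\bm{\Lambda}^{\star1/2}$. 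Standard Gaussian covariance concentration gives $\|\tfrac{1}{n}\bm{F}^{\star}\bm{F}^{\star\top}-\bm{I}_{r}\|\lesssim\sqrt{r/n}+\sqrt{\log(n+d)/n}\ll1$ on a high-probability event (using $n\gtrsim\mu_{\mathsf{ce}}\kappa_{\mathsf{ce}}^{5}r\log^{3}(n+d)$), so that $\sigma_{i}(\bm{A}^{\star})\asymp\sqrt{n\lambda_{i}^{\star}}$ for $1\leq i\leq r$; hence the condition number of $\bm{A}^{\star}$ satisfies $\kappa\asymp\sqrt{\kappa_{\mathsf{ce}}}$ and $\sigma_{r}^{\star}\asymp\sqrt{n\lambda_{r}^{\star}}$. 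As for the incoherence parameters of Definition~\ref{definition-mu0-mu1-mu2}: the left singular subspace of $\bm{A}^{\star}$ being that of $\bm{U}^{\star}$ gives $\mu_{1}=\tfrac{d}{r}\|\bm{U}^{\star}\|_{2,\infty}^{2}=\mu_{\mathsf{ce}}$; the right singular subspace of $\bm{A}^{\star}$ is the row space of $\bm{F}^{\star}$, whose leverage scores $\bm{f}_{j}^{\star\top}(\bm{F}^{\star}\bm{F}^{\star\top})^{-1}\bm{f}_{j}^{\star}$ are all $\lesssim(r+\log(n+d))/n$ by a standard argument, yielding $\mu_{2}\lesssim\log(n+d)$; and each entry $A_{i,j}^{\star}=(\bm{B}^{\star\top}\bm{e}_{i})^{\top}\bm{f}_{j}^{\star}$ is Gaussian with variance $\|\bm{\Lambda}^{\star1/2}\bm{U}^{\star\top}\bm{e}_{i}\|_{2}^{2}\leq\lambda_{1}^{\star}r\mu_{\mathsf{ce}}/d$, so a union bound combined with $\|\bm{A}^{\star}\|_{\mathrm{F}}^{2}\asymp n\,\mathrm{tr}(\bm{\Lambda}^{\star})\gtrsim nr\lambda_{r}^{\star}$ gives $\mu_{0}\lesssim\kappa_{\mathsf{ce}}\mu_{\mathsf{ce}}\log(n+d)$. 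Altogether $\mu:=\max\{\mu_{0},\mu_{1},\mu_{2}\}\lesssim\kappa_{\mathsf{ce}}\mu_{\mathsf{ce}}\log(n+d)$. Finally, since $\eta_{i,j}$ is zero-mean Gaussian with $\mathsf{Var}[\eta_{i,j}]\leq\sigma^{2}$, Assumption~\ref{assumption:random-noise} holds with $R\asymp\sigma\sqrt{\log(n+d)}$, and \eqref{eq:assumption-noise-spike} reduces to $p\min\{\sqrt{dn},n\}\gtrsim\log^{2}(n+d)$, which is implied by the first two terms of \eqref{eq:sample-size-assumption-ce}.

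For step~(ii), substituting $d_{1}=d$, $d_{2}=n$, $\kappa\asymp\sqrt{\kappa_{\mathsf{ce}}}$, $\sigma_{r}^{\star}\asymp\sqrt{n\lambda_{r}^{\star}}$, $\mu\lesssim\kappa_{\mathsf{ce}}\mu_{\mathsf{ce}}\log(n+d)$, and $\mu_{1}=\mu_{\mathsf{ce}}$ into \eqref{asmp}, a term-by-term comparison turns the two lower bounds on $p$ into (after squaring and rearranging) the first two conditions in \eqref{eq:sample-size-assumption-ce}; the two signal-to-noise conditions, upon squaring and solving for $n$, become the third and fourth conditions in \eqref{eq:sample-size-assumption-ce}; and the rank condition $r\leq c_{2}d_{1}/(\mu_{1}\kappa^{4})$ becomes $r\lesssim d/(\mu_{\mathsf{ce}}\kappa_{\mathsf{ce}}^{2})$. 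Thus \eqref{asmp} holds on the good event for $\bm{F}^{\star}$, and applying Theorem~\ref{thm:U_loss} with the same substitutions turns the five terms of $\mathcal{E}_{\mathsf{general}}$ in \eqref{eq:defn-Err-UB} into the five terms of $\mathcal{E}_{\mathsf{ce}}$ in \eqref{def:err-CE} and the $\ell_{2,\infty}$ prefactor $\kappa^{2}\sqrt{\mu r/d_{1}}$ into $\kappa_{\mathsf{ce}}^{3/2}\sqrt{\mu_{\mathsf{ce}}r\log(n+d)/d}$, which proves \eqref{claim:U_op_loss_cov}--\eqref{claim:U_2inf_loss_cov} and, via \eqref{claim:Lambda_loss}, the bound $\|\bm{\Sigma}-\bm{\Sigma}^{\star}\|\lesssim\sqrt{n\lambda_{r}^{\star}}\,\mathcal{E}_{\mathsf{ce}}$. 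For step~(iii), note $\bm{S}=\bm{B}\bm{B}^{\top}=\tfrac{1}{n}\bm{U}\bm{\Sigma}^{2}\bm{U}^{\top}$ equals $\tfrac{1}{n}$ times the best rank-$r$ approximation $\bm{G}_{r}$ of $\bm{G}$. Since $\mathbb{E}[\bm{G}]=\mathcal{P}_{\mathsf{off}\text{-}\mathsf{diag}}(\bm{A}^{\star}\bm{A}^{\star\top})$ has rank $r$, one has $\sigma_{r+1}(\bm{G})\leq\|\bm{G}-\mathbb{E}[\bm{G}]\|$ and hence $\|\bm{G}_{r}-\bm{A}^{\star}\bm{A}^{\star\top}\|\leq2\|\bm{G}-\mathbb{E}[\bm{G}]\|+\|\mathcal{P}_{\mathsf{diag}}(\bm{A}^{\star}\bm{A}^{\star\top})\|$; both terms on the right are already controlled in the analysis of Theorem~\ref{thm:U_loss} and sum, after rescaling by $1/n$, to $O(\kappa_{\mathsf{ce}}\lambda_{1}^{\star}\mathcal{E}_{\mathsf{ce}})$. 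Combining this with $\|\tfrac{1}{n}\bm{A}^{\star}\bm{A}^{\star\top}-\bm{S}^{\star}\|=\|\bm{\Lambda}^{\star1/2}(\tfrac{1}{n}\bm{F}^{\star}\bm{F}^{\star\top}-\bm{I}_{r})\bm{\Lambda}^{\star1/2}\|\lesssim\lambda_{1}^{\star}\big(\sqrt{r/n}+\sqrt{\log(n+d)/n}\big)\lesssim\lambda_{1}^{\star}\mathcal{E}_{\mathsf{ce}}$ yields \eqref{claim:S_op_loss}. For the entrywise bound \eqref{claim:S_inf_loss}, one instead writes $\bm{S}-\bm{S}^{\star}$ in a factored form involving $\bm{B}-\bm{B}^{\star}\bm{H}$ and $\bm{B}^{\star}$ (with $\bm{H}$ a suitable orthonormal alignment) and invokes $\|\bm{X}\bm{Y}^{\top}\|_{\infty}\leq\|\bm{X}\|_{2,\infty}\|\bm{Y}\|_{2,\infty}$, using $\|\bm{B}^{\star}\|_{2,\infty}\leq\sqrt{\lambda_{1}^{\star}r\mu_{\mathsf{ce}}/d}$ together with \eqref{claim:U_2inf_loss_cov} and $\|\bm{\Sigma}-\bm{\Sigma}^{\star}\|\lesssim\sqrt{n\lambda_{r}^{\star}}\,\mathcal{E}_{\mathsf{ce}}$ to control $\|\bm{B}-\bm{B}^{\star}\bm{H}\|_{2,\infty}$. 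A union bound over the good event for $\bm{F}^{\star}$ and the $1-O((n+d)^{-10})$ event of Theorem~\ref{thm:U_loss} finishes the proof.

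The main obstacle is that the ``signal'' $\bm{A}^{\star}=\bm{B}^{\star}\bm{F}^{\star}$ is itself random, so the reduction to Theorem~\ref{thm:U_loss} hinges on controlling the spectrum, the column leverage scores, and the entrywise incoherence of a random Gaussian-factor matrix sharply enough that the hypotheses \eqref{asmp} are met with the advertised dependence on $\mu_{\mathsf{ce}},\kappa_{\mathsf{ce}},r$; a secondary source of friction is the bookkeeping required to match the exponents of $\kappa_{\mathsf{ce}}$ and the powers of $\log(n+d)$ when translating \eqref{asmp} and \eqref{eq:defn-Err-UB}, together with the factored decomposition of $\bm{S}-\bm{S}^{\star}$ underlying \eqref{claim:S_inf_loss}.
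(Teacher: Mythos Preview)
Your steps (i) and (ii) are precisely the paper's route: condition on $\bm{F}^{\star}$, use Gaussian covariance concentration to get $\sigma_i(\bm{A}^{\star})\asymp\sqrt{n\lambda_i^{\star}}$ and $\kappa\asymp\sqrt{\kappa_{\mathsf{ce}}}$, bound $\mu_0\lesssim\kappa_{\mathsf{ce}}\mu_{\mathsf{ce}}\log(n+d)$, $\mu_1=\mu_{\mathsf{ce}}$, $\mu_2\lesssim\log(n+d)$, verify \eqref{asmp}, and read off \eqref{claim:U_op_loss_cov}--\eqref{claim:U_2inf_loss_cov}. One small slip: $\mathbb{E}[\bm{G}]=\mathcal{P}_{\mathsf{off}\text{-}\mathsf{diag}}(\bm{A}^{\star}\bm{A}^{\star\top})$ is \emph{not} rank $r$; use $\bm{G}^{\star}=\bm{A}^{\star}\bm{A}^{\star\top}$ instead, which gives $\sigma_{r+1}(\bm{G})\leq\|\bm{G}-\bm{G}^{\star}\|$ and hence $\|\bm{G}_r-\bm{G}^{\star}\|\leq 2\|\bm{G}-\bm{G}^{\star}\|$. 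With that fix, your argument for \eqref{claim:S_op_loss} via $\bm{S}=\tfrac{1}{n}\bm{G}_r$ is actually \emph{simpler} than the paper's (which goes through $\bm{B}\bm{K}-\bm{B}^{\star}$ already for the spectral norm).

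The genuine gap is in \eqref{claim:S_inf_loss}. You propose to bound $\|\bm{B}-\bm{B}^{\star}\bm{H}\|_{2,\infty}$ from \eqref{claim:U_2inf_loss_cov} and $\|\bm{\Sigma}-\bm{\Sigma}^{\star}\|$, but $\bm{\Sigma}$ does not commute with the rotation: writing $\bm{B}=\tfrac{1}{\sqrt n}(\bm{U}\bm{R})\bm{R}^{\top}\bm{\Sigma}$ leaves the residual term $\bm{U}^{\star}\big(\tfrac{1}{\sqrt n}\bm{R}^{\top}\bm{\Sigma}-\bm{\Lambda}^{\star1/2}\bm{H}\big)$, and there is no choice of orthonormal $\bm{H}$ for which this is controlled by $\|\bm{\Sigma}-\bm{\Sigma}^{\star}\|$ alone (think of repeated $\lambda_i^{\star}$, where $\bm{R}$ can be an arbitrary rotation). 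The paper handles this by introducing a second alignment $\bm{K}=\mathsf{sgn}(\bm{B}^{\top}\bm{B}^{\star})$, decomposing
\[
\bm{B}\bm{K}-\bm{B}^{\star}=\tfrac{1}{\sqrt n}\bm{U}\bm{\Lambda}^{1/2}(\bm{K}-\bm{R})+\bm{U}\big(\tfrac{1}{\sqrt n}\bm{\Lambda}^{1/2}\bm{R}-\bm{R}\bm{\Lambda}^{\star1/2}\big)+(\bm{U}\bm{R}-\bm{U}^{\star})\bm{\Lambda}^{\star1/2},
\]
and invoking commutation-type estimates $\|\tfrac{1}{\sqrt n}\bm{\Lambda}^{1/2}\bm{R}-\bm{R}\bm{\Lambda}^{\star1/2}\|\lesssim\kappa_{\mathsf{ce}}\sqrt{\lambda_r^{\star}}\,\mathcal{E}_{\mathsf{ce}}$ and $\|\bm{K}-\bm{R}\|\lesssim\sqrt{\kappa_{\mathsf{ce}}}\,\mathcal{E}_{\mathsf{ce}}$ (from \cite{ma2017implicit}), driven by $\|\tfrac{1}{n}\bm{G}-\bm{S}^{\star}\|\lesssim\lambda_r^{\star}\mathcal{E}_{\mathsf{ce}}$. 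These are the missing ingredients. An alternative that bypasses them is to decompose $\bm{S}-\bm{S}^{\star}$ directly as $(\bm{U}\bm{R}-\bm{U}^{\star})\tfrac{1}{n}\bm{R}^{\top}\bm{\Lambda}\bm{R}(\bm{U}\bm{R})^{\top}+\bm{U}^{\star}\big(\tfrac{1}{n}\bm{R}^{\top}\bm{\Lambda}\bm{R}-\bm{\Lambda}^{\star}\big)(\bm{U}\bm{R})^{\top}+\bm{U}^{\star}\bm{\Lambda}^{\star}(\bm{U}\bm{R}-\bm{U}^{\star})^{\top}$ and bound the middle factor via $\tfrac{1}{n}\bm{R}^{\top}\bm{\Lambda}\bm{R}=(\bm{U}\bm{R})^{\top}\tfrac{1}{n}\bm{G}(\bm{U}\bm{R})$; either way, the control of $\|\bm{\Sigma}-\bm{\Sigma}^{\star}\|$ alone is not enough.
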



\begin{remark} We make note of a scaling issue that one shall bear in mind when comparing this result with our main theorem. In the settings of Theorem~\ref{thm:U_loss}, the singular values $\{ \sigma_i^\star \}_{i=1}^r$ of the truth $\bm{A}^\star$ do not change as the column dimension $d_2$ grows. In contrast, in the settings of Corollary~\ref{cor:cov}, the singular values of the sample covariance matrix keep growing as we collect more sample vectors, which is equivalent to saying that these singular values scale with the column dimension.   \end{remark}

\paragraph{Discussion.}
To facilitate interpretation, let us again focus on the case where
$\mu_{\mathsf{ce}},\kappa_{\mathsf{ce}}\asymp1$. Corollary~\ref{cor:cov}
demonstrates that for any given sampling rate $p$, we can achieve consistent estimation\footnote{Here, consistent estimation is declared if ${\min}_{\bm{Q}\in\mathcal{O}^{r\times r}}\left\Vert \bm{U}\bm{Q}-\bm{U}^{\star}\right\Vert  =o\left(1\right)$ and $\left\Vert \bm{S}-\bm{S}^{\star}\right\Vert   =o\left(\lambda_{r}^{\star}\right)$. 
}
as long as the number $n$ of samples satisfies
\begin{equation}
n\gtrsim \max\left\{ \frac{r^{2}}{dp^{2}},\,\frac{r}{p},\,\frac{\sigma^{4}d}{\lambda_{r}^{\star2}p^{2}},\,\frac{\sigma^{2}d}{\lambda_{r}^{\star}p}\right\} \mathrm{poly}\log d.\label{eq:sample-size-ce}
\end{equation}
Throughout this subsection, the sample size refers to $n$ --- the
number of sample vectors $\left\{ \bm{x}_{i}\right\} _{1\leq i\leq n}$
available.

Next, we compare our $\ell_{2,\infty}$ bounds with several prior work for the case with $r\asymp 1$. We emphasize again that the foci and model assumptions of these prior papers might be quite different from ours (e.g.~\cite{zhu2019high} is able to accommodate inhomogeneous sampling patterns),  and the advantages of our results discussed below are restricted to the settings considered in this paper.     
For simplicity, we ignore all log factors. 
\begin{itemize}
\item Suppose that $\sigma = 1$. 
In this setting, \cite[Theorem~4]{zhu2019high}
demonstrates that if
\[
n\gtrsim\max\left\{ \tfrac{1}{p^{2}},\tfrac{d^{2}}{\lambda_{r}^{\star2}p^{2}},\tfrac{d}{\lambda_{r}^{\star}p^{2}}\right\} \mathrm{poly}\log d,
\]
then with high probability one has
\[
\min_{\bm{Q}\in \mathcal{O}^{r\times r}} \left\Vert \bm{U}\bm{Q}-\bm{U}^{\star}\right\Vert _{2,\infty}\lesssim \tfrac{1}{p\sqrt{n}}\left(\tfrac{1}{\sqrt{\lambda_{r}^{\star}}}+\tfrac{1}{\lambda_{r}^{\star}}\right)\left(1+\tfrac{\sqrt{d}}{\lambda_{r}^{\star}}\right) \mathrm{poly}\log d
\]
In comparison, our sample size requirement for consistent
estimation improves upon \cite[Theorem 4]{zhu2019high} by a factor of $\min\left\{ d,p^{-1}\right\} $. 
Moreover, our estimation error bound improves upon \cite[Theorem 4]{zhu2019high}
by a factor of $\min\big\{\sqrt{\lambda_{r}^{\star}},\frac{1}{\sqrt{p}}\big\}$
if $\sqrt{d}\ll\lambda_{r}^{\star}\lesssim d$, by a factor of $\frac{\sqrt{d}}{\lambda_{r}^{\star}}$
when $\lambda_{r}^{\star}\lesssim1$, and by a factor of $\min\left\{ \frac{\sqrt{d}}{\lambda_{r}^{\star}\sqrt{p}},\sqrt{\frac{d}{\lambda_{r}^{\star}}}\right\} $
if $1\ll\lambda_{r}^{\star}\lesssim\sqrt{d}$. 

\item In the absence of missing data, the $\ell_{2,\infty}$ error bound presented in \cite[Theorem~1.1]{cape2019two}
reads (ignoring logarithmic terms)
\[
  \min_{\bm{Q}\in \mathcal{O}^{r\times r}} \left\Vert \bm{U}\bm{Q}-\bm{U}^{\star}\right\Vert _{2,\infty}\lesssim\begin{cases}
\sqrt{\frac{1}{nd}}, & \text{for}\;\frac{\sigma}{\sqrt{\lambda_{r}^{\star}}}\lesssim\frac{1}{\sqrt{d}} \\
\frac{\sigma^{2}}{\lambda_{r}^{\star}}\sqrt{\frac{d}{n}} & \text{for}\;\frac{1}{\sqrt{d}}\ll\frac{\sigma}{\sqrt{\lambda_{r}^{\star}}}\lesssim 1 .
\end{cases}
\]
 Consequently, our result improves upon the above error
bound by a factor of $\frac{\sigma\sqrt{d}}{\sqrt{\lambda_{r}^{\star}}}$
if $\frac{1}{\sqrt{d}}\ll\frac{\sigma}{\sqrt{\lambda_{r}^{\star}}}\lesssim1$,
while being able to handle the case with larger noise (namely, $\frac{\sigma}{\sqrt{\lambda_{r}^{\star}}}\gg1$).

\end{itemize}

\subsection{Community recovery in bipartite stochastic block models\label{subsec:BSBM}}

 As it turns out, if we
denote by $\bm{A}\in\mathbb{R}{}^{|\mathcal{U}|\times|\mathcal{V}|}$
the bi-adjacency matrix of the observed random bipartite graph or
its centered version, then $\bm{A}^{\star}:=\mathbb{E}[\bm{A}]$ exhibits
a low-rank structure (as we shall elaborate momentarily). Perhaps
more importantly, the column subspace of $\bm{A}^{\star}$ reveals
the community memberships of all nodes in $\mathcal{U}$. As a result,
this biclustering problem is tightly connected to subspace estimation
given noisy observations of a low-rank matrix. In particular, when
the size of $\mathcal{V}$ is substantially larger than that of $\mathcal{U}$,
one might encounter a situation where only the nodes in $\mathcal{U}$
(rather than those in $\mathcal{V}$) can be reliably clustered. This
calls for development of ``one-sided'' community recovery algorithms,
that is, the type of algorithms that guarantee reliable clustering
of $\mathcal{U}$ without worrying about the clustering accuracy in
$\mathcal{V}$.

\paragraph{Model.} This subsection investigates the problem of biclustering, by considering a bipartite stochastic block model (BSBM) with two
disjoint groups of nodes $\mathcal{U}$ and $\mathcal{V}$. Suppose
that the nodes in $\mathcal{U}$ (resp.~$\mathcal{V}$) form two
clusters. For each pair of nodes $(i,j)\in(\mathcal{U},\mathcal{V})$,
there is an edge connecting them with probability depending only on
the community memberships of $i$ and $j$. To be more specific:

\begin{itemize}
\item \emph{Biclustering structure.} Consider two disjoint collections of
nodes $\mathcal{U}$ and $\mathcal{V}$, which are of size $n_{u}$
and $n_{v}$, respectively. Suppose that each collection of nodes
can be clustered into two communities. To be more precise, let $\mathcal{I}_{1}\subseteq\mathcal{U}$
and $\mathcal{I}_{2}=\mathcal{U}\backslash\mathcal{I}_{1}$ (resp.~$\mathcal{J}_{1}\subseteq\mathcal{V}$
and $\mathcal{J}_{2}=\mathcal{V}\backslash\mathcal{J}_{1}$) be two
non-overlapping communities in $\mathcal{U}$ (resp.~$\mathcal{V}$)
that contain $n_{u}/2$ (resp.~$n_{v}/2$) nodes each. Without loss
of generality, we assume that $\mathcal{I}_{1}$ contains the first
$n_{u}/2$ nodes of $\mathcal{U}$, and $\mathcal{J}_{1}$ contains
the first $n_{v}/2$ nodes of $\mathcal{V}$; these are of course
\emph{a priori} unknown.
\item \emph{Measurement model.} What we observe is a random bipartite graph
generated based on the community memberships of the nodes. In the
simplest version of BSBMs, a pair of nodes $(i,j)\in(\mathcal{U},\mathcal{V})$
is connected by an edge independently with probability $q_{\mathsf{in}}$
if either $(i,j)\in(\mathcal{I}_{1},\mathcal{J}_{1})$ or $(i,j)\in(\mathcal{I}_{2},\mathcal{J}_{2})$
holds, and with probability $q_{\mathsf{out}}$ otherwise. Here, $0\leq q_{\mathsf{out}}\leq q_{\mathsf{in}}\leq1$
represent the edge densities. If we denote by $\bm{C}\in\{0,1\}^{n_{u}\times n_{v}}$
the bi-adjacency matrix of this random bipartite graph, then one has
\[
\mathbb{P}\left\{ C_{i,j}=1\right\} \overset{\text{ind.}}{=}\begin{cases}
q_{\mathsf{in}}, & \text{if}\;\;(i,j)\in(\mathcal{I}_{1},\mathcal{J}_{1})\;\;\text{or}\;\;(i,j)\in(\mathcal{I}_{2},\mathcal{J}_{2}),\\
q_{\mathsf{out}}, & \text{\text{otherwise}.}
\end{cases}
\]
\end{itemize}
Our goal is to recover the community memberships of the nodes in $\mathcal{U}$, based on the above random bipartite graph. In what follows, we define
\begin{equation}
n:=n_{u}+n_{v},\label{eq:defn-n-BSBM}
\end{equation}
and declare exact community recovery of $\mathcal{U}$ if the partition
of the nodes returned by our algorithm coincides precisely with the
true partition $(\mathcal{I}_{1},\mathcal{I}_{2})$.

While our theory covers a broad range of $n_{u}$ and $n_{v}$, we
emphasize the case where $n_{v}\gg n_{u}$ (namely, $\mathcal{V}$
contains far more nodes than $\mathcal{U}$). In such a case, it is
not uncommon to encounter a situation where one can only hope to recover
the community memberships of the nodes in $\mathcal{U}$ but not those
in $\mathcal{V}$.

\paragraph{Algorithm.} To attempt community recovery, we look at a centered version of the
bi-adjacency matrix\footnote{Here, we assume prior knowledge about $q_{\mathsf{in}}$ and $q_{\mathsf{out}}$.
Otherwise, the quantity $\frac{q_{\mathsf{in}}+q_{\mathsf{out}}}{2}$
can also be easily estimated.}
\begin{align}
\bm{A} & :=\bm{C}-\frac{q_{\mathsf{in}}+q_{\mathsf{out}}}{2}\bm{1}_{n_{u}}\bm{1}_{n_{v}}^{\top}.\label{eq:A-defn-biclustering}
\end{align}
Recognizing that
\begin{equation}
\bm{A}^{\star}:=\mathbb{E}[\bm{A}]=\frac{q_{\mathsf{in}}-q_{\mathsf{out}}}{2}\left[\begin{array}{cc}
\bm{1}_{n_{u}/2}\bm{1}_{n_{v}/2}^{\top}, & -\bm{1}_{n_{u}/2}\bm{1}_{n_{v}/2}^{\top}\\
-\bm{1}_{n_{u}/2}\bm{1}_{n_{v}/2}^{\top}, & \bm{1}_{n_{u}/2}\bm{1}_{n_{v}/2}^{\top}
\end{array}\right]=\frac{q_{\mathsf{in}}-q_{\mathsf{out}}}{2}\left[\begin{array}{c}
\bm{1}_{n_{u}/2}\\
-\bm{1}_{n_{u}/2}
\end{array}\right]\left[\bm{1}_{n_{v}/2}^{\top},-\bm{1}_{n_{v}/2}^{\top}\right],\label{eq:mean-A-biclustering}
\end{equation}
we see that the leading singular vectors of $\bm{A}^{\star}$ reveals the
community memberships of all nodes. Motivated by this observation,
our algorithm for recovering the community memberships in $\mathcal{U}$
proceeds as follows:
\begin{algorithm}
\caption{The spectral method for BSBM}
\label{alg:spectral-bsbm}
\begin{algorithmic}[1]
\STATE{{\bf Input:} observed bi-adjacency matrix $\bm{C}$, edge probabilities $q_{\mathsf{in}}, q_{\mathsf{out}}$.}
\STATE{Employ $\bm{A}$ (cf.~\eqref{eq:A-defn-biclustering}) as the input of Algorithm~\ref{alg:spectral}, and let $\bm{u}=[u_{i}]\in\mathbb{R}^{n_{u}}$
be the output returned by Algorithm~\ref{alg:spectral} (which serves
as the estimate of the leading left singular subspace of $\bm{A}^{\star}$.}
\STATE{{\bf Output:} for any $i\in\mathcal{U}$, we claim that $i$ belongs
to the first community if $u_{i}>0$, and the second community otherwise.}
\end{algorithmic}
\end{algorithm}

\paragraph{Theoretical guarantees and implications.} We are now ready to invoke our general theory to
demonstrate the effectiveness of the above algorithm, as asserted
by the following result.

\begin{corollary}[Bipartite stochastic block model]\label{cor:BSBM}Consider
the above bipartite stochastic block model. There exists some universal
constant $c_{0}>0$ such that if
\begin{equation}
\frac{(q_{\mathsf{in}}-q_{\mathsf{out}})^{2}}{q_{\mathsf{in}}}\geq c_{0}\max\left\{ \frac{\log n}{\sqrt{n_{u}n_{v}}},\frac{\log n}{n_{v}}\right\} ,\label{eq:BSBM-assump}
\end{equation}
then Algorithm \ref{alg:spectral-bsbm} achieves exact community recovery
of $\mathcal{U}$ with probability exceeding $1-O(n^{-10})$.

\end{corollary}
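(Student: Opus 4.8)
The plan is to cast the BSBM community recovery problem as an instance of the general subspace estimation framework and invoke Theorem~\ref{thm:U_loss}, followed by a sign-based rounding argument. First, I would identify the relevant parameters: from \eqref{eq:mean-A-biclustering}, $\bm{A}^{\star}$ is rank $r=1$ with its unique nonzero singular value $\sigma_1^{\star} = \frac{q_{\mathsf{in}}-q_{\mathsf{out}}}{2}\sqrt{n_u n_v}$ and left singular vector $\bm{u}^{\star} = \frac{1}{\sqrt{n_u}}[\bm{1}_{n_u/2}^{\top}, -\bm{1}_{n_u/2}^{\top}]^{\top}$. The incoherence parameters are all $O(1)$ (indeed $\mu_0=\mu_1=\mu_2=1$ since every entry of $\bm{A}^{\star}$ has the same magnitude and $\bm{u}^{\star},\bm{v}^{\star}$ are flat), and the condition number $\kappa=1$. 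Here the ``sampling rate'' is $p=1$ (the full bi-adjacency matrix is observed), so by the Remark following Theorem~\ref{thm:U_loss} the missing-data terms in $\mathcal{E}_{\mathsf{general}}$ drop out. The role of the noise variance $\sigma^2$ is played by the Bernoulli fluctuation: $\mathsf{Var}(A_{i,j}) = \mathsf{Var}(C_{i,j}) \leq q_{\mathsf{in}}$, so we set $\sigma^2 \asymp q_{\mathsf{in}}$, and the magnitude bound $R\leq 1$ trivially holds; one must check that \eqref{eq:assumption-noise-spike} is satisfied, i.e.~$R^2/\sigma^2 \lesssim \frac{\min\{\sqrt{n_u n_v},\,n_v\}}{\log n}$, which follows from the assumed lower bound \eqref{eq:BSBM-assump} after noting $q_{\mathsf{in}} \gtrsim (q_{\mathsf{in}}-q_{\mathsf{out}})^2/q_{\mathsf{in}} \cdot (\text{something})$ — this needs a short argument, and is where some care about the regime $q_{\mathsf{in}}\to 0$ enters.

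Next, I would translate the hypothesis \eqref{eq:BSBM-assump} into the noise-size condition of Theorem~\ref{thm:U_loss}. With $d_1=n_u$, $d_2=n_v$, $\sigma^2\asymp q_{\mathsf{in}}$, and $\sigma_1^{\star}=\sigma_r^{\star}\asymp(q_{\mathsf{in}}-q_{\mathsf{out}})\sqrt{n_u n_v}$, one computes $\sigma^2/\sigma_r^{\star 2} \asymp \frac{q_{\mathsf{in}}}{(q_{\mathsf{in}}-q_{\mathsf{out}})^2 n_u n_v}$. The condition \eqref{eq:BSBM-assump} is precisely engineered so that $\sigma/\sigma_r^{\star}$ lies below $c_1\min\{ (n_u n_v)^{-1/4}/\sqrt{\log n},\,\sqrt{1/(n_u\log n)}\}$ — indeed the two branches of \eqref{eq:BSBM-assump} correspond exactly to the two branches of the noise condition after squaring and rearranging. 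Feeding this into \eqref{claim:U_2inf_loss} with $\kappa,\mu,r=O(1)$ yields
\[
\|\bm{u}\bm{R} - \bm{u}^{\star}\|_{2,\infty} \lesssim \sqrt{\tfrac{1}{n_u}} \cdot \mathcal{E}_{\mathsf{general}}, \qquad \mathcal{E}_{\mathsf{general}} \asymp \frac{q_{\mathsf{in}}}{(q_{\mathsf{in}}-q_{\mathsf{out}})^2}\frac{\sqrt{n_u n_v}\log n}{n_u n_v} + \sqrt{\frac{q_{\mathsf{in}}}{(q_{\mathsf{in}}-q_{\mathsf{out}})^2}\frac{n_u\log n}{n_u n_v}}
\]
(the diagonal-deletion term $\mu_1\kappa^2 r/d_1 \asymp 1/n_u$ being of lower order under \eqref{eq:BSBM-assump}). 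Here $\bm{R}=\pm 1$ since $r=1$.

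Finally, the rounding step: since each entry of $\bm{u}^{\star}$ equals $\pm 1/\sqrt{n_u}$, to correctly recover the sign pattern of $\bm{u}^{\star}$ (equivalently, the membership of every node in $\mathcal{U}$) via the sign of $u_i$, it suffices that $\|\bm{u}\bm{R}-\bm{u}^{\star}\|_{\infty} \leq \|\bm{u}\bm{R}-\bm{u}^{\star}\|_{2,\infty} < 1/\sqrt{n_u}$, i.e.~that $\mathcal{E}_{\mathsf{general}} < 1$ up to an absolute constant. By \eqref{eq:BSBM-assump} with a sufficiently large $c_0$, the quadratic term is $\lesssim \frac{\log n}{(q_{\mathsf{in}}-q_{\mathsf{out}})^2/q_{\mathsf{in}}\cdot\sqrt{n_u n_v}} \ll 1$ and the linear term is $\lesssim \sqrt{\frac{\log n}{(q_{\mathsf{in}}-q_{\mathsf{out}})^2/q_{\mathsf{in}}\cdot n_v}}\ll 1$, so both are small. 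Then $\bm{R}$-alignment plus the threshold at $0$ recovers $(\mathcal{I}_1,\mathcal{I}_2)$ exactly (up to the global label swap, which is inherent), on the event of probability $1-O(n^{-10})$ from Theorem~\ref{thm:U_loss} (with $d=\max\{n_u,n_v\}$ and $\log d \asymp \log n$).

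The main obstacle I anticipate is not the subspace bound itself but the \emph{verification of the assumptions} of Theorem~\ref{thm:U_loss} in degenerate regimes — specifically confirming the spiky-noise condition \eqref{eq:assumption-noise-spike} and the rank condition $r\leq c_2 d_1/(\mu_1\kappa^4)$ (trivial here since $r=1$) when $q_{\mathsf{in}}$ is very small or when $q_{\mathsf{in}}-q_{\mathsf{out}}$ is close to $q_{\mathsf{in}}$, and handling the degenerate boundary cases $q_{\mathsf{out}}=0$ or $q_{\mathsf{in}}=q_{\mathsf{out}}$ (the latter being excluded by \eqref{eq:BSBM-assump}). A secondary subtlety is that the general theorem controls $\|\bm{u}\bm{R}-\bm{u}^{\star}\|_{2,\infty}$, an $\ell_2$-per-row quantity; when $r=1$ this coincides with $\|\bm{u}\bm{R}-\bm{u}^{\star}\|_{\infty}$, so the entrywise sign-recovery argument goes through cleanly, but it is worth stating this reduction explicitly.
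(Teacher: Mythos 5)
Your proposal is correct and follows essentially the same route as the paper's proof: identify $r=1$, $\kappa=\mu_0=\mu_1=\mu_2=1$, $\sigma_1^{\star}=\tfrac{q_{\mathsf{in}}-q_{\mathsf{out}}}{2}\sqrt{n_u n_v}$, set $p=1$ and $\sigma^2\asymp q_{\mathsf{in}}$, verify \eqref{eq:assumption-noise-spike} via $q_{\mathsf{in}}\geq (q_{\mathsf{in}}-q_{\mathsf{out}})^2/q_{\mathsf{in}}$ and \eqref{eq:BSBM-assump}, then invoke Theorem~\ref{thm:U_loss} and round entrywise using $\|\cdot\|_{2,\infty}=\|\cdot\|_{\infty}$ for vectors. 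The details you flag as ``needing a short argument'' (the spiky-noise check and the identification of $\ell_{2,\infty}$ with $\ell_{\infty}$ in the rank-one case) are precisely the short verifications the paper supplies, so there is no gap.
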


We then take a moment to discuss the implications of Corollary~\ref{cor:BSBM}.
For simplicity of presentation, we shall focus on the scenario with
$q_{\mathsf{in}}\asymp q_{\mathsf{out}}=o\left(1\right)$ and $n_{u}\leq n_{v}$.
\begin{itemize}
\item \emph{Exact recovery via the spectral method alone.} Consider the
following sparse regime, where 
\[
q_{\mathsf{in}}=\frac{a\log n}{\sqrt{n_{u}n_{v}}}\qquad\text{and}\qquad q_{\mathsf{out}}=\frac{b\log n}{\sqrt{n_{u}n_{v}}}
\]
for some absolute positive constants $a\geq b$. Corollary~\ref{cor:BSBM}
demonstrates that we can achieve exact recovery when $\frac{\left(a-b\right)^{2}}{a}\gtrsim1$.
This improves upon prior results presented in \cite{florescu2016spectral}.
More specifically, the results in \cite{florescu2016spectral} only
guaranteed \emph{almost} exact recovery of community memberships (namely,
obtaining correct community memberships for a fraction $1-o(1)$ of
the nodes). In comparison, our results assert that the spectral estimates
alone are sufficient to reveal exact community memberships for all
nodes in $\mathcal{U}$; there is no need to invoke further refinement
procedures to clean up the remaining errors.

\item \emph{Near optimality. }In the balanced case where $n_{u} \asymp n_{v}$, the
condition $\frac{\left(a-b\right)^{2}}{a}\gtrsim1$ above is known
to be information-theoretically optimal up to a constant factor. In
the  unbalanced case with $n_v \geq n_u$, prior work has identified a sharp threshold for \emph{detection} --- the problem of 
recovering  a fraction $1/2+\epsilon$ of the community memberships for an arbitrarily small fixed constant $\epsilon>0$. Specifically, such results reveal a fundamental lower limit that requires $\frac{(q_{\mathsf{in}}-q_{\mathsf{out}})^{2}}{q_{\mathsf{in}}}\gtrsim\frac{1}{\sqrt{n_{u}n_{v}}}$ \cite{feldman2015subsampled,florescu2016spectral}, 
thus implying the information-theoretic optimality of the spectral method (up to a logarithmic factor).%
\end{itemize}

\subsection{Numerical experiments}
\label{subsec:Numerical-experiments}

To confirm the applicability of our algorithm and the theoretical
findings, we conduct a series of numerical experiments. All results
reported in this subsection are averaged over 100 independent Monte
Carlo trials. For the sake of comparisons, we also report the numerical
performance of the vanilla spectral method (namely, returning the
$r$-dimensional principal column subspace of $\bm{A}$ directly without
proper diagonal deletion).

\paragraph{Subspace estimation for random low-rank data matrices.} 

We start with subspace estimation for a randomly generated matrix
$\bm{A}^{\star}$. Specifically, generate $\bm{A}^{\star}=\bm{Z}_{1}\bm{Z}_{2}^{\top}$,
where $\bm{Z}_{1}\in\mathbb{R}^{d_{1}\times r},\bm{Z}_{2}\in\mathbb{R}^{d_{2}\times r}$
consist of i.i.d.~standard Gaussian entries. The noise matrix contains
i.i.d.~Gaussian entries, namely, $N_{i,j}\overset{\mathrm{i.i.d.}}{\sim}\mathcal{N}\left(0,\sigma^{2}\right)$
for each $\left(i,j\right)\in[d_{1}]\times[d_{2}]$. Figures~\ref{fig:subspace-est} and \ref{fig:subspace-est-abs} plot respectively the numerical
estimation errors of the estimate $\bm{U}$ vs.~the sampling rate
$p$, the column dimension $d_{2}$, and the standard deviation $\sigma$
of noise.  Two types of estimation errors are reported: (1) the absolute spectral norm error $\left\Vert \bm{U}\bm{R}-\bm{U}^{\star}\right\Vert$ and the relative
spectral norm error $\left\Vert \bm{U}\bm{R}-\bm{U}^{\star}\right\Vert /\left\Vert \bm{U}^{\star}\right\Vert $;
(2) the absolute $\ell_{2,\infty}$ norm error $\left\Vert \bm{U}\bm{R}-\bm{U}^{\star}\right\Vert_{2,\infty}$  and the relative $\ell_{2,\infty}$ norm error $\left\Vert \bm{U}\bm{R}-\bm{U}^{\star}\right\Vert _{2,\infty}/\left\Vert \bm{U}^{\star}\right\Vert _{2,\infty}$,
where $\bm{R}:=\arg\min_{\bm{Q}\in\mathcal{O}_{r\times r}}\|\bm{U}\bm{Q}-\bm{U}^{\star}\|_{\mathrm{F}}$.
As can be seen from the plots, Algorithm~\ref{alg:spectral} yields
reasonably good estimates in terms of both the spectral norm and the
$\ell_{2,\infty}$ norm, outperforming the vanilla spectral method
in all experiments. 

\begin{figure}[t]
\centering
\begin{tabular}{ccc}
\includegraphics[width=0.31\textwidth]{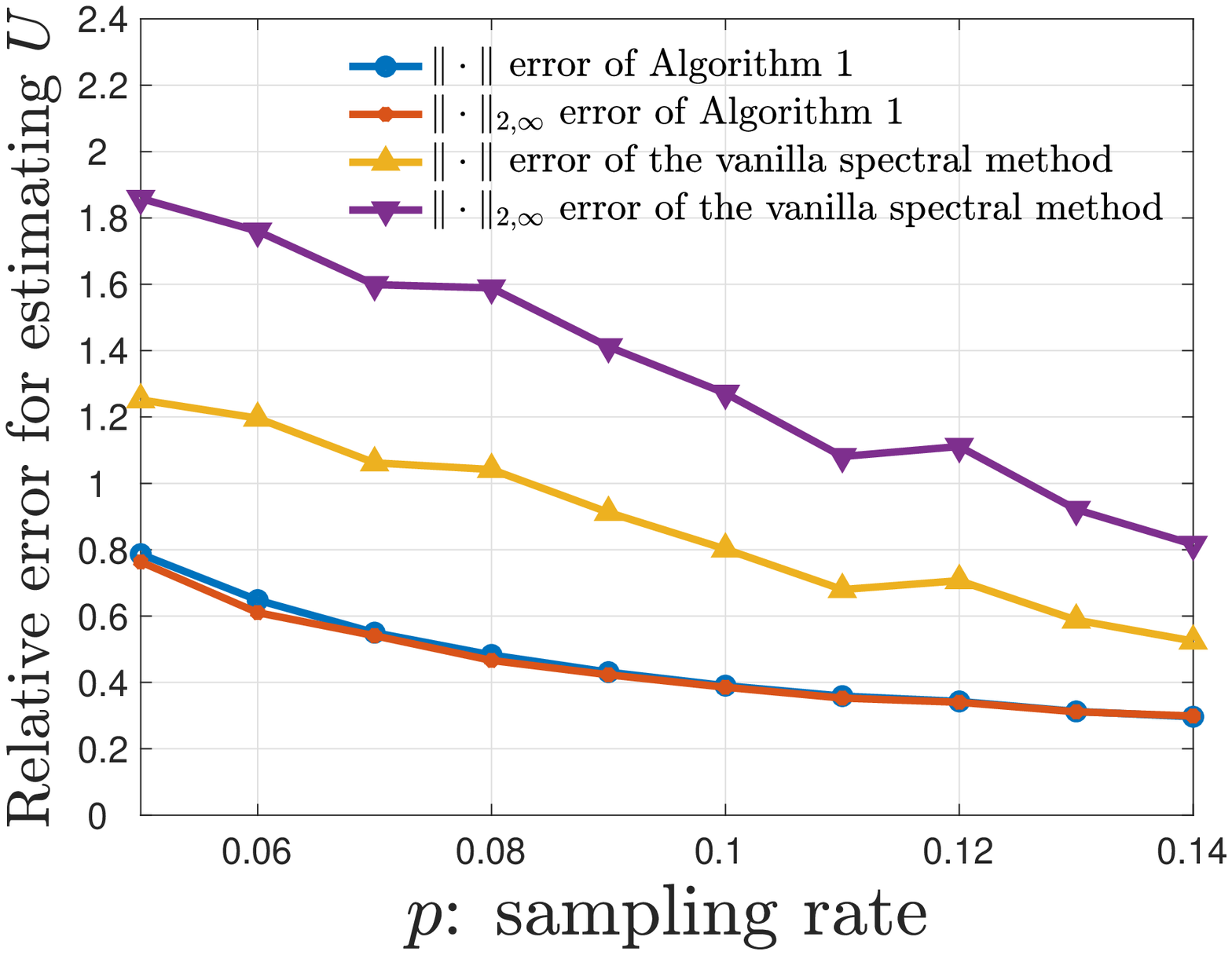} & \includegraphics[width=0.31\textwidth]{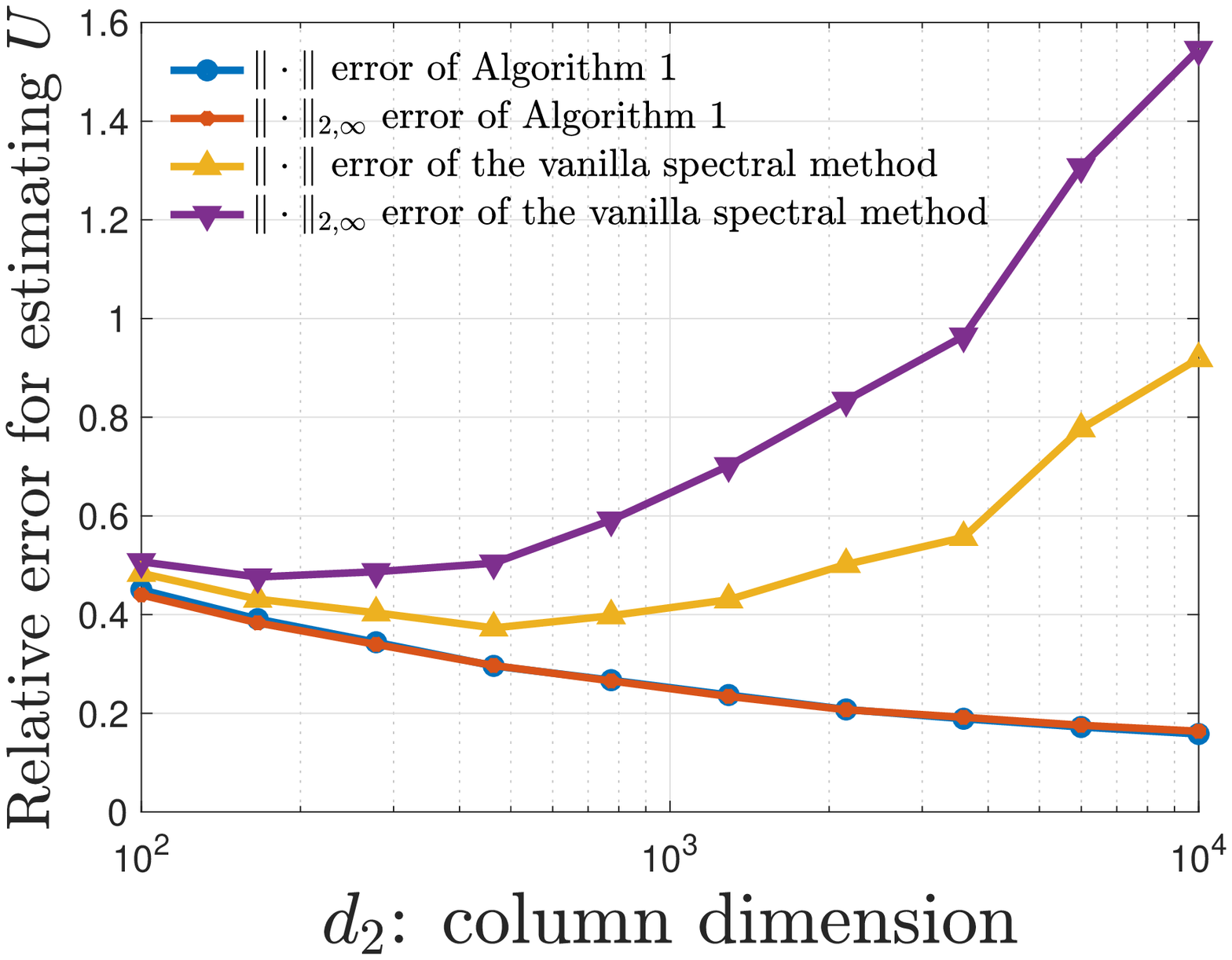} & \includegraphics[width=0.31\textwidth]{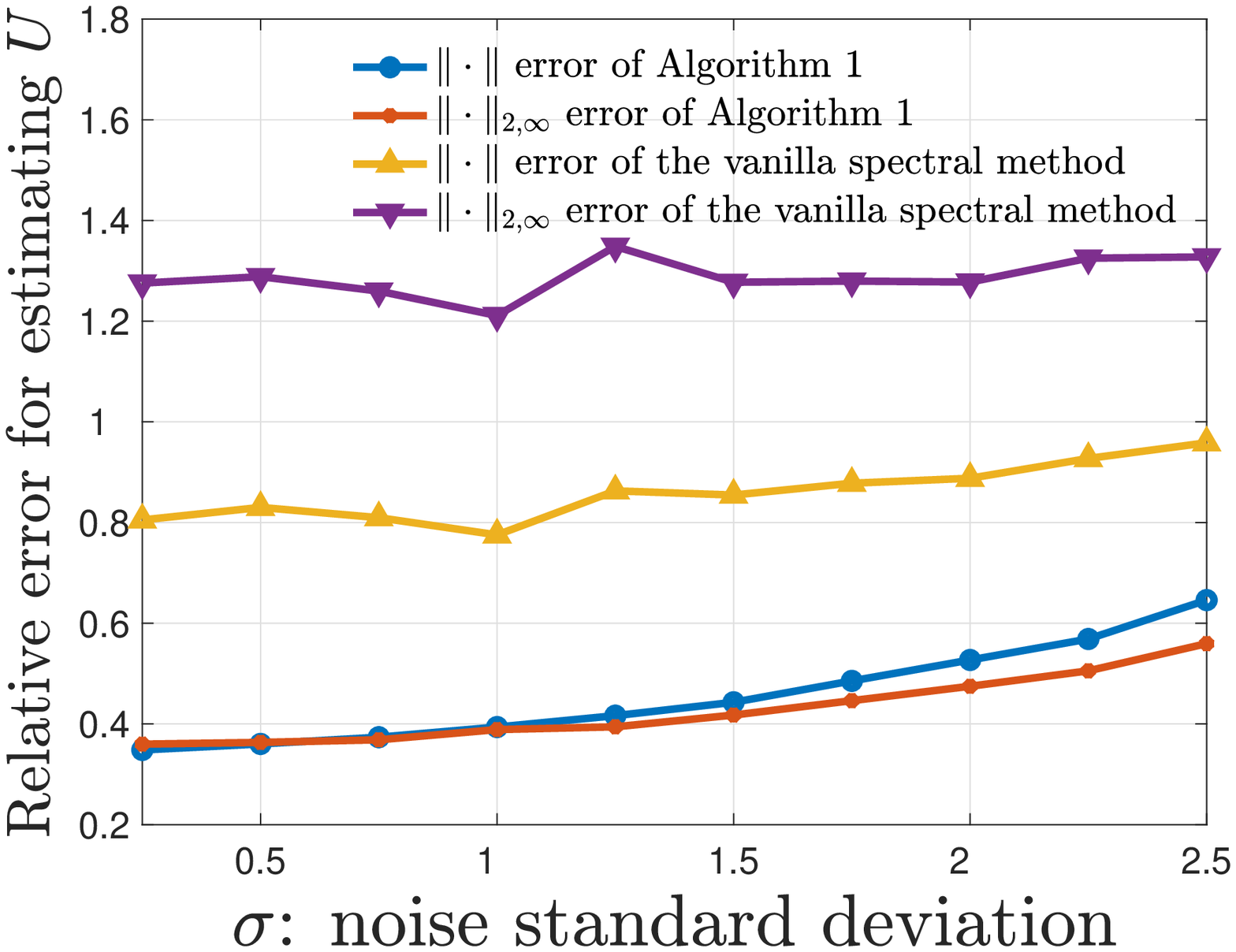}\tabularnewline
(a) & (b) & (c)\tabularnewline
\end{tabular}
\caption{Relative estimation errors of the subspace estimate $\bm{U}$ for
both Algorithm~\ref{alg:spectral} and the vanilla spectral method.
The results are reported for (a) relative error vs.~sampling rate
$p$ (where $d_{1}=100$, $d_{2}=1000$, $r=4$, $\sigma=1$), (b)
relative error vs.~column dimension $d_{2}$ (where $d_{1}=100$,
$r=4$, $\sigma=1$, $p=\frac{2r\log\left(d_{1}+d_{2}\right)}{\sqrt{d_{1}d_{2}}}$),
and (c) relative error vs.~noise standard deviation $\sigma$ (where
$d_{1}=100$, $d_{2}=1000$, $r=4$, $p=0.1$). \label{fig:subspace-est}}
\end{figure}

\begin{figure}[t]
\centering
\begin{tabular}{ccc}
\includegraphics[width=0.31\textwidth]{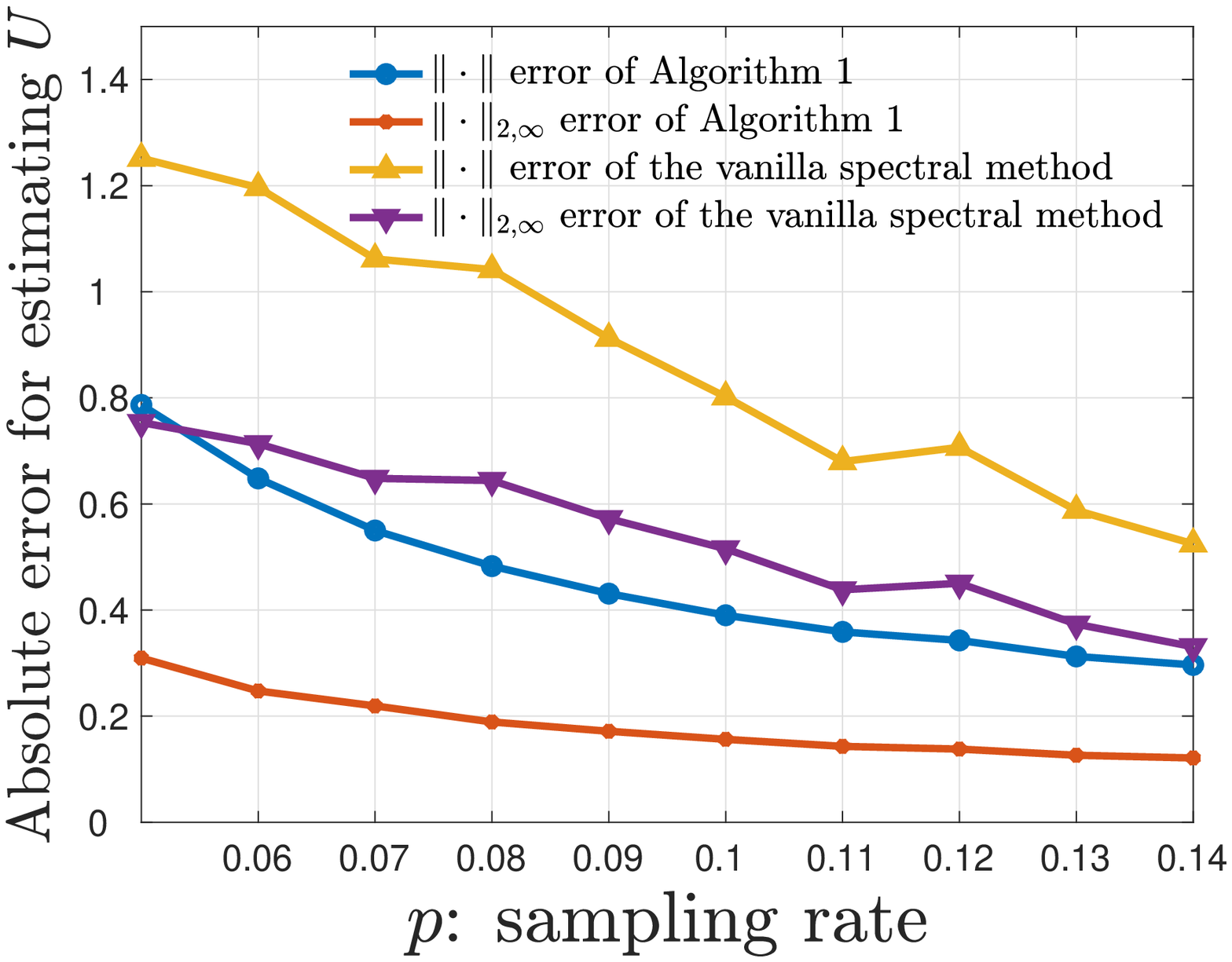} & \includegraphics[width=0.31\textwidth]{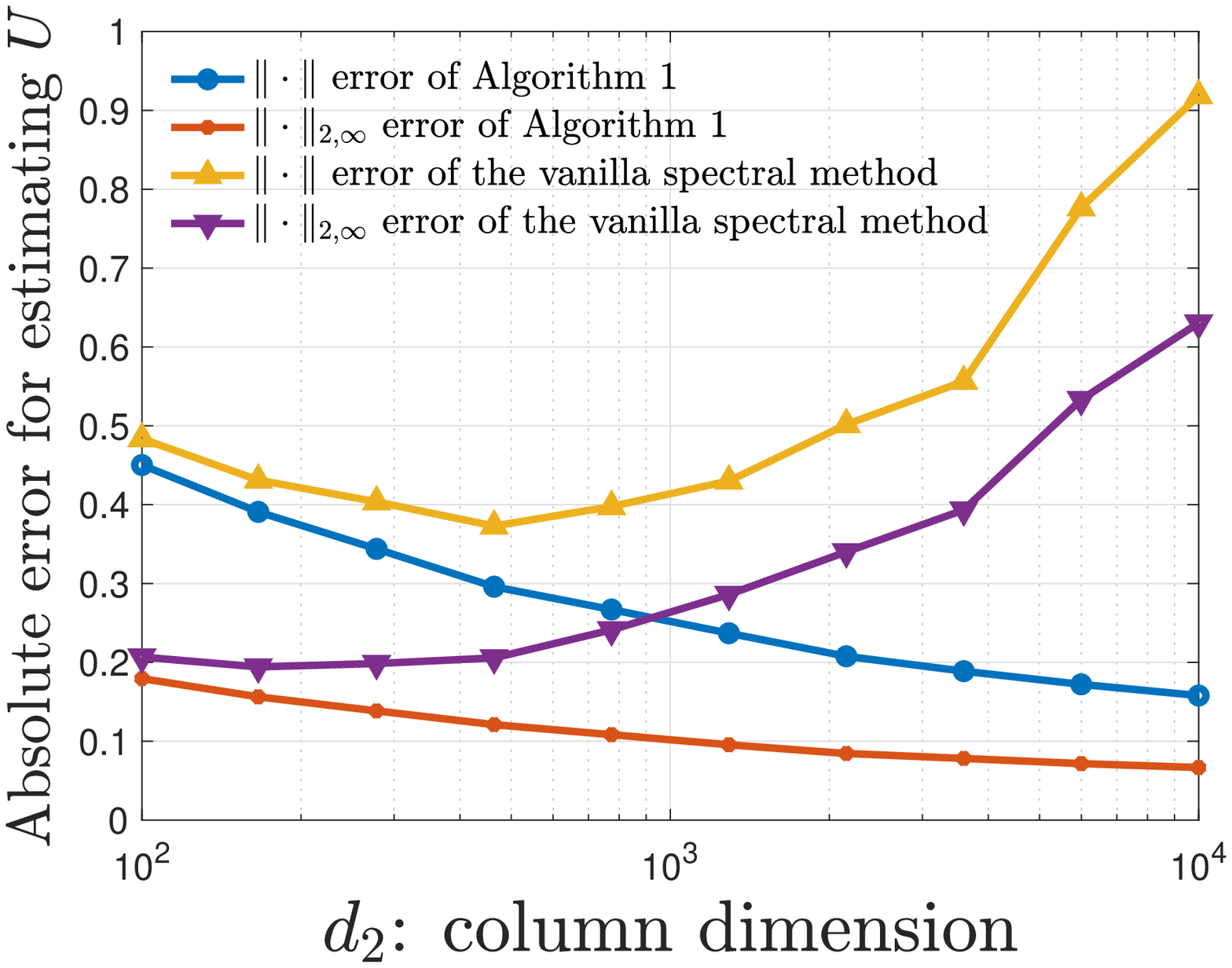} & \includegraphics[width=0.31\textwidth]{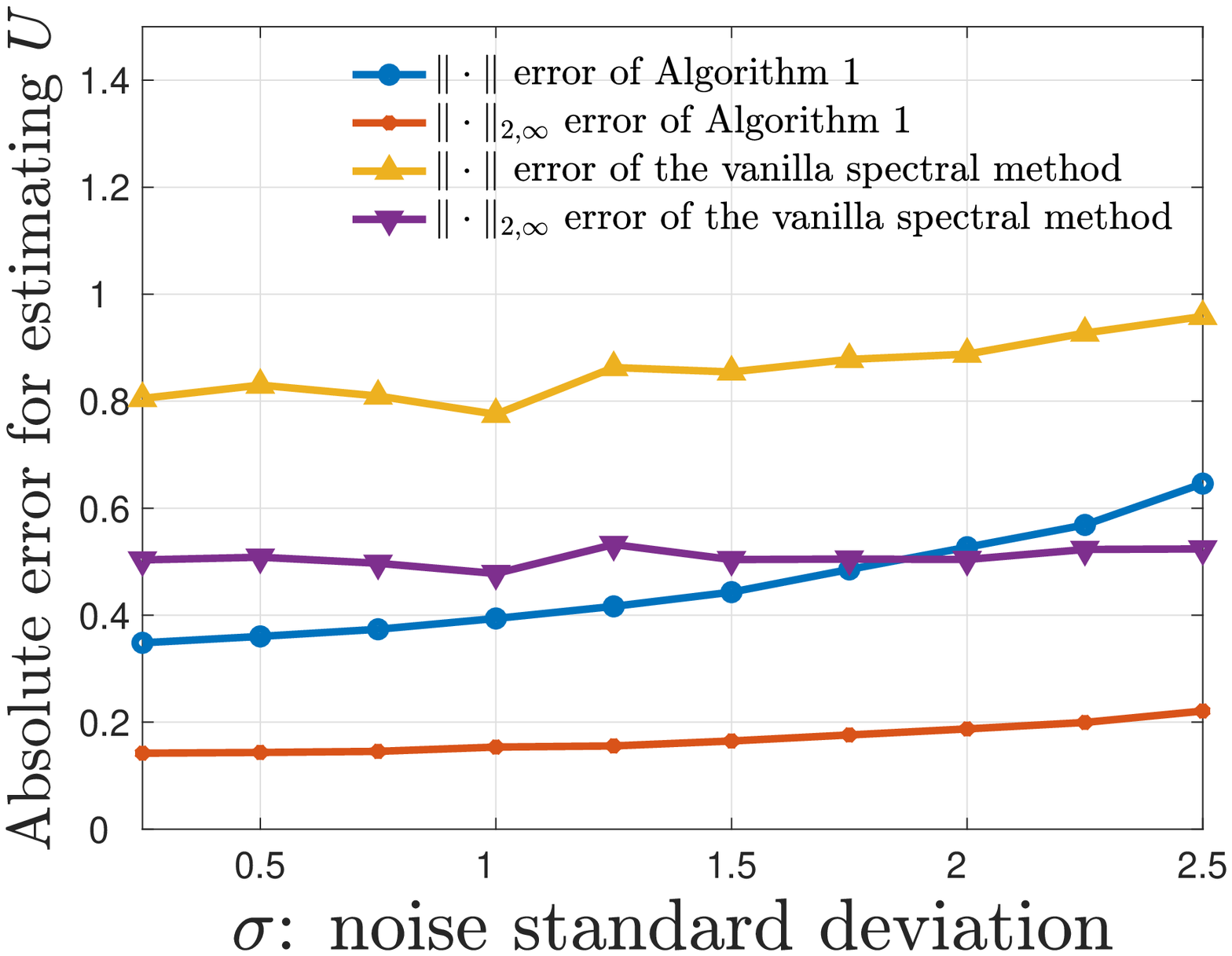}\tabularnewline
(a) & (b) & (c)\tabularnewline
\end{tabular}
\caption{Absolute estimation errors of the subspace estimate $\bm{U}$ for
both Algorithm~\ref{alg:spectral} and the vanilla spectral method.
The results are reported for (a) absolute error vs.~sampling rate
$p$ (where $d_{1}=100$, $d_{2}=1000$, $r=4$, $\sigma=1$), (b)
absolute error vs.~column dimension $d_{2}$ (where $d_{1}=100$,
$r=4$, $\sigma=1$, $p=\frac{2r\log\left(d_{1}+d_{2}\right)}{\sqrt{d_{1}d_{2}}}$),
and (c) absolute error vs.~noise standard deviation $\sigma$ (where
$d_{1}=100$, $d_{2}=1000$, $r=4$, $p=0.1$). \label{fig:subspace-est-abs}}
\end{figure}

\paragraph{Tensor completion from noise data.} Next, we consider
numerically the problem of tensor completion from noisy observations
of its entries. Recall the notations in Section~\ref{subsec:tensor-completion}.
We generate $\bm{W}^{\star}\in\mathbb{R}^{d\times r}$ with i.i.d.~standard
Gaussian entries, and generate $N_{i,j,k}\overset{\mathrm{i.i.d.}}{\sim}\mathcal{N}\left(0,\sigma^{2}\right)$
independently for each $\left(i,j,k\right)\in\left[d\right]^{3}$.
Figure~\ref{fig:tensor}(a) and Figure \ref{fig:tensor}(b) illustrate the
relative estimation errors of the subspace estimate $\bm{U}$ vs.~the
sampling rate $p$ and noise standard deviation $\sigma$, respectively.
Encouragingly, Figure~\ref{fig:tensor} shows that Algorithm~\ref{alg:spectral-tc}
accurately recovers the subspace spanned by the tensor factors of interest (with
respect to both the spectral norm and the $\ell_{2,\infty}$ norm); in particular,
it is capable of producing faithful subspace estimates even when the
vanilla spectral method fails.

\begin{figure}[t]
\centering

\begin{tabular}{cc}
\includegraphics[width=0.33\textwidth]{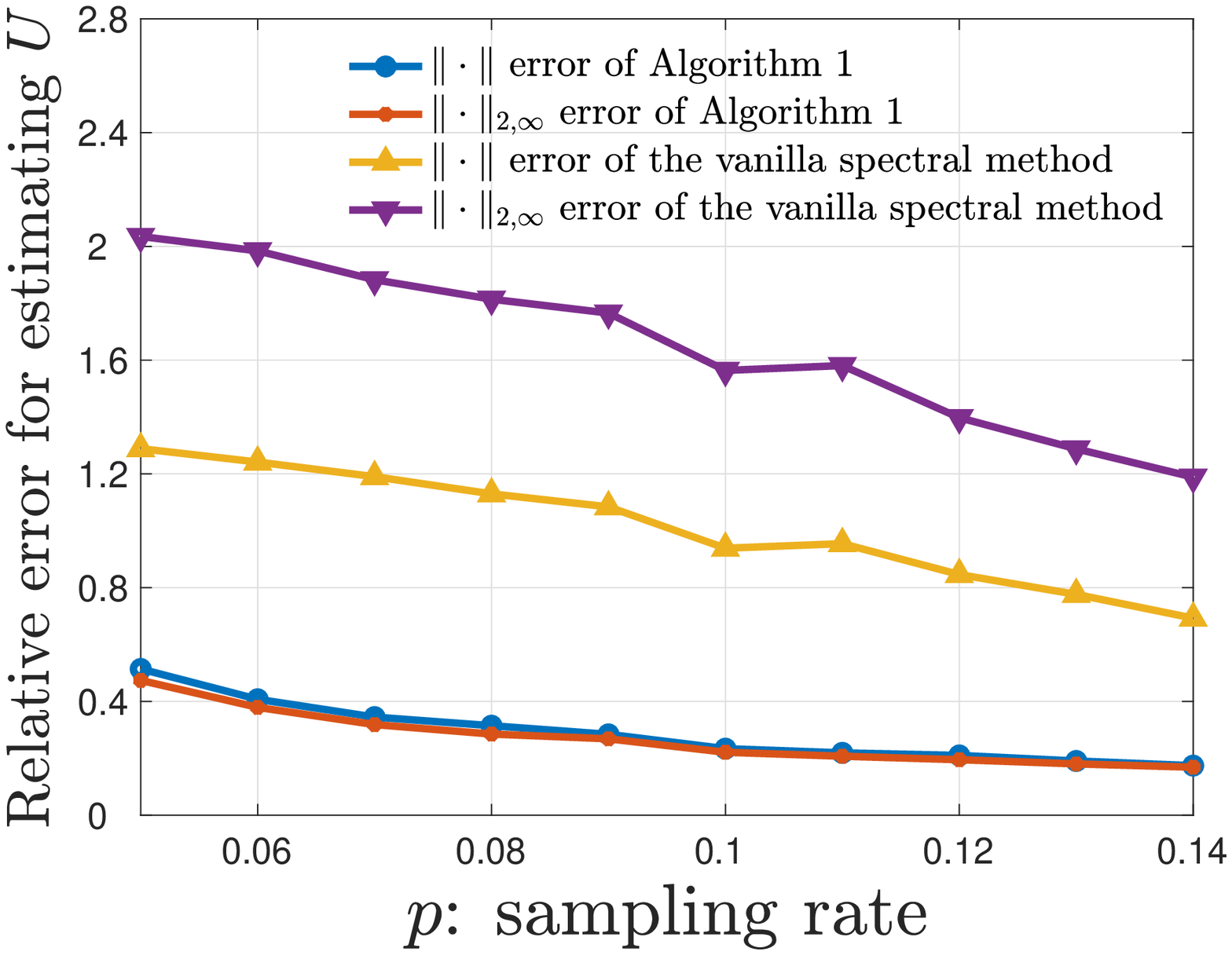} & \includegraphics[width=0.33\textwidth]{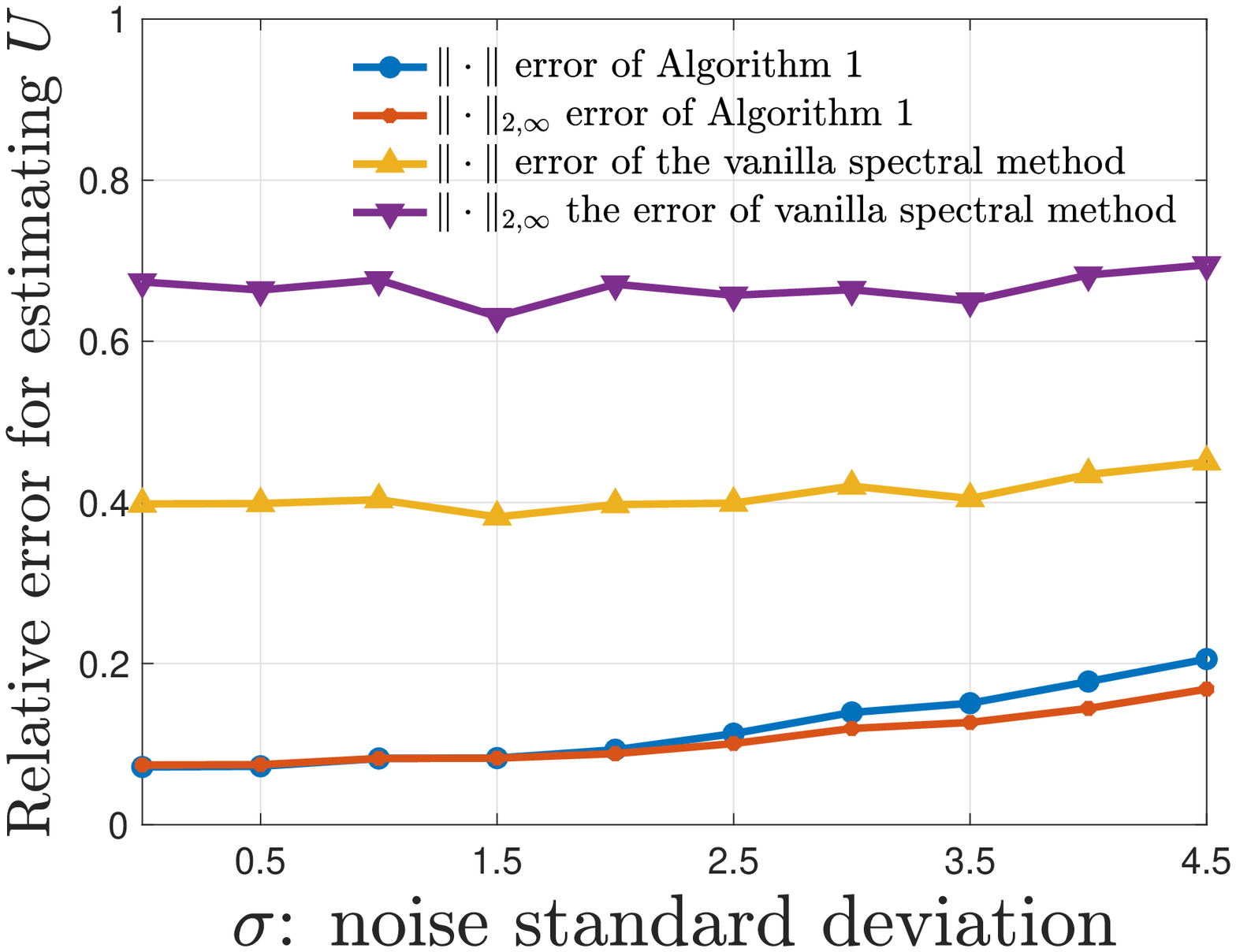}\tabularnewline
(a) & (b)\tabularnewline
\end{tabular}

\caption{Relative estimation errors of the subspace $\bm{U}$ spanned by tensor components in tensor completion. The results are
plotted for (a) relative error vs.~sampling rate $p$ (where $d=100$,
$r=4$, $\sigma=2$), and (b) relative error vs.~noise standard deviation
$\sigma$ (where $d=100$, $r=4$, $p=0.1$).\label{fig:tensor}}
\end{figure}

\paragraph{Covariance estimation with missing data.} The next series
of experiments is concerned with covariance estimation with missing
data. Recall the notations in Section~\ref{subsec:Covariance-estimation}.
We draw $\bm{x}_{i}\overset{}{\sim}\mathcal{N}\left(\bm{{0}},\bm{\Sigma}^{\star}\right)$
independently with $\bm{\Sigma}^{\star}=\bm{U}^{\star}\bm{U}^{\star\top}$,
where $\bm{U}^{\star}\in\mathbb{R}^{d\times r}$ is a i.i.d.~standard
Gaussian random matrix in $\mathbb{R}^{d\times r}$, and $\bm{\varepsilon}_{i}\overset{\mathrm{i.i.d.}}{\sim}\mathcal{N}\left(\bm{{0}},\sigma^{2}\bm{I}_{d}\right)$
for each $1\leq i\leq n$. We first consider the estimation error
of the subspace. The numerical estimation errors of the estimate $\bm{U}$
vs.~the sampling rate $p$, the sample size $n$ and the noise standard
deviation $\sigma$ are plotted in Figure \ref{fig:cov-est}(a) -- Figure \ref{fig:cov-est}(c),
respectively. We then turn to the estimation accuracy of the covariance
matrix. The numerical estimation errors of the estimate $\bm{S}$
of Algorithm~\ref{alg:spectral-ce} and the vanilla spectral method vs.~the sampling rate $p$, the
sample size $n$ and the noise standard deviation $\sigma$ are plotted
in Figure \ref{fig:cov-est-matrix-alg1},
respectively.  Similar to previous experiments,
Algorithm~\ref{alg:spectral-ce} produces reliable estimates both in
terms of the spectral norm, the $\ell_{2,\infty}$ norm and the $\ell_{\infty}$
norm accuracy.

\begin{figure}[t]
\centering

\begin{tabular}{ccc}
\includegraphics[width=0.31\textwidth]{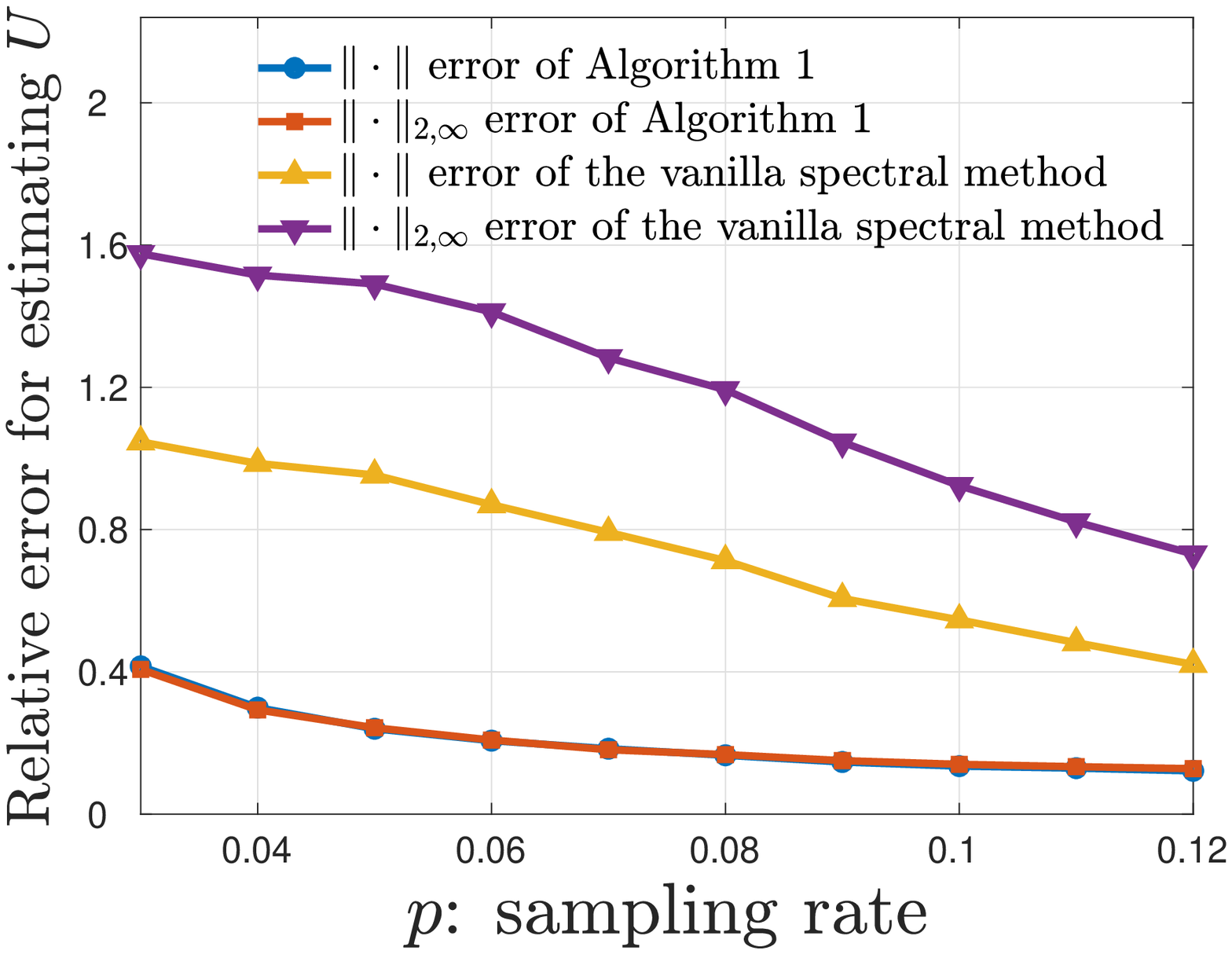} & \includegraphics[width=0.31\textwidth]{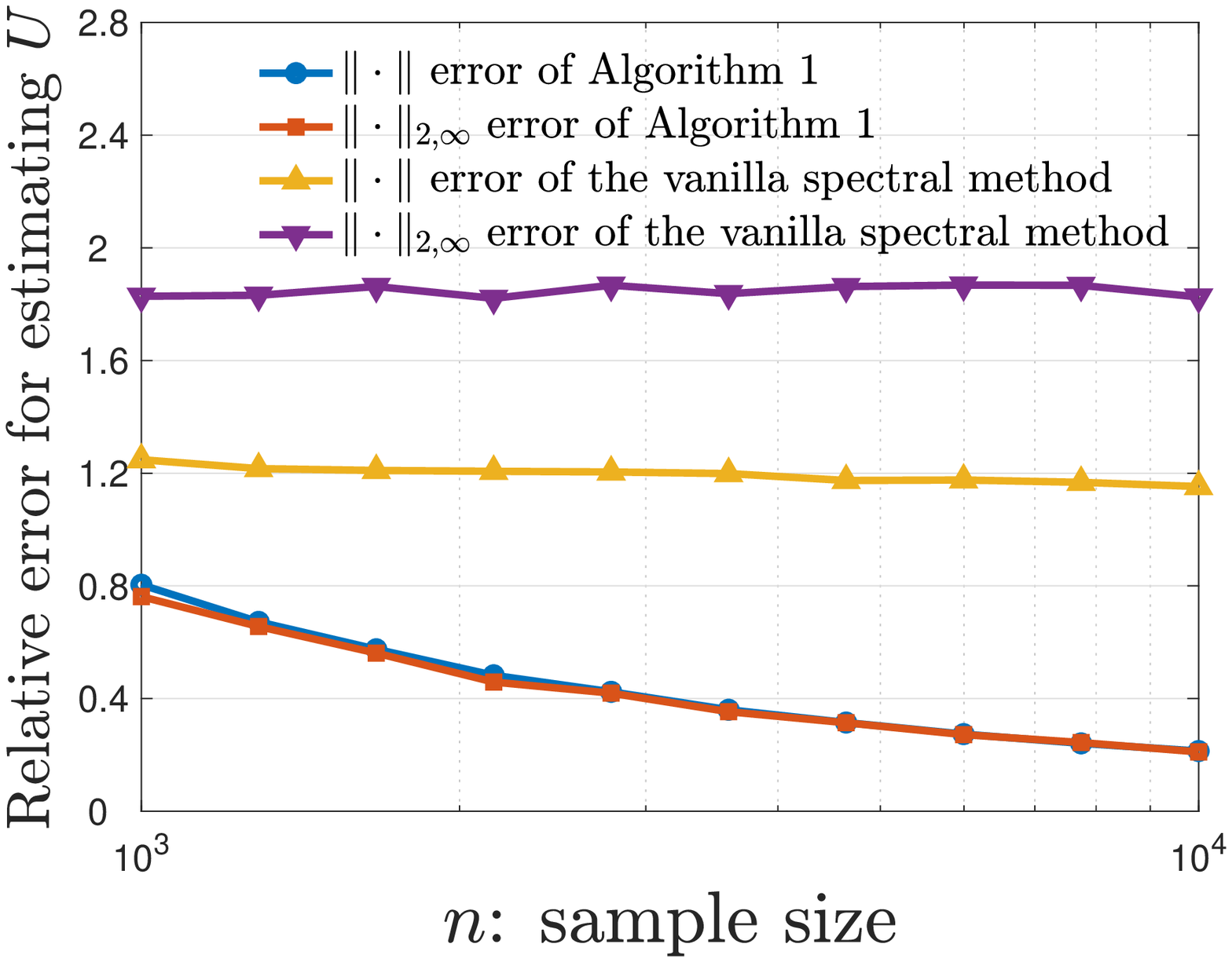} & \includegraphics[width=0.31\textwidth]{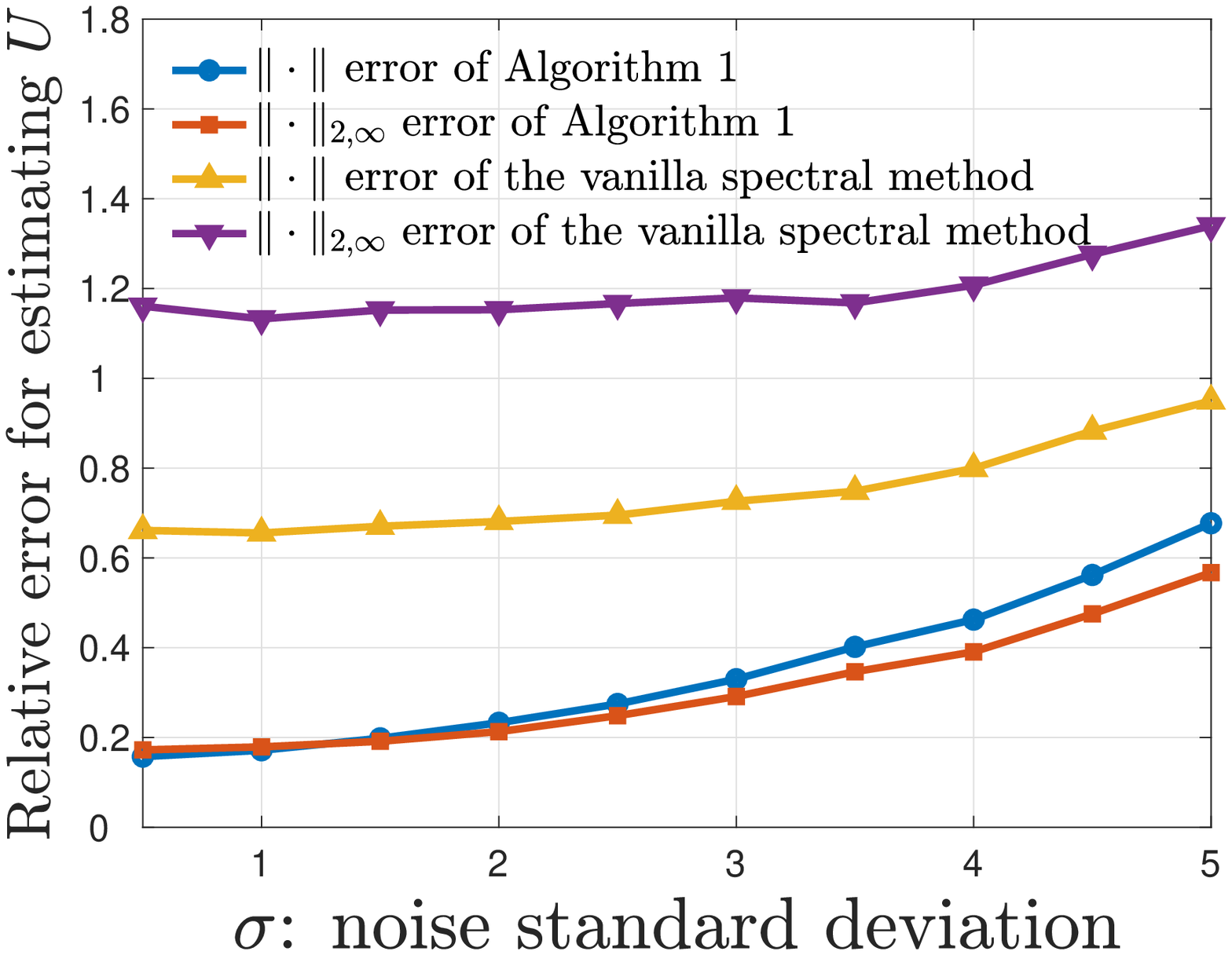}\tabularnewline
(a) & (b) & (c)\tabularnewline
\end{tabular}

\caption{Relative error of the estimate $\bm{U}$ for covariance estimation
with missing data. The results are shown for (a) relative error vs.~sampling
rate $p$ (where $d=100$, $n=5000$, $r=4$, $\sigma=1$), (b) relative
error vs.~sample size $n$ (where $d=100$, $r=4$, $\sigma=1$,
$p=0.05$), and (c) relative error vs.~noise standard deviation $\sigma$
(where $d=100$, $n=5000$, $r=4$, $p=0.1$). \label{fig:cov-est}}
\end{figure}

\begin{figure}[t]
\centering

\begin{tabular}{ccc}
\includegraphics[width=0.31\textwidth]{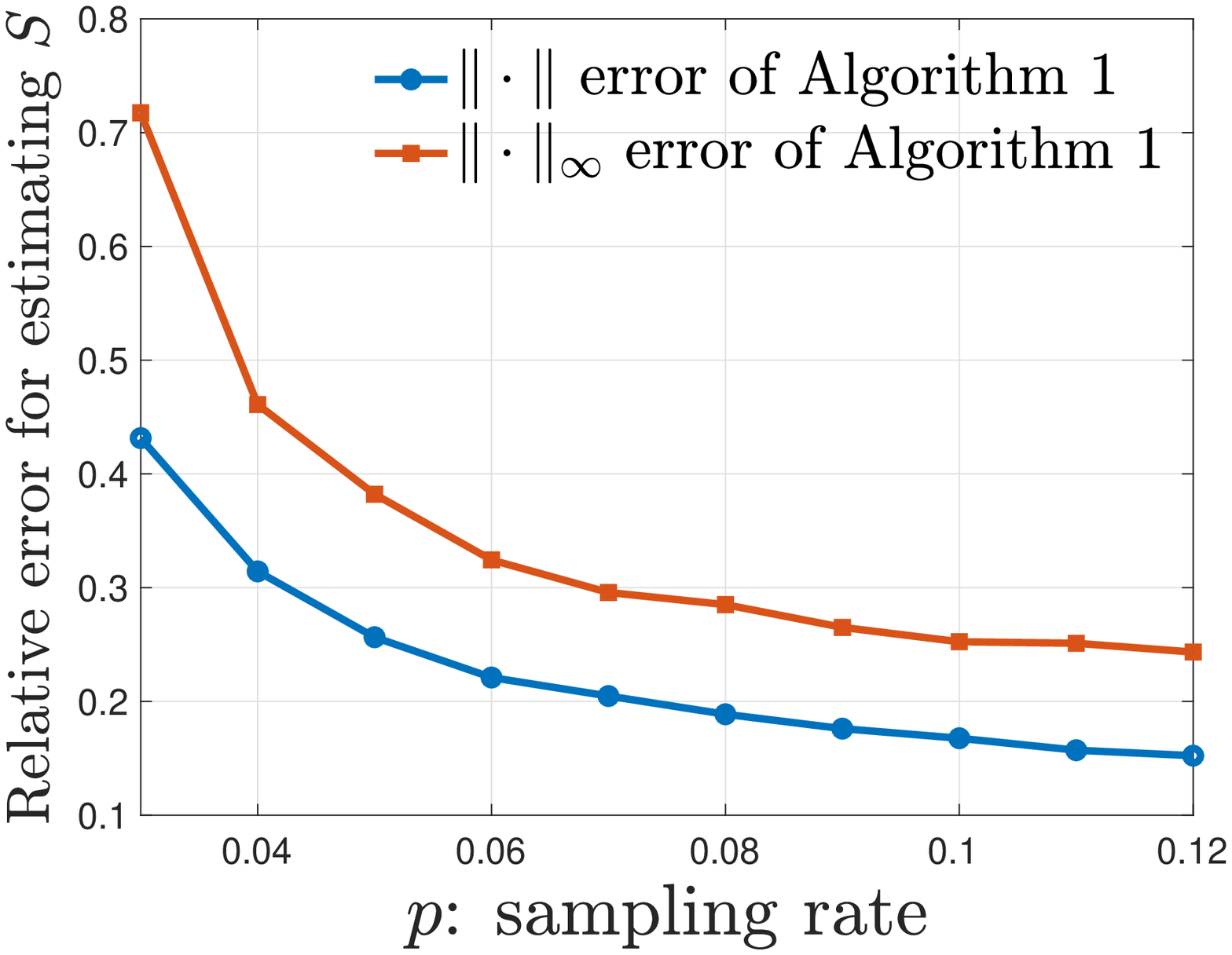} & \includegraphics[width=0.31\textwidth]{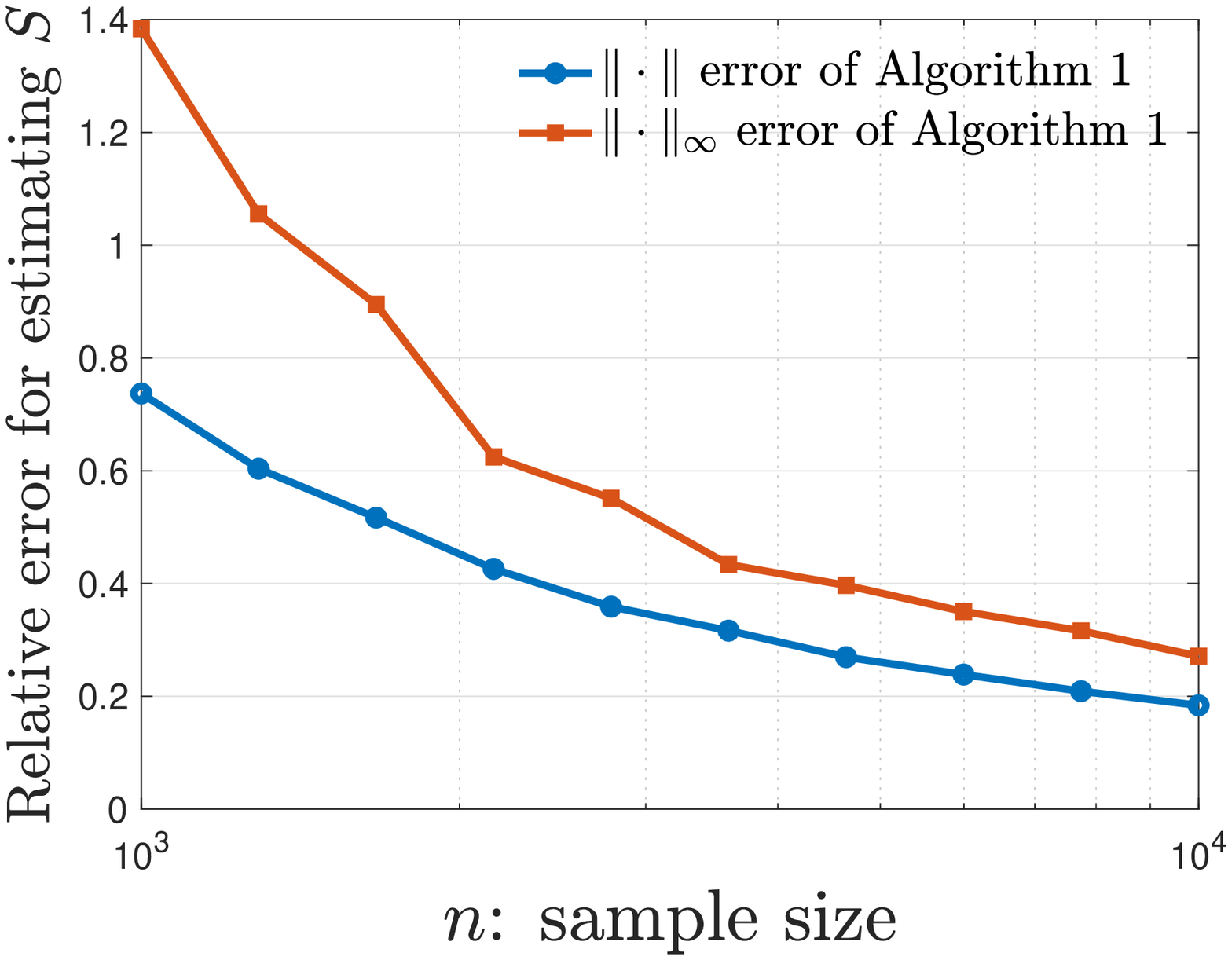} & \includegraphics[width=0.31\textwidth]{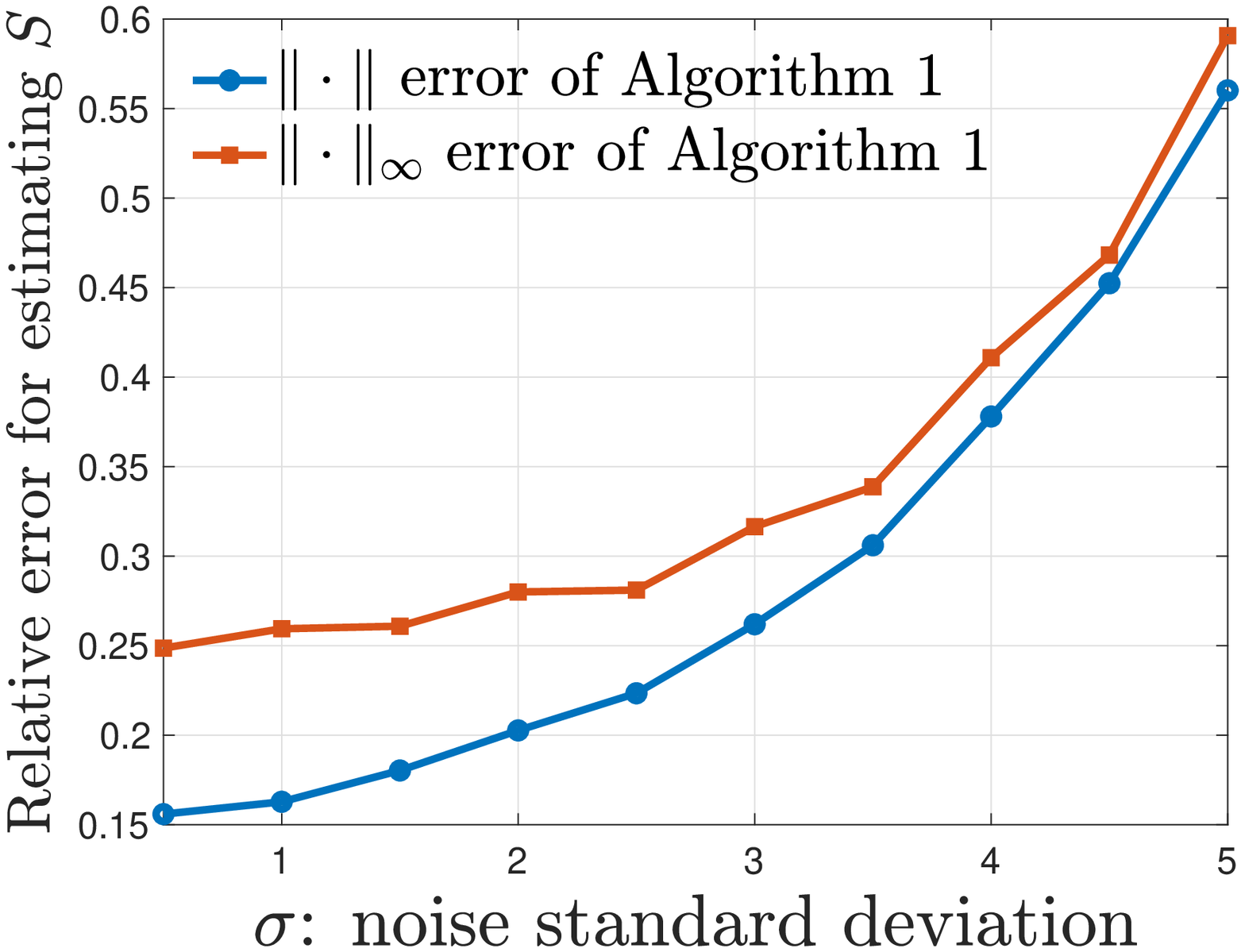}\tabularnewline
(a) & (b) & (c)\tabularnewline
\includegraphics[width=0.31\textwidth]{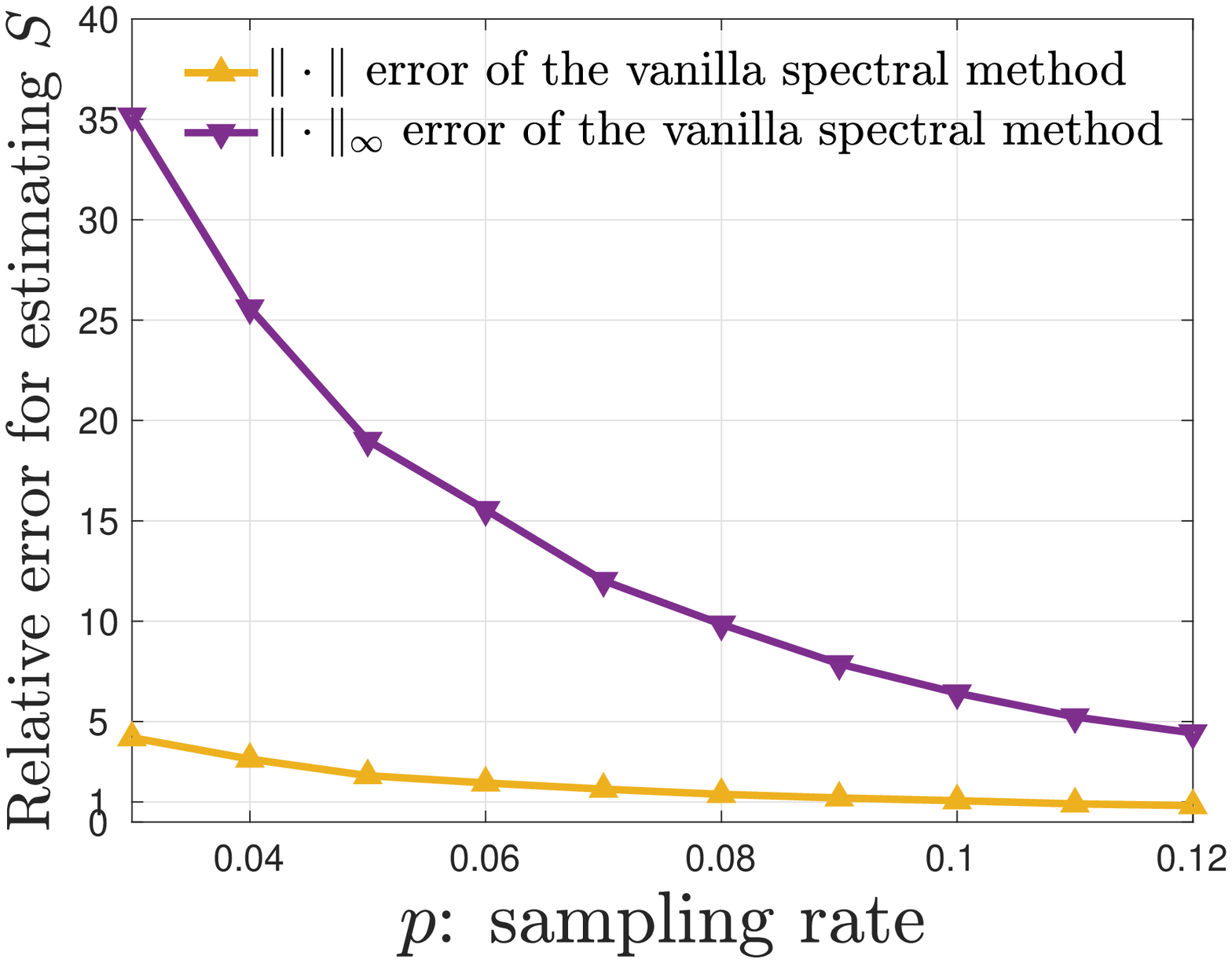} & \includegraphics[width=0.31\textwidth]{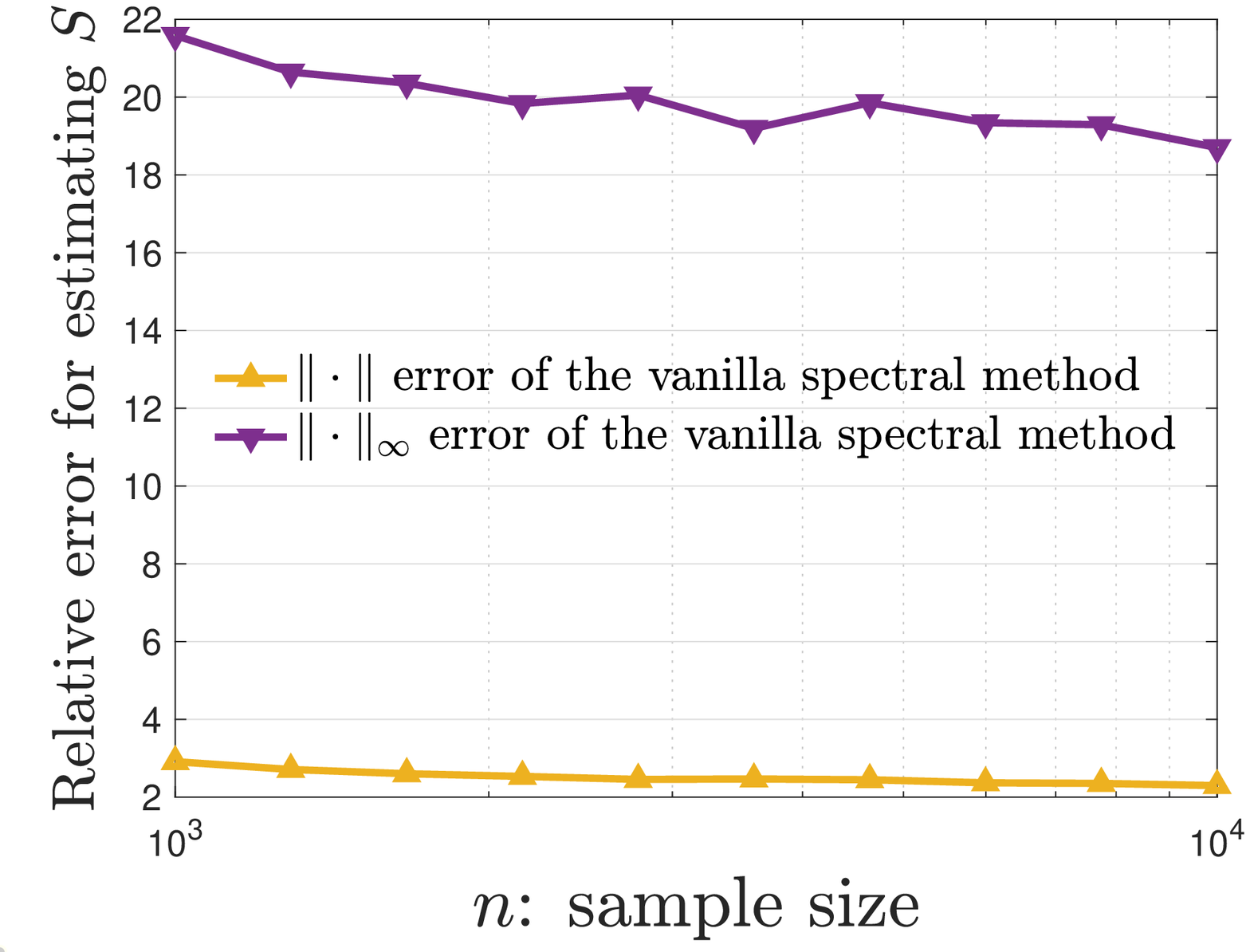} & \includegraphics[width=0.31\textwidth]{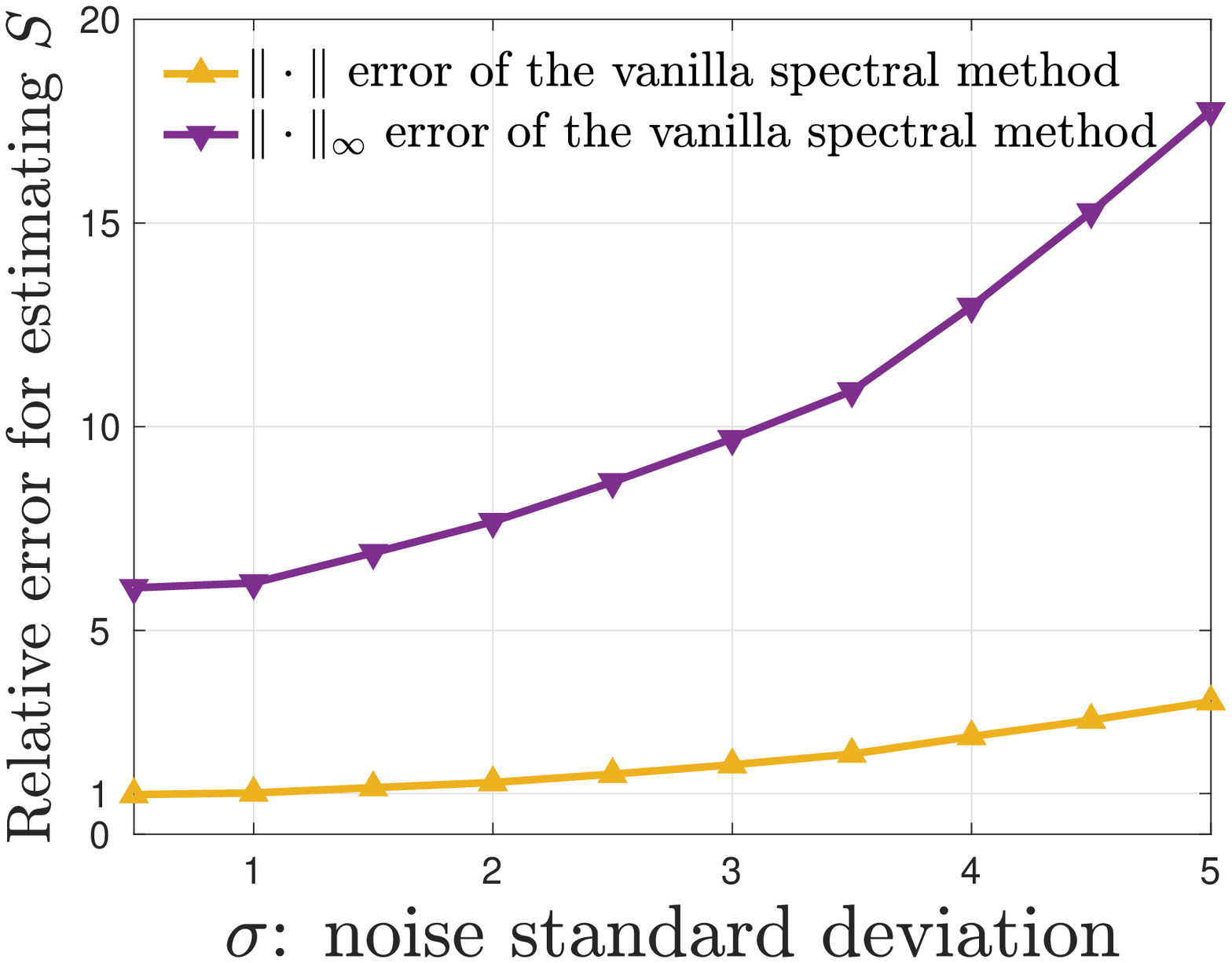}\tabularnewline
(d) & (e) & (f)\tabularnewline
\end{tabular}

\caption{Relative error of the estimate $\bm{S}$ of Algorithm~\ref{alg:spectral-ce} (top row) and the vanilla spectral method (bottom row)
for covariance estimation with missing data. The results are shown
for (a,d) relative error vs.~sampling rate $p$ (where $d=100$, $n=5000$,
$r=4$, $\sigma=1$), (b,e) relative error vs.~sample size $n$ (where
$d=100$, $r=4$, $\sigma=1$, $p=0.05$), and (c,f) relative error
vs.~noise standard deviation $\sigma$ (where $d=100$, $n=5000$,
$r=4$, $p=0.1$). It is worthnoting the different scales of the $y$-axis when plotting the errors of the two algorithms. \label{fig:cov-est-matrix-alg1}}
\end{figure}

%
%

\paragraph{Community recovery in bipartite stochastic block model.}
Finally, we conduct numerical experiments for community recovery in
bipartite stochastic block models. The parameters are chosen to be
$q_{\mathsf{in}}=\frac{a\log\left(n_{u}+n_{v}\right)}{\sqrt{n_{u}n_{v}}}$
and $q_{\mathsf{out}}=\frac{b\log\left(n_{u}+n_{v}\right)}{\sqrt{n_{u}n_{v}}}$
for some constants $a>b>0$. Figure~\ref{fig:BSBM}(a) reveals a
phase transition phenomenon concerned with exact community recovery.
As can be seen, Algorithm~\ref{alg:spectral-bsbm} always succeeds in
achieving exact recovery once $a$ --- or equivalently $q_{\mathsf{in}}$
--- exceeds a certain threshold, which outperforms the vanilla spectral
method. In Figure~\ref{fig:BSBM}(b), we vary the number $n_{v}$
nodes in $\mathcal{V}$ and plot the empirical success rates for exact
recovery. The advantage of Algorithm~\ref{alg:spectral-bsbm} compared
to the vanilla spectral method can be clearly seen from the plot.

\begin{figure}[t]
\centering

\begin{tabular}{cc}
\includegraphics[width=0.33\textwidth]{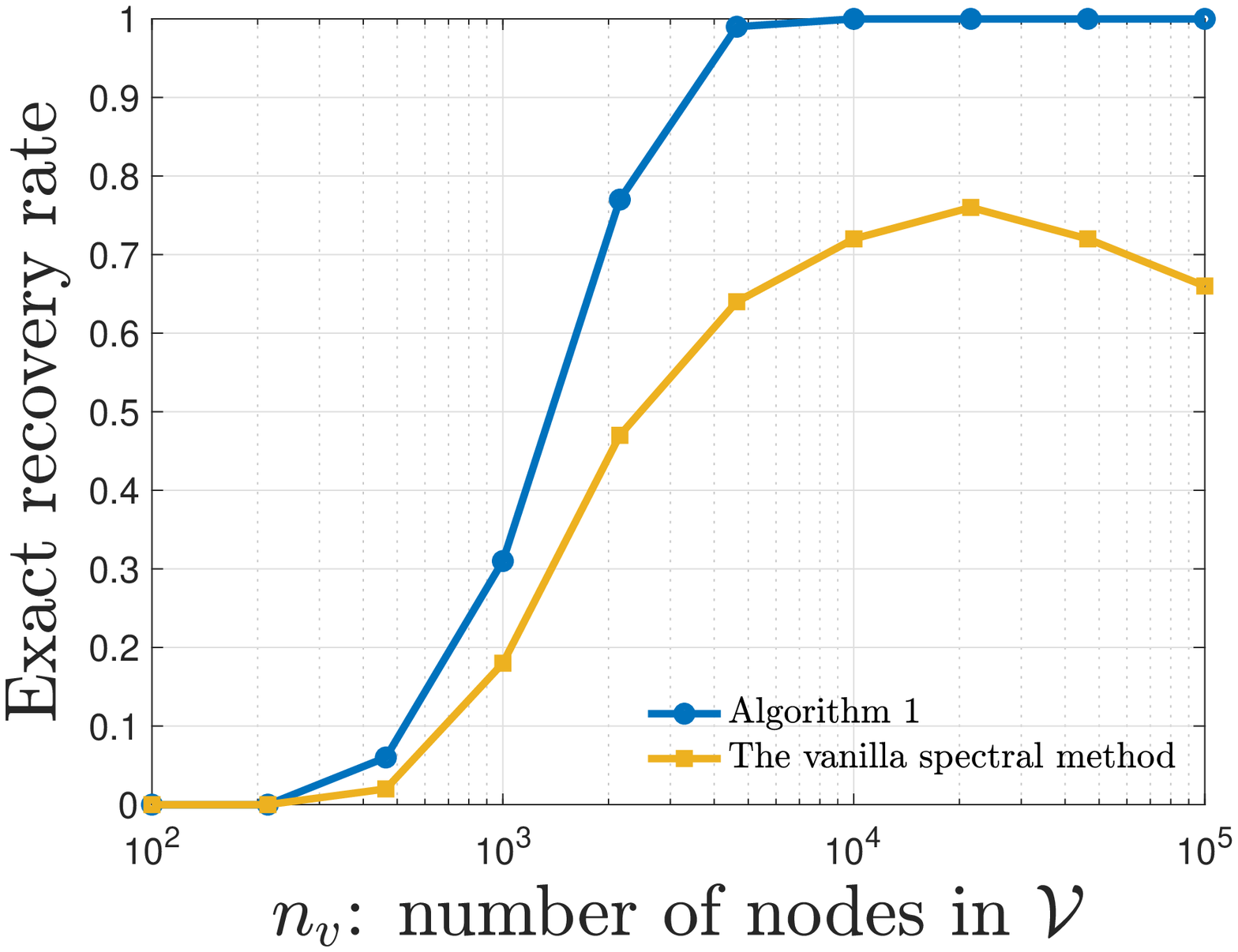} & \includegraphics[width=0.33\textwidth]{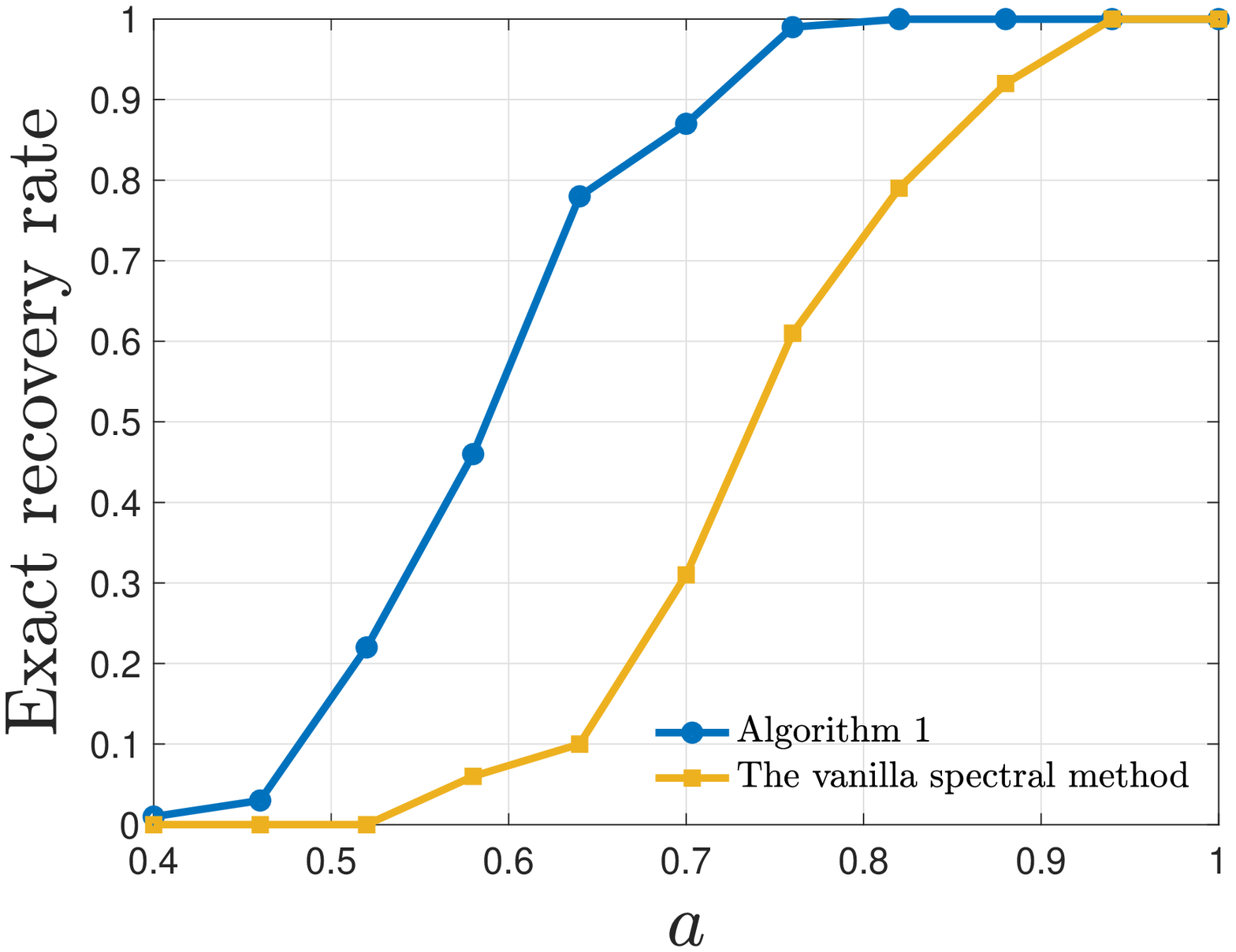}\tabularnewline
(a) & (b)\tabularnewline
\end{tabular}

\caption{Empirical success rates for exact community recovery in bipartite
stochastic block models, where $q_{\mathsf{in}}=\frac{a\log\left(n_{u}+n_{v}\right)}{\sqrt{n_{u}n_{v}}}$
and $q_{\mathsf{out}}=\frac{b\log\left(n_{u}+n_{v}\right)}{\sqrt{n_{u}n_{v}}}$.
The results are shown for (a) empirical success rate vs.~the number
$n_{v}$ of nodes in $\mathcal{V}$ (where $n_{u}=100$, $a=0.8$,
$b=0.01$), and (b) empirical exact recovery rate vs.~$a$ (where
$n_{u}=100$, $n_{v}=10000$, $b=0.01$). \label{fig:BSBM}}
\end{figure}

\section{Further related work \label{sec:Prior-art}}

A natural class of spectral algorithms to estimate the leading singular
subspace of a matrix --- when given a noisy and sub-sampled copy
of the true matrix --- is to compute the leading left singular subspace
of the observed data matrix. Despite the simplicity of this idea,
this type of spectral methods provably achieves appealing performances
for multiple statistical problems when the true matrix is (nearly)
square. A partial list of examples includes low-rank matrix estimation
and completion \cite{KesMonSew2010,jain2013low,chatterjee2015matrix,chen2015fast,ma2017implicit},
community detection \cite{rohe2011spectral,yun2014accurate,abbe2017entrywise,lei2019unified},
and synchronization and alignment \cite{singer2011angular,abbe2017entrywise,chen2016projected,shen2016normalized}.
The $\ell_{2}$ statistical analysis of such algorithms relies heavily
on the matrix perturbation theory such as the Davis-Kahan $\sin\bm{\Theta}$
theorem \cite{davis1970rotation}, the Wedin theorem \cite{wedin1972perturbation},
and their extensions \cite{vu2011singular,yu2014useful,o2018random,cai2018rate,chi2018nonconvex,zhang2018heteroskedastic}.

However, the above-mentioned approach might lead to sub-optimal performance
when the row dimension and the column dimension of the matrix differ
dramatically. This issue has already been recognized in multiple contexts,
including but not limited to unfolding-based spectral methods for
tensor estimation \cite{hopkins2016fast,xia2017polynomial,montanari2018spectral,xia2019sup,zhang2018tensor}
and spectral methods for biclustering \cite{florescu2016spectral}.
Motivated by this sub-optimality issue, an alternative is to look
at the ``sample Gram matrix'' which, as one expects, shares the
same leading left singular space as the original observed data matrix.
However, in the highly noisy or highly subsampled regime, the diagonal entries
of the sample Gram matrix are highly biased, thus requiring special
care. Several different treatments of diagonal components have been
adopted for different contexts, including proper rescaling \cite{montanari2018spectral,lounici2014high,gonen2016subspace},
deletion \cite{florescu2016spectral}, and iterative updates \cite{zhang2018heteroskedastic}.
The deletion strategy is perhaps the simplest of this kind, as it
does not require estimation of noise parameters. We note, however,
that performing more careful iterative updates might be beneficial
for certain heteroskedastic noise scenarios; see \cite{zhang2018heteroskedastic}
for detailed discussions.

An important application of our work is the problem of tensor completion
and estimation \cite{gandy2011tensor,liu2013tensor,romera2013new,kim2013robust,mu2014square,richard2014statistical,ghassemi2017stark,salmi2009sequential,yuan2016tensor,yuan2017incoherent,hao2018sparse}.
Despite its similarity to matrix completion, tensor completion is
considerably more challenging; for concreteness and simplicity, we
shall only discuss order-3 symmetric rank-$r$ tensors in $\mathbb{R}^{d^{3}}$.
Motivated by the success of matrix completion, a simple strategy for
tensor completion\,/\,denoising is to unfold the observed tensor
into a $d\times d^{2}$ matrix and to apply standard matrix completion
methods for completion. However, existing statistical guarantees derived
in the matrix completion literature \cite{candes2009exact,KesMonSew2010,Gross2011recovering}
do not lead to useful bounds unless the sample size exceeds the order
of $rd^{2}$, which far exceeds the requirement for other methods
such as the sum-of-squares (SOS) hierarchy \cite{barak2016noisy,potechin2017exact}.
The work by \cite{montanari2018spectral} demonstrates that unfolding-based
spectral algorithms can also lead to useful estimates under minimal sample complexity, as
long as we look at the ``Gram matrix'' instead. In addition, such
spectral algorithms also play an important role in initializing
other nonconvex optimization methods \cite{xia2017polynomial,xia2017statistically,cai2019nonconvex,cai2020uncertainty}.

In addition, there is an enormous literature on covariance estimation
and principal component analysis (PCA). For instance, the classical
spiked covariance model \cite{johnstone2001distribution} has been
extensively studied; in particular, the high-dimensional setting has
inspired much investigation from both algorithmic and analysis perspectives
\cite{johnstone2009consistency,paul2007asymptotics,bickel2008covariance,nadler2008finite,cai2012adaptive,ma2013sparse,cai2013sparse,cai2015optimal}.
More recently, a computationally efficient algorithm called \emph{HeteroPCA} has been proposed by \cite{zhang2018heteroskedastic} to achieve rate-optimal statistical guarantees for PCA in the presence of heteroskedastic noise. 
When it comes to incomplete data, a variety of methods have been
introduced \cite{kiers1997weighted,elsener2019sparse,josse2012handling}.
For instance, Lounici considered estimating the top eigenvector
in the setting of sparse PCA in \cite{lounici2013sparse}, and further 
proposed an estimator for the covariance matrix in \cite{lounici2014high}. In \cite{cai2016minimax},
bandable and sparse covariance matrices are considered. In addition, most of the prior work considered uniform random subsampling, and the recent work \cite{zhu2019high,pavez2019covariance} began to account for heterogeneous missingness patterns. 

Turning to the problem of community recovery or graph clustering,
we note that extensive research has been carried out on stochastic
block models or censored block models, which can be viewed as special
cases of uni-partite networks \cite{mossel2014consistency,massoulie2014community,abbe2015exact,mossel2015reconstruction,hajek2016achieving,chen2014clustering,chen2016community,chin2015stochastic,guedon2016community,chen2016information,javanmard2016phase,chen2018convexified,cai2015robust,gao2017achieving}.
The algorithms that enable exact community recovery in these block
models include two-stage approaches \cite{abbe2015exact,mossel2014consistency}
and semidefinite programming \cite{hajek2016achieving,amini2018semidefinite,agarwal2017multisection,bandeira2018random,guedon2016community}.
In addition, spectral clustering algorithms have been extensively
studied as well \cite{coja2006spectral,coja2010graph,rohe2011spectral,yun2014accurate,lei2015consistency,yun2016optimal,abbe2017entrywise,gao2017achieving,vu2018simple,o2018random,sussman2012consistent,lelarge2015reconstruction}.
While this class of algorithms was originally developed to yield almost
exact recovery (e.g.~\cite{abbe2015exact}), the recent work by \cite{abbe2017entrywise,lei2019unified}
uncovered that spectral methods alone are sufficient to achieve optimal
exact community recovery (a.k.a.~achieving strong consistency) for
stochastic block models. The interested reader is referred to \cite{abbe2018community}
for an in-depth overview. Various extensions of the SBMs have been
introduced and studied in the last few years. Our work contributes
to this growing literature by justifying the optimality of spectral
methods in bipartite stochastic block models \cite{feldman2015subsampled,florescu2016spectral,gao2016optimal}.

Further, entrywise statistical analysis has recently received significant
attention for various statistical problems \cite{fan2016ell,abbe2017entrywise,mao2017estimating,chen2017spectral,chen2018asymmetry,zhong2018near,agarwal2019robustness,lei2019unified,eldridge2018unperturbed,cape2019two,cape2019signal,xia2019sup,rudelson2015delocalization,Pananjady2019MDP}.
For instance, entrywise guarantees for spectral methods are obtained
in \cite{chen2018asymmetry,eldridge2018unperturbed} based on an algebraic
Neumann trick, while the results in \cite{zhong2018near,abbe2017entrywise,chen2017spectral}
were established based on a leave-one-out analysis. The work by \cite{koltchinskii2016asymptotics,koltchinskii2016perturbation,chen2018asymmetry}
went one step further by controlling an arbitrary linear form of the
eigenvectors or singular vectors of interest. These results, however,
typically lead to suboptimal performance guarantees when the row dimension
and the column dimension of the matrix are substantially different.

Finally, we recently became aware of an unpublished work by Abbe,
Fan and Wang \cite{abbe2019Lp}, which also considers statistical
guarantees of PCA beyond the usual $\ell_{2}$ analysis; in particular,
they develop an analysis framework that delivers tight $\ell_{p}$
perturbation bounds. Note, however, that their results are very different
from the ones presented here. For instance, the results presented
herein emphasize the scenarios with drastically different $d_{1}$
and $d_{2}$, which are not the main focus of \cite{abbe2019Lp}.

\section{Analysis}
\label{sec:Analysis}

In this section, we discuss in detail the analysis techniques employed
to establish Theorem \ref{thm:U_loss}. This is built upon a leave-one-out (as well as a leave-two-out)
analysis strategy that is particularly effective in controlling entrywise
and $\ell_{2,\infty}$ estimation errors \cite{el2013robust,el2015impact,zhong2018near,chen2017spectral,abbe2017entrywise,sur2017likelihood,chen2018gradient,chen2019inference,chen2019nonconvex,lei2018asymptotics,Pananjady2019MDP}. 

\subsection{Leave-one-out and leave-two-out estimates\label{subsec:Leave-one-out-estimates}}

In order to facilitate the analysis when bounding $\left\Vert \bm{U}\bm{R}-\bm{U}^{\star}\right\Vert _{2,\infty}$,
we introduce a set of auxiliary leave-one-out matrices --- a powerful
analysis technique that has been employed to decouple complicated
statistical dependency. It is worth emphasizing that these procedures are never executed in practice. Specifically, for each $1\leq m\leq d_{1}$,
we introduce an auxiliary matrix
\begin{align}
\bm{A}^{(m)} & =\mathcal{P}_{-m,:}\left(\bm{A}\right)+p\mathcal{P}_{m,:}\left(\bm{A}^{\star}\right),\label{def:A_loo}
\end{align}
where $\mathcal{P}_{-m,:}$ (resp.~$\mathcal{P}_{m,:}$) represents
the projection onto the subspace of matrices supported on the index
subset $\left\{ \left[d_{1}\right]\setminus\left\{ m\right\} \right\} \times\left[d_{2}\right]$
(resp.~$\left\{ m\right\} \times\left[d_{2}\right]$). In other words,
$\bm{A}^{(m)}$ is obtained by replacing all entries in the $m$-th
row by their expected values (taking into account the sampling rate).
By construction, (1) $\bm{A}^{(m)}$ is statistically independent
of the data in the $m$-th row of $\bm{A}$, and (2) $\bm{A}^{(m)}$
is expected to be quite close to $\bm{A}$, as we only discard a small
fraction of data when constructing $\bm{A}^{(m)}$. These two observations
taken together allow for optimal control of the estimation error
in the $m$-th row of $\bm{U}$.

Armed with the leave-one-out matrices, we are ready to introduce auxiliary
leave-one-out procedures for subspace estimation. Similar to the matrix
$\bm{G}$ in Algorithm \ref{alg:spectral} (whose eigenspace serves
as an estimate of the column space of $\bm{U}^{\star}$), we define
an auxiliary matrix $\bm{G}^{\left(m\right)}\in\mathbb{R}^{d_{1}\times d_{1}}$
as follows:
\begin{equation}
\bm{G}^{\left(m\right)}=\mathcal{P}_{\mathsf{off}\text{-}\mathsf{diag}}\Big(\tfrac{1}{p^{2}}\bm{A}^{\left(m\right)}\bm{A}^{\left(m\right)\top}\Big),\label{def:G_loo}
\end{equation}
where $\ensuremath{\mathcal{P}_{\mathsf{off}\text{-}\mathsf{diag}}\left(\cdot\right)}$
(as already defined in Section~\ref{subsec:Notations}) extracts
out all off-diagonal entries from a matrix. The auxiliary procedure,
which is summarized in Algorithm \ref{alg:init_loo}, is very similar
to Algorithm \ref{alg:spectral} except that it operates upon $\bm{G}^{(m)}$.

\begin{algorithm}
\caption{The $m$-th leave-one-out sequence}

\label{alg:init_loo}

\begin{algorithmic}[1]
\STATE{{\bf Input:} sampling  set $\Omega$, observed entries $\{ A_{i, j} \mid \left( i, j \right) \in \Omega \}$, true entries $\{ A^\star_{m, j}  \mid j \in \left[ d_2 \right] \}$, sampling rate $p$, rank $r$.}
	\STATE{Let $\bm{U}^{\left(m\right)} \bm{\Lambda}^{\left(m\right)} \bm{U}^{\left(m\right) \top}$ be the (truncated) rank-$r$ eigen-decomposition of $\bm{G}^{\left(m\right)}$. Here, $\bm{G}^{\left(m\right)}$ and $\bm{A}^{\left(m\right)}$ are defined respectively in \eqref{def:G_loo} and \eqref{def:A_loo}.}
\STATE{{\bf Output} $\bm{U}^{\left(m\right)}$ as the subspace estimate and $\bm{\Sigma}^{\left(m\right)} = {(\bm{\Lambda}^{\left(m\right)})}^{1/2}$ as the spectrum estimate.}
\end{algorithmic}
\end{algorithm}

Given that $\bm{A}^{(m)}$ (resp.~$\bm{G}^{(m)}$) is very close
to $\bm{A}$ (resp.~$\bm{G}$), one would naturally expect $\bm{U}^{(m)}$
--- the $r$-dimensional principal eigenspace of $\bm{G}^{(m)}$ --- to stay extremely
close to the original estimate $\bm{U}$. This fact will be formalized
shortly.

As it turns out, given that the spectral method is applied to the Gram matrix (which is a quadratic form of the original data matrix), introducing the leave-one-out sequences alone is not yet sufficient for our purpose; we still need to introduce an
additional set of ``leave-two-out'' matrices, in the hope of simultaneously
handling the row-wise and the column-wise statistical dependency.
Specifically, for each $1\leq m\leq d_{1}$ and each $1\leq l\leq d_{2}$,
define the following auxiliary matrices:\begin{subequations}\label{def:loo_col}
\begin{align}
\bm{A}^{\left(m,l\right)} & :=\mathcal{P}_{-m,-l}\left(\bm{A}\right)+p\mathcal{\mathcal{P}}_{m,l}\left(\bm{A}^{\star}\right),\label{def:A_loo_col}\\
\bm{G}^{\left(m,l\right)} & :=\mathcal{P}_{\mathsf{off}\text{-}\mathsf{diag}}\left(\tfrac{1}{p^{2}}\bm{A}^{\left(m,l\right)}\bm{A}^{\left(m,l\right)\top}\right),\label{def:G_loo_col}
\end{align}
\end{subequations} where $\mathcal{\mathcal{P}}_{-m,-l}$ (resp.~$\mathcal{\mathcal{P}}_{m,l}$)
denotes the projection onto the subspace of matrices supported on
$\left\{ \left[d_{1}\right]\setminus\left\{ m\right\} \right\} \times\left\{ \left[d_{2}\right]\setminus\left\{ l\right\} \right\} $
(resp.~$\left\{ m\right\} \times\left\{ l\right\} $). Similar to $\bm{A}^{(m)}$,
$\bm{A}^{(m,l)}$ is generated by replacing all data lying on the $m$-th
row and the $l$-th column of $\bm{A}$ by their expected values (taking into account the sampling rate). The precise procedure is
summarized in Algorithm \ref{alg:init_loo_col}. Similar to the leave-one-out
estimates, one expects the new leave-two-out estimates $\bm{U}^{\left(m,l\right)}$
to be extremely close to $\bm{U}^{\left(m\right)}$ (and hence $\bm{U}$).

\begin{algorithm}
\caption{The $\left(m,l\right)$-th leave-two-out sequence}
\label{alg:init_loo_col}
\begin{algorithmic}[1]
	\STATE{{\bf Input:} sampling  set $\Omega$, observed entries $\{ A_{i, j} \mid \left( i, j \right) \in \Omega \}$, true entries $\{ A^\star_{m, j}  \mid j \in \left[ d_2 \right] \} \cup \{ A^\star_{i, l}  \mid i \in \left[ d_1 \right] \} $, sampling rate $p$, rank $r$.}
	\STATE{Let $\bm{U}^{\left(m, l\right)} \bm{\Lambda}^{\left(m, l\right)} \bm{U}^{\left(m, l\right) \top}$ be the (truncated) rank-$r$ eigen-decomposition of $\bm{G}^{\left(m, l\right)}$. Here, $\bm{G}^{\left(m, l\right)}$ and $\bm{A}^{\left(m, l\right)}$ are defined respectively in \eqref{def:G_loo_col} and \eqref{def:A_loo_col}.}
	\STATE{{\bf Output} $\bm{U}^{\left(m, l\right)}$ as the subspace estimate and $\bm{\Sigma}^{\left(m, l\right)} = {(\bm{\Lambda}^{\left(m, l\right)})}^{1/2}$ as the spectrum estimate.}
\end{algorithmic}
\end{algorithm}

\subsection{Key lemmas\label{subsec:Key-lemmas}}

In this subsection, we provide several lemmas that play a crucial
role in establishing our main theorem. These lemmas are primarily
concerned with the proximity between the original estimate, the leave-one-out
estimates, and the ground truth. Throughout this section, we let
\begin{equation}
\bm{G}^{\star}:=\bm{A}^{\star}\bm{A}^{\star\top}=\bm{U}^{\star}\bm{\Sigma}^{\star2}\bm{U}^{\star\top}.\label{eq:defn-Gstar}
\end{equation}

To begin with, we demonstrate that $\bm{G}$ is sufficiently close
to $\bm{G}^{\star}$ when the difference is measured by the spectral
norm. In view of standard matrix perturbation theory (which we shall make precise later), the proximity of $\bm{G}$ and $\bm{G}^{\star}$ 
is crucial in bounding the difference between $\bm{U}$ and $\bm{U}^{\star}$. The proof is
deferred to Appendix \ref{subsec:pf:G_op_loss}.

\begin{lemma}\label{lemma:G_op_loss}Instate the assumptions of Theorem
\ref{thm:U_loss}. With probability at least $1-O\left(d^{-10}\right)$,
one has
\begin{align}
\left\Vert \bm{G}-\bm{G}^{\star}\right\Vert  & \lesssim\underset{=:\zeta_{\mathsf{op}}}{\underbrace{\frac{\mu r\sigma_{1}^{\star2}\log d}{\sqrt{d_{1}d_{2}}\,p}+\sqrt{\frac{\mu r\sigma_{1}^{\star4}\log d}{d_{2}p}}+\frac{\sigma^{2}\sqrt{d_{1}d_{2}}\,\log^ {}d}{p}+\sigma\sigma_{1}^{\star}\sqrt{\frac{d_{1}\log d}{p}}}}+\left\Vert \bm{A}^{\star}\right\Vert _{2,\infty}^{2}.\label{claim:G_op_loss}
\end{align}
\end{lemma}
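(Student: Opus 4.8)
The plan is to decompose $\bm{G}-\bm{G}^{\star}$ into a bias term coming from diagonal deletion and a zero-mean fluctuation term, and bound each separately. Write
\[
\bm{G}-\bm{G}^{\star}=\underbrace{\mathcal{P}_{\mathsf{off}\text{-}\mathsf{diag}}\big(\tfrac{1}{p^{2}}\bm{A}\bm{A}^{\top}\big)-\mathcal{P}_{\mathsf{off}\text{-}\mathsf{diag}}\big(\bm{A}^{\star}\bm{A}^{\star\top}\big)}_{\text{fluctuation}}-\underbrace{\mathcal{P}_{\mathsf{diag}}\big(\bm{A}^{\star}\bm{A}^{\star\top}\big)}_{\text{diagonal bias}}.
\]
The diagonal bias satisfies $\|\mathcal{P}_{\mathsf{diag}}(\bm{A}^{\star}\bm{A}^{\star\top})\|=\max_i \|\bm{A}^{\star}_{i,:}\|_2^2=\|\bm{A}^{\star}\|_{2,\infty}^2$, which is exactly the last term in \eqref{claim:G_op_loss}. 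So everything reduces to controlling the fluctuation term $\mathcal{P}_{\mathsf{off}\text{-}\mathsf{diag}}\big(\tfrac{1}{p^{2}}\bm{A}\bm{A}^{\top}-\bm{A}^{\star}\bm{A}^{\star\top}\big)$ in spectral norm, and to show it is bounded by $\zeta_{\mathsf{op}}$ (up to constants). Since $\mathcal{P}_{\mathsf{off}\text{-}\mathsf{diag}}$ only removes a diagonal matrix whose norm is at most that of the full matrix's diagonal, it suffices (with a harmless factor-$2$ loss, or by noting $\|\mathcal{P}_{\mathsf{off}\text{-}\mathsf{diag}}(\bm{M})\|\le 2\|\bm{M}\|$) to bound $\|\tfrac{1}{p^{2}}\bm{A}\bm{A}^{\top}-\bm{A}^{\star}\bm{A}^{\star\top}\|$ together with the diagonal of this matrix.

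The next step is the key decomposition of $\bm{A}=\mathcal{P}_{\Omega}(\bm{A}^{\star})+\mathcal{P}_{\Omega}(\bm{N})$. Writing $\bm{E}:=\tfrac{1}{p}\mathcal{P}_{\Omega}(\bm{A}^{\star})-\bm{A}^{\star}$ (the zero-mean sampling perturbation) and $\bm{N}_{\Omega}:=\tfrac1p\mathcal{P}_{\Omega}(\bm{N})$, we have $\tfrac1p\bm{A}=\bm{A}^{\star}+\bm{E}+\bm{N}_{\Omega}$, and hence
\[
\tfrac{1}{p^{2}}\bm{A}\bm{A}^{\top}-\bm{A}^{\star}\bm{A}^{\star\top}
=\bm{A}^{\star}(\bm{E}+\bm{N}_{\Omega})^{\top}+(\bm{E}+\bm{N}_{\Omega})\bm{A}^{\star\top}+(\bm{E}+\bm{N}_{\Omega})(\bm{E}+\bm{N}_{\Omega})^{\top}.
\]
This produces cross terms ($\bm{A}^{\star}\bm{E}^{\top}$, $\bm{A}^{\star}\bm{N}_{\Omega}^{\top}$ and transposes) and quadratic terms ($\bm{E}\bm{E}^{\top}$, $\bm{E}\bm{N}_{\Omega}^{\top}$, $\bm{N}_{\Omega}\bm{N}_{\Omega}^{\top}$). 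Each of the six term types is then bounded using standard concentration: matrix Bernstein / matrix Chernoff for the sampling operator (to control $\|\bm{E}\|$, $\|\bm{A}^{\star}\bm{E}^{\top}\|$, etc.), combined with the incoherence assumptions of Definition \ref{definition-mu0-mu1-mu2}, and sub-exponential-type tail bounds (truncation at level $R$, using Assumption \ref{assumption:random-noise}) for the noise contributions. The missing-data conditions on $p$ and the SNR conditions on $\sigma/\sigma_r^{\star}$ in \eqref{asmp} are exactly what is needed to ensure the quadratic terms are dominated or of the stated order. Tracking the four resulting contributions gives the four summands defining $\zeta_{\mathsf{op}}$: the $\frac{\mu r\sigma_1^{\star2}\log d}{\sqrt{d_1d_2}\,p}$ and $\sqrt{\frac{\mu r\sigma_1^{\star4}\log d}{d_2 p}}$ terms track the sampling error (quadratic and linear in $1/\sqrt p$ respectively), and $\frac{\sigma^2\sqrt{d_1d_2}\log d}{p}$ and $\sigma\sigma_1^{\star}\sqrt{\frac{d_1\log d}{p}}$ track the noise error (quadratic and linear in $\sigma$).

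The main obstacle is controlling the spectral norm of the random quadratic forms where sampling and noise interact, in particular $\|\mathcal{P}_{\mathsf{off}\text{-}\mathsf{diag}}(\bm{E}\bm{E}^{\top})\|$ and $\|\mathcal{P}_{\mathsf{off}\text{-}\mathsf{diag}}(\bm{N}_{\Omega}\bm{N}_{\Omega}^{\top})\|$: a naive bound on $\bm{N}_{\Omega}\bm{N}_{\Omega}^{\top}$ via $\|\bm{N}_{\Omega}\|^2$ is too lossy, since $\|\bm{N}_{\Omega}\|\asymp \sigma\sqrt{\max\{d_1,d_2\}/p}$ would give $\sigma^2 d_2/p$ rather than $\sigma^2\sqrt{d_1 d_2}/p$. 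The off-diagonal projection is crucial here — it kills the $\sigma^2 d_2/p$ diagonal inflation, and one must instead estimate $\|\mathcal{P}_{\mathsf{off}\text{-}\mathsf{diag}}(\bm{N}_{\Omega}\bm{N}_{\Omega}^{\top})\|$ directly, e.g.\ by a decoupling argument together with matrix Bernstein applied to the sum of independent rank-structured pieces, or by an $\varepsilon$-net argument bounding $\sup_{\|\bm{x}\|=1}\bm{x}^{\top}\mathcal{P}_{\mathsf{off}\text{-}\mathsf{diag}}(\bm{N}_{\Omega}\bm{N}_{\Omega}^{\top})\bm{x}$ with Bernstein's inequality for quadratic forms. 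The truncation level $R$ and the constraint \eqref{eq:assumption-noise-spike} are precisely tailored so that these concentration bounds go through with the claimed logarithmic factors; I would handle the possibly-heavy-tailed case (condition (3b)) by a union bound showing all $|N_{i,j}|\le R$ with probability $1-O(d^{-10})$ and reducing to the bounded case.
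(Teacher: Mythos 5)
Your plan is correct and essentially mirrors the paper's proof: decompose $\bm{G}-\bm{G}^{\star}$ into the diagonal-deletion bias $-\mathcal{P}_{\mathsf{diag}}(\bm{A}^{\star}\bm{A}^{\star\top})$ (whose norm is exactly $\|\bm{A}^{\star}\|_{2,\infty}^{2}$) plus a zero-mean fluctuation, then bound the linear (cross) and quadratic fluctuation pieces via truncated matrix Bernstein applied to the column-indexed sum of independent rank-one blocks, exploiting precisely the point you single out — that one must estimate $\|\mathcal{P}_{\mathsf{off}\text{-}\mathsf{diag}}(\cdot)\|$ of the quadratic term \emph{directly}, since the projection kills the $\sigma^{2}d_{2}/p$ diagonal inflation that would otherwise dominate a naive $\|\bm{N}_{\Omega}\|^{2}$-type bound. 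The one organizational difference is that the paper absorbs the sampling perturbation and the noise into a single aggregate matrix $\bm{E}:=\tfrac{1}{p}\mathcal{P}_{\Omega}(\bm{A}^{\star})-\bm{A}^{\star}+\tfrac{1}{p}\mathcal{P}_{\Omega}(\bm{N})$, so it only has two fluctuation terms, $\mathcal{P}_{\mathsf{off}\text{-}\mathsf{diag}}(\bm{E}\bm{E}^{\top})$ and $\mathcal{P}_{\mathsf{off}\text{-}\mathsf{diag}}(\bm{A}^{\star}\bm{E}^{\top}+\bm{E}\bm{A}^{\star\top})$, rather than your six; this streamlines the bookkeeping because all the concentration inequalities can be run once against the variance proxies $\sigma_{\mathsf{row}},\sigma_{\mathsf{col}},\sigma_{\infty}$ and the tail level $B$ for a single $\bm{E}$, and the four summands of $\zeta_{\mathsf{op}}$ then fall out by substituting the values \eqref{eq:noise_value}, whereas your split attributes the error sources more explicitly at the cost of more terms. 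One small wrinkle: your aside that $\|\mathcal{P}_{\mathsf{off}\text{-}\mathsf{diag}}(\bm{M})\|\le 2\|\bm{M}\|$ could make things "suffice" is not usable here, since $\|\tfrac{1}{p^{2}}\bm{A}\bm{A}^{\top}-\bm{A}^{\star}\bm{A}^{\star\top}\|$ is itself dominated by the inflated diagonal; your final paragraph correctly walks this back and identifies the direct off-diagonal estimate as unavoidable, which is what the paper's Step 1 carries out.
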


In order to get a better sense of the term $\zeta_{\mathsf{op}}$
appearing above, we make note of a straightforward yet useful fact, which reveals that $\zeta_{\mathsf{op}}$ is much smaller than any nonzero eigenvalue of $\bm{G}^{\star}$.

\begin{fact}\label{fact:rel_err_o1}Instate the assumptions of Theorem
\ref{thm:U_loss}. Then the quantity $\zeta_{\mathsf{op}}$ as defined
in (\ref{claim:G_op_loss}) obeys
\begin{align*}
\zeta_{\mathsf{op}}+\left\Vert \bm{A}^{\star}\right\Vert _{2,\infty}^{2} & \leq\frac{\mu r\sigma_{1}^{\star2}\log d}{\sqrt{d_{1}d_{2}}\,p}+\sqrt{\frac{\mu r\sigma_{1}^{\star4}\log d}{d_{2}p}}+\frac{\sigma^{2}}{\sigma_{r}^{\star2}}\frac{\sqrt{d_{1}d_{2}}\,\log^ {}d}{p}+\sigma\sigma_{1}^{\star}\sqrt{\frac{d_{1}\log d}{p}}+\frac{\mu_{1}r\sigma_{1}^{\star2}}{d_{1}}\\
 & \ll\frac{\sigma_{r}^{\star2}}{\kappa^{2}},
\end{align*}
where $\|\bm{A}^{\star}\|_{2,\infty}^{2}\leq\frac{\mu_{1}r\sigma_{1}^{\star2}}{d_{1}}$
(cf.~Lemma \ref{lemma:incoh}).
\end{fact}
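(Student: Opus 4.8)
The statement is a direct bookkeeping consequence of the three conditions collected in \eqref{asmp}, so the plan is to bound $\zeta_{\mathsf{op}}+\|\bm{A}^{\star}\|_{2,\infty}^{2}$ term by term, matching each summand to exactly one branch of \eqref{asmp}. First I would invoke Lemma~\ref{lemma:incoh} to replace $\|\bm{A}^{\star}\|_{2,\infty}^{2}$ by $\mu_{1}r\sigma_{1}^{\star2}/d_{1}$ (which also produces the last term in the displayed bound); recalling the definition of $\zeta_{\mathsf{op}}$ from Lemma~\ref{lemma:G_op_loss}, the first inequality in the statement is then essentially this incoherence bound together with the definition of $\zeta_{\mathsf{op}}$. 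Throughout, I would use the identity $\sigma_{1}^{\star}=\kappa\sigma_{r}^{\star}$ so as to normalize every summand by the target quantity $\sigma_{r}^{\star2}/\kappa^{2}$.

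Then I would handle the five terms one at a time. (i) Normalizing $\frac{\mu r\sigma_{1}^{\star2}\log d}{\sqrt{d_{1}d_{2}}\,p}$ by $\sigma_{r}^{\star2}/\kappa^{2}$ turns it into $\frac{\mu\kappa^{4}r\log d}{\sqrt{d_{1}d_{2}}\,p}$, which the first branch $p\geq c_{0}\mu\kappa^{4}r\log^{2}d/\sqrt{d_{1}d_{2}}$ of the sampling condition bounds by $1/(c_{0}\log d)$. (ii) Squaring $\sqrt{\mu r\sigma_{1}^{\star4}\log d/(d_{2}p)}$ and dividing by $\sigma_{r}^{\star4}/\kappa^{4}$ gives $\frac{\mu\kappa^{8}r\log d}{d_{2}p}$, bounded by $1/(c_{0}\log d)$ via the second branch $p\geq c_{0}\mu\kappa^{8}r\log^{2}d/d_{2}$. (iii) The term $\frac{\sigma^{2}\sqrt{d_{1}d_{2}}\,\log d}{p}$, divided by $\sigma_{r}^{\star2}/\kappa^{2}$, equals $\frac{\sigma^{2}}{\sigma_{r}^{\star2}}\cdot\frac{\kappa^{2}\sqrt{d_{1}d_{2}}\,\log d}{p}$, which is $\leq c_{1}^{2}$ by the first branch $\sigma/\sigma_{r}^{\star}\leq c_{1}\sqrt{p}/(\kappa(d_{1}d_{2})^{1/4}\sqrt{\log d})$ of the noise condition. (iv) The term $\sigma\sigma_{1}^{\star}\sqrt{d_{1}\log d/p}$, divided by $\sigma_{r}^{\star2}/\kappa^{2}$, equals $\frac{\sigma\kappa^{3}}{\sigma_{r}^{\star}}\sqrt{d_{1}\log d/p}\leq c_{1}$ by the second branch $\sigma/\sigma_{r}^{\star}\leq (c_{1}/\kappa^{3})\sqrt{p/(d_{1}\log d)}$. (v) Finally $\mu_{1}r\sigma_{1}^{\star2}/d_{1}$, normalized by $\sigma_{r}^{\star2}/\kappa^{2}$, becomes $\mu_{1}\kappa^{4}r/d_{1}\leq c_{2}$ by the rank condition $r\leq c_{2}d_{1}/(\mu_{1}\kappa^{4})$. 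Summing these (a fixed number of) contributions and taking $c_{0}$ large enough and $c_{1},c_{2}$ small enough gives $\zeta_{\mathsf{op}}+\|\bm{A}^{\star}\|_{2,\infty}^{2}\ll\sigma_{r}^{\star2}/\kappa^{2}$.

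I do not anticipate any real obstacle here: the argument is pure arithmetic, and the only points requiring a little care are (a) remembering to route $\|\bm{A}^{\star}\|_{2,\infty}^{2}$ through the incoherence bound of Lemma~\ref{lemma:incoh} and to use $\mu_{1}\le\mu$ where convenient, and (b) tracking the powers of $\kappa$ correctly when passing from $\sigma_{1}^{\star}$ to $\sigma_{r}^{\star}$, so that each normalized summand lands against precisely the branch of \eqref{asmp} designed to kill it. Since several of the bounds come out as $1/(c_{0}\log d)$ and the others as absolute multiples of $c_{1}$ or $c_{2}$, the strict inequality $\ll$ follows with room to spare once the constants in \eqref{asmp} are chosen appropriately.
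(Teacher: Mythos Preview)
Your proposal is correct and matches the paper's intent: Fact~\ref{fact:rel_err_o1} is stated without a dedicated proof and is described as ``a straightforward yet useful fact,'' so the paper implicitly relies on exactly the term-by-term verification against the three branches of \eqref{asmp} that you carry out. Your matching of each summand to its corresponding branch (and the use of Lemma~\ref{lemma:incoh} for $\|\bm{A}^{\star}\|_{2,\infty}^{2}$) is precisely what is needed.
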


Further, the following lemma upper bounds the difference between $\bm{G}$
and $\bm{G}^{\star}$ in the $m$-th row, when projected onto the
subspace represented by $\bm{U}^{\star}$; the proof is postponed
to Appendix~\ref{subsec:pf:G_dev_W_2_norm}. This result gives a
more refined control of the difference between $\bm{G}$ and $\bm{G}^{\star}$.

\begin{lemma}\label{lemma:G_dev_W_2_norm}Instate the assumptions
of Theorem \ref{thm:U_loss}. With probability at least $1-O\left(d^{-10}\right)$,
the following holds simultaneously for all $1\leq m\leq d_{1}$:
\begin{align*}
\big\|\left(\bm{G}-\bm{G}^{\star}\right)_{m,:}\bm{U}^{\star}\big\|_{2} & \lesssim\big(\zeta_{\mathsf{op}}+\left\Vert \bm{A}^{\star}\right\Vert _{2,\infty}^{2}\big)\sqrt{\frac{\mu r}{d_{1}}},
\end{align*}
where $\zeta_{\mathsf{op}}$ is defined in \eqref{claim:G_op_loss}.\end{lemma}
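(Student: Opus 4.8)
}
The plan is to control the single row $\bm{e}_m^\top(\bm{G}-\bm{G}^\star)\bm{U}^\star$ by combining the operator-norm-level estimates underlying Lemma~\ref{lemma:G_op_loss} with the incoherence of $\bm{U}^\star$; the target is exactly the bound of Lemma~\ref{lemma:G_op_loss} discounted by the factor $\sqrt{\mu r/d_1}\asymp\|\bm{U}^\star\|_{2,\infty}$ gained from projecting onto the $r$-dimensional incoherent subspace. To set this up I would introduce the rescaled, mean-zero perturbation $\bm{E}:=\tfrac1p\bm{A}-\bm{A}^\star$, so that $\tfrac1{p^2}\bm{A}\bm{A}^\top=(\bm{A}^\star+\bm{E})(\bm{A}^\star+\bm{E})^\top=\bm{A}^\star\bm{A}^{\star\top}+\bm{A}^\star\bm{E}^\top+\bm{E}\bm{A}^{\star\top}+\bm{E}\bm{E}^\top$. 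Since the rows of $\bm{E}$ are mutually independent and mean zero, $\mathcal{P}_{\mathsf{off}\text{-}\mathsf{diag}}(\bm{E}\bm{E}^\top)$ has zero mean, and one obtains the exact identity
\[
\bm{G}-\bm{G}^\star=-\mathcal{P}_{\mathsf{diag}}\big(\bm{A}^\star\bm{A}^{\star\top}\big)+\mathcal{P}_{\mathsf{off}\text{-}\mathsf{diag}}\big(\bm{A}^\star\bm{E}^\top+\bm{E}\bm{A}^{\star\top}+\bm{E}\bm{E}^\top\big).
\]
Right-multiplying by $\bm{U}^\star$ and reading off the $m$-th row decomposes $(\bm{G}-\bm{G}^\star)_{m,:}\bm{U}^\star=\bm{\Delta}_0+\bm{\Delta}_1+\bm{\Delta}_2+\bm{\Delta}_3$, where $\bm{\Delta}_0:=-\|\bm{A}^\star_{m,:}\|_2^2\,\bm{U}^\star_{m,:}$ is the only deterministic piece, $\bm{\Delta}_1:=\sum_{k\neq m}\langle\bm{A}^\star_{m,:},\bm{E}_{k,:}\rangle\bm{U}^\star_{k,:}$, $\bm{\Delta}_2:=\bm{E}_{m,:}\bm{V}^\star\bm{\Sigma}^\star-\langle\bm{E}_{m,:},\bm{A}^\star_{m,:}\rangle\bm{U}^\star_{m,:}$ (using $\bm{A}^{\star\top}\bm{U}^\star=\bm{V}^\star\bm{\Sigma}^\star$), and $\bm{\Delta}_3:=\bm{E}_{m,:}\big(\sum_{k\neq m}\bm{E}_{k,:}^\top\bm{U}^\star_{k,:}\big)$.

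I would then bound each $\bm{\Delta}_i$ by $O\big((\zeta_{\mathsf{op}}+\|\bm{A}^\star\|_{2,\infty}^2)\sqrt{\mu r/d_1}\big)$. Term $\bm{\Delta}_0$ is immediate from $\|\bm{U}^\star_{m,:}\|_2\le\sqrt{\mu_1 r/d_1}$ (Lemma~\ref{lemma:incoh}). Term $\bm{\Delta}_1$ is a sum over $k\neq m$ of independent, mean-zero $\mathbb{R}^r$-vectors, each of norm $\lesssim|\langle\bm{A}^\star_{m,:},\bm{E}_{k,:}\rangle|\sqrt{\mu r/d_1}$; a matrix Bernstein inequality, preceded by truncating the spiky part of the noise (which is precisely what the bound $R^2/\sigma^2\lesssim\min\{p\sqrt{d_1d_2},pd_2\}/\log d$ in Assumption~\ref{assumption:random-noise} is tailored for), finishes it, with the variance proxy reduced to $\zeta_{\mathsf{op}}^2\cdot\mu r/d_1$ after invoking the incoherence bounds on $\bm{A}^\star$. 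For $\bm{\Delta}_2$, the main term $\bm{E}_{m,:}\bm{V}^\star\bm{\Sigma}^\star$ is a linear form of the mean-zero vector $\bm{E}_{m,:}$, handled again by Bernstein; here the key is to keep $\bm{\Sigma}^\star$ inside the variance $\sum_j\mathbb{E}[E_{m,j}^2]\|\bm{V}^\star_{j,:}\bm{\Sigma}^\star\|_2^2$ rather than crudely pulling out $\sigma_1^\star$, so that the noise part collapses to $\tfrac{\sigma^2}{p}\|\bm{A}^\star\|_{\mathrm F}^2\le\tfrac{\sigma^2 r\sigma_1^{\star2}}{p}$ and the signal part contracts through the column incoherence of $\bm{V}^\star$; the residual single-coordinate term is lower order. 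Finally, for the quadratic term $\bm{\Delta}_3$, conditioning on all rows of $\bm{E}$ other than the $m$-th fixes $\bm{M}:=\sum_{k\neq m}\bm{E}_{k,:}^\top\bm{U}^\star_{k,:}$, so $\bm{\Delta}_3=\bm{E}_{m,:}\bm{M}$ is once more a linear form of the independent vector $\bm{E}_{m,:}$; Bernstein reduces matters to controlling $\|\bm{M}\|_{\mathrm F}\le\sqrt r\,\|\bm{E}\|$ (and $\|\bm{M}\|_{2,\infty}$ for the tail term), after which plugging in the standard spectral-norm bound for the noisy, sub-sampled perturbation $\bm{E}$ turns the estimate into $\lesssim\zeta_{\mathsf{op}}\sqrt{\mu r/d_1}$. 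Summing $\bm{\Delta}_0,\dots,\bm{\Delta}_3$ and taking a union bound over $m\in[d_1]$ (each failure event of probability $O(d^{-11})$) yields the simultaneous statement with probability $1-O(d^{-10})$.

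I expect $\bm{\Delta}_3$ to be the main obstacle. The naive second-moment calculation for $\bm{E}_{m,:}\bm{M}$ produces a $\|\bm{E}\|$ (equivalently, $\|\bm{M}\|_{\mathrm F}$) factor multiplied by the per-row scale $\sqrt{\tfrac1p(\|\bm{A}^\star_{m,:}\|_\infty^2+\sigma^2)}$, and showing that this product is dominated by $\zeta_{\mathsf{op}}\sqrt{\mu r/d_1}$ requires careful tracking of the incoherence parameters $\mu_0,\mu_1,\mu_2$ together with the precise form of the spectral bound for $\bm{E}$; moreover, making every Bernstein step legitimate hinges on taming the heavy tails of $\bm{E}$ via the truncation level $R$ permitted by Assumption~\ref{assumption:random-noise}. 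Should the crude estimate of $\|\bm{M}\|_{2,\infty}$ prove too lossy, one can further decouple the column index — precisely the role of the leave-two-out matrices of Algorithm~\ref{alg:init_loo_col} — and bound the relevant rows of $\bm{M}$ using near-independence from any single column of $\bm{E}$.
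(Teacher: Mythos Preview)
Your decomposition $(\bm{G}-\bm{G}^\star)_{m,:}\bm{U}^\star=\bm{\Delta}_0+\bm{\Delta}_1+\bm{\Delta}_2+\bm{\Delta}_3$ is exactly the one the paper uses (its four terms in \eqref{eq:G-W-expansion} are your $\bm{\Delta}_3,\bm{\Delta}_1,\bm{\Delta}_2,\bm{\Delta}_0$ in that order), and the treatment of $\bm{\Delta}_0,\bm{\Delta}_1,\bm{\Delta}_2$ via matrix Bernstein matches the paper's proof in Appendix~\ref{subsec:pf:G_dev_W_2_norm}. The only substantive difference is the quadratic piece $\bm{\Delta}_3$: you condition on all rows of $\bm{E}$ except the $m$-th, freeze $\bm{M}=\sum_{k\neq m}\bm{E}_{k,:}^\top\bm{U}^\star_{k,:}$, and then need to control $\|\bm{M}\|_{\mathrm F}\le\sqrt{r}\,\|\bm{E}\|$ and $\|\bm{M}\|_{2,\infty}$ by separate arguments before applying Bernstein in the randomness of $\bm{E}_{m,:}$. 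The paper instead conditions on the single row $\{E_{m,j}\}_{j}$ and views $\sum_{(i,j):i\neq m}E_{m,j}E_{i,j}\bm{U}^\star_{i,:}$ directly as a sum of independent zero-mean vectors in the randomness of $\{E_{i,j}\}_{i\neq m}$; the variance then collapses to $\sigma_{\mathsf{col}}^2\|\bm{U}^\star\|_{2,\infty}^2\sum_j E_{m,j}^2$, and a single invocation of Lemma~\ref{lemma:row_col_2_norm} for $\sum_j E_{m,j}^2$ finishes the term cleanly as $\sigma_{\mathsf{col}}\sigma_{\mathsf{row}}\sqrt{\log d}\,\|\bm{U}^\star\|_{2,\infty}$. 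Both routes are valid; the paper's conditioning is a bit more economical because it avoids the detour through $\|\bm{E}\|$ and an auxiliary bound on $\|\bm{M}\|_{2,\infty}$.

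One remark on your closing paragraph: the leave-two-out contingency you raise is unnecessary here. The proof of Lemma~\ref{lemma:G_dev_W_2_norm} requires no column-wise decoupling at all---$\bm{U}^\star$ is deterministic, so there is no dependence issue of the kind that forces leave-two-out in Lemmas~\ref{lemma:U_U_loo_dist} and~\ref{lemma:W_loo_2inf_norm}. Your worry that ``the crude estimate of $\|\bm{M}\|_{2,\infty}$ may prove too lossy'' does not materialize, and in the paper's conditioning it does not even arise.
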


The next step, which is also the most challenging and crucial step, lies in showing that: every row of $\bm{U}$, under certain global linear transformation, serves as a good approximation of the corresponding row of $\bm{U}^{\star}$. Towards this end, we begin with the following preparations: 
\begin{itemize}
\item We first introduce the following matrix $\bm{H}$ to represent the linear transformation we have in mind:
\begin{equation}
\bm{H}:=\bm{U}^{\top}\bm{U}^{\star}.\label{eq:defn-H}
\end{equation}
While this is not a rotation matrix, it is quite close to the rotation matrix $\bm{R}$
defined in \eqref{def:R}. 
\item In addition, we find it convenient to express
\[
\bm{U}^{\star}=\bm{G}^{\star}\bm{U}^{\star}\left(\bm{\Sigma}^{\star}\right)^{-2}.
\]
Combining this with Lemma \ref{lemma:G_dev_W_2_norm}, one would expect $\bm{U}^{\star}$ and $\bm{G}\bm{U}^{\star}\left(\bm{\Sigma}^{\star}\right)^{-2}$
to be reasonably close, namely,
\begin{equation}
	\bm{U}^{\star} \approx \bm{G}\bm{U}^{\star}\left(\bm{\Sigma}^{\star}\right)^{-2}. 
\end{equation}
\end{itemize}


With these in hand, the following lemma (together with Lemma \ref{lemma:G_dev_W_2_norm}) asserts that
\[
	\bm{U}\bm{H} \approx \bm{G}\bm{U}^{\star}\left(\bm{\Sigma}^{\star}\right)^{-2} \approx \bm{U}^{\star}
\]
in an $\ell_{2,\infty}$ sense. 

\begin{lemma}\label{lemma:UH_BUtrue} Instate the assumptions of
Theorem \ref{thm:U_loss}, and recall the definition of $\zeta_{\mathsf{op}}$
in (\ref{claim:G_op_loss}). With probability at least $1-O(d^{-10})$, one has
\begin{align*}
\big\|\bm{U}\bm{H}-\bm{G}\bm{U}^{\star}\left(\bm{\Sigma}^{\star}\right)^{-2}\big\|_{2,\infty} & \lesssim\frac{\kappa^{2}\big(\zeta_{\mathsf{op}}+\left\Vert \bm{A}^{\star}\right\Vert _{2,\infty}^{2}\big)}{\sigma_{r}^{\star2}}\left(\left\Vert \bm{U}\bm{H}\right\Vert _{2,\infty}+\sqrt{\frac{\mu r}{d_{1}}}\right).
\end{align*}
\end{lemma}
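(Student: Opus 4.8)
The plan is to exploit the eigenvector fixed-point identity for $\bm{U}$, decompose the residual into a "bulk" term that benefits from the spectral-norm proximity of $\bm{G}$ and $\bm{G}^\star$ (Lemma~\ref{lemma:G_op_loss} and Fact~\ref{fact:rel_err_o1}), and a row-wise term controlled by the leave-one-out machinery, and finally collect everything into a self-bounding inequality in $\|\bm{U}\bm{H}\|_{2,\infty}$. Concretely, since $\bm{U}\bm{\Lambda}\bm{U}^\top$ is the rank-$r$ eigen-decomposition of $\bm{G}$, we have $\bm{U} = \bm{G}\bm{U}\bm{\Lambda}^{-1}$, hence $\bm{U}\bm{H} = \bm{G}\bm{U}\bm{\Lambda}^{-1}\bm{H}$. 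Writing $\bm{U}\bm{H} = \bm{G}\bm{U}^\star(\bm{\Sigma}^{\star})^{-2} + \big(\bm{G}\bm{U}\bm{\Lambda}^{-1}\bm{H} - \bm{G}\bm{U}^\star(\bm{\Sigma}^{\star})^{-2}\big)$, I would further split $\bm{U}\bm{\Lambda}^{-1}\bm{H} - \bm{U}^\star(\bm{\Sigma}^{\star})^{-2}$ as
\begin{equation}
\bm{U}\bm{\Lambda}^{-1}\bm{H} - \bm{U}^\star(\bm{\Sigma}^{\star})^{-2} = \bm{U}\big(\bm{\Lambda}^{-1} - \bm{H}(\bm{\Sigma}^{\star})^{-2}\bm{H}^\top\big)\bm{H} + \big(\bm{U}\bm{H} - \bm{U}^\star\big)(\bm{\Sigma}^{\star})^{-2}\bm{H}^\top + \bm{U}^\star(\bm{\Sigma}^{\star})^{-2}\big(\bm{H}^\top\bm{H} - \bm{I}\big).
\end{equation}
Each factor here is either small in spectral norm — from Weyl/Davis--Kahan applied with the gap $\sigma_r^{\star 2}$ supplied by Fact~\ref{fact:rel_err_o1}, which gives $\|\bm{\Lambda} - \bm{H}(\bm{\Sigma}^{\star})^2\bm{H}^\top\| \lesssim \zeta_{\mathsf{op}} + \|\bm{A}^\star\|_{2,\infty}^2$, $\|\bm{H}^\top\bm{H}-\bm{I}\| \lesssim (\cdot)^2$, and $\|\bm{U}\bm{H}-\bm{U}^\star\| \lesssim (\zeta_{\mathsf{op}}+\|\bm{A}^\star\|_{2,\infty}^2)/\sigma_r^{\star 2}$ via \eqref{claim:U_op_loss} — or carries a $\bm{U}^\star$ whose rows are controlled by incoherence. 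Multiplying on the left by $\bm{G}$ and taking $\|\cdot\|_{2,\infty}$, I would bound $\|\bm{G}\bm{M}\|_{2,\infty} \le \|\bm{G}\|_{2,\infty}\|\bm{M}\|$ type terms using $\|\bm{G}\|_{2,\infty} \lesssim \|\bm{G}^\star\|_{2,\infty} + (\text{error}) \lesssim \sigma_1^{\star 2}\sqrt{\mu r/d_1}$, and the terms that see $\bm{U}\bm{H}$ directly via $\|\bm{G}(\bm{U}\bm{H}-\bm{U}^\star)\|_{2,\infty}$ by splitting $\bm{G} = \bm{G}^\star + (\bm{G}-\bm{G}^\star)$; the $\bm{G}^\star$ piece equals $\bm{U}^\star\bm{\Sigma}^{\star 2}\bm{U}^{\star\top}(\bm{U}\bm{H}-\bm{U}^\star)$ whose $\|\cdot\|_{2,\infty}$ is $\le \sqrt{\mu r/d_1}\,\sigma_1^{\star 2}\|\bm{U}\bm{H}-\bm{U}^\star\|$, and the $(\bm{G}-\bm{G}^\star)$ piece uses $\|\bm{G}-\bm{G}^\star\| \lesssim \zeta_{\mathsf{op}}+\|\bm{A}^\star\|_{2,\infty}^2$ together with $\|\bm{U}\bm{H}-\bm{U}^\star\|_{2,\infty} \le \|\bm{U}\bm{H}\|_{2,\infty} + \|\bm{U}^\star\|_{2,\infty}$.

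The genuinely hard term is $(\bm{G}-\bm{G}^\star)\bm{U}^\star$ evaluated row-by-row — that is, controlling $\big\|\big[(\bm{G}-\bm{G}^\star)\bm{U}^\star\big]_{m,:}\big\|_2$ — because the $m$-th row of $\bm{G}$ depends on the full data and hence is correlated with $\bm{U}$. This is precisely where Lemma~\ref{lemma:G_dev_W_2_norm} is invoked; however, that lemma alone bounds $(\bm{G}-\bm{G}^\star)_{m,:}\bm{U}^\star$, whereas here I actually need to handle $\bm{G}_{m,:}(\bm{U}\bm{H} - \bm{U}^\star)$ and related quadratic-in-noise cross terms, and it is here that the leave-one-out and leave-two-out estimates $\bm{U}^{(m)}, \bm{U}^{(m,l)}$ enter: I would replace $\bm{U}$ by $\bm{U}^{(m)}$ (independent of row $m$ of $\bm{A}$) inside the offending inner products, pay the price $\|\bm{U}\bm{H} - \bm{U}^{(m)}\bm{H}^{(m)}\|$ (which should be $o(1)$ times the right-hand side, established via a companion lemma using the $p$-scaling and noise assumptions in \eqref{asmp}), and for the Gram-matrix quadratic structure further descend to $\bm{U}^{(m,l)}$ to decouple the $l$-th column interactions, applying Bernstein/matrix-Bernstein to the now-independent sums. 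The anticipated main obstacle is bookkeeping these two layers of decoupling simultaneously so that the accumulated leave-one-out / leave-two-out perturbations remain a genuinely negligible fraction of $\|\bm{U}\bm{H}\|_{2,\infty} + \sqrt{\mu r/d_1}$, rather than feeding back at unit scale.

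Finally, I would gather all contributions. Every term is shown to be $\lesssim \frac{\kappa^2(\zeta_{\mathsf{op}}+\|\bm{A}^\star\|_{2,\infty}^2)}{\sigma_r^{\star 2}}\big(\|\bm{U}\bm{H}\|_{2,\infty} + \sqrt{\mu r/d_1}\big)$, possibly after absorbing a few $\|\bm{U}^\star\|_{2,\infty} \le \sqrt{\mu_1 r/d_1} \le \sqrt{\mu r/d_1}$ and using $\|\bm{U}\bm{H}-\bm{U}^\star\|_{2,\infty} \le \|\bm{U}\bm{H}\|_{2,\infty} + \sqrt{\mu r/d_1}$ to keep the bound self-referential only through $\|\bm{U}\bm{H}\|_{2,\infty}$. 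Since by Fact~\ref{fact:rel_err_o1} the prefactor $\frac{\kappa^2(\zeta_{\mathsf{op}}+\|\bm{A}^\star\|_{2,\infty}^2)}{\sigma_r^{\star 2}} \ll 1$, no circularity arises, and the claimed inequality follows. Throughout, all high-probability statements are on the event (of probability $1 - O(d^{-10})$) on which Lemmas~\ref{lemma:G_op_loss} and \ref{lemma:G_dev_W_2_norm}, the spectral-norm bound \eqref{claim:U_op_loss}, and the relevant leave-one-out proximity estimates simultaneously hold.
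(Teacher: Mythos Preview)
Your plan and the paper's agree on the essential point: the delicate term is $\bm{G}_{m,:}(\bm{U}\bm{H}-\bm{U}^\star)$, to be handled by substituting the leave-one-out surrogate $\bm{U}^{(m)}\bm{H}^{(m)}$ (with $\|\bm{U}\bm{U}^\top-\bm{U}^{(m)}\bm{U}^{(m)\top}\|$ controlled by Lemma~\ref{lemma:U_U_loo_dist}) and then bounding $\|\bm{G}_{m,:}(\bm{U}^{(m)}\bm{H}^{(m)}-\bm{U}^\star)\|_2$; the paper packages the latter as a separate Lemma~\ref{lemma:G_row_UH_Utrue}, and the leave-two-out device enters inside those auxiliary lemmas, exactly as you anticipate.

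Where you differ is the starting decomposition. The paper does not expand $\bm{U}\bm{H}=\bm{G}\bm{U}\bm{\Lambda}^{-1}\bm{H}$ by hand; it invokes \cite[Lemma~1]{abbe2017entrywise} to obtain directly
\[
\big\|(\bm{U}\bm{H}-\bm{G}\bm{U}^{\star}(\bm{\Sigma}^{\star})^{-2})_{m,:}\big\|_2 \;\lesssim\; \tfrac{1}{\sigma_r^{\star4}}\,\|\bm{G}-\bm{G}^{\star}\|\,\|\bm{G}_{m,:}\bm{U}^{\star}\|_2 \;+\; \tfrac{1}{\sigma_r^{\star2}}\,\|\bm{G}_{m,:}(\bm{U}\bm{H}-\bm{U}^{\star})\|_2,
\]
so the ``easy'' piece is $\|\bm{G}-\bm{G}^\star\|\cdot\|\bm{G}\bm{U}^\star\|_{2,\infty}/\sigma_r^{\star4}$ (Lemmas~\ref{lemma:G_op_loss}--\ref{lemma:G_dev_W_2_norm}) and one lands immediately on the hard term. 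Your explicit three-term identity is not exact---expanding the right side leaves a residual $(\bm{U}\bm{H}-\bm{U}^\star)(\bm{\Sigma}^\star)^{-2}\bm{H}^\top(\bm{I}-\bm{H})$, which is higher-order and absorbable. More substantively, the step where you bound the $(\bm{G}-\bm{G}^\star)(\bm{U}\bm{H}-\bm{U}^\star)$ piece by pairing $\|\bm{G}-\bm{G}^\star\|$ with $\|\bm{U}\bm{H}-\bm{U}^\star\|_{2,\infty}$ is not a valid inequality (the bound $\|\bm{M}\bm{W}\|_{2,\infty}\le\|\bm{M}\|\,\|\bm{W}\|_{2,\infty}$ fails in general), and the claimed $\|\bm{G}\|_{2,\infty}\lesssim\sigma_1^{\star2}\sqrt{\mu r/d_1}$ is not established---only $\|\bm{G}\bm{U}^\star\|_{2,\infty}$ enjoys that bound. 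In fact that $(\bm{G}-\bm{G}^\star)(\bm{U}\bm{H}-\bm{U}^\star)$ contribution \emph{is} the one that must pass through the row-wise leave-one-out argument you describe next; once it is routed there, your outline and the paper's coincide. The Abbe--Fan--Wang lemma simply spares the algebraic bookkeeping and avoids these pitfalls.
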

The proof of this lemma, however, goes far beyond conventional matrix perturbation theory, and requires delicate decoupling of statistical dependencies. This is accomplished via leave-one-out and leave-two-out analysis arguments. In what follows, we take a moment to explain the high-level idea. 

To establish Lemma \ref{lemma:UH_BUtrue}, we first learn
from standard matrix perturbation theory\cite[Lemma~1]{abbe2017entrywise}
that: for each $1\leq m\leq d_{1}$, 
\begin{equation}
\left\Vert \big(\bm{U}\bm{H}-\bm{G}\bm{U}^{\star}\left(\bm{\Sigma}^{\star}\right)^{-2}\big)_{m,:}\right\Vert _{2}\lesssim\frac{1}{\lambda_{r}\left(\bm{G}^{\star}\right)^2}\left\Vert \bm{G}-\bm{G}^{\star}\right\Vert \left\Vert \bm{G}_{m,:}\bm{U}^{\star}\right\Vert _{2}+\frac{1}{\lambda_{r}\left(\bm{G}^{\star}\right)}\left\Vert \bm{G}_{m,:}\left(\bm{U}\bm{H}-\bm{U}^{\star}\right)\right\Vert _{2}\label{eq:UH-UB2}
\end{equation}
holds, provided that $\bm{G}$ and $\bm{G}^{\star}$ are sufficiently
close. 

\begin{itemize}
\item
The first term on the right-hand side of (\ref{eq:UH-UB2})
can already be controlled by Lemma~\ref{lemma:G_op_loss} and Lemma~\ref{lemma:G_dev_W_2_norm}.

\item 
The second term on the right-hand side of (\ref{eq:UH-UB2}), however,
is considerably more difficult to analyze, due to the complicated
statistical dependence between $\bm{G}_{m,:}$ and $\bm{U\bm{H}}$.
In order to decouple statistical dependency, we resort to the leave-one-out
sequence $\bm{U}^{\left(m\right)}$ introduced in Algorithm~\ref{alg:init_loo} and use the triangle inequality to bound
\begin{equation}
\left\Vert \bm{G}_{m,:}\left(\bm{U}\bm{H}-\bm{U}^{\star}\right)\right\Vert _{2}\leq\big\|\bm{G}_{m,:}\big(\bm{U}\bm{H}-\bm{U}^{\left(m\right)}\bm{H}^{\left(m\right)}\big)\big\|_{2}+\big\|\bm{G}_{m,:}\big(\bm{U}^{\left(m\right)}\bm{H}^{\left(m\right)}-\bm{U}^{\star}\big)\big\|_{2},\label{eq:UH-UB3}
\end{equation}
where $\bm{H}^{(m)}:=\bm{U}^{(m)\top}\bm{U}^{\star}$. 
As mentioned before,
the leave-one-out estimate $\bm{U}^{\left(m\right)}$ enjoys two nice
properties. 
\begin{itemize}

	\item[(1)] The true estimate $\bm{U}$ and the leave-one-out estimate $\bm{U}^{(m)}$ are exceedingly close, as asserted by the following lemma (to be established in Appendix
\ref{subsec:pf:U_U_loo_dist}).   
%
%
\begin{lemma}\label{lemma:U_U_loo_dist}
Instate the assumptions of
Theorem \ref{thm:U_loss}, and recall the definition of $\bm{H}$
in (\ref{eq:defn-H}). With probability at least $1-O\left(d^{-10}\right)$,
the following holds simultaneously for all $1\leq m\leq d_{1}$:
\begin{align*}
\big\|\bm{U}^{\left(m\right)}\bm{U}^{\left(m\right)\top}-\bm{U}\bm{U}^{\top}\big\|_{\mathrm{F}} & \lesssim\frac{\zeta_{\mathsf{op}}}{\sigma_{r}^{\star2}}\left(\left\Vert \bm{U}\bm{H}\right\Vert _{2,\infty}+\sqrt{\frac{\mu r}{d_{1}}}\right),
\end{align*}
where $\zeta_{\mathsf{op}}$ is defined in (\ref{claim:G_op_loss}).\end{lemma}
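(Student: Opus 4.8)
The plan is to view $\bm{G}^{(m)}$ as a perturbation of $\bm{G}$, reduce via a Davis--Kahan-type $\sin\bm{\Theta}$ bound in Frobenius norm, and then exploit the fact that $\bm{G}-\bm{G}^{(m)}$ is a rank-$\le 2$ matrix supported entirely on the $m$-th row and column; the leave-one-out construction is precisely what renders the fluctuation part amenable to a Bernstein-type concentration argument. First I would record the eigengap: Lemma~\ref{lemma:G_op_loss} gives $\|\bm{G}-\bm{G}^\star\|\lesssim\zeta_{\mathsf{op}}+\|\bm{A}^\star\|_{2,\infty}^2$, and the very same bound holds for $\|\bm{G}^{(m)}-\bm{G}^\star\|$ (the proof of Lemma~\ref{lemma:G_op_loss} goes through verbatim for $\bm{A}^{(m)}$, as it only replaces one row by its expectation). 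By Weyl's inequality, $\lambda_r(\bm{G})$ and $\lambda_r(\bm{G}^{(m)})$ then lie within $O(\zeta_{\mathsf{op}}+\|\bm{A}^\star\|_{2,\infty}^2)$ of $\sigma_r^{\star2}$ and $\lambda_{r+1}(\bm{G})$ within the same order of $0$, while Fact~\ref{fact:rel_err_o1} ensures $\zeta_{\mathsf{op}}+\|\bm{A}^\star\|_{2,\infty}^2\ll\sigma_r^{\star2}/\kappa^2$, so the gap $\lambda_r(\bm{G}^{(m)})-\lambda_{r+1}(\bm{G})\gtrsim\sigma_r^{\star2}$. Standard matrix perturbation theory then yields $\|\bm{U}^{(m)}\bm{U}^{(m)\top}-\bm{U}\bm{U}^\top\|_{\mathrm F}\lesssim\sigma_r^{\star-2}\,\|(\bm{G}-\bm{G}^{(m)})\bm{U}^{(m)}\|_{\mathrm F}$.

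Next, since $\bm{A}$ and $\bm{A}^{(m)}$ differ only on the $m$-th row, write $\bm{A}=\bm{A}^{(m)}+\bm{e}_m\bm{\delta}_m^\top$ with $\bm{\delta}_m:=(\bm{A}-p\bm{A}^\star)_{m,:}^\top$, a zero-mean vector that is independent of $\bm{A}^{(m)}$ (hence of $\bm{U}^{(m)}$). Expanding the Gram matrices, the purely diagonal term $\|\bm{\delta}_m\|_2^2\,\bm{e}_m\bm{e}_m^\top$ is killed by $\mathcal{P}_{\mathsf{off}\text{-}\mathsf{diag}}$, leaving $\bm{G}-\bm{G}^{(m)}=\bm{y}\bm{e}_m^\top+\bm{e}_m\bm{y}^\top$, where $\bm{y}$ equals $\tfrac{1}{p^2}\bm{A}^{(m)}\bm{\delta}_m$ with its $m$-th entry zeroed out. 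Hence $(\bm{G}-\bm{G}^{(m)})\bm{U}^{(m)}=\bm{y}\,\bm{U}^{(m)}_{m,:}+\bm{e}_m(\bm{y}^\top\bm{U}^{(m)})$, and it suffices to control $\|\bm{y}\|_2\,\|\bm{U}^{(m)}_{m,:}\|_2$ and $\|\bm{y}^\top\bm{U}^{(m)}\|_2$. The latter is the benign term: because $\bm{\delta}_m$ is mean-zero and independent of $(\bm{A}^{(m)},\bm{U}^{(m)})$, the vector $\bm{y}^\top\bm{U}^{(m)}=\tfrac{1}{p^2}\sum_{j}(\bm{\delta}_m)_j\big[(\bm{A}^{(m)})_{:,j}^\top\bm{U}^{(m)}\big]$ (adjusted for the zeroed coordinate) is a sum of independent mean-zero random vectors; conditioning on $(\bm{A}^{(m)},\bm{U}^{(m)})$ and applying matrix Bernstein — using incoherence of $\bm{U}^{(m)}\approx\bm{U}^\star$ to bound the row norms of $(\bm{A}^{(m)})^\top\bm{U}^{(m)}$, spectral bounds on $\bm{A}^{(m)}$, and Assumption~\ref{assumption:random-noise} (truncating at level $R$ in the spiky case) for the magnitudes of $\bm{\delta}_m$ — yields $\|\bm{y}^\top\bm{U}^{(m)}\|_2\lesssim\zeta_{\mathsf{op}}\sqrt{\mu r/d_1}$.

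For the remaining term I would bound $\|\bm{U}^{(m)}_{m,:}\|_2\le\|(\bm{U}\bm{H})_{m,:}\|_2\,\|\bm{H}^{-1}\|+\|\bm{U}^{(m)}\bm{U}^{(m)\top}-\bm{U}\bm{U}^\top\|_{\mathrm F}\lesssim\|\bm{U}\bm{H}\|_{2,\infty}+\beta$, writing $\beta$ for the quantity being bounded; and for $\bm{y}$ I would split $\bm{A}^{(m)}=p\bm{A}^\star+\bm{E}^{(m)}$ (with $\bm{E}^{(m)}:=\bm{A}^{(m)}-p\bm{A}^\star$). The signal part $\tfrac1p\bm{A}^\star\bm{\delta}_m$ lies essentially in $\mathrm{col}(\bm{U}^\star)$, so when sandwiched by $\bm{I}-\bm{U}^{(m)}\bm{U}^{(m)\top}$ in the Davis--Kahan estimate it is nearly annihilated, the residual being of order $\tfrac{\zeta_{\mathsf{op}}}{\sigma_r^{\star2}}\|\bm{A}^\star\bm{\delta}_m\|_2$ (quadratically small), while $\bm{E}^{(m)}\bm{\delta}_m$ is again a mean-zero object controlled by Bernstein. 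Collecting, $\|\bm{y}\|_2\,\|\bm{U}^{(m)}_{m,:}\|_2\lesssim\zeta_{\mathsf{op}}\big(\|\bm{U}\bm{H}\|_{2,\infty}+\sqrt{\mu r/d_1}\big)+\zeta_{\mathsf{op}}\,\beta$. Combining the two terms, dividing by $\sigma_r^{\star2}$, and using $\zeta_{\mathsf{op}}/\sigma_r^{\star2}\ll1$ (Fact~\ref{fact:rel_err_o1}) to absorb the $\tfrac{\zeta_{\mathsf{op}}}{\sigma_r^{\star2}}\beta$ contribution into the left-hand side gives $\beta\lesssim\tfrac{\zeta_{\mathsf{op}}}{\sigma_r^{\star2}}\big(\|\bm{U}\bm{H}\|_{2,\infty}+\sqrt{\mu r/d_1}\big)$, and a union bound over $1\le m\le d_1$ (each bad event having probability $O(d^{-12})$) finishes the proof.

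The main obstacle is twofold. The estimate is self-referential — bounding $\beta$ needs $\|\bm{U}^{(m)}_{m,:}\|_2$, which can only be controlled in terms of $\beta$ itself — and this is resolved only by the smallness $\zeta_{\mathsf{op}}/\sigma_r^{\star2}\ll1$. More delicate is showing that the row-$m$ rank-one perturbation, after projection onto the leave-one-out subspace, is genuinely of size $\zeta_{\mathsf{op}}$: this relies both on the near-alignment of the signal component of $\bm{y}$ with $\mathrm{col}(\bm{U}^\star)$ and on a careful Bernstein-type accounting that forces the various missing-data and (linear as well as quadratic) noise fluctuation terms to aggregate exactly into $\zeta_{\mathsf{op}}$, which in turn presupposes the spectral and incoherence control of $\bm{A}^{(m)}$ and $\bm{U}^{(m)}$ — established as separate auxiliary lemmas — and is ultimately carried out as part of a joint bootstrap together with the bounds on $\|\bm{U}\bm{H}\|_{2,\infty}$ and the leave-two-out estimates.
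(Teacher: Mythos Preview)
Your overall architecture matches the paper's: Davis--Kahan reduces to $\|(\bm{G}-\bm{G}^{(m)})\bm{U}^{(m)}\|_{\mathrm F}$, the rank-two structure $\bm{y}\bm{e}_m^\top+\bm{e}_m\bm{y}^\top$ splits this into a ``column'' piece $\|\bm{y}\|_2\|\bm{U}^{(m)}_{m,:}\|_2$ and a ``row'' piece $\|\bm{y}^\top\bm{U}^{(m)}\|_2$, and the self-referential loop on $\|\bm{U}^{(m)}_{m,:}\|_2$ is closed via $\zeta_{\mathsf{op}}/\sigma_r^{\star2}\ll 1$. Two points, however, are misattributed.

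First, your handling of the column piece appeals to a projection $\bm{I}-\bm{U}^{(m)}\bm{U}^{(m)\top}$ ``in the Davis--Kahan estimate'' to annihilate the signal component of $\bm{y}$. No such projection appears in the numerator $\|(\bm{G}-\bm{G}^{(m)})\bm{U}^{(m)}\|_{\mathrm F}$, so there is no mechanism for cancellation here. The paper simply bounds $\|\bm{y}\|_2\le\|\bm{G}-\bm{G}^{(m)}\|$ and controls the latter directly (Lemma~\ref{lemma:G_loo_op_diff}), which already gives a quantity $\lesssim\zeta_{\mathsf{op}}$; no alignment argument is needed.

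Second, and more importantly, your Bernstein bound on $\bm{y}^\top\bm{U}^{(m)}=\tfrac{1}{p^2}\sum_j(\bm{\delta}_m)_j\,(\bm{A}^{(m)})_{:,j}^\top\bm{U}^{(m)}$ requires, for the $L$ parameter, control of $\max_l\|(\bm{A}^{(m)})_{:,l}^\top\bm{U}^{(m)}\|_2$. You attribute this to ``incoherence of $\bm{U}^{(m)}\approx\bm{U}^\star$'', but that is insufficient: the $l$-th column of $\bm{A}^{(m)}$ contains the fluctuation $\bm{E}_{:,l}$ (for rows $\ne m$), and $\bm{U}^{(m)}$ depends on that very column, so a direct concentration bound on $\bm{E}_{:,l}^\top\bm{U}^{(m)}$ is not legitimate. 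This is \emph{exactly} where the leave-two-out surrogate $\bm{U}^{(m,l)}$ must enter --- one replaces $\bm{U}^{(m)}$ by $\bm{U}^{(m,l)}$ (independent of column $l$), applies Bernstein, and separately controls $\|\bm{U}^{(m)}\bm{U}^{(m)\top}-\bm{U}^{(m,l)}\bm{U}^{(m,l)\top}\|$. You do invoke leave-two-out in your closing paragraph, but you locate it only vaguely as ``part of a joint bootstrap''; in the paper it is isolated as Lemma~\ref{lemma:A_dev_row_A_U_2_norm}, whose proof rests on Lemma~\ref{lemma:W_loo_2inf_norm} (the $\ell_{2,\infty}$ control of $(\bm{A}^{(m)})^\top\bm{U}^{(m)}\bm{H}^{(m)}$) and Lemma~\ref{lemma:U_loo_col_dist}. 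Note also that the resulting bound on the row piece is $\delta_{\mathsf{loo}}\big(\|\bm{U}^{(m)}\bm{H}^{(m)}\|_{2,\infty}+\sqrt{\mu r/d_1}\big)$, not merely $\zeta_{\mathsf{op}}\sqrt{\mu r/d_1}$ --- it too feeds into the bootstrap.
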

This result in turn allows us to control the first term on the right-hand side of (\ref{eq:UH-UB3}).

\item[(2)] Due to the statistical independence between $\bm{A}_{m,:}$
and $\bm{U}^{(m)}$,  the matrices $\bm{G}_{m,:}$ and $\bm{U}^{(m)}$ turn out
to be nearly independent. This allows one to invoke simple concentration
inequalities to develop tight bounds for the second term on the right-hand
side of (\ref{eq:UH-UB3}). The detailed proof can be found in Appendix
\ref{subsec:pf:UH_BUtrue}.
\end{itemize}

\end{itemize}


Finally, we make a remark on a technical issue encountered in the proof of Lemma \ref{lemma:U_U_loo_dist}. Recall that $\bm{U}^{\left(m\right)}$
is obtained by simply replacing the $m$-th row of $\bm{A}$ with 
its population version, which indicates the statistical dependency between  $\bm{U}^{\left(m\right)}$ and the $m$-th row of $\bm{A}$. 
However, there is still some delicate statistical dependency between $\bm{U}^{(m)}$ and the columns of $\bm{A}$ that need to be carefully coped with. 
Fortunately, the leave-two-out estimate $\bm{U}^{(m,l)}$ --- which is obtained by dropping  not only the $m$-th row of $\bm{A}$ but also the $l$-th of its columns --- allows us to decouple the dependency between $\bm{U}^{(m,l)}$ (and hence $\bm{U}$ and $\bm{U}^{(m)}$) and the $l$-th column of $\bm{A}$.  This is precisely the main reason why we introduce additional leave-two-out estimates. 

\subsection{Proof of Theorem~\ref{thm:U_loss}\label{subsec:pf:thm:U_loss}}

We are now positioned to establish our main theorem. The proof is
split into two parts.

\subsubsection{Statistical accuracy measured by $\left\Vert \cdot\right\Vert $}

We begin by establishing the spectral norm bound \eqref{claim:Lambda_loss}.
Let $\lambda_i$ and $\lambda_i^{\star}$ be the $i$-th largest eigenvalue of $\bm{\Lambda}$ and $\bm{\Lambda^{\star}}$, respectively. 
From Lemma~\ref{lemma:G_op_loss} and
Weyl's inequality, one finds that
\begin{align}
	\label{eq:lambda-i-bound}
\max_{1\leq i\leq r}\left|\lambda_{i}-\lambda_{i}^{\star}\right|=\left\Vert \bm{\Lambda}-\bm{\Lambda}^{\star}\right\Vert  & \leq\left\Vert \bm{G}-\bm{G}^{\star}\right\Vert \lesssim\zeta_{\mathsf{op}}+\|\bm{A}^{\star}\|_{2,\infty}^{2}\leq\sigma_{r}^{\star2}\cdot\mathcal{E}_{\mathsf{general}} ,
\end{align}
where $\zeta_{\mathsf{op}}$ and $\mathcal{E}_{\mathsf{general}}$ are
defined in (\ref{claim:G_op_loss}) and (\ref{eq:defn-Err-UB}), respectively. 
Here, the last inequality arises from the simple fact that $\|\bm{A}^{\star}\|_{2,\infty}^{2}\leq\frac{\mu_{1}r\sigma_{1}^{\star2}}{d_{1}}$
(cf.~Lemma \ref{lemma:incoh}). By virtue of Fact \ref{fact:rel_err_o1}, we
know that $\left\Vert \bm{\Lambda}-\bm{\Lambda}^{\star}\right\Vert \ll\sigma_{r}^{\star2}$. 
Given that $\bm{\Lambda}^{\star}=\bm{\Sigma}^{\star2}$ and $\bm{\Lambda}=\bm{\Sigma}^{2}$,
this implies that for each $1\leq i\leq r$,
\[
\frac{1}{4}\sigma_{i}^{\star2}=\frac{1}{4}\lambda_{i}^{\star}\leq\lambda_{i}^{\star}-\left|\lambda_{i}-\lambda_{i}^{\star}\right|\leq\lambda_{i}\leq\lambda_{i}^{\star}+\left|\lambda_{i}-\lambda_{i}^{\star}\right|\leq4\lambda_{i}^{\star }=4\sigma_{i}^{\star2},
\]
thus indicating that
\begin{equation}
	\label{eq:sigma-bound}
	\frac{1}{2}\sigma_{i}^{\star}\leq\sigma_{i}\leq2\sigma_{i}^{\star}.
\end{equation}
In conclusion, 
\[
	\left\Vert \bm{\Sigma}-\bm{\Sigma}^{\star}\right\Vert =\max_{1\leq i\leq r}\left|\sigma_{i}-\sigma_{i}^{\star}\right|=\max_{1\leq i\leq r}\frac{\left|\sigma_{i}^{2}-\sigma_{i}^{\star2}\right|}{\sigma_{i}+\sigma_{i}^{\star}} \overset{\mathrm{(a)}}{\leq} \max_{1\leq i\leq r}\frac{\left\Vert \bm{\Lambda}-\bm{\Lambda}^{\star}\right\Vert }{\frac{3}{2}\sigma_{i}^{\star}}  \overset{\mathrm{(b)}}{\lesssim} \sigma_{r}^{\star}\cdot\mathcal{E}_{\mathsf{general}}
\]
as claimed. Here, (a) comes from \eqref{eq:sigma-bound}, whereas (b) follows from \eqref{eq:lambda-i-bound}.

Next, we turn attention to (\ref{claim:U_op_loss}). First, it is seen that
\begin{align}
	\label{eq:UR-U-UB}
	\left\Vert \bm{U}\bm{R}-\bm{U}^{\star}\right\Vert  \leq \sqrt{2}\left\Vert \bm{U}\bm{U}^{\top}-\bm{U}^{\star}\bm{U}^{\star\top}\right\Vert = \| \sin \bm{\Theta}( \bm{U},  \bm{U}^{\star}) \| 
\end{align}
where $\bm{R}$ is defined in (\ref{def:R}), and $\bm{\Theta}( \bm{U},  \bm{U}^{\star})$ denotes a diagonal matrix
whose $i$-th diagonal entry is the $i$-th principal angle between the two subspaces represented by $\bm{U}$ and $\bm{U}^{\star}$. Here,  the first inequality follows from a well-known inequality connecting two different subspace distance metrics \cite[Chapter II]{stewart1990matrix}, while the last identity follows from  \cite[Chapter II]{stewart1990matrix}. In addition, Lemma~\ref{lemma:G_op_loss}
and Fact \ref{fact:rel_err_o1} tell us that 
\begin{align}
\left\Vert \bm{G}-\bm{G}^{\star}\right\Vert  & \lesssim\zeta_{\mathsf{op}}+\|\bm{A}^{\star}\|_{2,\infty}^{2}\ll\sigma_{r}^{\star2}\label{eq:G_eigval}
\end{align}
with probability at least $1-O(d^{-10})$, which together with Weyl's inequality gives
\begin{align}
	\lambda_{r+1}(\bm{G}) \leq \lambda_{r+1}(\bm{G}^{\star}) + \|\bm{G}-\bm{G}^{\star}\| = \|\bm{G}-\bm{G}^{\star}\| \leq \sigma_{r}^{\star2}/2 . 
\end{align}
Therefore, \cite[Chapter V, Theorem~3.6]{stewart1990matrix} (which is a version of the celebrated Davis-Kahan $\sin\bm{\Theta}$ Theorem\cite{davis1970rotation}) reveals that 
\begin{align*}
\|\sin\bm{\Theta}(\bm{U},\bm{U}^{\star})\| & \leq\frac{\left\Vert \bm{G}-\bm{G}^{\star}\right\Vert }{\lambda_{r}\left(\bm{G}^{\star}\right)-\lambda_{r+1}\big(\bm{G}\big)}\leq\frac{\left\Vert \bm{G}-\bm{G}^{\star}\right\Vert }{\sigma_{r}^{\star2}-\sigma_{r}^{\star2}/2}=\frac{2\left\Vert \bm{G}-\bm{G}^{\star}\right\Vert }{\sigma_{r}^{\star2}} ,
\end{align*}
where we have used the fact $\lambda_r(\bm{G}^{\star})=\sigma_r^{\star 2}$. The above bounds taken collectively imply that, with probability at least
$1-O(d^{-10})$, 
\begin{align}
	\left\Vert \bm{U}\bm{R}-\bm{U}^{\star}\right\Vert  & \leq\frac{2\sqrt{2}\left\Vert \bm{G}-\bm{G}^{\star}\right\Vert }{\sigma_{r}^{\star2}} \lesssim \mathcal{E}_{\mathsf{general}}. 
	\label{eq:UR-Ustar-UB1}
\end{align}

\subsubsection{Statistical accuracy measured by $\left\Vert \cdot\right\Vert _{2,\infty}$}

Before continuing to the proof, we find it convenient to introduce a few more notations.
In addition to the rotation matrix $\bm{R}$  defined in (\ref{def:R})
and the linear transformation $\bm{H}$ defined in (\ref{eq:defn-H}),
we define
\begin{align}
\mathsf{sgn}\left(\bm{H}\right) & :=\widetilde{\bm{U}}\widetilde{\bm{V}}^{\top}\in\mathbb{R}^{d_{1}\times d_{1}},\label{def:H_sgn}
\end{align}
where the columns of $\widetilde{\bm{U}}\in\mathbb{R}^{d_{1}\times d_{1}}$
(resp.~$\widetilde{\bm{V}}\in\mathbb{R}^{d_{1}\times d_{1}}$) are
the left (resp.~right) singular vectors of $\bm{H}$. It is well-known
that \cite[Theorem~2]{ten1977orthogonal}
\begin{equation}
\bm{R}=\mathsf{sgn}\left(\bm{H}\right).\label{eq:R-sgnH}
\end{equation}

We now move on to establishing the advertised bound (\ref{claim:U_2inf_loss}).
\begin{enumerate}
\item To begin with, we claim that $\bm{U}\bm{H}$ is extremely close to
$\bm{U}\bm{R}$, provided that $\left\Vert \bm{G}-\bm{G}^{\star}\right\Vert $
is sufficiently small. To this end, recognizing that $\left\Vert \bm{G}-\bm{G}^{\star}\right\Vert \lesssim\zeta_{\mathsf{op}}\ll\sigma_{r}^{\star2}$
(according to Lemma \ref{lemma:G_op_loss} and Fact \ref{fact:rel_err_o1}),
we can apply \cite[Lemma~3]{abbe2017entrywise} to show that
\[
\left\Vert \bm{H}^{-1}\right\Vert \lesssim1\qquad\text{and}\qquad\sqrt{\left\Vert \bm{H}-\mathsf{sgn}\left(\bm{H}\right)\right\Vert }\leq\left\Vert \bm{U}\bm{U}^{\top}-\bm{U}^{\star}\bm{U}^{\star\top}\right\Vert \lesssim\frac{\zeta_{\mathsf{op}}}{\sigma_{r}^{\star2}},
\]
where the last inequality follows from (\ref{eq:UR-Ustar-UB1}). Thus,
invoke the identity (\ref{eq:R-sgnH}) to arrive at
\begin{align}
\left\Vert \bm{U}\bm{H}-\bm{U}\bm{R}\right\Vert _{2,\infty} & =\left\Vert \bm{U}\bm{H}-\bm{U}\mathsf{sgn}\left(\bm{H}\right)\right\Vert _{2,\infty}=\left\Vert \bm{U}\bm{H}\bm{H}^{-1}\big(\bm{H}-\mathsf{sgn}\left(\bm{H}\right)\big)\right\Vert _{2,\infty}\nonumber \\
 & \leq\left\Vert \bm{U}\bm{H}\right\Vert _{2,\infty}\left\Vert \bm{H}^{-1}\right\Vert \left\Vert \bm{H}-\mathsf{sgn}\left(\bm{H}\right)\right\Vert \nonumber \\
 & \lesssim\left(\frac{\zeta_{\mathsf{op}}}{\sigma_{r}^{\star2}}\right)^{2}\left\Vert \bm{U}\bm{H}\right\Vert _{2,\infty}\ll\frac{\zeta_{\mathsf{op}}}{\sigma_{r}^{\star2}}\left\Vert \bm{U}\bm{H}\right\Vert _{2,\infty}.\label{eq:UH_UsgnH_2inf_diff}
\end{align}
This in turn allows us to focus attention on bounding $\left\Vert \bm{U}\bm{H}-\bm{U}^{\star}\right\Vert _{2,\infty}$
(instead of $\left\Vert \bm{U}\bm{R}-\bm{U}^{\star}\right\Vert _{2,\infty}$).
\item Next, recall that $\bm{G}^{\star}=\bm{U}^{\star}\bm{\Sigma}^{\star2}\bm{U}^{\star\top}$
and hence $\bm{G}^{\star}\bm{U}^{\star}(\bm{\Sigma}^{\star})^{-2}=\bm{U}^{\star}$.
Invoke the triangle inequality to reach
\begin{align}
\left\Vert \bm{U}\bm{H}-\bm{U}^{\star}\right\Vert _{2,\infty} & =\left\Vert \bm{U}\bm{H}-\bm{G}\bm{U}^{\star}\left(\bm{\Sigma}^{\star}\right)^{-2}+\bm{G}\bm{U}^{\star}\left(\bm{\Sigma}^{\star}\right)^{-2}-\bm{G}^{\star}\bm{U}^{\star}\left(\bm{\Sigma}^{\star}\right)^{-2}\right\Vert _{2,\infty}\nonumber \\
 & \leq\big\|\left(\bm{G}-\bm{G}^{\star}\right)\bm{U}^{\star}\left(\bm{\Sigma}^{\star}\right)^{-2}\big\|_{2,\infty}+\big\|\bm{U}\bm{H}-\bm{G}\bm{U}^{\star}\left(\bm{\Sigma}^{\star}\right)^{-2}\big\|_{2,\infty} \nonumber \\
 & \leq\left\Vert \left(\bm{G}-\bm{G}^{\star}\right)\bm{U}^{\star}\right\Vert _{2,\infty}\big\|\left(\bm{\Sigma}^{\star}\right)^{-2}\big\|+\big\|\bm{U}\bm{H}-\bm{G}\bm{U}^{\star}\left(\bm{\Sigma}^{\star}\right)^{-2}\big\|_{2,\infty}\nonumber \\
 & \leq\frac{1}{\sigma_{r}^{\star2}}\left\Vert \left(\bm{G}-\bm{G}^{\star}\right)\bm{U}^{\star}\right\Vert _{2,\infty}+\big\|\bm{U}\bm{H}-\bm{G}\bm{U}^{\star}\left(\bm{\Sigma}^{\star}\right)^{-2}\big\|_{2,\infty}.\label{eq:UH_2inf_loss_decomp}
\end{align}
Regarding the first term of (\ref{eq:UH_2inf_loss_decomp}), Lemma~\ref{lemma:G_dev_W_2_norm}
reveals that with probability at least $1-O\left(d^{-10}\right)$,
\begin{align}
\frac{1}{\sigma_{r}^{\star2}}\left\Vert \left(\bm{G}-\bm{G}^{\star}\right)\bm{U}^{\star}\right\Vert _{2,\infty} & \lesssim\frac{\zeta_{\mathsf{op}}+\left\Vert \bm{A}^{\star}\right\Vert _{2,\infty}^{2}}{\sigma_{r}^{\star2}}\sqrt{\frac{\mu r}{d_{1}}}.\label{eq:U_loss_2inf_term1}
\end{align}
With regards to the second term of (\ref{eq:UH_2inf_loss_decomp}),
Lemma~\ref{lemma:UH_BUtrue} demonstrates that
\begin{align}
\big\|\bm{U}\bm{H}-\bm{G}\bm{U}^{\star}\left(\bm{\Sigma}^{\star}\right)^{-2}\big\|_{2,\infty}\lesssim & \frac{\kappa^{2}\big(\zeta_{\mathsf{op}}+\left\Vert \bm{A}^{\star}\right\Vert _{2,\infty}^{2}\big)}{\sigma_{r}^{\star2}}\left(\left\Vert \bm{U}\bm{H}\right\Vert _{2,\infty}+\sqrt{\frac{\mu r}{d_{1}}}\right)\label{eq:U_loss_2inf_term2}
\end{align}
with probability at least $1-O\left(d^{-10}\right)$. Combine (\ref{eq:U_loss_2inf_term1})
and (\ref{eq:U_loss_2inf_term2}) to arrive at
\begin{align}
\left\Vert \bm{U}\bm{H}-\bm{U}^{\star}\right\Vert _{2,\infty} & \lesssim\frac{\kappa^{2}\big(\zeta_{\mathsf{op}}+\left\Vert \bm{A}^{\star}\right\Vert _{2,\infty}^{2}\big)}{\sigma_{r}^{\star2}}\left(\left\Vert \bm{U}\bm{H}\right\Vert _{2,\infty}+\sqrt{\frac{\mu r}{d_{1}}}\right).\label{eq:UH_2inf_loss_temp}
\end{align}

\item As a byproduct of (\ref{eq:UH_2inf_loss_temp}) and Fact~\ref{fact:rel_err_o1}, we see that
\begin{align*}
\left\Vert \bm{U}\bm{H}-\bm{U}^{\star}\right\Vert _{2,\infty} & \ll\left\Vert \bm{U}\bm{H}\right\Vert _{2,\infty}+\sqrt{\frac{\mu r}{d_{1}}} .
\end{align*}
%
%
It then follows from the triangle inequality that
\[
\left\Vert \bm{U}\bm{H}\right\Vert _{2,\infty}\leq\left\Vert \bm{U}\bm{H}-\bm{U}^{\star}\right\Vert _{2,\infty}+\left\Vert \bm{U}^{\star}\right\Vert _{2,\infty}\lesssim o\left(1\right)\left\Vert \bm{U}\bm{H}\right\Vert _{2,\infty}+\sqrt{\frac{\mu r}{d_{1}}},
\]
thus indicating that
\begin{equation}
\left\Vert \bm{U}\bm{H}\right\Vert _{2,\infty}\leq2\sqrt{\frac{\mu r}{d_{1}}}.\label{eq:UH_2inf_UB}
\end{equation}
Substitution into (\ref{eq:UH_UsgnH_2inf_diff}) and (\ref{eq:UH_2inf_loss_temp})
gives
\[
\left\Vert \bm{U}\bm{H}-\bm{U}\bm{R}\right\Vert _{2,\infty}\ll\frac{\zeta_{\mathsf{op}}}{\sigma_{r}^{\star2}}\sqrt{\frac{\mu r}{d_{1}}}\quad\text{and}\quad\left\Vert \bm{U}\bm{H}-\bm{U}^{\star}\right\Vert _{2,\infty}\lesssim\frac{\kappa^{2}\big(\zeta_{\mathsf{op}}+\left\Vert \bm{A}^{\star}\right\Vert _{2,\infty}^{2}\big)}{\sigma_{r}^{\star2}}\sqrt{\frac{\mu r}{d_{1}}}.
\]
\end{enumerate}
Combining the above results yields
\begin{align*}
\left\Vert \bm{U}\bm{R}-\bm{U}^{\star}\right\Vert _{2,\infty} & \leq\left\Vert \bm{U}\bm{H}-\bm{U}^{\star}\right\Vert _{2,\infty}+\left\Vert \bm{U}\bm{H}-\bm{U}\bm{R}\right\Vert _{2,\infty}\\
 & \lesssim\frac{\kappa^{2}\big(\zeta_{\mathsf{op}}+\left\Vert \bm{A}^{\star}\right\Vert _{2,\infty}^{2}\big)}{\sigma_{r}^{\star2}}\sqrt{\frac{\mu r}{d_{1}}}.
\end{align*}
Substituting the value of $\zeta_{\mathsf{op}}$ into the above inequality
and using the upper bound $\|\bm{A}^{\star}\|_{2,\infty}^{2}\leq\frac{\mu_{1}r\sigma_{1}^{\star2}}{d_{1}}$
(cf.~Lemma \ref{lemma:incoh}), we conclude the proof.

\section{Discussion\label{sec:discussion}}

This paper has explored spectral methods tailored to subspace estimation
for low-rank matrices with missing entries. In comparison to prior
literature, our findings are particularly interesting when the column
dimension $d_{2}$ far exceeds the row dimension $d_{1}$. In many
scenarios, even though the observed data are either too noisy or too
incomplete to support reliable recovery of the entire matrix (so that
prior matrix completion results often become inapplicable), they might
still be informative enough if the purpose is merely to estimate the
column subspace of the unknown matrix. In fact, this suggests a potentially useful
paradigm for privacy-preserving estimation or learning: the
inability to recover the entire matrix facilitates the protection
of personal data, yet it is still possible to retrieve useful subspace
information for inference and learning.  Our main contribution lies in establishing $\ell_{2,\infty}$
statistical guarantees for subspace estimation, therefore providing a stronger
form of performance guarantees compared to the usual $\ell_{2}$ perturbation
bounds.

Moving forward, there are many directions that are worth pursuing.
For example, our current theory is likely suboptimal with respect
to the dependence on the rank $r$ and the condition number $\kappa$. For instance, the conditions~\eqref{asmp} and the risk bound \eqref{eq:defn-Err-UB} involve high-order polynomials of $\kappa$ in multiple places, and the rank $r$ in our current theory cannot exceed the order of $d_1/\kappa^4$; all of these might be improvable via more refined analysis.   In addition, it is natural to wonder whether we can extend our algorithm and theory to accommodate
more general sampling patterns. Going beyond estimation, an important
direction lies in statistical inference and uncertainty quantification
for subspace estimation, namely, how to construct valid and hopefully
optimal confidence regions that are likely to contain the unknown
column subspace? It would also be interesting to investigate how to
incorporate other structural prior (e.g.~sparsity) to further reduce
the sample complexity and/or improve the estimation accuracy. Finally,
another interesting avenue for future exploration is the  extension to distributed or decentralized settings. 

\section*{Acknowledgements}

Y.~Chen is supported in part by the grants AFOSR YIP award FA9550-19-1-0030,
ONR N00014-19-1-2120, ARO YIP award W911NF-20-1-0097, ARO W911NF-18-1-0303, NSF DMS-2014279, CCF-1907661 and IIS-1900140,
and the Princeton SEAS Innovation Award. Y.~Chi is supported in part
by the grants ONR N00014-18-1-2142 and N00014-19-1-2404, ARO W911NF-18-1-0303,
and NSF CCF-1806154 and ECCS-1818571. H.~V.~Poor is supported in
part by the NSF grant DMS-1736417, CCF-1908308, and a Princeton Schmidt Data-X Research Award. C.~Cai is supported in part by Gordon Y.~S.~Wu Fellowships in Engineering.

\appendix

\section{Proofs for corollaries}

\subsection{Proof of Corollary~\ref{cor:tensor-completion}\label{subsec:pf:tc}}

Recall that the spectral algorithm considered herein (cf.~Section
\ref{subsec:tensor-completion}) operates upon the noisy copy of the
mode-1 matricization of the tensor $\bm{T}^{\star}$, namely,
\begin{equation}
\bm{A}^{\star}=\sum_{s=1}^{r}\bm{w}_{s}^{\star}\left(\bm{w}_{s}^{\star}\otimes\bm{w}_{s}^{\star}\right)^{\top}.\label{eq:A-expression-TC}
\end{equation}
Consequently, in order to apply Theorem~\ref{thm:U_loss}, the main
step boils down to estimating the spectrum and the incoherence parameters
of $\bm{A}^{\star}$. Specifically, we need to upper bound the condition
number $\kappa$, as well as the incoherence parameters $\mu_{0},\mu_{1}$
and $\mu_{2}$ as introduced in Definition \ref{definition-mu0-mu1-mu2}.

Before proceeding, we introduce a few notations that simplify the
presentation. Define
\[
\lambda_{i}^{\star}:=\left\Vert \bm{w}_{i}^{\star}\right\Vert _{2}^{3},\qquad1\leq i\leq r,
\]
and let $\lambda_{\left(i\right)}^{\star}$ denote the $i$-th largest
value in $\left\{ \lambda_{i}^{\star}\right\} _{i=1}^{r}$. We also
recall that
\[
\lambda_{\min}^{\star}:=\min_{1\leq i\leq r}\left\Vert \bm{w}_{i}^{\star}\right\Vert _{2}^{3}\qquad\text{and}\qquad\lambda_{\max}^{\star}:=\max_{1\leq i\leq r}\left\Vert \bm{w}_{i}^{\star}\right\Vert _{2}^{3}.
\]
In addition, we define two matrices of interest
\[
\overline{\bm{W}}^{\star}:=\left[\overline{\bm{w}}_{1}^{\star},\cdots,\overline{\bm{w}}_{r}^{\star}\right]\in\mathbb{R}^{d\times r},\qquad\widetilde{\bm{W}}^{\star}:=\left[\overline{\bm{w}}_{1}^{\star}\otimes\overline{\bm{w}}_{1}^{\star},\cdots,\overline{\bm{w}}_{r}^{\star}\otimes\overline{\bm{w}}_{r}^{\star}\right]\in\mathbb{R}^{d^{2}\times r},
\]
where $\overline{\bm{w}}_{s}^{\star}:=\bm{w}_{s}^{\star}/\left\Vert \bm{w}_{s}^{\star}\right\Vert _{2}$,
and $\bm{a}\otimes\bm{b}:=\footnotesize\left[\begin{array}{c}
a_{1}\bm{b}\\
\vdots\\
a_{d}\bm{b}
\end{array}\right]$. In addition, let $\bm{D}^{\star}\in\mathbb{R}^{r\times r}$ be a
diagonal matrix with diagonal entries
\[
D_{s,s}^{\star}=\left\Vert \bm{w}_{s}^{\star}\right\Vert _{2},\qquad1\leq s\leq r.
\]
These allow us to express
\[
\bm{G}^{\star}=\bm{A}^{\star}\bm{A}^{\star\top}=\overline{\bm{W}}^{\star}\bm{D}^{\star3}\widetilde{\bm{W}}^{\star\top}\widetilde{\bm{W}}^{\star}\bm{D}^{\star3}\overline{\bm{W}}^{\star\top}.
\]

In the sequel, we begin by quantifying the spectrum of $\bm{G}^{\star}$,
which in turn allows us to understand the spectrum of $\bm{A}^{\star}$.
\begin{itemize}
\item We first look at the eigenvalues of the matrices $\overline{\bm{W}}^{\star\top}\overline{\bm{W}}^{\star}$
and $\widetilde{\bm{W}}^{\star\top}\widetilde{\bm{W}}^{\star}$. Towards
this, let us write
\begin{equation}
\overline{\bm{W}}^{\star\top}\overline{\bm{W}}^{\star}=\bm{I}_{r}+\bm{C},\qquad\text{and}\qquad\widetilde{\bm{W}}^{\star\top}\widetilde{\bm{W}}^{\star}=\bm{I}_{r}+\widetilde{\bm{C}}\label{eq:W_top_gram_decomp}
\end{equation}
for some matrices $\bm{C},\widetilde{\bm{C}}\in\mathbb{R}^{r\times r}$.
It follows immediately from the incoherence assumption~(\ref{asmp:tensor})
that
\[
\left\Vert \bm{C}\right\Vert _{\infty}\leq\sqrt{\mu_{5}/d}\qquad\text{and}\qquad\big\|\widetilde{\bm{C}}\big\|_{\infty}\leq\mu_{5}/d,
\]
thus leading to the simple bounds
\begin{equation}
\left\Vert \bm{C}\right\Vert \leq r\left\Vert \bm{C}\right\Vert _{\infty}\leq r\sqrt{\mu_{5}/d},\qquad\big\|\widetilde{\bm{C}}\big\|\leq r \, \big\|\widetilde{\bm{C}}\big\|_{\infty}\leq\mu_{5}r/d.\label{eq:C_op_UB}
\end{equation}
These taken collectively with \eqref{eq:W_top_gram_decomp} and Weyl's
inequality yield
\[
\max_{i\in\left[r\right]}\Big|\lambda_{i}\big(\overline{\bm{W}}^{\star\top}\overline{\bm{W}}^{\star}\big)-1\Big|\leq\left\Vert \bm{C}\right\Vert \leq r\sqrt{\mu_{5}/d}\qquad\text{and}\qquad\max_{i\in\left[r\right]}\Big|\lambda_{i}\big(\widetilde{\bm{W}}^{\star\top}\widetilde{\bm{W}}^{\star}\big)-1\Big|\leq\big\|\widetilde{\bm{C}}\big\|\leq\mu_{5}r/d,
\]
which essentially tell us that
\begin{equation}
\big\|\overline{\bm{W}}^{\star}\big\|=\sqrt{\lambda_{1}\big(\overline{\bm{W}}^{\star\top}\overline{\bm{W}}^{\star}\big)}\leq\sqrt{1+r\sqrt{\mu_{5}/d}}\qquad\text{and}\qquad\big\|\widetilde{\bm{W}}^{\star}\big\|=\sqrt{\lambda_{1}\big(\widetilde{\bm{W}}^{\star\top}\widetilde{\bm{W}}^{\star}\big)}\leq\sqrt{1+\mu_{5}r/d}.\label{eq:W_op_UB}
\end{equation}
\item Returning to $\bm{G}^{\star}$, one invokes the definition \eqref{eq:W_top_gram_decomp}
to deduce that
\[
\bm{G}^{\star}=\overline{\bm{W}}^{\star}\bm{D}^{\star6}\overline{\bm{W}}^{\star\top}+\overline{\bm{W}}^{\star}\bm{D}^{\star3}\widetilde{\bm{C}}\bm{D}^{\star3}\overline{\bm{W}}^{\star\top}.
\]
Observe that the eigenvalues of $\overline{\bm{W}}^{\star}\bm{D}^{\star3}\big(\overline{\bm{W}}^{\star}\bm{D}^{\star3}\big)^{\top}$
are identical to those of $\big(\overline{\bm{W}}^{\star}\bm{D}^{\star3}\big)^{\top}\overline{\bm{W}}^{\star}\bm{D}^{\star3}$,
where the latter can be further decomposed as follows (in view of
\eqref{eq:W_top_gram_decomp})
\[
\big(\overline{\bm{W}}^{\star}\bm{D}^{\star3}\big)^{\top}\overline{\bm{W}}^{\star}\bm{D}^{\star3}=\bm{D}^{\star3}\overline{\bm{W}}^{\star\top}\overline{\bm{W}}^{\star}\bm{D}^{\star3}=\bm{D}^{\star6}+\bm{D}^{\star3}\bm{C}\bm{D}^{\star3}.
\]
This taken together with Weyl's inequality, \eqref{eq:C_op_UB} and
\eqref{eq:W_op_UB} shows that
\[
\left|\lambda_{i}\big(\overline{\bm{W}}^{\star}\bm{D}^{\star6}\overline{\bm{W}}^{\star\top}\big)-\lambda_{i}\big(\bm{D}^{\star6}\big)\right|\leq\left\Vert \bm{D}^{\star3}\bm{C}\bm{D}^{\star3}\right\Vert \leq\left\Vert \bm{D}^{\star}\right\Vert ^{6}\left\Vert \bm{C}\right\Vert \leq r\sqrt{\frac{\mu_{5}}{d}}\,\lambda_{\max}^{\star2}
\]
for each $1\leq i\leq r$. In addition, 
\[
\left|\lambda_{i}\big(\bm{G}^{\star}\big)-\lambda_{i}\big(\overline{\bm{W}}^{\star}\bm{D}^{\star6}\overline{\bm{W}}^{\star\top}\big)\right|\leq\big\|\overline{\bm{W}}^{\star}\bm{D}^{\star3}\widetilde{\bm{C}}\bm{D}^{\star3}\overline{\bm{W}}^{\star\top}\big\|\leq\big\|\overline{\bm{W}}^{\star}\big\|^{2}\left\Vert \bm{D}^{\star}\right\Vert ^{6}\big\|\widetilde{\bm{C}}\big\|\leq\frac{\mu_{5}r}{d}\left(1+r\sqrt{\frac{\mu_{5}}{d}}\right)\lambda_{\max}^{\star2}.
\]
As a result, invoke the triangle inequality to see that 
\begin{align*}
\left|\lambda_{i}\left(\bm{G}^{\star}\right)-\lambda_{i}\left(\bm{D}^{\star6}\right)\right| & \leq\left|\lambda_{i}\big(\bm{G}^{\star}\big)-\lambda_{i}\big(\overline{\bm{W}}^{\star}\bm{D}^{\star6}\overline{\bm{W}}^{\star\top}\big)\right|+\left|\lambda_{i}\big(\overline{\bm{W}}^{\star}\bm{D}^{\star6}\overline{\bm{W}}^{\star\top}\big)-\lambda_{i}\big(\bm{D}^{\star6}\big)\right|\\
 & \leq\frac{\mu_{5}r}{d}\left(1+r\sqrt{\frac{\mu_{5}}{d}}\right)\lambda_{\max}^{\star2}+r\sqrt{\frac{\mu_{5}}{d}}\,\lambda_{\max}^{\star2}\leq3r\sqrt{\frac{\mu_{5}}{d}}\,\lambda_{\max}^{\star2}
\end{align*}
for each $1\leq i\leq r$, where the last inequality holds under the
assumption that $r\sqrt{\mu_{5}/d}\leq1$. This means
\begin{align*}
\left|\lambda_{i}\left(\bm{G}^{\star}\right)-\lambda_{\left(i\right)}^{\star2}\right| & \leq3r\sqrt{\frac{\mu_{5}}{d}}\,\lambda_{\max}^{\star2},
\end{align*}
where $\lambda_{\left(i\right)}^{\star}$ denotes the $i$-th largest
value in $\left\{ \lambda_{i}^{\star}\right\} _{i=1}^{r}$.
\item Recalling that $\mu_{\mathsf{tc}}:=\max\left\{ \mu_{3},\mu_{4}^{2} \right\} $,
$\kappa_{\mathsf{tc}}:=\lambda_{\max}^{\star}/\lambda_{\min}^{\star}$
and the rank assumption $r\ll \kappa_{\mathsf{tc}}^{-2}\sqrt{d/\mu_{5}}$,
we find that
\begin{align}
\lambda_{i}\left(\bm{G}^{\star}\right)=\lambda_{\left(i\right)}^{\star2}+O\left(r\sqrt{\frac{\mu_{5}}{d}}\right)\lambda_{\max}^{\star2}\qquad\text{and}\qquad\sigma_{i}\left(\bm{A}^{\star}\right)=\lambda_{\left(i\right)}^{\star}\left(1+o\left(1\right)\right). \label{eq:A-tc-spectrum}
\end{align}
As a result, we immediately arrive at
\[
\qquad\sigma_{1}\left(\bm{A}^{\star}\right)=\lambda_{\max}^{\star}\left(1+o\left(1\right)\right),\qquad\sigma_{r}\left(\bm{A}^{\star}\right)=\lambda_{\min}^{\star}\left(1+o\left(1\right)\right),\qquad\text{and}\qquad\kappa=\frac{\sigma_{1}\left(\bm{A}^{\star}\right)}{\sigma_{r}\left(\bm{A}^{\star}\right)}\lesssim\kappa_{\mathsf{tc}}.
\]
\end{itemize}
Next, we turn attention to bounding the incoherence parameters of
$\bm{A}^{\star}$. Let $\bm{A}^{\star}=\bm{U}^{\star}\bm{\Sigma}^{\star}\bm{V}^{\star\top}$
be the (compact) SVD of $\bm{A}^{\star}$. It is seen from (\ref{eq:A-expression-TC})
that the column space of $\bm{U}^{\star}$ (resp.~$\bm{V}^{\star}$)
coincides with the column space of $\overline{\bm{W}}^{\star}$ (resp.~$\widetilde{\bm{W}}^{\star}$).
Therefore, there exist orthonormal matrices $\bm{H}_{1}$ and $\bm{H}_{2}$
such that
\[
\bm{U}^{\star}\bm{H}_{1}=\overline{\bm{W}}^{\star}\big(\overline{\bm{W}}^{\star\top}\overline{\bm{W}}^{\star}\big)^{-1/2}\qquad\text{and}\qquad\bm{V}^{\star}\bm{H}_{2}=\widetilde{\bm{W}}^{\star}\big(\widetilde{\bm{W}}^{\star\top}\widetilde{\bm{W}}^{\star}\big)^{-1/2}.
\]
These allow us to bound
\begin{align*}
\left\Vert \bm{U}^{\star}\right\Vert _{2,\infty} & =\left\Vert \bm{U}^{\star}\bm{H}_{1}\right\Vert _{2,\infty}\leq\big\|\overline{\bm{W}}^{\star}\big\|_{2,\infty}\big\|\big(\overline{\bm{W}}^{\star\top}\overline{\bm{W}}^{\star}\big)^{-1/2}\big\|\leq\sqrt{\frac{\mu_{4}r}{d}}\sqrt{\frac{1}{\lambda_{r}\big(\overline{\bm{W}}^{\star\top}\overline{\bm{W}}^{\star}\big)}}\lesssim\sqrt{\frac{\mu_{4}r}{d}}\sqrt{\frac{1}{1-1/3}}\leq\sqrt{\frac{2\mu_{4}r}{d}},\\
\left\Vert \bm{V}^{\star}\right\Vert _{2,\infty} & =\left\Vert \bm{V}^{\star}\bm{H}_{2}\right\Vert _{2,\infty}\leq\big\|\widetilde{\bm{W}}^{\star}\big\|_{2,\infty}\big\|\big(\widetilde{\bm{W}}^{\star\top}\widetilde{\bm{W}}^{\star}\big)^{-1/2}\big\|\leq\sqrt{\frac{\mu_{4}^{2}r}{d^2}}\sqrt{\frac{1}{\lambda_{r}\big(\widetilde{\bm{W}}^{\star\top}\widetilde{\bm{W}}^{\star}\big)}}\leq\sqrt{\frac{\mu_{4}^{2}r}{d^2}}\sqrt{\frac{1}{1-1/3}}\leq\sqrt{\frac{2\mu_{4}^{2}r}{d^2}},
\end{align*}
which follow from \eqref{eq:W_op_UB} and the assumption that $r\ll\sqrt{d/\mu_{5}}$.
Moreover, the incoherence assumption~(\ref{asmp:tensor}) gives that
\[
\mu_0 = \frac{d^{3} \left\| \bm{A}^\star \right\|_{\infty}^2 }{\left\| \bm{A}^\star \right\|_{\mathrm{F}}^2 } = \frac{d^{3} \left\| \bm{T}^\star \right\|_{\infty}^2 }{\left\| \bm{T}^\star \right\|_{\mathrm{F}}^2 } \leq \mu_3.
\]

To conclude, the above analysis reveals that
\[
	\mu_{0}\leq \mu_{3},\qquad\mu_{1}\lesssim\mu_{4},\qquad\mu_{2}\lesssim\mu_{4}^{2},\qquad\mu\lesssim\max\left\{ \mu_{3},\mu_{4}^{2}\right\} = \mu_{\mathsf{tc}} \qquad\text{and}\qquad\kappa\lesssim\kappa_{\mathsf{tc}},
\]
where $\mu=\max\left\{ \mu_{0},\mu_{1},\mu_{2}\right\} $ and $\kappa=\sigma_{1}\left(\bm{A}^{\star}\right)/\sigma_{r}\left(\bm{A}^{\star}\right)$.
With these estimates in place, Corollary~\ref{cor:tensor-completion}
follows immediately from Theorem~\ref{thm:U_loss}.

\subsection{Proof of Corollary~\ref{cor:cov}\label{subsec:pf:cov}}

In the problem of covariance estimation with missing data, the ground
truth $\bm{A}^{\star}$ is effectively given by $\bm{B}^{\star}\bm{F}^{\star}$,
which obeys
\[
\bm{A}^{\star}=\bm{B}^{\star}\bm{F}^{\star}=\bm{U}^{\star}\bm{\Lambda}^{\star1/2}\bm{F}^{\star}\in\mathbb{R}^{d\times n},\qquad\bm{F}^{\star}=\left[\bm{f}_{1}^{\star},\cdots,\bm{f}_{n}^{\star}\right]\in\mathbb{R}^{r\times n}
\]
with $\bm{f}_{i}^{\star}\overset{\mathrm{i.i.d.}}{\sim}\mathcal{N}(\bm{0},\bm{I}_{r})$.We
note that by our assumption on the sample size, one has $n\gg\kappa_{\mathsf{ce}}^{2}\left(r+\log d\right)$,
where $\kappa_{\mathsf{ce}}=\lambda_{1}^{\star}/\lambda_{r}^{\star}$.
In addition, we note that under the assumption of Corollary~\ref{cor:cov},
one has
\begin{equation}
	\mathcal{E}_{\mathsf{ce}}\ll \kappa_{\mathsf{ce}}^{-1} \leq1,\label{eq:err-CE-UB}
\end{equation}
where $\mathcal{E}_{\mathsf{ce}}$ and $\kappa_{\mathsf{ce}}$
are defined in \eqref{def:err-CE} and \eqref{def:kappa_ce}, respectively.

\subsubsection{Estimation error of the principle subspace}

In this section, we will prove \eqref{claim:U_op_loss_cov} and \eqref{claim:U_2inf_loss_cov}. To begin the proof,  we verify the condition of the random
noise (cf.~\eqref{eq:assumption-noise-spike}). From standard
Gaussian concentration results, one is allowed to choose $R\asymp\sigma\sqrt{\log (n+d)}$, so that $|\eta_{i,j}|\leq R$ for all $i$ and $j$  
with probability $1-O((n+d)^{-12})$. Under our sample size condition
that $n\gg\max\big\{\frac{\log^{6}\left(n+d\right)}{dp^{2}}, \frac{\log^3\left(n+d\right)}{p}\big\}$, the requirement \eqref{eq:assumption-noise-spike}
is satisfied, namely, 
\[
	\frac{R^{2}}{\sigma^{2}}\asymp\log (n+d) \lesssim\min\left\{\frac{p\sqrt{dn}}{\log\left(n+d\right)}, \frac{pn}{\log\left(n+d\right)} \right\}.
\]

Next, we turn to the properties of $\bm{B}^{\star}\bm{F}^{\star}$
and start by looking at its spectrum. Define
\[
\bm{C}:=\bm{F}^{\star}\bm{F}^{\star\top}-n\bm{I}_{r},
\]
which allows us to write
\begin{equation}
\bm{G}^{\star}=\bm{B}^{\star}\bm{F}^{\star}\left(\bm{B}^{\star}\bm{F}^{\star}\right)^{\top}=\bm{U}^{\star}\bm{\Lambda}^{\star1/2}\bm{F}^{\star}\bm{F}^{\star\top}\bm{\Lambda}^{\star1/2}\bm{U}^{\star\top}=n\underbrace{\bm{U}^{\star}\bm{\Lambda}^{\star}\bm{U}^{\star\top}}_{= \bm{S}^{\star}}+\underbrace{\bm{U}^{\star}\bm{\Lambda}^{\star1/2}\bm{C}\bm{\Lambda}^{\star1/2}\bm{U}^{\star\top}}_{=: \, \bm{\Delta}}.\label{eq:G_decomp_ce}
\end{equation}
 Using standard results on Gaussian random matrices \cite{Vershynin2012},
one obtains
\begin{align}
\left\Vert \bm{C}\right\Vert  & \lesssim\max\left\{ \sqrt{n}\,\big(\sqrt{r}+\sqrt{\log\left(n+d\right)}\big),r+\log\left(n+d\right)\right\} \asymp\sqrt{n}\,\big(\sqrt{r}+\sqrt{\log \left(n+d\right)}\big),\nonumber \\
\left|\sigma_{i}\left(\bm{F}^{\star}\right)-\sqrt{n}\right| & \lesssim\sqrt{r}+\sqrt{\log \left(n+d\right)}\label{eq:F_spectrum}
\end{align}
with probability at least $1-O\big(\left(n+d\right)^{-10}\big)$,  provided that
$n\gg r+\log (n+d)$. It then follows from Weyl's inequality that 
\begin{align}
\left|\lambda_{i}\left(\bm{G}^{\star}\right)-\lambda_{i}\left(n\bm{S}^{\star}\right)\right|=\left|\lambda_{i}\left(\bm{G}^{\star}\right)-\lambda_{i}^{\star}n\right| & \leq\big\|\bm{\Delta}\big\|\leq\left\Vert \bm{C}\right\Vert \left\Vert \bm{U}^{\star}\right\Vert ^{2}\left\Vert \bm{\Lambda}^{\star}\right\Vert \lesssim\lambda_{1}^{\star}\sqrt{n}\,\big(\sqrt{r}+\sqrt{\log \left(n+d\right)}\big).\label{eq:Delta_op_ce}
\end{align}
Under the sample size assumption $n\gg\kappa_{\mathsf{ce}}^{2}\left(r+\log \left(n+d\right)\right)$,
we conclude that 
\begin{equation}
\lambda_{i}\left(\bm{G}^{\star}\right)=\lambda_{i}^{\star}n\left(1+o\left(1\right)\right)\qquad\text{and}\qquad\sigma_{i}\left(\bm{B}^{\star}\bm{F}^{\star}\right)=\sqrt{\lambda_{i}^{\star}n}\left(1+o\left(1\right)\right),\label{eq:BF_spectrum}
\end{equation}
and hence
\begin{align}
\kappa\left(\bm{G}^{\star}\right)=\frac{\lambda_{1}\left(\bm{G}^{\star}\right)}{\lambda_{r}\left(\bm{G}^{\star}\right)}\asymp\kappa_{\mathsf{ce}}\qquad\text{and}\qquad\kappa\left(\bm{B}^{\star}\bm{F}^{\star}\right)=\frac{\sigma_{1}\left(\bm{B}^{\star}\bm{F}^{\star}\right)}{\sigma_{r}\left(\bm{B}^{\star}\bm{F}^{\star}\right)}\asymp\sqrt{\kappa_{\mathsf{ce}}}.
	\label{eq:kappa-ce-bound}
\end{align}

Further, we look at the entrywise infinity norm of $\bm{B}^{\star}\bm{F}^{\star}$.
From standard Gaussian concentration inequalities,
\begin{align*}
\left\Vert \bm{B}^{\star}\bm{F}^{\star}\right\Vert _{\infty} & =\max\nolimits _{i,j}\left|\left\langle \big(\bm{U}^{\star}\bm{\Lambda}^{\star1/2}\big)_{i,:},\bm{f}_{j}^{\star}\right\rangle \right|\lesssim\big\|\bm{U}^{\star}\bm{\Lambda}^{\star1/2}\big\|_{2,\infty}\sqrt{\log \left(n+d\right)}\\
	& \leq\left\Vert \bm{U}^{\star}\right\Vert _{2,\infty}\left\Vert \bm{\Lambda}^{\star}\right\Vert ^{1/2}\sqrt{\log \left(n+d\right)}
        \leq\sqrt{\frac{\lambda_{1}^{\star}\mu_{\mathsf{ce}}r\log \left(n+d\right)}{d}}
\end{align*}
holds with probability at least $1-O\big(\left(n+d\right)^{-10}\big)$. Meanwhile, one has
\begin{align*}
	\left\Vert \bm{B}^{\star}\bm{F}^{\star}\right\Vert_{\mathrm{F}} \geq \left\Vert \bm{B}^{\star}\right\Vert_{\mathrm{F}} \sigma_r (\bm{F}^\star) = \big\Vert \bm{\Lambda}^{\star 1/2}\big\Vert_{\mathrm{F}} \sigma_r (\bm{F}^\star)  \gtrsim \sqrt{\lambda_r^\star r n} = \sqrt{ \frac{1}{{\kappa_{\mathsf{ce}}}} \lambda_1^\star r n},
\end{align*}
where the last step follows from \eqref{eq:BF_spectrum} and \eqref{eq:kappa-ce-bound}. 
As a result,
\begin{equation}
	\left\Vert \bm{B}^{\star}\bm{F}^{\star}\right\Vert _{\infty} \leq  \sqrt{\frac{\mu_{\mathsf{ce}}\kappa_{\mathsf{ce}}\log(n+d)}{nd}} \left\Vert \bm{B}^{\star}\bm{F}^{\star}\right\Vert_{\mathrm{F}}
\end{equation}
Recalling the
definition of $\mu_{0}$ in \eqref{eq:def-col-mu0}, one obtains
\begin{equation}
	\label{eq:mu0-ub-ce}
\mu_{0}\lesssim \mu_{\mathsf{ce}} \kappa_{\mathsf{ce}} \log \left(n+d\right).
\end{equation}

When it comes to the incoherence parameters $\mu_{1}$ and $\mu_{2}$ (cf.~\eqref{def:coh}),
it can be easily verified that 
\[
\mu_{1}=\frac{d}{r}\left\|\bm{U}^{\star}\right\|_{2,\infty}^{2}=\mu_{\mathsf{ce}}.
\]
In addition, recognizing the existence of an orthonormal matrix $\bm{H}_2$
such that $\bm{V}^{\star}\bm{H}_2 = \bm{F}^{\star\top}(\bm{F}^{\star}\bm{F}^{\star\top})^{-1/2}$,
we can bound
\begin{align*}
\left\Vert \bm{V}^{\star}\right\Vert _{2,\infty} & =\left\Vert \bm{V}^{\star}\bm{H}_{2}\right\Vert _{2,\infty}\leq\left\Vert \bm{F}^{\star\top}\right\Vert _{2,\infty}\big\Vert \left(\bm{F}^{\star}\bm{F}^{\star\top}\right)^{-1/2}\big\Vert \overset{(\text{i})}{\leq}\frac{\sqrt{r}+\sqrt{\log \left(n+d\right)}}{\sigma_{r}\left(\bm{F}^{\star}\right)} \\
	& \overset{\left(\mathrm{ii}\right)}{\lesssim}\frac{\sqrt{r}+\sqrt{\log \left(n+d\right)}}{\sqrt{n}-\sqrt{r}-\sqrt{\log \left(n+d\right)}}\overset{\left(\mathrm{iii}\right)}{\lesssim}\frac{\sqrt{r\log \left(n+d\right)}}{\sqrt{n}},
\end{align*}
where (i) follows from the standard Gaussian concentration result
that $\big|\left\Vert \bm{F}^{\star\top}\right\Vert _{2,\infty}-\sqrt{r}\big|\lesssim\sqrt{\log \left(n+d\right)}$
with probability $1-O\big(\left(n+d\right)^{-20}\big)$, (ii) arises from \eqref{eq:F_spectrum},
and (iii) holds true under our sample size assumption. Consequently, we obtain
\[
\mu_{2}=\frac{n}{r}\left\Vert \bm{V}^{\star}\right\Vert _{2,\infty}^{2}\lesssim\log \left(n+d\right).
\]

Thus far,  we have shown that
\[
\sigma_r^\star \asymp \sqrt{\lambda_r^\star n}, \quad \mu_{0}\lesssim\mu_{\mathsf{ce}}\kappa_{\mathsf{ce}}\log \left(n+d\right),\quad\mu_{1}=\mu_{\mathsf{ce}},
\quad\mu_{2}\lesssim\log \left(n+d\right),\quad\mu\lesssim\mu_{\mathsf{ce}}\kappa_{\mathsf{ce}}\log \left(n+d\right)
\quad\text{and}\quad
\kappa\asymp\sqrt{\kappa_{\mathsf{ce}}},
\]
where $\sigma_r^\star = \sigma_r^\star \left(\bm{B}^{\star}\bm{F}^{\star}\right)$, $\mu=\max\left\{ \mu_{0},\mu_{1},\mu_{2}\right\} $ and $\kappa=\kappa\left(\bm{B}^{\star}\bm{F}^{\star}\right)$. Under the sample size assumption~\eqref{eq:sample-size-assumption-ce} and the rank condition $r \ll \frac{d}{\mu_{\mathsf{ce}} \kappa_{\mathsf{ce}}^2}$, it is straightforward to verify the condition~\eqref{asmp} is satisfied, i.e.
\begin{align*}
p & \gg \max \left\{ \frac{\mu_{\mathsf{ce}} \kappa_{\mathsf{ce}}^3 r \log^3 \left( n + d \right) }{\sqrt{dn}}, \, \frac{\mu_{\mathsf{ce}} \kappa_{\mathsf{ce}}^5 r \log^3 \left( n + d \right)}{n} \right\} 
\gtrsim \max \left\{ \frac{\mu \kappa^4 r \log^2 \left( n + d \right) }{\sqrt{dn}}, \, \frac{\mu \kappa^8 r \log^2 \left( n + d \right)}{n} \right\}, \\
\frac{\sigma}{\sigma_r^\star} & \asymp \frac{\sigma}{\sqrt{\lambda_r^\star n}} \ll \min \left\{ \frac{\sqrt{p}}{\sqrt{\kappa_{\mathsf{ce}}}\sqrt[4]{dn} \sqrt{\log\left(n+d\right)}} ,\, \sqrt{\frac{p}{\kappa_{\mathsf{ce}}^3 d \log \left(n+d\right)}}  \right\} \lesssim \min\left\{ \frac{\sqrt{p}}{\kappa\sqrt[4]{d_{1}d_{2}}\,\sqrt{\log d}},\frac{1}{\kappa^{3}}\sqrt{\frac{p}{d_{1}\log d}}\right\}, \\
r &\ll \frac{d}{\mu_{\mathsf{ce}} \kappa_{\mathsf{ce}}^2} \asymp \frac{d}{\mu_1 \kappa^4}.
\end{align*}
Applying Theorem \ref{thm:U_loss} immediately establishes the claims
\eqref{claim:U_op_loss_cov} and \eqref{claim:U_2inf_loss_cov} in
Corollary \ref{cor:cov}. Along the way, we have also established the following upper bound (see Lemma \ref{lemma:G_op_loss}), which will be useful in the sequel:
\begin{align}
	\left\Vert \bm{G}-\bm{G}^{\star}\right\Vert  & \lesssim\lambda_{r}\left(\bm{G}^{\star}\right)\cdot\mathcal{E}_{\mathsf{ce}} \asymp \lambda_r^\star n \cdot \mathcal{E}_{\mathsf{ce}}. 
	\label{eq:G_op_loss_ce}
\end{align}
Here, we recall that $\bm{G}=\frac{1}{p^{2}}\mathcal{P}_{\mathsf{off}\text{-}\mathsf{diag}}\left(\mathcal{P}_{\Omega}(\bm{X})\mathcal{P}_{\Omega}(\bm{X})^{\top}\right)$.

\subsubsection{Estimation error of the covariance matrix}
It remains to prove \eqref{claim:S_op_loss} and \eqref{claim:S_inf_loss}.
Before proceeding, we first recall that $\bm{U}\bm{\Lambda}\bm{U}^\top$ is the top-$r$ eigendecomposition of $\bm{G}$, 
\begin{align}
	\label{eq:defn-Sigma-B-R}
	\bm{\Sigma} = \bm{\Lambda}^{1/2}, \quad \bm{B}=\frac{1}{\sqrt{n}}\bm{U}\bm{\Sigma}, \quad
	\bm{B}^{\star}=\bm{U}^{\star}\bm{\Lambda}^{\star1/2} 
	\quad \text{and} \quad 
	\bm{R}=\underset{\bm{Q}\in\mathcal{O}^{r\times r}}{\arg\min}\left\Vert \bm{U}\bm{Q}-\bm{U}^{\star}\right\Vert _{\mathrm{F}}.
\end{align}
Let us also define
\[
\bm{K}:=\underset{\bm{Q}\in\mathcal{O}^{r\times r}}{\arg\min}\left\Vert \bm{B}\bm{Q}-\bm{B}^{\star}\right\Vert _{\mathrm{F}}.
\]
It is well known that the minimizer $\bm{K}$ is given by \cite{ten1977orthogonal}
\[
\bm{K}=\mathsf{sgn}\left(\bm{B}^{\top}\bm{B}^{\star}\right),
\]
where the $\mathsf{sgn}(\cdot)$ function is defined in \eqref{def:H_sgn}. Since $\bm{K}$
is an orthonormal matrix, one can express
\begin{align}
\bm{S}-\bm{S}^{\star}=\left(\bm{B}\bm{K}\right)\left(\bm{B}\bm{K}\right)^{\top}-\bm{B}^{\star}\bm{B}^{\star\top}=\left(\bm{B}\bm{K}-\bm{B}^{\star}\right)\left(\bm{B}\bm{K}\right)^{\top}+\bm{B}^{\star}\left(\bm{B}\bm{K}-\bm{B}^{\star}\right)^{\top}.
	\label{eq:S_S_star_diff}
\end{align}
As a result, everything boils down to controlling $\left\Vert \bm{B}\bm{K}-\bm{B}^{\star}\right\Vert $
and $\left\Vert \bm{B}\bm{K}-\bm{B}^{\star}\right\Vert _{2,\infty}$.
To this end, we use \eqref{eq:defn-Sigma-B-R} to reach the following useful decomposition
\begin{equation}
\bm{BK}-\bm{B}^{\star}=\frac{1}{\sqrt{n}}\bm{U}\bm{\Lambda}^{1/2}\left(\bm{K}-\bm{R}\right)+\bm{U}\left(\frac{1}{\sqrt{n}}\bm{\Lambda}^{1/2}\bm{R}-\bm{R}\bm{\Lambda}^{\star1/2}\right)+\left(\bm{U}\bm{R}-\bm{U}^{\star}\right)\bm{\Lambda}^{\star1/2} .
	\label{eq:BK_B_star_decomp}
\end{equation}

Given that $\bm{U}\frac{1}{n}\bm{\Lambda}\bm{U}^{\top}$ is the top-$r$
eigendecomposition of $\frac{1}{n}\bm{G}$, an important step lies in controlling the
difference between $\frac{1}{n}\bm{G}$ and $\bm{S}^{\star}$. Recalling the matrix
$\bm{\Delta}$ as defined in \eqref{eq:G_decomp_ce}, one can use \eqref{eq:Delta_op_ce},
\eqref{eq:G_op_loss_ce} as well as the definition of $\mathcal{E}_{\mathsf{ce}}$
(cf.~\eqref{def:err-CE}) to obtain
\begin{align*}
\left\Vert \frac{1}{n}\bm{G}-\bm{S}^{\star}\right\Vert  & \leq\frac{1}{n}\left\Vert \bm{G}-\bm{G}^{\star}\right\Vert +\left\Vert \frac{1}{n}\bm{G}^{\star}-\bm{S}^{\star}\right\Vert =\frac{1}{n}\left\Vert \bm{G}-\bm{G}^{\star}\right\Vert +\frac{1}{n}\left\Vert \bm{\Delta}\right\Vert \\
 & \lesssim\frac{1}{n}\lambda_{r}\left(\bm{G}^{\star}\right)\cdot\mathcal{E}_{\mathsf{ce}}+\frac{\lambda_{1}^{\star}}{\sqrt{n}}\big(\sqrt{r}+\sqrt{\log \left(n+d\right)}\big)\asymp\lambda_{r}^{\star}\cdot\mathcal{E}_{\mathsf{ce}},
\end{align*}
where the last inequality makes use of the identity $\lambda_r(\bm{G}^{\star}) \asymp n\lambda_r^{\star}$. 
Hence,  apply \cite[Lemma~46, Lemma~47]{ma2017implicit} (with slight
modification on $\kappa$) and Weyl's inequality to show that
\begin{align}
\left\Vert \frac{1}{\sqrt{n}}\bm{\Lambda}^{1/2}\bm{R}-\bm{R}\bm{\Lambda}^{\star1/2}\right\Vert  & \lesssim\frac{\kappa\left(\bm{S}^{\star}\right)}{\sqrt{\lambda_{r}\left(\bm{S}^{\star}\right)}}\left\Vert \frac{1}{n}\bm{G}-\bm{S}^{\star}\right\Vert \lesssim\kappa_{\mathsf{ce}}\sqrt{\lambda_{r}^{\star}}\cdot\mathcal{E}_{\mathsf{ce}};\label{eq:Lamb_R_R_Lamb_star}\\
\left\Vert \bm{K}-\bm{R}\right\Vert  & \lesssim\frac{\sqrt{\kappa\left(\bm{S}^{\star}\right)}}{\lambda_{r}\left(\bm{S}^{\star}\right)}\left\Vert \frac{1}{n}\bm{G}-\bm{S}^{\star}\right\Vert \lesssim\sqrt{\kappa_{\mathsf{ce}}}\cdot\mathcal{E}_{\mathsf{ce}}. 
	\label{eq:K_R_op_diff}
\end{align}
In addition, it follows from Weyl's inequality that
\[
\left\Vert \frac{1}{n}\bm{\Lambda}-\bm{\Lambda}^{\star}\right\Vert \leq\left\Vert \frac{1}{n}\bm{G}-\bm{S}^{\star}\right\Vert \lesssim\lambda_{r}^{\star}\cdot\mathcal{E}_{\mathsf{ce}},
\]
which combined with \eqref{eq:err-CE-UB} gives
\begin{equation}
\frac{1}{n}\left\Vert \bm{\Lambda}\right\Vert \leq\left\Vert \frac{1}{n}\bm{\Lambda}-\bm{\Lambda}^{\star}\right\Vert +\left\Vert \bm{\Lambda}^{\star}\right\Vert \lesssim\lambda_{r}^{\star}\cdot\mathcal{E}_{\mathsf{ce}}+\lambda_{1}^{\star}\asymp\lambda_{1}^{\star}\label{eq:Lambda_UB}
\end{equation}
under our assumptions. 

We are ready to upper bound the difference between $\bm{BK}-\bm{B}^{\star}$.
Plugging \eqref{claim:U_op_loss_cov}, \eqref{eq:Lamb_R_R_Lamb_star}
\eqref{eq:K_R_op_diff} and \eqref{eq:Lambda_UB} into \eqref{eq:BK_B_star_decomp}
shows that
\begin{align}
\left\Vert \bm{BK}-\bm{B}^{\star}\right\Vert  & \leq\frac{1}{\sqrt{n}}\left\Vert \bm{\Lambda}\right\Vert ^{1/2}\left\Vert \bm{K}-\bm{R}\right\Vert +\left\Vert \frac{1}{\sqrt{n}}\bm{\Lambda}^{1/2}\bm{R}-\bm{R}\bm{\Lambda}^{\star1/2}\right\Vert +\left\Vert \bm{U}\bm{R}-\bm{U}^{\star}\right\Vert \left\Vert \bm{\Lambda}^{\star}\right\Vert ^{1/2}\nonumber \\
 & \lesssim\sqrt{\kappa_{\mathsf{ce}}\lambda_{1}^{\star}}\cdot\mathcal{E}_{\mathsf{ce}}+\kappa_{\mathsf{ce}}\sqrt{\lambda_{r}^{\star}}\cdot\mathcal{E}_{\mathsf{ce}}+\sqrt{\lambda_{1}^{\star}}\cdot\mathcal{E}_{\mathsf{ce}}\nonumber \\
 & \lesssim\kappa_{\mathsf{ce}}\sqrt{\lambda_{1}^{\star}}\cdot\mathcal{E}_{\mathsf{ce}}.\label{eq:BK_B_star_op_loss}
\end{align}
Since $\bm{K}\in\mathcal{O}^{r\times r}$,
this also implies that
\begin{equation}
\left\Vert \bm{B}\right\Vert =\left\Vert \bm{BK}\right\Vert \leq\left\Vert \bm{BK}-\bm{B}^{\star}\right\Vert +\left\Vert \bm{B}^{\star}\right\Vert \lesssim\kappa_{\mathsf{ce}}\sqrt{\lambda_{1}^{\star}}\cdot\mathcal{E}_{\mathsf{ce}}+\sqrt{\lambda_{1}^{\star}}\asymp\sqrt{\lambda_{1}^{\star}},
	\label{eq:B_op_UB}
\end{equation}
where the last step results from \eqref{eq:err-CE-UB}. In addition, \eqref{claim:U_2inf_loss_cov}, 
\eqref{eq:err-CE-UB} and the fact that $\bm{R}\in\mathcal{O}^{r\times r}$
guarantees that
\[
\left\Vert \bm{U}\right\Vert _{2,\infty}=\left\Vert \bm{U}\bm{R}\right\Vert _{2,\infty}\leq\left\Vert \bm{U}\bm{R}-\bm{U}^{\star}\right\Vert _{2,\infty}+\left\Vert \bm{U}^{\star}\right\Vert _{2,\infty}\lesssim\kappa_{\mathsf{ce}}^{3/2}\mathcal{E}_{\mathsf{ce}}\sqrt{\frac{\mu_{\mathsf{ce}}r\log \left(n+d\right)}{d}}+\sqrt{\frac{\mu_{\mathsf{ce}}r}{d}}\lesssim\sqrt{\frac{\kappa_{\mathsf{ce}}\mu_{\mathsf{ce}}r\log \left(n+d\right)}{d}}.
\]
Consequently, it follows from the decomposition \eqref{eq:BK_B_star_decomp} that 
\begin{align}
\left\Vert \bm{BK}-\bm{B}^{\star}\right\Vert _{2,\infty} & \leq\left\Vert \bm{U}\right\Vert _{2,\infty}\frac{1}{\sqrt{n}}\left\Vert \bm{\Lambda}\right\Vert ^{1/2}\left\Vert \bm{K}-\bm{R}\right\Vert +\left\Vert \bm{U}\right\Vert _{2,\infty}\left\Vert \frac{1}{\sqrt{n}}\bm{\Lambda}^{1/2}\bm{R}-\bm{R}\bm{\Lambda}^{\star1/2}\right\Vert +\left\Vert \bm{U}\bm{R}-\bm{U}^{\star}\right\Vert _{2,\infty}\left\Vert \bm{\Lambda}^{\star}\right\Vert ^{1/2}\nonumber \\
	& \lesssim\sqrt{\frac{\kappa_{\mathsf{ce}}^2\mu_{\mathsf{ce}}r\lambda_{1}^{\star}\log(n+d)}{d}}\,\mathcal{E}_{\mathsf{ce}}
	+\sqrt{\frac{\kappa_{\mathsf{ce}}^3\mu_{\mathsf{ce}}r\lambda_{r}^{\star}\log(n+d)}{d}}\,\mathcal{E}_{\mathsf{ce}}
	+\sqrt{\frac{\kappa_{\mathsf{ce}}^3\mu_{\mathsf{ce}}r\lambda_{1}^{\star}\log \left(n+d\right)}{d}}\,\mathcal{E}_{\mathsf{ce}}\nonumber \\
 & \asymp\sqrt{\frac{\kappa_{\mathsf{ce}}^3\mu_{\mathsf{ce}}r\lambda_{1}^{\star}\log \left(n+d\right)}{d}}\, \mathcal{E}_{\mathsf{ce}}.\label{eq:BK_B_star_2inf_loss}
\end{align}
Combining \eqref{eq:BK_B_star_2inf_loss} and \eqref{eq:err-CE-UB}
gives that
\begin{align}
\left\Vert \bm{B}\right\Vert _{2,\infty} & =\left\Vert \bm{BK}\right\Vert _{2,\infty}\leq\left\Vert \bm{BK}-\bm{B}^{\star}\right\Vert _{2,\infty}+\left\Vert \bm{B}^{\star}\right\Vert _{2,\infty}\lesssim\sqrt{\frac{\kappa_{\mathsf{ce}}^3\mu_{\mathsf{ce}}r\lambda_{1}^{\star}\log \left(n+d\right)}{d}}\, \mathcal{E}_{\mathsf{ce}}+\sqrt{\frac{\mu_{\mathsf{ce}}r\lambda_{1}^{\star}}{d}} \nonumber \\ 
& \lesssim \sqrt{\frac{\mu_{\mathsf{ce}}r\lambda_{1}^{\star}\log \left(n+d\right)}{d}},\label{eq:B_2inf_UB}
\end{align}
where we use the fact that $\left\Vert \bm{B}^{\star}\right\Vert _{2,\infty}=\left\Vert \bm{U}^{\star}\bm{\Lambda}^{\star1/2}\right\Vert _{2,\infty}\lesssim\left\Vert \bm{U}^{\star}\right\Vert _{2,\infty}\left\Vert \bm{\Lambda}^{\star}\right\Vert ^{1/2}\lesssim\sqrt{\mu_{\mathsf{ce}}r\lambda_{1}^{\star}/d}$ and $\mathcal{E}_{\mathsf{ce}} \ll \kappa_{\mathsf{ce}}^{-1}$.

To finish up, we substitute \eqref{eq:BK_B_star_op_loss} and \eqref{eq:B_op_UB}
into \eqref{eq:S_S_star_diff} to find that
\begin{align*}
\left\Vert \bm{S}-\bm{S}^{\star}\right\Vert  & \leq\left\Vert \bm{B}\bm{K}-\bm{B}^{\star}\right\Vert \left(\left\Vert \bm{B}^{\star}\right\Vert +\left\Vert \bm{B}\bm{K}\right\Vert \right)\leq\left\Vert \bm{B}\bm{K}-\bm{B}^{\star}\right\Vert \left(\left\Vert \bm{B}^{\star}\right\Vert +\left\Vert \bm{B}\right\Vert \right)\\
 & \lesssim\kappa_{\mathsf{ce}}\lambda_{1}^{\star}\cdot\mathcal{E}_{\mathsf{ce}}.
\end{align*}
Combining \eqref{eq:BK_B_star_2inf_loss} and \eqref{eq:B_2inf_UB}
reveals that
\begin{align*}
\left\Vert \bm{S}-\bm{S}^{\star}\right\Vert _{\infty} & \leq\left\Vert \bm{B}\bm{K}-\bm{B}^{\star}\right\Vert _{2,\infty}\big(\left\Vert \bm{B}\bm{K}\right\Vert _{2,\infty}+\left\Vert \bm{B}^{\star}\right\Vert _{2,\infty}\big)\leq\left\Vert \bm{B}\bm{K}-\bm{B}^{\star}\right\Vert _{2,\infty}\big(\left\Vert \bm{B}\right\Vert _{2,\infty}+\left\Vert \bm{B}^{\star}\right\Vert _{2,\infty}\big)\\
 & \lesssim\frac{\kappa_{\mathsf{ce}}^2\mu_{\mathsf{ce}}r\lambda_{1}^{\star}\log \left(n+d\right)}{d}\cdot\mathcal{E}_{\mathsf{ce}}.
\end{align*}
We have therefore established all claims.

\subsection{Proof of Corollary~\ref{cor:BSBM}\label{subsec:pf:bsbm}}

Recall from our calculation (\ref{eq:mean-A-biclustering}) that
\[
\bm{A}^{\star}:=\mathbb{E}[\bm{A}]=\frac{\left(q_{\mathsf{in}}-q_{\mathsf{out}}\right)\sqrt{n_{u}n_{v}}}{2}\bm{u}^{\star}\bm{v}^{\star\top}
\]
is a rank-1 matrix, where
\[
\bm{u}^{\star}:=\frac{1}{\sqrt{n_{u}}}\left[\begin{array}{c}
\bm{1}_{n_{u}/2}\\
-\bm{1}_{n_{u}/2}
\end{array}\right]\qquad\text{and}\qquad\bm{v}^{\star}:=\frac{1}{\sqrt{n_{v}}}\left[\begin{array}{c}
\bm{1}_{n_{v}/2}\\
-\bm{1}_{n_{v}/2}
\end{array}\right].
\]
Let $\bm{u}\in\mathbb{R}^{n_{u}}$ be the leading eigenvector of $\bm{G}$
(cf.~(\ref{eq:defn-G}) and Algorithm~\ref{alg:spectral-bsbm}). To establish Corollary~\ref{cor:BSBM},
the main step boils down to showing that, under the conditions of
Corollary~\ref{cor:BSBM}, 
\begin{align}
\min\left\{ \left\Vert \bm{u}-\bm{u}^{\star}\right\Vert _{\infty},\left\Vert \bm{u}+\bm{u}^{\star}\right\Vert _{\infty}\right\}  
	& \lesssim\frac{1}{\sqrt{n_{u}}}  \mathcal{E}_{{\mathsf{bsbm}}},
	\label{claim:U_2inf_loss_bsbm}
\end{align}
holds with probability exceeding $1-O\left(n^{-10}\right)$, where
\begin{equation}
	\mathcal{E}_{{\mathsf{bsbm}}} :=\frac{q_{\mathsf{in}}}{\left(q_{\mathsf{in}}-q_{\mathsf{out}}\right)^{2}}\frac{\log n}{\sqrt{n_{u}n_{v}}}+\frac{\sqrt{q_{\mathsf{in}}}}{q_{\mathsf{in}}-q_{\mathsf{out}}}\sqrt{\frac{\log n}{n_{v}}}+\frac{1}{\sqrt{n_{u}}}.
	\label{def:err-bsbm}
\end{equation}
If this claim (\ref{claim:U_2inf_loss_bsbm}) holds, then under our
condition (\ref{eq:BSBM-assump}) one has $\mathcal{E}_{{\mathsf{bsbm}}} \ll 1$, and hence
\[
\min\big\{ \left\Vert \bm{u}-\bm{u}^{\star}\right\Vert _{\infty},\left\Vert \bm{u}+\bm{u}^{\star}\right\Vert _{\infty}\big\} 
\lesssim\frac{1}{\sqrt{n_{u}}} \mathcal{E}_{{\mathsf{bsbm}}} <\frac{1}{\sqrt{n_{u}}}.
\]
In other words, one has either $\mathsf{sign}(u_{i})=\mathsf{sign}(u_{i}^{\star})$
for all $1\leq i\leq n_{u}$, or $\mathsf{sign}(u_{i})=-\mathsf{sign}(u_{i}^{\star})$
for all $1\leq i\leq n_{u}$. This tells us that the entrywise rounding
operation applied to $\bm{u}$ is sufficient to recover exactly the
community memberships of all nodes in $\mathcal{U}$.

The rest of the proof is devoted to establishing the claim (\ref{claim:U_2inf_loss_bsbm}).
In order to apply Theorem~\ref{thm:U_loss}, it suffices to estimate
the spectrum and the incoherence parameters of $\bm{A}^{\star}$,
as well as some simple statistical properties of $\bm{N}:=\bm{A}-\bm{A}^{\star}$.
\begin{itemize}
\item We begin by looking at $\bm{A}^{\star}$, which has rank 1 and satisfies
\[
\sigma_{1}\left(\bm{A}^{\star}\right)=\frac{\left(q_{\mathsf{in}}-q_{\mathsf{out}}\right)\sqrt{n_{u}n_{v}}}{2},\quad\left\Vert \bm{A}^{\star}\right\Vert _{\infty}=\frac{q_{\mathsf{in}}-q_{\mathsf{out}}}{2},\quad\left\Vert \bm{u}^{\star}\right\Vert _{\infty}=\frac{1}{\sqrt{n_{u}}},\quad\left\Vert \bm{v}^{\star}\right\Vert _{\infty}=\frac{1}{\sqrt{n_{v}}}.
\]
Recalling the definition of $\mu_{0},\mu_{1},\mu_{2}$ in \eqref{eq:def-col-mu0}
and \eqref{def:coh}, we obtain
\[
\mu_{0}=\frac{n_{u}n_{v}}{\left\Vert \bm{A}^{\star}\right\Vert _{\mathrm{F}}^{2}}\left\Vert \bm{A}^{\star}\right\Vert _{\infty}^{2}=1,\qquad\mu_{1}=n_{u}\left\Vert \bm{u}^{\star}\right\Vert _{2,\infty}^{2}=1,\qquad\mu_{2}=n_{v}\left\Vert \bm{v}^{\star}\right\Vert _{2,\infty}^{2}=1,\qquad\kappa=1.
\]
\item Next, we consider the maximum magnitude $R$ and the maximum variance $\sigma^{2}$
of all entries of $\bm{N}$ (see Assumption~\ref{assumption:random-noise}).
Clearly, one has
\begin{align*}
R & =\text{\ensuremath{\max_{i,j}}}\left|N_{i,j}\right|\leq1,\\
\sigma^{2} & =\max_{i,j}\mathbb{\mathsf{Var}}(N_{i,j})=\max\left\{ q_{\mathsf{in}}(1-q_{\mathsf{in}}),q_{\mathsf{out}}(1-q_{\mathsf{out}})\right\} 
	\leq \max\left\{ q_{\mathsf{in}},q_{\mathsf{out}}\right\} \asymp q_{\mathsf{in}},
\end{align*}
which follows since $N_{i,j}$ is a centered Bernoulli random variable
with parameter either $q_{\mathsf{in}}$ or $q_{\mathsf{out}}$. From
the assumption~\eqref{eq:BSBM-assump} and the fact $q_{\mathsf{in}}^{2}\geq(q_{\mathsf{in}}-q_{\mathsf{out}})^{2}$,
we know that
\[
q_{\mathsf{in}}\geq\frac{\left(q_{\mathsf{in}}-q_{\mathsf{out}}\right)^{2}}{q_{\mathsf{in}}}\gg\frac{\log n}{\sqrt{n_{u}n_{v}}}+\frac{\log n}{n_{v}}.
\]
Putting the above estimates together, we can straightforwardly verify
the random noise requirement~\eqref{eq:assumption-noise-spike},
namely,
\[
\frac{R^{2}}{\sigma^{2}}\lesssim\frac{1}{q_{\mathsf{in}}}\lesssim\frac{\min\left\{ \sqrt{n_{u}n_{v}},n_{v}\right\} }{\log n}.
\]
\end{itemize}
With the preceding bounds in place, Corollary \ref{cor:BSBM} is an
immediate consequence of Theorem \ref{thm:U_loss}.

\section{Proofs for key lemmas}

\label{sec:Proofs-key-lemmas}

This section aims to establish the key lemmas listed in Section \ref{subsec:Key-lemmas}.

\subsection{Auxiliary quantities, notation, and preliminary facts}

To simplify our treatment, the proofs shall consider the influence
of missing data and that of noise altogether. Specifically, throughout
this section, we shall define a rescaled version of $\bm{A}$ as follows
\begin{equation}
\bm{A}^{\mathsf{s}}:=\tfrac{1}{p}\bm{A}=\bm{A}^{\star}+\bm{E}\in\mathbb{R}^{d_{1}\times d_{2}},\label{eq:A-E}
\end{equation}
where the matrix $\bm{E}$ represents the aggregate perturbation
\begin{equation}
\bm{E}:=\tfrac{1}{p}\mathcal{\mathcal{P}}_{\Omega}\left(\bm{A}^{\star}\right)-\bm{A}^{\star}+\tfrac{1}{p}\mathcal{\mathcal{P}}_{\Omega}\left(\bm{N}\right).\label{eq:defn-E}
\end{equation}
Clearly, $\bm{E}\in\mathbb{R}^{d_{1}\times d_{2}}$ is a random matrix
with independent zero-mean entries and $\mathbb{E}\left[\bm{A}^{\mathsf{s}}\right]=\bm{A}^{\star}$.
In addition, we define the corresponding leave-one-out and leave-two-out
versions
\begin{align}
\bm{A}^{\mathsf{s},\left(m\right)} & :=\tfrac{1}{p}\bm{A}^{\left(m\right)},\\
\bm{A}^{\mathsf{s},\left(m,l\right)} & :=\tfrac{1}{p}\bm{A}^{\left(m,l\right)},
\end{align}
for each $1\leq m\leq d_{1},1\leq l\leq d_{2}$.

As we shall see momentarily, it is convenient to introduce the following
quantities regarding the above perturbation matrix $\bm{E}$: (1)
$\max_{i\in\left[d_{1}\right],j\in\left[d_{2}\right]}\left|E_{i,j}\right|$;
(2) $\max_{i\in\left[d_{1}\right],j\in\left[d_{2}\right]}\sqrt{\mathbb{E}\left[E_{i,j}^{2}\right]}$;
(3) $\max_{i\in\left[d_{1}\right]}\sqrt{\sum_{j\in\left[d_{2}\right]}\mathbb{E}\left[E_{i,j}^{2}\right]}$;
(4) $\max_{j\in\left[d_{2}\right]}\sqrt{\sum_{i\in\left[d_{1}\right]}\mathbb{E}\left[E_{i,j}^{2}\right]}$.
In our settings, it is easy to verify --- using the definition of incoherence
parameters (cf.~Definition \ref{definition-mu0-mu1-mu2}), the assumptions
of the random noise (cf.~Assumption \ref{assumption:random-noise}),
and Lemma \ref{lemma:incoh} --- that the quantities defined above admit
the following upper bounds \begin{subequations}\label{eq:noise_value}
\begin{align}
\max_{i\in\left[d_{1}\right],j\in\left[d_{2}\right]}\left|E_{i,j}\right| & \leq\frac{\left\Vert \bm{A}^{\star}\right\Vert _{\infty}+R}{p}\lesssim\frac{\sqrt{\mu r}\,\sigma_{1}^{\star}}{\sqrt{d_{1}d_{2}}\,p}+\frac{\sigma\min\left\{ \sqrt[4]{d_{1}d_{2}},\,\sqrt{d_{2}}\right\} }{\sqrt{p\log d}}=:B,\label{def:sigma_B_UB}\\
\max_{i\in\left[d_{1}\right],j\in\left[d_{2}\right]}\sqrt{\mathbb{E}\left[E_{i,j}^{2}\right]} & \leq\frac{\left\Vert \bm{A}^{\star}\right\Vert _{\infty}+\sigma}{\sqrt{p}}\leq\sigma_{1}^{\star}\sqrt{\frac{\mu r}{d_{1}d_{2}p}}+\frac{\sigma}{\sqrt{p}}=:\sigma_{\infty},\label{def:sigma_inf_UB}\\
\max_{i\in\left[d_{1}\right]}\sqrt{\sum_{j\in\left[d_{2}\right]}\mathbb{E}\left[E_{i,j}^{2}\right]} & \leq\frac{\left\Vert \bm{A}^{\star}\right\Vert _{2,\infty}+\sigma\sqrt{d_{2}}}{\sqrt{p}}\leq\sigma_{1}^{\star}\sqrt{\frac{\mu r}{d_{1}p}}+\sigma\sqrt{\frac{d_{2}}{p}}=:\sigma_{\mathsf{row}},\label{def:sigma_row_UB}\\
\max_{j\in\left[d_{2}\right]}\sqrt{\sum_{i\in\left[d_{1}\right]}\mathbb{E}\left[E_{i,j}^{2}\right]} & \leq\frac{\left\Vert \bm{A}^{\star\top}\right\Vert _{2,\infty}+\sigma\sqrt{d_{1}}}{\sqrt{p}}\leq\sigma_{1}^{\star}\sqrt{\frac{\mu r}{d_{2}p}}+\sigma\sqrt{\frac{d_{1}}{p}}=:\sigma_{\mathsf{col}},\label{def:sigma_col_UB}
\end{align}
\end{subequations}with probability exceeding $1-O(d^{-12})$. Further,
the following lemma singles out a few other useful properties about
these quantities (to be established in Appendix \ref{subsec:pf:noise_cond}),
which will be useful throughout the proof.

\begin{lemma}\label{lemma:noise_cond}Instate the assumptions of
Theorem \ref{thm:U_loss}. Then with probability at least $1-O\left(d^{-12}\right)$,
we have\begin{subequations}\label{eq:noise_cond}
\begin{align}
B & \lesssim\frac{\min\left\{ \sqrt{\sigma_{\mathsf{row}}\sigma_{\mathsf{col}}},\,\sigma_{\mathsf{row}}\right\} }{\sqrt{\log d}};\label{cond:B_UB}\\
\sigma_{\infty}^{2} & \lesssim B\log d\left\Vert \bm{A}^{\star}\right\Vert \sqrt{\frac{\mu r}{d_{2}}}\lesssim\sigma_{\mathsf{col}}\sqrt{\log d}\left\Vert \bm{A}^{\star}\right\Vert \sqrt{\frac{\mu r}{d_{1}}};\label{cond:col_B_rel}\\
\sqrt{\frac{\mu r}{d_{1}}} & \gtrsim\frac{B\log^{3/2}d\left\Vert \bm{A}^{\star}\right\Vert _{\infty}}{\left\Vert \bm{A}^{\star}\right\Vert ^{2}};\label{cond:row_B_rel}\\
\sigma_{r}^{\star2} & \gg\max\Big\{\kappa^{2}\sigma_{\mathsf{col}}\sigma_{\mathsf{row}}\log d,\,\kappa^{2}\sigma_{\mathsf{col}}\sqrt{\log d}\left\Vert \bm{A}^{\star}\right\Vert ,\,\kappa^{2}\left\Vert \bm{A}^{\star}\right\Vert _{2,\infty}^{2},\,\sigma_{\mathsf{row}}\sqrt{\frac{d_{1}\log d}{\mu r}}\left\Vert \bm{A}^{\star\top}\right\Vert _{2,\infty},\,B\log d\left\Vert \bm{A}^{\star}\right\Vert _{\infty}\Big\}.\label{cond:eig_o1}
\end{align}
\end{subequations}\end{lemma}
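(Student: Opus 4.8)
\textbf{Proof plan for Lemma~\ref{lemma:noise_cond}.}

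The strategy is to treat each of the four inequalities \eqref{cond:B_UB}--\eqref{cond:eig_o1} separately, plugging in the explicit formulas for $B$, $\sigma_{\infty}$, $\sigma_{\mathsf{row}}$, $\sigma_{\mathsf{col}}$ from \eqref{eq:noise_value} and the incoherence bound $\|\bm{A}^{\star}\|_{2,\infty}^{2}\le \tfrac{\mu_{1}r\sigma_{1}^{\star2}}{d_{1}}$ (Lemma~\ref{lemma:incoh}), and then verifying that the assumed conditions \eqref{asmp} on $p$, $\sigma/\sigma_{r}^{\star}$, and $r$ are exactly what is needed. The recurring mechanism is simple: $B$ is a ``high-missingness/high-noise-spike'' quantity, whereas $\sigma_{\infty},\sigma_{\mathsf{row}},\sigma_{\mathsf{col}}$ are ``typical-size'' quantities, and the noise-spike assumption \eqref{eq:assumption-noise-spike} together with the sampling-rate lower bound forces $B$ to be a logarithmically small fraction of the typical quantities. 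Throughout I would freely use $\sqrt{d_1 d_2}\le d_2$ (valid when $d_1\le d_2$; otherwise the argument is symmetric in swapping the roles, or one uses $\min\{\sqrt{d_1d_2},d_2\}=\sqrt{d_1d_2}$ directly), and that $\sigma_1^\star = \kappa\sigma_r^\star$.

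For \eqref{cond:B_UB}: write $B \lesssim \tfrac{\sqrt{\mu r}\,\sigma_1^\star}{\sqrt{d_1 d_2}\,p} + \tfrac{\sigma\min\{(d_1d_2)^{1/4},\sqrt{d_2}\}}{\sqrt{p\log d}}$ and compare termwise against $\sqrt{\sigma_{\mathsf{row}}\sigma_{\mathsf{col}}}/\sqrt{\log d}$ and $\sigma_{\mathsf{row}}/\sqrt{\log d}$. For the first term of $B$, one uses that $\sqrt{\sigma_{\mathsf{row}}\sigma_{\mathsf{col}}}\ge \sigma_1^\star\sqrt{\tfrac{\mu r}{\sqrt{d_1d_2}\,p}}$ (dropping the noise parts), so the needed inequality $\tfrac{\sqrt{\mu r}\,\sigma_1^\star}{\sqrt{d_1d_2}\,p}\cdot\sqrt{\log d}\lesssim \sigma_1^\star\sqrt{\tfrac{\mu r}{\sqrt{d_1d_2}\,p}}$ reduces to $(\log d)\sqrt{\mu r}\lesssim (d_1 d_2)^{1/4}\sqrt{p}$, i.e.\ $p\gtrsim \tfrac{\mu r\log^2 d}{\sqrt{d_1 d_2}}$, which is implied by \eqref{asmp}. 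For the second term of $B$, one uses $\sigma\min\{(d_1d_2)^{1/4},\sqrt{d_2}\} \le \sigma\sqrt{d_2}\le \sqrt{p}\,\sigma_{\mathsf{row}}$, so the needed inequality is just $1/\sqrt{\log d}\lesssim \sigma_{\mathsf{row}}/\sqrt{\log d}$ after the appropriate rescaling — more carefully, $\tfrac{\sigma\sqrt{d_2}}{\sqrt{p\log d}} = \tfrac{1}{\log d}\cdot\sigma\sqrt{\tfrac{d_2}{p}}\cdot\tfrac{\sqrt{\log d}}{1}\le \tfrac{\sigma_{\mathsf{row}}}{\sqrt{\log d}}$ once $\log d\ge 1$. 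The bound against $\sqrt{\sigma_{\mathsf{row}}\sigma_{\mathsf{col}}}$ for this term uses in addition the noise-spike assumption $R^2/\sigma^2\lesssim \min\{p\sqrt{d_1d_2},pd_2\}/\log d$ to convert a $\sqrt{d_2}$ into $(d_1d_2)^{1/4}$ where needed, controlling the $R$-contribution inside $B$.

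For \eqref{cond:col_B_rel}: expand $\sigma_\infty^2 \le 2\big(\tfrac{\mu r\sigma_1^{\star2}}{d_1d_2 p} + \tfrac{\sigma^2}{p}\big)$ and the right-hand side $B\log d\,\|\bm{A}^\star\|\sqrt{\mu r/d_2}$, and again match termwise; the signal term needs $\|\bm{A}^\star\|\sqrt{\mu r/d_2}\cdot \tfrac{\sqrt{\mu r}\sigma_1^\star\log d}{\sqrt{d_1d_2}\,p}\gtrsim \tfrac{\mu r\sigma_1^{\star 2}}{d_1 d_2 p}$, i.e.\ $\log d\gtrsim 1$ after using $\|\bm{A}^\star\|=\sigma_1^\star$; the noise term needs $p\gtrsim$ a log factor over $\sqrt{d_1 d_2}$ (again from \eqref{asmp}) together with the spike bound. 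The second half of \eqref{cond:col_B_rel} follows from \eqref{cond:B_UB} since $B\lesssim \sigma_{\mathsf{col}}/\sqrt{\log d}$ (a consequence of $B\lesssim \sigma_{\mathsf{row}}/\sqrt{\log d}$ and $\sigma_{\mathsf{row}}\le\sqrt{d_2/d_1}\,\sigma_{\mathsf{col}}\cdot(\cdots)$ — more directly, note $\sigma_{\mathsf{row}}/\sqrt{d_1}$ and $\sigma_{\mathsf{col}}/\sqrt{d_2}$ differ only by the signal/noise split, and one checks $B\sqrt{\log d}\lesssim\sigma_{\mathsf{col}}$ by the same termwise comparison). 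For \eqref{cond:row_B_rel}: rearrange to $\|\bm{A}^\star\|^2\sqrt{\mu r/d_1}\gtrsim B\log^{3/2}d\,\|\bm{A}^\star\|_\infty$; use $\|\bm{A}^\star\|_\infty\le\sqrt{\mu r}\,\sigma_1^\star/\sqrt{d_1d_2}$ (from incoherence $\mu_0$) and $\|\bm{A}^\star\|=\sigma_1^\star$, substitute the two pieces of $B$, and reduce to the sampling and noise conditions in \eqref{asmp}.

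The main obstacle — and the part requiring the most care — is \eqref{cond:eig_o1}, which is a lower bound on $\sigma_r^{\star2}$ by a maximum of five terms, each of which must be individually dominated. This is where all three hypotheses of \eqref{asmp} get used simultaneously, in different combinations: the term $\kappa^2\sigma_{\mathsf{col}}\sigma_{\mathsf{row}}\log d$ splits into signal$\times$signal, signal$\times$noise, and noise$\times$noise cross terms and one checks each against $\sigma_r^{\star2}$ using $p\gtrsim \mu\kappa^4 r\log^2 d/\sqrt{d_1d_2}$ and $\sigma/\sigma_r^\star\lesssim \min\{\sqrt{p}/(\kappa (d_1d_2)^{1/4}\sqrt{\log d}), \kappa^{-3}\sqrt{p/(d_1\log d)}\}$ (the quadratic noise term needs the first of these squared, the signal$\times$noise term needs a geometric combination of both); $\kappa^2\sigma_{\mathsf{col}}\sqrt{\log d}\,\|\bm{A}^\star\|$ is handled similarly with one factor of $\sigma_1^\star = \kappa\sigma_r^\star$ pulled out; $\kappa^2\|\bm{A}^\star\|_{2,\infty}^2\le \kappa^2\mu_1 r\sigma_1^{\star2}/d_1$ needs $r\le c_2 d_1/(\mu_1\kappa^4)$; the term $\sigma_{\mathsf{row}}\sqrt{d_1\log d/(\mu r)}\,\|\bm{A}^{\star\top}\|_{2,\infty}$ needs $\|\bm{A}^{\star\top}\|_{2,\infty}\le\sqrt{\mu_2 r}\,\sigma_1^\star/\sqrt{d_2}$ and then the sampling/noise conditions; and $B\log d\,\|\bm{A}^\star\|_\infty$ reduces via the incoherence bound on $\|\bm{A}^\star\|_\infty$ to conditions already verified. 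I would organize \eqref{cond:eig_o1} as five displayed sub-claims, each a two-line calculation, being careful that every ``$\ll$'' picks up a genuine slack from the sufficiently-large $c_0$ / sufficiently-small $c_1,c_2$ in \eqref{asmp}. The whole lemma holds on the $1-O(d^{-12})$ event from \eqref{eq:noise_value} on which the upper bounds for $B,\sigma_\infty,\sigma_{\mathsf{row}},\sigma_{\mathsf{col}}$ are valid.
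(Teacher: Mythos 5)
Your proof plan takes exactly the paper's route: substitute the explicit expressions \eqref{eq:noise_value} for $B,\sigma_{\infty},\sigma_{\mathsf{row}},\sigma_{\mathsf{col}}$ together with the incoherence bounds from Lemma~\ref{lemma:incoh}, compare termwise, and verify the resulting elementary inequalities against the sampling-rate, noise-level, and rank conditions in \eqref{asmp}. That is what the paper does (and, like you, the paper leaves the five sub-claims of \eqref{cond:eig_o1} as a terse verification), so no further detail is needed on the overall strategy.

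One local correction, though: the intermediate bound you invoke for the second half of \eqref{cond:col_B_rel}, namely $B\sqrt{\log d}\lesssim\sigma_{\mathsf{col}}$ (equivalently $B\lesssim\sigma_{\mathsf{col}}/\sqrt{\log d}$), does \emph{not} hold in the regime $d_2\gg d_1$ that is the paper's main focus. Indeed, from the definitions \eqref{def:sigma_row_UB}--\eqref{def:sigma_col_UB} one has the exact identity $\sigma_{\mathsf{row}}=\sqrt{d_2/d_1}\,\sigma_{\mathsf{col}}$, so $B\sqrt{\log d}\lesssim\sigma_{\mathsf{row}}$ (which is the $\sigma_{\mathsf{row}}$-branch of \eqref{cond:B_UB}) only yields $B\sqrt{\log d}\lesssim\sqrt{d_2/d_1}\,\sigma_{\mathsf{col}}$, which is weaker by the factor $\sqrt{d_2/d_1}$. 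Fortunately that weaker bound is exactly what the second half of \eqref{cond:col_B_rel} requires: after cancelling $\|\bm{A}^{\star}\|\sqrt{\mu r}$ from both sides, the claim reads $B\sqrt{\log d}/\sqrt{d_2}\lesssim\sigma_{\mathsf{col}}/\sqrt{d_1}$, i.e.\ $B\sqrt{\log d}\lesssim\sqrt{d_2/d_1}\,\sigma_{\mathsf{col}}=\sigma_{\mathsf{row}}$. So your high-level idea (``the second half of \eqref{cond:col_B_rel} follows from \eqref{cond:B_UB}'') is correct; just be sure to route it through the $\sigma_{\mathsf{row}}$ branch via the identity above rather than through the false intermediate inequality.
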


\subsection{Proof of Lemma~\ref{lemma:G_op_loss}\label{subsec:pf:G_op_loss}}

The main component of the proof is to demonstrate that
\begin{equation}
\left\Vert \bm{G}-\bm{G}^{\star}\right\Vert \lesssim\underset{=:\delta_{\mathsf{op}}}{\underbrace{\left(\sigma_{\mathsf{row}}+\sigma_{\mathsf{col}}\right)\big(\sigma_{\mathsf{col}}+\left\Vert \bm{A}^{\star\top}\right\Vert _{2,\infty}\big)\log d+\sigma_{\mathsf{col}}\sqrt{\log d}\left\Vert \bm{A}^{\star}\right\Vert +\left\Vert \bm{A}^{\star}\right\Vert _{2,\infty}^{2}}}.\label{def:delta_op}
\end{equation}
By substituting the values of $\sigma_{\mathsf{row}}$ and $\sigma_{\mathsf{col}}$
(cf.~(\ref{eq:noise_value})) into the above expression, one derives
\begin{equation}
\delta_{\mathsf{op}}\lesssim\zeta_{\mathsf{op}}+\left\Vert \bm{A}^{\star}\right\Vert _{2,\infty}^{2},\label{eq:delta-opt-zeta-op}
\end{equation}
where $\zeta_{\mathsf{op}}$ is defined in (\ref{claim:G_op_loss}).
Therefore, Lemma~\ref{lemma:G_op_loss} is an immediate consequence
of (\ref{def:delta_op}) and (\ref{eq:delta-opt-zeta-op}). The remainder
of the proof amounts to justifying (\ref{def:delta_op}).

Recall the definitions of $\bm{G}$ and $\bm{G}^{\star}$ in (\ref{def:G})
and (\ref{eq:defn-Gstar}), respectively. Given that $\bm{A}^{\mathsf{s}}=\bm{A}^{\star}+\bm{E}$,
we can expand 
\begin{align}
\bm{G}-\bm{G}^{\star} & =\mathcal{P}_{\mathsf{off}\text{-}\mathsf{diag}}\left(\bm{A}^{\mathsf{s}}\bm{A}^{\mathsf{s}\top}\right)-\bm{A}^{\star}\bm{A}^{\star\top}=\mathcal{P}_{\mathsf{off}\text{-}\mathsf{diag}}\left(\bm{A}^{\mathsf{s}}\bm{A}^{\mathsf{s}\top}-\bm{A}^{\star}\bm{A}^{\star\top}\right)-\mathcal{P}_{\mathsf{diag}}\left(\bm{A}^{\star}\bm{A}^{\star\top}\right)\nonumber \\
 & =\mathcal{P}_{\mathsf{off}\text{-}\mathsf{diag}}\left(\bm{E}\bm{E}^{\top}\right)+\mathcal{P}_{\mathsf{off}\text{-}\mathsf{diag}}\left(\bm{A}^{\star}\bm{E}^{\top}+\bm{E}\bm{A}^{\star\top}\right)-\mathcal{P}_{\mathsf{diag}}\left(\bm{A}^{\star}\bm{A}^{\star\top}\right),\label{eq:G-Gstar-expand}
\end{align}
where $\mathcal{P}_{\mathsf{off}\text{-}\mathsf{diag}}$ and $\mathcal{P}_{\mathsf{diag}}$
are defined in Section \ref{subsec:Notations}. In what follows, we
control these three terms separately.

\subsubsection{Step 1: bounding the term $\mathcal{P}_{\mathsf{off}\text{-}\mathsf{diag}}\left(\bm{E}\bm{E}^{\top}\right)$}

We first consider the term $\mathcal{P}_{\mathsf{off}\text{-}\mathsf{diag}}\left(\bm{E}\bm{E}^{\top}\right)$.
Since $\left\{ E_{i,j}\right\} _{i\in\left[d_{1}\right],j\in\left[d_{2}\right]}$
are independent zero-mean random variables, we can express

\begin{equation}
\mathcal{P}_{\mathsf{off}\text{-}\mathsf{diag}}\left(\bm{E}\bm{E}^{\top}\right)=\sum\nolimits _{1\leq l \leq d_2}\left(\bm{E}_{:,l}\bm{E}_{:,l}^{\top}-\bm{D}_{l}\right)\label{eq:Poffdiag-sum}
\end{equation}
as a sum of independent zero-mean random matrices, where $\bm{D}_{l}$
is a random diagonal matrix in $\mathbb{R}^{d_{1}\times d_{1}}$ with
entries 
\begin{align}
\big(\bm{D}_{l}\big)_{i,i}=E_{i,l}^{2}. \label{def:D_l}
\end{align}

We intend to invoke the truncated matrix Bernstein inequality \cite[Proposition~A.7]{hopkins2016fast}
to control the spectral norm of (\ref{eq:Poffdiag-sum}). To this
end, we need to look at a few quantities.
\begin{itemize}
\item We first bound the spectral norm of the following covariance matrix
\[
\bm{\Sigma}_{\mathsf{ns}}:=\sum\nolimits _{1\leq l \leq d_2}\mathbb{E}\left[\left(\bm{E}_{:,l}\bm{E}_{:,l}^{\top}-\bm{D}_{l}\right)^{2}\right]\in\mathbb{R}^{d_{1}\times d_{1}}.
\]
Straightforward computation reveals that $\bm{\Sigma}_{\mathsf{ns}}$
is a diagonal matrix with entries
\[
\left(\bm{\Sigma}_{\mathsf{ns}}\right)_{i,i}=\sum\nolimits _{1\leq l \leq d_2}\mathbb{E}\left[E_{i,l}^{2}\right]\sum\nolimits _{m: m \neq i}\mathbb{E}\left[E_{m,l}^{2}\right]\leq\sigma_{\mathsf{row}}^{2}\sigma_{\mathsf{col}}^{2}
\]
for each $1\leq i\leq d_{1}$. This immediately reveals that
\begin{equation}
V_{\mathsf{ns}}:=\left\Vert \bm{\Sigma}_{\mathsf{ns}}\right\Vert \leq\sigma_{\mathsf{row}}^{2}\sigma_{\mathsf{col}}^{2}.\label{eq:V_noise}
\end{equation}
\item Next, we turn to upper bounding the spectral norm of each summand
$\bm{E}_{:,l}\bm{E}_{:,l}^{\top}-\bm{D}_{l}$. As shown in the proof
of Lemma~\ref{lemma:row_col_2_norm}, one has
\begin{align*}
\mathbb{P}\left\{ \left|\left\Vert \bm{E}_{:,l}\right\Vert _{2}^{2}-M_{1}\right|\geq t\right\}  & \leq2\exp\left(-\frac{3}{8}\min\left\{ \frac{t^{2}}{V_{1}},\frac{t}{L_{1}}\right\} \right),\quad t>0,
\end{align*}
where $M_{1},L_{1}$ and $V_{1}$ are given respectively by
\begin{align*}
M_{1} & :=\mathbb{E}\left[\left\Vert \bm{E}_{:,l}\right\Vert _{2}^{2}\right]\leq\sigma_{\mathsf{col}}^{2},\\
L_{1} & :=\max_{1\leq i \leq d_1}\left|E_{i,l}^{2}-\mathbb{E}\left[E_{i,l}^{2}\right]\right|\leq2B^{2},\\
V_{1} & :=\sum\nolimits _{1\leq i \leq d_1}\mathsf{Var}\left(E_{i,l}^{2}\right)\leq B^{2}\sigma_{\mathsf{col}}^{2}.
\end{align*}
In addition, with probability exceeding $1-O\left(d^{-20}\right)$,
\begin{align}
\left\Vert \bm{E}_{:,l}\right\Vert _{2}^{2} & \lesssim M_{1}+L_{1}\log d+\sqrt{V_{1}\log d}\lesssim\sigma_{\mathsf{col}}^{2}+B^{2}\log d+\sqrt{B^{2}\sigma_{\mathsf{col}}^{2}\log d}\nonumber \\
 & \asymp B^{2}\log d+\sigma_{\mathsf{col}}^{2},\label{eq:n_l_2_norm}
\end{align}
where the last line comes from the AM-GM inequality $2\sqrt{B^{2}\sigma_{\mathsf{col}}^{2}\log d}\leq B^{2}\log d+\sigma_{\mathsf{col}}^{2}$.
This together with the definition $\bm{D}_{l}:=\mathsf{diag}\big(E_{i,l}^{2},\dots,E_{d_{1},l}^{2}\big)$
gives
\[
\left\Vert \bm{E}_{:,l}\bm{E}_{:,l}^{\top}-\bm{D}_{l}\right\Vert \leq\left\Vert \bm{E}_{:,l}\right\Vert _{2}^{2}+\left\Vert \bm{D}_{l}\right\Vert \leq2\left\Vert \bm{E}_{:,l}\right\Vert _{2}^{2}\lesssim B^{2}\log d+\sigma_{\mathsf{col}}^{2}.
\]
Therefore, if we set
\begin{equation}
L_{\mathsf{ns}}:=C\left(B^{2}\log d+\sigma_{\mathsf{col}}^{2}\right)\label{eq:L_noise}
\end{equation}
 for some sufficiently large constant $C>0$, then the above argument
reveals that
\begin{align*}
L_{\mathsf{ns}} & \geq\frac{C}{3}\left(M_{1}+L_{1}\log d+\sqrt{V_{1}\log d}\right)\geq\frac{C}{3}\max\left\{ \sqrt{V_{1}\log d},L_{1}\log d\right\} .
\end{align*}
\item In addition, one can easily bound that
\begin{align*}
\mathbb{E}\left[\left\Vert \bm{E}_{:,l}\right\Vert _{2}^{2}\mathds{1}\left\{ \left\Vert \bm{E}_{:,l}\right\Vert _{2}^{2}\geq L_{\mathsf{ns}}\right\} \right] & \leq L_{\mathsf{ns}}\mathbb{P}\left\{ \left\Vert \bm{E}_{:,l}\right\Vert _{2}^{2}\geq L_{\mathsf{ns}}\right\} +\int_{L_{\mathsf{ns}}}^{\infty}\mathbb{P}\left\{ \left\Vert \bm{E}_{:,l}\right\Vert _{2}^{2}\geq t\right\} \mathrm{d}t\\
 & \leq O\left(d^{-20}\right)L_{\mathsf{ns}}+\int_{L_{\mathsf{ns}}}^{\infty}\mathbb{P}\left\{ \left\Vert \bm{E}_{:,l}\right\Vert _{2}^{2}\geq t\right\} \mathrm{d}t.
\end{align*}
Moreover, we know that $\min\big\{ t^{2}/V_{1},t/L_{1}\big\} \geq t/\max\big\{ \sqrt{V_{1}/\log d},L_{1}\big\} $
for any $t\geq L_{\mathsf{ns}}/2$. As a result, for sufficiently
large $d$, we have
\begin{align*}
\int_{L_{\mathsf{ns}}}^{\infty}\mathbb{P}\left\{ \left\Vert \bm{E}_{:,l}\right\Vert _{2}^{2}\geq t\right\} \mathrm{d}t & \leq\int_{L_{\mathsf{ns}}}^{\infty}\mathbb{P}\left\{ \left|\left\Vert \bm{E}_{:,l}\right\Vert _{2}^{2}-M_{1}\right|>\frac{t}{2}\right\} \mathrm{d}t\\
 & \leq4\int_{\frac{1}{2}L_{\mathsf{ns}}}^{\infty}\exp\left(-\frac{3}{8}\min\left\{ \frac{t^{2}}{V_{1}},\frac{t}{L_{1}}\right\} \right)\mathrm{d}t\\
 & \leq4\int_{\frac{1}{2}L_{\mathsf{ns}}}^{\infty}\exp\left(-\frac{3}{8}\frac{t}{\max\big\{ \sqrt{V_{1}/\log d},L_{1}\big\} }\right)\mathrm{d}t\\
 & \lesssim\max\big\{ \sqrt{V_{1}/\log d},L_{1}\big\}\exp\left(-\frac{3}{16}\frac{L_{\mathsf{ns}}}{\max\big\{ \sqrt{V_{1} / \log d},L_{1}\big\} }\right)\\
 & \leq\max\big\{ \sqrt{V_{1}/\log d},L_{1}\big\}\exp\left(-\frac{3C}{32}\log d\right)\\
 & \ll\frac{L_{\mathsf{ns}}}{d^{2}},
\end{align*}
provided that $C>0$ is sufficiently large. Consequently, we have
\begin{align}
R_{\mathsf{ns}} & :=\mathbb{E}\left[\left\Vert \bm{E}_{:,l}\bm{E}_{:,l}^{\top}-\bm{D}_{l}\right\Vert \mathds{1}\left\{ \left\Vert \bm{E}_{:,l}\bm{E}_{:,l}^{\top}-\bm{D}_{l}\right\Vert \geq L_{\mathsf{ns}}\right\} \right]\nonumber \\
 & \leq\mathbb{E}\left[2\left\Vert \bm{E}_{:,l}\right\Vert _{2}^{2}\mathds{1}\left\{ 2\left\Vert \bm{E}_{:,l}\right\Vert _{2}^{2}\geq L_{\mathsf{ns}}\right\} \right]\ll\frac{L_{\mathsf{ns}}}{d^{2}}.\label{eq:R_noise}
\end{align}
\end{itemize}
With estimates (\ref{eq:V_noise}), (\ref{eq:L_noise}) and (\ref{eq:R_noise})
in place, we are ready to apply the truncated matrix Bernstein inequality
\cite[Proposition~A.7]{hopkins2016fast} to obtain that, with probability
at least $1-O\left(d^{-10}\right)$,
\begin{align}
\left\Vert \mathcal{P}_{\mathsf{off}\text{-}\mathsf{diag}}\left(\bm{E}\bm{E}^{\top}\right)\right\Vert  & =\Big\|\sum\nolimits _{1\leq l\leq d_{2}}\bm{E}_{:,l}\bm{E}_{:,l}^{\top}-\bm{D}_{l}\Big\|\lesssim d_{2}R_{\mathsf{ns}}+L_{\mathsf{ns}}\log d+\sqrt{V_{\mathsf{ns}}\log d}\nonumber \\
 & \asymp L_{\mathsf{ns}}\log d+\sqrt{V_{\mathsf{ns}}\log d}\nonumber \\
 & \lesssim B^{2}\log^{2}d+\sigma_{\mathsf{col}}^{2}\log d+\sigma_{\mathsf{row}}\sigma_{\mathsf{col}}\sqrt{\log d}\nonumber \\
 & \lesssim\sigma_{\mathsf{col}}\left(\sigma_{\mathsf{row}}+\sigma_{\mathsf{col}}\right)\log d,\label{eq:delta_noise_op_norm_UB}
\end{align}
where the last line results from the identity $B^{2}\log d\lesssim\sigma_{\mathsf{row}}\sigma_{\mathsf{col}}$
(See (\ref{cond:B_UB})).

\subsubsection{Step 2: bounding the term $\mathcal{P}_{\mathsf{off}\text{-}\mathsf{diag}}\left(\bm{A}^{\star}\bm{E}^{\top}+\bm{E}\bm{A}^{\star\top}\right)$}

Next, we turn attention to $\mathcal{P}_{\mathsf{off}\text{-}\mathsf{diag}}\left(\bm{A}^{\star}\bm{E}^{\top}+\bm{E}\bm{A}^{\star\top}\right)$.
By symmetry, it suffices to control to the spectral norm of $\mathcal{P}_{\mathsf{off}\text{-}\mathsf{diag}}\left(\bm{A}^{\star}\bm{E}^{\top}\right)$.
To this end, we first express

\begin{equation}
\mathcal{P}_{\mathsf{off}\text{-}\mathsf{diag}}\left(\bm{A}^{\star}\bm{E}^{\top}\right)=\sum\nolimits _{1\leq l\leq d_{2}}\big(\bm{A}_{:,l}^{\star}\bm{E}_{:,l}^{\top}-\widehat{\bm{D}}_{l}\big)\label{eq:Poff-diag-AE-expand}
\end{equation}
as a sum of independent zero-mean random matrices, where $\widehat{\bm{D}}_{l}$
is a diagonal matrix obeying 
\begin{align}
\big(\widehat{\bm{D}}_{l}\big)_{i,i}=A_{i,l}^{\star}E_{i,l}. \label{def:D_l_hat}
\end{align}

To control (\ref{eq:Poff-diag-AE-expand}), we need to first look
at two matrices defined as follows
\begin{align*}
\widehat{\bm{\Sigma}}_{\mathsf{crs}} & :=\sum\nolimits _{1\leq l\leq d_{2}}\mathbb{E}\Big[\big(\bm{A}_{:,l}^{\star}\bm{E}_{:,l}^{\top}-\widehat{\bm{D}}_{l}\big)\big(\bm{A}_{:,l}^{\star}\bm{E}_{:,l}^{\top}-\widehat{\bm{D}}_{l}\big)^{\top}\Big];\\
\widetilde{\bm{\Sigma}}_{\mathsf{crs}} & :=\sum\nolimits _{1\leq l\leq d_{2}}\mathbb{E}\Big[\big(\bm{A}_{:,l}^{\star}\bm{E}_{:,l}^{\top}-\widehat{\bm{D}}_{l}\big)^{\top}\big(\bm{A}_{:,l}^{\star}\bm{E}_{:,l}^{\top}-\widehat{\bm{D}}_{l}\big)\Big].
\end{align*}
Straightforward computation shows that 
\begin{align*}
\big(\widehat{\bm{\Sigma}}_{\mathsf{crs}}\big)_{i,i} & =\sum\nolimits _{1\leq l\leq d_{2}}A_{i,l}^{\star2}\mathbb{E}\left[\left\Vert \bm{E}_{:,l}\right\Vert _{2}^{2}-E_{i,l}^{2}\right],\quad\quad i\in\left[d_{1}\right],\\
\big(\widehat{\bm{\Sigma}}_{\mathsf{crs}}\big)_{i,j} & =\sum\nolimits _{1\leq l\leq d_{2}}A_{i,l}^{\star}A_{j,l}^{\star}\mathbb{E}\left[\left\Vert \bm{E}_{:,l}\right\Vert _{2}^{2}-E_{i,l}^{2}-E_{j,l}^{2}\right],\quad\quad i\neq j,
\end{align*}
and $\widetilde{\bm{\Sigma}}_{\mathsf{crs}}\in\mathbb{R}^{d_{1}\times d_{1}}$
is a diagonal matrix with entries
\begin{align*}
\big(\widetilde{\bm{\Sigma}}_{\mathsf{crs}}\big)_{i,i} & =\sum\nolimits _{1\leq l\leq d_{2}}\left(\left\Vert \bm{A}_{:,l}^{\star}\right\Vert _{2}^{2}-A_{i,l}^{\star2}\right)\mathbb{E}\left[E_{i,l}^{2}\right],\quad\quad i\in\left[d_{1}\right].
\end{align*}
Hence we have
\begin{equation}
\big\|\widetilde{\bm{\Sigma}}_{\mathsf{crs}}\big\|\leq\max_{1\leq i \leq d_1}\big|\big(\widetilde{\bm{\Sigma}}_{\mathsf{crs}}\big)_{i,i}\big|\lesssim\sigma_{\mathsf{row}}^{2}\left\Vert \bm{A}^{\star\top}\right\Vert _{2,\infty}^{2}.\label{eq:cov_cross_term1}
\end{equation}
To control the spectral norm of $\widehat{\bm{\Sigma}}_{\mathsf{crs}}$,
we further decompose it as $\widehat{\bm{\Sigma}}_{\mathsf{crs}}=\widehat{\bm{\Sigma}}_{\mathsf{crs}}^{'}-\widehat{\bm{\Sigma}}_{\mathsf{crs}}^{''}$,
where
\begin{align*}
\big(\widehat{\bm{\Sigma}}_{\mathsf{crs}}^{'}\big)_{i,i} & =\sum\nolimits _{1\leq l\leq d_{2}}A_{i,l}^{\star2}\mathbb{E}\left[\left\Vert \bm{E}_{:,l}\right\Vert _{2}^{2}\right],\quad i\in\left[d_{1}\right],\\
\big(\widehat{\bm{\Sigma}}_{\mathsf{crs}}^{'}\big)_{i,j} & =\sum\nolimits _{1\leq l\leq d_{2}}A_{i,l}^{\star}A_{j,l}^{\star}\mathbb{E}\left[\left\Vert \bm{E}_{:,l}\right\Vert _{2}^{2}\right],\quad i\neq j,
\end{align*}
and
\begin{align*}
\big(\widehat{\bm{\Sigma}}_{\mathsf{crs}}^{''}\big)_{i,i} & =\sum\nolimits _{1\leq l\leq d_{2}}A_{i,l}^{\star2}\mathbb{E}\left[E_{i,l}^{2}\right],\quad i\in\left[d_{1}\right],\\
\big(\widehat{\bm{\Sigma}}_{\mathsf{crs}}^{''}\big)_{i,j} & =\sum\nolimits _{1\leq l\leq d_{2}}A_{i,l}^{\star}A_{j,l}^{\star}\mathbb{E}\left[E_{i,l}^{2}+E_{j,l}^{2}\right],\quad i\neq j.
\end{align*}

\begin{itemize}
\item The spectral norm of $\widehat{\bm{\Sigma}}_{\mathsf{crs}}^{'}$ can
be easily upper bounded by
\begin{align}
\big\|\widehat{\bm{\Sigma}}_{\mathsf{crs}}^{'}\big\| & \leq\max_{1\leq l \leq d_2}\mathbb{E}\left[\left\Vert \bm{E}_{:,l}\right\Vert _{2}^{2}\right]\left\Vert \bm{A}^{\star}\bm{A}^{\star\top}\right\Vert \leq\sigma_{\mathsf{col}}^{2}\left\Vert \bm{A}^{\star}\right\Vert ^{2}.\label{eq:Sigma-hat-crs-tilde}
\end{align}
\item Regarding $\widehat{\bm{\Sigma}}_{\mathsf{crs}}^{''}$, we first decompose
$\widehat{\bm{\Sigma}}_{\mathsf{crs}}^{''}+\mathcal{P}_{\mathsf{diag}}\big(\widehat{\bm{\Sigma}}_{\mathsf{crs}}^{''}\big)=\bm{B}_{1}+\bm{B}_{2}$,
where the diagonal entries of $\bm{B}_{1}$ and $\bm{B}_{2}$ are
identical and equal to $\sum\nolimits _{1\leq l\leq d_{2}}A_{i,l}^{\star2}\mathbb{E}\big[E_{i,l}^{2}\big],\left(1\leq i\leq d_{2}\right)$
while their off-diagonal parts are given by
\[
\left(\bm{B}_{1}\right)_{i,j}=\sum\nolimits _{1\leq l\leq d_{2}}A_{i,l}^{\star}\mathbb{E}\left[E_{i,l}^{2}\right]A_{j,l}^{\star}\qquad\text{and}\qquad\left(\bm{B}_{2}\right)_{i,j}=\sum\nolimits _{1\leq l\leq d_{2}}A_{i,l}^{\star}A_{j,l}^{\star}\mathbb{E}\left[E_{j,l}^{2}\right],\qquad i\neq j.
\]
Let $\bm{C}$ be a matrix in $\mathbb{R}^{d_{1}\times d_{2}}$ with
entries $C_{i,j}=A_{i,j}^{\star}\mathbb{E}\left[E_{i,j}^{2}\right]$.
One can easily check that $\bm{B}_{1}=\sum_{1\leq l\leq d_{2}}\bm{C}_{:,l}\bm{A}_{:,l}^{\star\top}=\bm{C}\bm{A}^{\star\top}$
and develop an upper bound 
\begin{align*}
\left\Vert \bm{B}_{1}\right\Vert  & \leq\left\Vert \bm{C}\right\Vert \left\Vert \bm{A}^{\star}\right\Vert \leq\left\Vert \bm{C}\right\Vert _{\mathrm{F}}\left\Vert \bm{A}^{\star}\right\Vert \leq\sigma_{\infty}^{2}\left\Vert \bm{A}^{\star}\right\Vert _{\mathrm{F}}\left\Vert \bm{A}^{\star}\right\Vert .
\end{align*}
Note that the same bound also holds for $\bm{B}_{2}$. Therefore,
we arrive at
\begin{align*}
\big\|\widehat{\bm{\Sigma}}_{\mathsf{crs}}^{''}\big\| & \leq\big\|\mathcal{P}_{\mathsf{diag}}\big(\widehat{\bm{\Sigma}}_{\mathsf{crs}}^{''}\big)\big\|+\big\|\mathcal{P}_{\mathsf{diag}}\big(\widehat{\bm{\Sigma}}_{\mathsf{crs}}^{''}\big)+\widehat{\bm{\Sigma}}_{\mathsf{crs}}^{''}\big\|\leq\big\|\mathcal{P}_{\mathsf{diag}}\big(\widehat{\bm{\Sigma}}_{\mathsf{crs}}^{''}\big)\big\|+\left\Vert \bm{B}_{1}\right\Vert +\left\Vert \bm{B}_{2}\right\Vert \\
 & \leq\sigma_{\infty}^{2}\left\Vert \bm{A}^{\star}\right\Vert _{2,\infty}^{2}+\sigma_{\infty}^{2}\left\Vert \bm{A}^{\star}\right\Vert _{\mathrm{F}}\left\Vert \bm{A}^{\star}\right\Vert \\
 & \leq\sigma_{\infty}^{2}\left\Vert \bm{A}^{\star}\right\Vert _{2,\infty}^{2}+\sigma_{\infty}^{2}\sqrt{r}\left\Vert \bm{A}^{\star}\right\Vert ^{2}\\
 & \leq2\sigma_{\infty}^{2}\sqrt{r}\left\Vert \bm{A}^{\star}\right\Vert ^{2},
\end{align*}
where we have used the facts that $\left\Vert \bm{A}^{\star}\right\Vert _{2,\infty}\leq\left\Vert \bm{A}^{\star}\right\Vert $
and $\left\Vert \bm{A}^{\star}\right\Vert _{\mathrm{F}}\leq\sqrt{r}\left\Vert \bm{A}^{\star}\right\Vert $.
Consequently, the above bounds taken collectively yield
\begin{equation}
\big\|\widehat{\bm{\Sigma}}_{\mathsf{crs}}\big\|\leq\big\|\widehat{\bm{\Sigma}}_{\mathsf{crs}}^{'}\big\|+\big\|\widehat{\bm{\Sigma}}_{\mathsf{crs}}^{''}\big\|\lesssim\left(\sigma_{\mathsf{col}}^{2}+\sigma_{\infty}^{2}\sqrt{r}\right)\left\Vert \bm{A}^{\star}\right\Vert ^{2}\asymp\sigma_{\mathsf{col}}^{2}\left\Vert \bm{A}^{\star}\right\Vert ^{2},\label{eq:cov_cross_term2}
\end{equation}
where the last step uses (\ref{def:sigma_inf_UB}) and (\ref{def:sigma_col_UB}).
\end{itemize}
Putting (\ref{eq:cov_cross_term1}), (\ref{eq:Sigma-hat-crs-tilde})
and (\ref{eq:cov_cross_term2}) together yields
\begin{align}
V_{\mathsf{crs}} & :=\max\left\{ \big\|\widehat{\bm{\Sigma}}_{\mathsf{crs}}\big\|,\big\|\widetilde{\bm{\Sigma}}_{\mathsf{crs}}\big\|\right\} \lesssim\sigma_{\mathsf{col}}^{2}\left\Vert \bm{A}^{\star}\right\Vert ^{2}+\sigma_{\mathsf{row}}^{2}\left\Vert \bm{A}^{\star\top}\right\Vert _{2,\infty}^{2}.\label{eq:V_cross}
\end{align}

Second, we turn to the spectral norm of each summand $\bm{A}_{:,l}^{\star}\bm{E}_{:,l}^{\top}-\widehat{\bm{D}}_{l}$.
Recalling the definition that $\widehat{\bm{D}}_{l}=\mathsf{diag}\big(A_{1,l}^{\star}E_{1,l},\dots,A_{d_{1},l}^{\star}E_{d_{1},l}\big)$,
we can obtain
\[
\big\|\bm{A}_{:,l}^{\star}\bm{E}_{:,l}^{\top}-\widehat{\bm{D}}_{l}\big\|\leq\left\Vert \bm{A}_{:,l}^{\star}\right\Vert _{2}\left\Vert \bm{E}_{:,l}\right\Vert _{2}+\big\|\widehat{\bm{D}}_{l}\big\|\leq2\left\Vert \bm{A}_{:,l}^{\star}\right\Vert _{2}\left\Vert \bm{E}_{:,l}\right\Vert _{2}\leq2\left\Vert \bm{A}^{\star\top}\right\Vert _{2,\infty}\left\Vert \bm{E}_{:,l}\right\Vert _{2}.
\]
Set
\begin{align}
L_{\mathsf{crs}} & :=C\sqrt{L_{\mathsf{ns}}}\left\Vert \bm{A}^{\star\top}\right\Vert _{2,\infty}\asymp\big(\sigma_{\mathsf{col}}+B\sqrt{\log d}\big)\left\Vert \bm{A}^{\star\top}\right\Vert _{2,\infty},\label{eq:L_cross}
\end{align}
where $L_{\mathsf{ns}}$ is defined in (\ref{eq:L_noise}) and $C>0$
is some sufficiently large universal constant. Then with probability
at least $1-O\left(d^{-20}\right)$, one has
\begin{align*}
\big\|\bm{A}_{:,l}^{\star}\bm{E}_{:,l}^{\top}-\widehat{\bm{D}}_{l}\big\| & \leq2\left\Vert \bm{A}^{\star\top}\right\Vert _{2,\infty}\left\Vert \bm{E}_{:,l}\right\Vert _{2}\lesssim L_{\mathsf{crs}},
\end{align*}
where the last inequality comes from (\ref{eq:n_l_2_norm}).

Third, we need to control
\[
R_{\mathsf{crs}}:=\mathbb{E}\left[\big\|\bm{A}_{:,l}^{\star}\bm{E}_{:,l}^{\top}-\widehat{\bm{D}}_{l}\big\|\,\mathds{1}\big\{ \big\|\bm{A}_{:,l}^{\star}\bm{E}_{:,l}^{\top}-\widehat{\bm{D}}_{l}\big\|\geq L_{\mathsf{crs}}\big\} \right].
\]
From Jensen's inequality and (\ref{eq:R_noise}), we know that
\begin{align*}
\mathbb{E}\left[\left\Vert \bm{E}_{:,l}\right\Vert _{2}\mathds{1}\big\{ \left\Vert \bm{E}_{:,l}\right\Vert _{2}\geq\sqrt{L_{\mathsf{ns}}}\big\} \right] & \leq\sqrt{\mathbb{E}\left[\left\Vert \bm{E}_{:,l}\right\Vert _{2}^{2}\mathds{1}\big\{ \left\Vert \bm{E}_{:,l}\right\Vert _{2}^{2}\geq L_{\mathsf{ns}}\big\} \right]}\ll\frac{\sqrt{L_{\mathsf{ns}}}}{d}.
\end{align*}
By the definition of $L_{\mathsf{crs}}$ in (\ref{eq:L_cross}) and
the fact that 
\begin{align*}
\left\{ \left\Vert \bm{A}_{:,l}^{\star}\right\Vert _{2}\left\Vert \bm{E}_{:,l}\right\Vert _{2}\geq L_{\mathsf{crs}}\right\}  & \subset\left\{ \left\Vert \bm{A}^{\star\top}\right\Vert _{2,\infty}\left\Vert \bm{E}_{:,l}\right\Vert _{2}\geq L_{\mathsf{crs}}\right\} =\left\{ \left\Vert \bm{E}_{:,l}\right\Vert _{2}\geq C\sqrt{L_{\mathsf{ns}}}\right\} ,
\end{align*}
one has
\begin{align}
R_{\mathsf{crs}} & \leq\mathbb{E}\left[2\left\Vert \bm{A}_{:,l}^{\star}\right\Vert _{2}\left\Vert \bm{E}_{:,l}\right\Vert _{2}\mathds{1}\big\{ 2\left\Vert \bm{A}_{:,l}^{\star}\right\Vert _{2}\left\Vert \bm{E}_{:,l}\right\Vert _{2}\geq L_{\mathsf{crs}}\big\} \right]\nonumber \\
 & \lesssim\left\Vert \bm{A}^{\star\top}\right\Vert _{2,\infty}\mathbb{E}\Big[\left\Vert \bm{E}_{:,l}\right\Vert _{2}\mathds{1}\big\{ \left\Vert \bm{E}_{:,l}\right\Vert _{2}\geq\frac{C}{2}\sqrt{L_{\mathsf{ns}}}\big\} \Big]\nonumber \\
 & \ll\frac{1}{d}\left\Vert \bm{A}^{\star\top}\right\Vert _{2,\infty}\sqrt{L_{\mathsf{ns}}}\asymp\frac{L_{\mathsf{crs}}}{d}.\label{eq:R_cross}
\end{align}

With (\ref{eq:V_cross}), (\ref{eq:L_cross}) and (\ref{eq:R_cross})
in place, we can apply the truncated matrix Bernstein inequality to
obtain that, with with probability at least $1-O\left(d^{-10}\right)$,
\begin{align}
\left\Vert \mathcal{P}_{\mathsf{off}\text{-}\mathsf{diag}}\left(\bm{A}^{\star}\bm{E}^{\top}\right)\right\Vert  & \leq\big\|\sum\nolimits _{1\leq l\leq d_{2}}\big(\bm{A}_{:,l}^{\star}\bm{E}_{:,l}^{\top}-\widehat{\bm{D}}_{l}\big)\big\|\lesssim d_{2}R_{\mathsf{crs}}+L_{\mathsf{crs}}\log d+\sqrt{V_{\mathsf{crs}}\log d}\nonumber \\
 & \asymp L_{\mathsf{crs}}\log d+\sqrt{V_{\mathsf{crs}}\log d}\nonumber \\
 & \lesssim\big(\sigma_{\mathsf{col}}\log d+B\log^{3/2}d\big)\left\Vert \bm{A}^{\star\top}\right\Vert _{2,\infty}+\sigma_{\mathsf{row}}\sqrt{\log d}\left\Vert \bm{A}^{\star\top}\right\Vert _{2,\infty}+\sigma_{\mathsf{col}}\sqrt{\log d}\left\Vert \bm{A}^{\star}\right\Vert \nonumber \\
 & \lesssim\left(\sigma_{\mathsf{row}}+\sigma_{\mathsf{col}}\right)\log d\left\Vert \bm{A}^{\star\top}\right\Vert _{2,\infty}+\sigma_{\mathsf{col}}\sqrt{\log d}\left\Vert \bm{A}^{\star}\right\Vert \label{eq:delta_cross_op_norm_UB}
\end{align}
under the condition~(\ref{cond:B_UB}) that $B\sqrt{\log d}\lesssim\sqrt{\sigma_{\mathsf{row}}\sigma_{\mathsf{col}}}\leq\max\left\{ \sigma_{\mathsf{row}},\sigma_{\mathsf{col}}\right\} $.

\subsubsection{Step 3: combining Step 1 and Step 2}

Taking together (\ref{eq:delta_noise_op_norm_UB}), (\ref{eq:delta_cross_op_norm_UB})
and (\ref{eq:G-Gstar-expand}), we conclude that
\begin{align*}
\left\Vert \bm{G}-\bm{G}^{\star}\right\Vert  & \lesssim\left(\sigma_{\mathsf{row}}+\sigma_{\mathsf{col}}\right)\big(\sigma_{\mathsf{col}}+\left\Vert \bm{A}^{\star\top}\right\Vert _{2,\infty}\big)\log d+\sigma_{\mathsf{col}}\sqrt{\log d}\left\Vert \bm{A}^{\star}\right\Vert +\left\Vert \mathcal{P}_{\mathsf{diag}}\left(\bm{A}^{\star}\bm{A}^{\star\top}\right)\right\Vert \\
 & \lesssim\left(\sigma_{\mathsf{row}}+\sigma_{\mathsf{col}}\right)\big(\sigma_{\mathsf{col}}+\left\Vert \bm{A}^{\star\top}\right\Vert _{2,\infty}\big)\log d+\sigma_{\mathsf{col}}\sqrt{\log d}\left\Vert \bm{A}^{\star}\right\Vert +\left\Vert \bm{A}^{\star}\right\Vert _{2,\infty}^{2},
\end{align*}
where we have used the basic property $\left\Vert \mathcal{P}_{\mathsf{diag}}\left(\bm{A}^{\star}\bm{A}^{\star\top}\right)\right\Vert =\left\Vert \bm{A}^{\star}\right\Vert _{2,\infty}^{2}$.

\subsection{Proof of Lemma~\ref{lemma:G_dev_W_2_norm}\label{subsec:pf:G_dev_W_2_norm}}

We first claim that, for any fixed matrix $\bm{W}$, with probability
at least $1-O(d^{-10})$, the following holds for any $1\leq m \leq d_1$:
\begin{align}
\big\|\left(\bm{G}-\bm{G}^{\star}\right)_{m,:}\bm{W}\big\|_{2} & \lesssim\left(\sigma_{\mathsf{col}}\,\big(\sigma_{\mathsf{row}}+\left\Vert \bm{A}^{\star}\right\Vert _{2,\infty}\big)\sqrt{\log d}+B\log d\left\Vert \bm{A}^{\star}\right\Vert _{\infty}+\left\Vert \bm{A}^{\star}\right\Vert _{2,\infty}^{2}\right)\left\Vert \bm{W}\right\Vert _{2,\infty}\nonumber \\
 & \quad+\sigma_{\mathsf{row}}\sqrt{\log d}\left\Vert \bm{A}^{\star\top}\right\Vert _{2,\infty}\left\Vert \bm{W}\right\Vert .\label{eq:claim-G-W}
\end{align}
In particular, taking $\bm{W}=\bm{U}^{\star}$ gives
\[
\big\|\left(\bm{G}-\bm{G}^{\star}\right)_{m,:}\bm{U}^{\star}\big\|_{2}\lesssim\delta_{\mathsf{row}}\sqrt{\frac{\mu r}{d_{1}}},
\]
where
\begin{equation}
\delta_{\mathsf{row}}:=\sigma_{\mathsf{col}}\,\big(\sigma_{\mathsf{row}}+\left\Vert \bm{A}^{\star}\right\Vert _{2,\infty}\big)\sqrt{\log d}\,+B\log d\left\Vert \bm{A}^{\star}\right\Vert _{\infty}+\sqrt{\frac{d_{1}\log d}{\mu r}}\,\sigma_{\mathsf{row}}\left\Vert \bm{A}^{\star\top}\right\Vert _{2,\infty}+\left\Vert \bm{A}^{\star}\right\Vert _{2,\infty}^{2}.\label{def:delta_row}
\end{equation}
Using the values of $B,\sigma_{\infty},\sigma_{\mathsf{row}}$ and
$\sigma_{\mathsf{col}}$ specified in (\ref{eq:noise_value}), one
can easily verify that
\begin{align}
\delta_{\mathsf{row}}\lesssim\zeta_{\mathsf{op}}+\left\Vert \bm{A}^{\star}\right\Vert _{2,\infty}^{2}, \label{eq:delta_row_UB}
\end{align}
where $\zeta_{\mathsf{op}}$ is defined in (\ref{claim:G_op_loss}).
This leads to the advertised bound.

The rest of the proof is thus devoted to proving the claim (\ref{eq:claim-G-W}).
Recall the definitions of $\bm{G}$ and $\bm{G}^{\star}$ in \eqref{def:G}
and (\ref{eq:defn-Gstar}). For any $m,i\in[d_{1}]$, we can expand
\begin{align*}
\left(\bm{G}-\bm{G}^{\star}\right)_{m,i} & =\left\langle \bm{A}_{m,:}^{\mathsf{s}},\bm{A}_{i,:}^{\mathsf{s}}\right\rangle -\left\langle \bm{A}_{m,:}^{\star},\bm{A}_{i,:}^{\star}\right\rangle =\left\langle \bm{E}_{m,:},\bm{E}_{i,:}\right\rangle +\left\langle \bm{A}_{m,:}^{\star},\bm{E}_{i,:}\right\rangle +\big\langle\bm{E}_{m,:},\bm{A}_{i,:}^{\star}\big\rangle,\quad i\neq m;\\
\left(\bm{G}-\bm{G}^{\star}\right)_{m,m} & =-G_{m,m}^{\star}=-\left\Vert \bm{A}_{m,:}^{\star}\right\Vert _{2}^{2}.
\end{align*}
This allows us to derive
\begin{align}
\big\|\left(\bm{G}-\bm{G}^{\star}\right)_{m,:}\bm{W}\big\|_{2} & \leq\Big\|\sum\nolimits _{i:i\neq m}\left\langle \bm{E}_{m,:},\bm{E}_{i,:}\right\rangle \bm{W}_{i,:}\Big\|_{2}+\Big\|\sum\nolimits _{i:i\neq m}\left\langle \bm{A}_{m,:}^{\star},\bm{E}_{i,:}\right\rangle \bm{W}_{i,:}\Big\|_{2}\nonumber \\
 & \quad\quad+\Big\|\sum\nolimits _{i:i\neq m}\big\langle\bm{E}_{m,:},\bm{A}_{i,:}^{\star}\big\rangle\bm{W}_{i,:}\Big\|_{2}+\left\Vert G_{m,m}^{\star}\bm{W}_{m,:}\right\Vert _{2}.\label{eq:G-W-expansion}
\end{align}
We shall control each of these four terms separately.
\begin{itemize}
\item For the first term on the right-hand side of (\ref{eq:G-W-expansion}),
we know that
\[
\sum\nolimits _{i:i\neq m}\left\langle \bm{E}_{m,:},\bm{E}_{i,:}\right\rangle \bm{W}_{i,:}=\sum\nolimits _{(i,j):i\neq m}E_{m,j}E_{i,j}\bm{W}_{i,:}
\]
is a sum of independent zero-mean random vectors conditional on $\left\{ E_{m,j}\right\} _{j\in\left[d_{2}\right]}$.
In view of the matrix Bernstein inequality, it suffices to control
the following two quantities
\begin{align*}
L_{1} & :=\max_{(i,j):i\neq m}\left\Vert E_{m,j}E_{i,j}\bm{W}_{i,:}\right\Vert _{2}\leq\max_{i, j}\left|E_{i,j}\right|^{2}\left\Vert \bm{W}\right\Vert _{2,\infty}\leq B^{2}\left\Vert \bm{W}\right\Vert _{2,\infty},
\end{align*}
\begin{align*}
V_{1} & :=\sum\nolimits _{(i,j):i\neq m}E_{m,j}^{2}\mathbb{E}\left[E_{i,j}^{2}\right]\left\Vert \bm{W}_{i,:}\right\Vert _{2}^{2}\leq\left\Vert \bm{W}\right\Vert _{2,\infty}^{2}\sum\nolimits _{j}E_{m,j}^{2}\sum\nolimits _{i}\mathbb{E}\left[E_{i,j}^{2}\right]\leq\left\Vert \bm{W}\right\Vert _{2,\infty}^{2}\sigma_{\mathsf{col}}^{2}\sum\nolimits _{j}E_{m,j}^{2},
\end{align*}
where $B$ and $\sigma_{\mathsf{col}}$ are defined in (\ref{eq:noise_value}).
According to Lemma~\ref{lemma:row_col_2_norm}, the following holds
with probability at least $1-O\left(d^{-12}\right)$,
\begin{align*}
V_{1} & \lesssim\sigma_{\mathsf{col}}^{2}\left(\sigma_{\mathsf{row}}^{2}+B^{2}\log d\right)\left\Vert \bm{W}\right\Vert _{2,\infty}^{2}\asymp\sigma_{\mathsf{col}}^{2}\sigma_{\mathsf{row}}^{2}\left\Vert \bm{W}\right\Vert _{2,\infty}^{2},
\end{align*}
where we use the condition~(\ref{cond:B_UB}) (namely, $B\lesssim\sigma_{\mathsf{row}}/\sqrt{\log d}$).
Apply the matrix Bernstein inequality to demonstrate that with probability
exceeding $1-O(d^{-12})$,
\begin{align}
\Big\|\sum\nolimits _{i:i\neq m}\left\langle \bm{E}_{m,:},\bm{E}_{i,:}\right\rangle \bm{W}_{i,:}\Big\|_{2} & \lesssim L_{1}\log d+\sqrt{V_{1}\log d}\lesssim\big(B^{2}\log d+\sigma_{\mathsf{col}}\sigma_{\mathsf{row}}\sqrt{\log d}\big)\left\Vert \bm{W}\right\Vert _{2,\infty}\nonumber \\
 & \asymp\sigma_{\mathsf{col}}\sigma_{\mathsf{row}}\sqrt{\log d}\left\Vert \bm{W}\right\Vert _{2,\infty},\label{eq:B_dev_W_2_norm_term1}
\end{align}
where the last line follows from ~(\ref{cond:B_UB}) (i.e.~$B\lesssim\sqrt{\sigma_{\mathsf{row}}\sigma_{\mathsf{col}}/\log d}$).
\item Regarding the second term on the right-hand side of (\ref{eq:G-W-expansion}),
apply the same argument as above to show that 
\begin{equation}
\Big\|\sum\nolimits _{i:i\neq m}\left\langle \bm{A}_{m,:}^{\star},\bm{E}_{i,:}\right\rangle \bm{W}_{i,:}\Big\|_{2}\lesssim\big(\sigma_{\mathsf{col}}\sqrt{\log d}\left\Vert \bm{A}^{\star}\right\Vert _{2,\infty}+B\log d\left\Vert \bm{A}^{\star}\right\Vert _{\infty}\big)\left\Vert \bm{W}\right\Vert _{2,\infty}\label{eq:B_dev_W_2_norm_term2}
\end{equation}
holds with probability at least $1-O\left(d^{-12}\right)$.
\item Turning to the third term on the right-hand side of (\ref{eq:G-W-expansion}),
we have
\[
\Big\|\sum\nolimits _{i:i\neq m}\big\langle\bm{E}_{m,:},\bm{A}_{i,:}^{\star}\big\rangle\bm{W}_{i,:}\Big\|_{2}\leq\Big\|\sum\nolimits _{1\leq j \leq d_2}E_{m,j}\left(\bm{A}^{\star\top}\bm{W}\right)_{j,:}\Big\|_{2},
\]
where the summands are independent zero-mean random vectors. Let us
compute that
\begin{align*}
L_{2} & :=\max_{j\in\left[d_{2}\right]}\Big\| E_{m,j}\left(\bm{A}^{\star\top}\bm{W}\right)_{j,:}\Big\|_{2}\leq B\left\Vert \bm{A}^{\star\top}\bm{W}\right\Vert _{2,\infty}\leq B\left\Vert \bm{A}^{\star\top}\right\Vert _{2,\infty}\left\Vert \bm{W}\right\Vert ;\\
V_{2} & =\sum\nolimits _{j}\mathbb{E}\left[E_{m,j}^{2}\right]\Big\|\left(\bm{A}^{\star\top}\bm{W}\right)_{j,:}\Big\|_{2}^{2}\leq\sigma_{\mathsf{row}}^{2}\left\Vert \bm{A}^{\star\top}\bm{W}\right\Vert _{2,\infty}^{2}\leq\sigma_{\mathsf{row}}^{2}\left\Vert \bm{A}^{\star\top}\right\Vert _{2,\infty}^{2}\left\Vert \bm{W}\right\Vert ^{2}.
\end{align*}
Then the matrix Bernstein inequality reveals that with probability
exceeding $1-O\left(d^{-12}\right),$
\begin{align}
\Big\|\sum\nolimits _{j}E_{m,j}\left(\bm{A}^{\star\top}\bm{W}\right)_{j,:}\Big\|_{2} & \lesssim L_{2}\log d+\sqrt{V_{2}\log d}\lesssim\big(B\log d+\sigma_{\mathsf{row}}\sqrt{\log d}\big)\left\Vert \bm{A}^{\star\top}\right\Vert _{2,\infty}\left\Vert \bm{W}\right\Vert \nonumber \\
 & \asymp\sigma_{\mathsf{row}}\sqrt{\log d}\left\Vert \bm{A}^{\star\top}\right\Vert _{2,\infty}\left\Vert \bm{W}\right\Vert ,\label{eq:B_dev_W_2_norm_term3}
\end{align}
where the last line follows from the condition~(\ref{cond:B_UB})
(i.e.~$B\lesssim\sigma_{\mathsf{row}}/\sqrt{\log d}$).
\item The last term on the right-hand side of (\ref{eq:G-W-expansion})
can simply be upper bounded by
\begin{equation}
\left\Vert G_{m,m}^{\star}\bm{W}_{m,:}\right\Vert _{2}\leq\left\Vert \bm{A}^{\star}\right\Vert _{2,\infty}^{2}\left\Vert \bm{W}\right\Vert _{2,\infty}.\label{eq:B_dev_W_2_norm_term4}
\end{equation}
\end{itemize}
Putting (\ref{eq:B_dev_W_2_norm_term1}), (\ref{eq:B_dev_W_2_norm_term2}),
(\ref{eq:B_dev_W_2_norm_term3}) and (\ref{eq:B_dev_W_2_norm_term4})
together yields 
\begin{align*}
\big\|\left(\bm{G}-\bm{G}^{\star}\right)_{m,:}\bm{W}\big\|_{2} & \lesssim\left(\sigma_{\mathsf{col}}\big(\sigma_{\mathsf{row}}+\left\Vert \bm{A}^{\star}\right\Vert _{2,\infty}\big)\sqrt{\log d}+B\log d\left\Vert \bm{A}^{\star}\right\Vert _{\infty}+\left\Vert \bm{A}^{\star}\right\Vert _{2,\infty}^{2}\right)\left\Vert \bm{W}\right\Vert _{2,\infty}\\
 & \quad+\sigma_{\mathsf{row}}\sqrt{\log d}\left\Vert \bm{A}^{\star\top}\right\Vert _{2,\infty}\left\Vert \bm{W}\right\Vert 
\end{align*}
as claimed.

\subsection{Proof of Lemma~\ref{lemma:UH_BUtrue}\label{subsec:pf:UH_BUtrue}}

We claim for the moment that
\begin{align}
\big\|\bm{U}\bm{H}-\bm{G}\bm{U}^{\star}\left(\bm{\Sigma}^{\star}\right)^{-2}\big\|_{2,\infty} & \lesssim\frac{\left(\delta_{\mathsf{op}}+\delta_{\mathsf{loo}}\right)\kappa^{2}}{\sigma_{r}^{\star2}}\left(\left\Vert \bm{U}\bm{H}\right\Vert _{2,\infty}+\sqrt{\frac{\mu r}{d_{1}}}\right),\label{eq:claim4}
\end{align}
where $\delta_{\mathsf{op}}$ is defined in (\ref{def:delta_op}), and $\delta_{\mathsf{loo}}$ is defined as follows:
\begin{align}
\delta_{\mathsf{loo}} := \sigma_{\mathsf{col}}\sigma_{\mathsf{row}}\log d+\sigma_{\mathsf{col}}\left\Vert \bm{A}^{\star}\right\Vert \sqrt{\log d}. \label{def:delta_loo}
\end{align}
Using the values of $\sigma_{\mathsf{row}}$ and $\sigma_{\mathsf{col}}$
specified in (\ref{eq:noise_value}), one can easily see that $\delta_{\mathsf{loo}}\lesssim\zeta_{\mathsf{op}}$, where $\zeta_{\mathsf{op}}$ is defined in (\ref{claim:G_op_loss}). In addition, recall that we have already shown that $\delta_{\mathsf{op}}\lesssim\zeta_{\mathsf{op}}+\|\bm{A}^{\star}\|_{2,\infty}^{2}$. Putting these together establishes the lemma.

We now start to prove the claim (\ref{eq:claim4}). To this end, consider
an arbitrary $m\in[d_{1}]$. In view of \cite[Lemma~1]{abbe2017entrywise},
we can decompose
\begin{equation}
\left\Vert \big(\bm{U}\bm{H}-\bm{G}\bm{U}^{\star}\left(\bm{\Sigma}^{\star}\right)^{-2}\big)_{m,:}\right\Vert _{2}\lesssim\frac{1}{\sigma_{r}^{\star4}}\left\Vert \bm{G}-\bm{G}^{\star}\right\Vert \left\Vert \bm{G}_{m,:}\bm{U}^{\star}\right\Vert _{2}+\frac{1}{\sigma_{r}^{\star2}}\left\Vert \bm{G}_{m,:}\left(\bm{U}\bm{H}-\bm{U}^{\star}\right)\right\Vert _{2}.\label{eq:UH-GU-expand}
\end{equation}

\begin{itemize}
\item To bound the first term of (\ref{eq:UH-GU-expand}), we apply Lemma~\ref{lemma:G_dev_W_2_norm}, \eqref{eq:delta_row_UB} and Fact~\ref{fact:rel_err_o1} to reach
\[
\left\Vert \left(\bm{G}-\bm{G}^{\star}\right)\bm{U}^{\star}\right\Vert _{2,\infty}\lesssim\delta_{\mathsf{row}}\sqrt{\frac{\mu r}{d_{1}}}\ll\sigma_{r}^{\star2}\sqrt{\frac{\mu r}{d_{1}}}.
\]
The triangle inequality then gives
\begin{equation}
\left\Vert \bm{G}\bm{U}^{\star}\right\Vert _{2,\infty}\leq\left\Vert \left(\bm{G}-\bm{G}^{\star}\right)\bm{U}^{\star}\right\Vert _{2,\infty}+\left\Vert \bm{G}^{\star}\right\Vert \left\Vert \bm{U}^{\star}\right\Vert _{2,\infty}\lesssim\sigma_{1}^{\star2}\sqrt{\frac{\mu r}{d_{1}}}.\label{eq:B_U_true_2inf_norm}
\end{equation}
This taken collectively with the upper bound on $\left\Vert \bm{G}-\bm{G}^{\star}\right\Vert $
(cf.~(\ref{def:delta_op})) gives
\begin{align}
\frac{1}{\sigma_{r}^{\star4}}\left\Vert \bm{G}-\bm{G}^{\star}\right\Vert \left\Vert \bm{G}_{m,:}\bm{U}^{\star}\right\Vert _{2} & \leq\frac{1}{\sigma_{r}^{\star4}}\left\Vert \bm{G}-\bm{G}^{\star}\right\Vert \left\Vert \bm{G}\bm{U}^{\star}\right\Vert _{2,\infty}\lesssim\frac{\delta_{\mathsf{op}}\kappa^{2}}{\sigma_{r}^{\star2}}\sqrt{\frac{\mu r}{d_{1}}}.\label{eq:UH_BUtrue_term1}
\end{align}
\item Turning to the second term of (\ref{eq:UH-GU-expand}), we start with
the following bound
\[
\left\Vert \bm{G}_{m,:}\left(\bm{U}\bm{H}-\bm{U}^{\star}\right)\right\Vert _{2}\leq\big\|\bm{G}_{m,:}\big(\bm{U}\bm{H}-\bm{U}^{\left(m\right)}\bm{H}^{\left(m\right)}\big)\big\|_{2}+\big\|\bm{G}_{m,:}\big(\bm{U}^{\left(m\right)}\bm{H}^{\left(m\right)}-\bm{U}^{\star}\big)\big\|_{2}.
\]
Lemma~\ref{lemma:G_dev_W_2_norm} tells us that
\[
\big\|\bm{G}_{m,:}\big\|_{2}\leq\big\|\left(\bm{G}-\bm{G}^{\star}\right)_{m,:}\big\|_{2}+\left\Vert \bm{G}^{\star}\right\Vert _{2,\infty}\lesssim\sigma_{1}^{\star2}\left\Vert \bm{U}^{\star}\right\Vert _{2,\infty}\leq\sigma_{1}^{\star2}\left\Vert \bm{U}^{\star}\right\Vert =\sigma_{1}^{\star2},
\]
which makes use of the fact that $\left\Vert \bm{G}^{\star}\right\Vert _{2,\infty}\leq\|\bm{U}^{\star}\|_{2,\infty}\|\bm{\Sigma}^{\star}\|^{2}\|\bm{U}^{\star\top}\|=\sigma_{1}^{\star2}\|\bm{U}^{\star}\|_{2,\infty}$.
		This combined with (\ref{eq:claim3}) (to be established shortly in the proof of Lemma~\ref{lemma:U_U_loo_dist}) and the definitions of $\bm{H}$
and $\bm{H}^{(m)}$ gives
\begin{align}
\big\|\bm{G}_{m,:}\big(\bm{U}\bm{H}-\bm{U}^{\left(m\right)}\bm{H}^{\left(m\right)}\big)\big\|_{2} & \leq\big\|\bm{G}_{m,:}\big\|_{2}\big\|\bm{U}^{\left(m\right)}\bm{H}^{\left(m\right)}-\bm{U}\bm{H}\big\|\nonumber \\
 & =\big\|\bm{G}_{m,:}\big\|_{2}\big\|\big(\bm{U}^{\left(m\right)}\bm{U}^{\left(m\right)\top}-\bm{U}\bm{U}^{\top}\big)\bm{U}^{\star}\big\|\nonumber \\
 & \leq\big\|\bm{G}_{m,:}\big\|_{2}\big\|\bm{U}^{\left(m\right)}\bm{U}^{\left(m\right)\top}-\bm{U}\bm{U}^{\top}\big\|\nonumber \\
 & \lesssim\frac{\sigma_{1}^{\star2}}{\sigma_{r}^{\star2}}\delta_{\mathsf{loo}}\left(\left\Vert \bm{U}\bm{H}\right\Vert _{2,\infty}+\sqrt{\frac{\mu r}{d_{1}}}\right)\nonumber \\
 & \leq\delta_{\mathsf{loo}}\kappa^{2}\left(\left\Vert \bm{U}\bm{H}\right\Vert _{2,\infty}+\sqrt{\frac{\mu r}{d_{1}}}\right).\label{eq:UH_BUtrue_term2}
\end{align}
 In addition, Lemma~\ref{lemma:G_row_UH_Utrue} shows that with probability
at least $1-O\left(d^{-11}\right)$,
\begin{align}
\big\|\bm{G}_{m,:}\big(\bm{U}^{\left(m\right)}\bm{H}^{\left(m\right)}-\bm{U}^{\star}\big)\big\|_{2} & \lesssim\delta_{\mathsf{loo}}\,\big\|\bm{U}^{\left(m\right)}\bm{H}^{\left(m\right)}-\bm{U}^{\star}\big\|_{2,\infty}+\delta_{\mathsf{op}}\kappa^{2}\sqrt{\frac{\mu r}{d_{1}}}\nonumber \\
 & \leq\delta_{\mathsf{loo}}\,\big\|\bm{U}^{\left(m\right)}\bm{H}^{\left(m\right)}\big\|_{2,\infty}+\delta_{\mathsf{loo}}\left\Vert \bm{U}^{\star}\right\Vert _{2,\infty}+\delta_{\mathsf{op}}\kappa^{2}\sqrt{\frac{\mu r}{d_{1}}}\nonumber \\
 & \lesssim\delta_{\mathsf{loo}}\left\Vert \bm{U}\bm{H}\right\Vert _{2,\infty}+\big(\delta_{\mathsf{loo}}+\delta_{\mathsf{op}}\kappa^{2}\big)\sqrt{\frac{\mu r}{d_{1}}},\label{eq:UH_BUtrue_term3}
\end{align}
where the inequality (\ref{eq:UH_BUtrue_term3}) results from (\ref{eq:U_loo_2inf_UB}) (also established shortly in the proof of Lemma~\ref{lemma:U_U_loo_dist}).
\end{itemize}
Then claim immediately follows from (\ref{eq:UH_BUtrue_term1}), (\ref{eq:UH_BUtrue_term2}),
(\ref{eq:UH_BUtrue_term3}) and the union bound.

\subsection{Proof of Lemma~\ref{lemma:U_U_loo_dist}\label{subsec:pf:U_U_loo_dist}}

To begin with, recalling the definition of $\delta_{\mathsf{loo}}$ (cf.~\eqref{def:delta_loo}), we claim that
\begin{align}
\big\|\bm{U}^{\left(m\right)}\bm{U}^{\left(m\right)\top}-\bm{U}\bm{U}^{\top}\big\|_{\mathrm{F}} = \frac{1}{\sigma_{r}^{\star2}}  \underbrace{\big( \sigma_{\mathsf{col}}\sigma_{\mathsf{row}}\log d+\sigma_{\mathsf{col}}\left\Vert \bm{A}^{\star}\right\Vert \sqrt{\log d}\big)}_{=\delta_{\mathsf{loo}}} \left(\left\Vert \bm{U}\bm{H}\right\Vert _{2,\infty}+\sqrt{\frac{\mu r}{d_{1}}}\right) \label{eq:claim3}.
\end{align}
As mentioned before, one has $\delta_{\mathsf{loo}} \lesssim \zeta_{\mathsf{op}}$, from which the lemma follows immediately. The rest of the proof thus boils down
to proving the claim (\ref{eq:claim3}).

We shall apply the Davis-Kahan $\sin\bm{\Theta}$ theorem \cite{davis1970rotation}
to derive 
\begin{equation}
\big\|\bm{U}\bm{U}^{\top}-\bm{U}^{\left(m\right)}\bm{U}^{\left(m\right)\top}\big\|_{\mathrm{F}}\leq\frac{\left\Vert \big(\bm{G}-\bm{G}^{\left(m\right)}\big)\bm{U}^{\left(m\right)}\right\Vert _{\mathrm{F}}}{\lambda_{r}\left(\bm{G}^{\left(m\right)}\right)-\lambda_{r+1}\big(\bm{G}\big)}\leq\frac{2\left\Vert \big(\bm{G}-\bm{G}^{\left(m\right)}\big)\bm{U}^{\left(m\right)}\right\Vert _{\mathrm{F}}}{\sigma_{r}^{\star2}}.\label{eq:U_loo_DK}
\end{equation}
Here, the last inequality follows since, by Weyl's inequality,
\begin{align}
\lambda_{r}\big(\bm{G}^{\left(m\right)}\big)-\lambda_{r+1}\big(\bm{G}\big) & \geq\lambda_{r}\left(\bm{G}^{\star}\right)-\big\|\bm{G}^{\left(m\right)}-\bm{G}^{\star}\big\|-\lambda_{r+1}\left(\bm{G}^{\star}\right)-\left\Vert \bm{G}-\bm{G}^{\star}\right\Vert \nonumber \\
 & =\sigma_{r}^{\star2}-\big\|\bm{G}^{\left(m\right)}-\bm{G}^{\star}\big\|-\big\|\bm{G}-\bm{G}^{\star}\big\|\nonumber \\
 & \geq\sigma_{r}^{\star2}/2,\label{eq:U_loo_DK_deno}
\end{align}
where the last line follows since $\big\|\bm{G}^{\left(m\right)}-\bm{G}^{\star}\big\|\lesssim\delta_{\mathsf{op}}\ll\sigma_{r}^{\star2}$
--- an immediate consequence of Lemma~\ref{lemma:G_loo_op_diff}
and Condition~(\ref{cond:eig_o1}). As a side note, the fact $\big\|\bm{G}^{\left(m\right)}-\bm{G}^{\star}\big\|\ll\sigma_{r}^{\star2}$
also implies (according to \cite[Lemma~3]{abbe2017entrywise})
\begin{equation}
\big\|\big(\bm{H}^{\left(m\right)}\big)^{-1}\big\|\lesssim1,\label{eq:H_inverse_op_UB}
\end{equation}
which will be useful later.

It remains to control the term $\left\Vert \big(\bm{G}-\bm{G}^{\left(m\right)}\big)\bm{U}^{\left(m\right)}\right\Vert _{\mathrm{F}}$
in (\ref{eq:U_loo_DK}). Recall the definitions of $\bm{G}$ and $\bm{G}^{\left(m\right)}$
in \eqref{def:G} and (\ref{def:G_loo}), respectively. It is straightforward
to see that $\bm{G}-\bm{G}^{\left(m\right)}$ is a rank-$2$ symmetric
matrix with nonzero entries located only in the $m$-th row and the
$m$-th column. Simple calculation reveals that
\begin{align}
\big(\bm{G}-\bm{G}^{\left(m\right)}\big)_{m,i} & =\big\langle\bm{E}_{m,:},\bm{A}_{i,:}^{\mathsf{s}}\big\rangle,\quad i\neq m;\label{eq:G_G_loo_entry}\\
\big(\bm{G}-\bm{G}^{\left(m\right)}\big)_{m,m} & =0. \label{eq:G_G_loo_entry_m}
\end{align}
We can then derive
\begin{align}
\big\|\big(\bm{G}-\bm{G}^{\left(m\right)}\big)\bm{U}^{\left(m\right)}\big\|_{\mathrm{F}} & =\left\Vert \big(\bm{G}-\bm{G}^{\left(m\right)}\big)\bm{U}^{\left(m\right)}\bm{H}^{\left(m\right)}\big(\bm{H}^{\left(m\right)}\big)^{-1}\right\Vert _{\mathrm{F}}\leq\big\|\big(\bm{G}-\bm{G}^{\left(m\right)}\big)\bm{U}^{\left(m\right)}\bm{H}^{\left(m\right)}\big\|_{\mathrm{F}}\big\|\big(\bm{H}^{\left(m\right)}\big)^{-1}\big\|\nonumber \\
 & \lesssim\big\|\big(\bm{G}-\bm{G}^{\left(m\right)}\big)\bm{U}^{\left(m\right)}\bm{H}^{\left(m\right)}\big\|_{\mathrm{F}}\nonumber \\
 & \leq\big\|\mathcal{P}_{m,:}\big(\bm{G}-\bm{G}^{\left(m\right)}\big)\bm{U}^{\left(m\right)}\bm{H}^{\left(m\right)}\big\|_{\mathrm{F}}+\big\|\mathcal{P}_{:,m}\big(\bm{G}-\bm{G}^{\left(m\right)}\big)\bm{U}^{\left(m\right)}\bm{H}^{\left(m\right)}\big\|_{\mathrm{F}},\label{eq:G-Gm-diff-Um}
\end{align}
where the second line arises due to (\ref{eq:H_inverse_op_UB}), and
$\mathcal{P}_{m,:}$ (resp.~$\mathcal{P}_{:,m}$) is the projection
onto the subspace of matrix supported on $\left\{ m\right\} \times\left[d_{2}\right]$
(resp.~$\left[d_{1}\right]\times\left\{ m\right\} $).

To bound the first term of (\ref{eq:G-Gm-diff-Um}), we make the observation
(using (\ref{eq:G_G_loo_entry}) and \eqref{eq:G_G_loo_entry_m}) that
\[
\big\|\mathcal{P}_{m,:}\big(\bm{G}-\bm{G}^{\left(m\right)}\big)\bm{U}^{\left(m\right)}\bm{H}^{\left(m\right)}\big\|_{\mathrm{F}}=\big\|\left(\bm{A}^{\mathsf{s}}-\bm{A}^{\star}\right)_{m,:}\big[\mathcal{\mathcal{P}}_{-m,:}\left(\bm{A}^{\mathsf{s}}\right)\big]^{\top}\bm{U}^{\left(m\right)}\bm{H}^{\left(m\right)}\big\|_{2}.
\]
Controlling this quantity requires the assistance of leave-two-out matrices. Here, we only state our bound:  with probability at least $1-O\left(d^{-11}\right)$, one has
\begin{align}
 & \big\|\mathcal{P}_{m,:}\big(\bm{G}-\bm{G}^{\left(m\right)}\big)\bm{U}^{\left(m\right)}\bm{H}^{\left(m\right)}\big\|_{\mathrm{F}}\nonumber \\
 & \qquad\lesssim\sigma_{\mathsf{col}}\big(\sigma_{\mathsf{row}}\log d+\left\Vert \bm{A}^{\star}\right\Vert \sqrt{\log d}\big)\left(\big\|\bm{U}^{\left(m\right)}\bm{H}^{\left(m\right)}\big\|_{\mathrm{2,\infty}}+\sqrt{\frac{\mu r}{d_{1}}}\right).\label{eq:B_B_loo_row_U}
\end{align}
This bound will be restated in Lemma~\ref{lemma:A_dev_row_A_U_2_norm} and established in Appendix \ref{sec:proof-auxiliary-lemmas}. 
Turning to the second term of (\ref{eq:G-Gm-diff-Um}), we apply Lemma~\ref{lemma:G_loo_op_diff} (also established in Appendix \ref{sec:proof-auxiliary-lemmas})
to obtain 
\begin{align}
\big\|\mathcal{P}_{:,m}\big(\bm{G}-\bm{G}^{\left(m\right)}\big) & \bm{U}^{\left(m\right)}\bm{H}^{\left(m\right)}\big\|_{\mathrm{F}}=\Big(\sum\nolimits _{1\leq i \leq d_1}\big(\bm{G}-\bm{G}^{\left(m\right)}\big)_{i,m}^{2}\Big)^{1/2}\left\Vert \big(\bm{U}^{\left(m\right)}\bm{H}^{\left(m\right)}\big)_{m,:}\right\Vert _{2}\nonumber \\
 & \leq\big\|\bm{G}-\bm{G}^{\left(m\right)}\big\|\big\|\bm{U}^{\left(m\right)}\bm{H}^{\left(m\right)}\big\|_{2,\infty}\nonumber \\
 & \lesssim\big(\sigma_{\mathsf{col}}\big(\sigma_{\mathsf{row}}+\left\Vert \bm{A}^{\star}\right\Vert _{2,\infty}\big)\sqrt{\log d}\big)\big\|\bm{U}^{\left(m\right)}\bm{H}^{\left(m\right)}\big\|_{2,\infty}\label{eq:B_B_loo_col_U}
\end{align}
with probability at least $1-O\left(d^{-11}\right)$. Hence, we can
combine (\ref{eq:B_B_loo_row_U}), (\ref{eq:B_B_loo_col_U})
and (\ref{eq:U_loo_DK}) to yield
\begin{align}
 & \big\|\bm{U}^{\left(m\right)}\bm{U}^{\left(m\right)\top}-\bm{U}\bm{U}^{\top}\big\|_{\mathrm{F}}\nonumber \\
 & \qquad\lesssim\left(\sigma_{\mathsf{col}}\sigma_{\mathsf{row}}\log d+\sigma_{\mathsf{col}}\left\Vert \bm{A}^{\star}\right\Vert \sqrt{\log d}\right)\left(\big\|\bm{U}^{\left(m\right)}\bm{H}^{\left(m\right)}\big\|_{\mathrm{2,\infty}}+\sqrt{\frac{\mu r}{d_{1}}}\right)\nonumber \\
 & \qquad=\frac{\delta_{\mathsf{loo}}}{\sigma_{r}^{\star2}}\left(\big\|\bm{U}^{\left(m\right)}\bm{H}^{\left(m\right)}\big\|_{\mathrm{2,\infty}}+\sqrt{\frac{\mu r}{d_{1}}}\right),\label{eq:U_loo_dist_temp}
\end{align}
where $\delta_{\mathsf{loo}}$ is defined in (\ref{def:delta_loo}).
As a result, the proof is complete as long as we can show that
\begin{equation}
\big\|\bm{U}^{\left(m\right)}\bm{H}^{\left(m\right)}\big\|_{2,\infty}\lesssim\left\Vert \bm{U}\bm{H}\right\Vert _{2,\infty}+\sqrt{\frac{\mu r}{d_{1}}}.\label{eq:U_loo_2inf_UB}
\end{equation}

To finish up, it remains to justify this inequality (\ref{eq:U_loo_2inf_UB}).
To this end, from the definitions of $\bm{H}^{(m)}$ and $\bm{H}$
we have 
\begin{align}
\big\|\bm{U}^{\left(m\right)}\bm{H}^{\left(m\right)}\big\|_{2,\infty} & \leq\big\|\bm{U}^{\left(m\right)}\bm{H}^{\left(m\right)}-\bm{U}\bm{H}\big\|_{2,\infty}+\left\Vert \bm{U}\bm{H}\right\Vert _{2,\infty}\nonumber \\
 & =\big\|\big(\bm{U}^{\left(m\right)}\bm{U}^{\left(m\right)\top}-\bm{U}\bm{U}^{\top}\big)\bm{U}^{\star}\big\|_{2,\infty}+\left\Vert \bm{U}\bm{H}\right\Vert _{2,\infty}\nonumber \\
 & \leq\big\|\bm{U}^{\left(m\right)}\bm{U}^{\left(m\right)\top}-\bm{U}\bm{U}^{\top}\big\|_{\mathrm{F}}\|\bm{U}^{\star}\|+\left\Vert \bm{U}\bm{H}\right\Vert _{2,\infty}\nonumber \\
 & =\big\|\bm{U}^{\left(m\right)}\bm{U}^{\left(m\right)\top}-\bm{U}\bm{U}^{\top}\big\|_{\mathrm{F}}+\left\Vert \bm{U}\bm{H}\right\Vert _{2,\infty}.\label{eq:UmHm-UB1}
\end{align}
Under the condition (\ref{cond:eig_o1}), it is easily seen that $\delta_{\mathsf{loo}}\ll\sigma_{r}^{\star2}$.
This together with (\ref{eq:U_loo_dist_temp}) gives
\begin{equation}
\big\|\bm{U}^{\left(m\right)}\bm{U}^{\left(m\right)\top}-\bm{U}\bm{U}^{\top}\big\|_{\mathrm{F}}\leq0.5\big\|\bm{U}^{\left(m\right)}\bm{H}^{\left(m\right)}\big\|_{2,\infty}+0.5\sqrt{\frac{\mu r}{d_{1}}},\label{eq:U_loo_dist_o1}
\end{equation}
which combined with (\ref{eq:UmHm-UB1}) yields
\begin{align}
\big\|\bm{U}^{\left(m\right)}\bm{H}^{\left(m\right)}\big\|_{2,\infty} & \leq2\left\Vert \bm{U}\bm{H}\right\Vert _{2,\infty}+\sqrt{\frac{\mu r}{d_{1}}}\label{eq:UmHm-UB1-1}
\end{align}
as claimed.

\subsection{Proof of Lemma~\ref{lemma:noise_cond}\label{subsec:pf:noise_cond}}

We start with (\ref{cond:B_UB}). In view of the definitions of $B,\sigma_{\infty},\sigma_{\mathsf{row}}$
and $\sigma_{\mathsf{col}}$ in (\ref{eq:noise_value}), we have with
probability at least $1-O\left(d^{-12}\right)$,
\begin{align*}
B^{2} & =\frac{\mu r\sigma_{1}^{\star2}}{d_{1}d_{2}p^{2}}+\frac{\sigma^{2}\min\left\{ \sqrt{d_{1}d_{2}},d_{2}\right\} }{p\log d},\\
\sigma_{\mathsf{row}}^{2} & =\frac{\mu r\sigma_{1}^{\star2}}{d_{1}p}+\frac{\sigma^{2}d_{2}}{p},\\
\sigma_{\mathsf{row}}\sigma_{\mathsf{col}} & =\frac{\mu r\sigma_{1}^{\star2}}{\sqrt{d_{1}d_{2}}\,p}+\frac{2\sigma\sigma_{1}^{\star}\sqrt{\mu r}}{p}+\frac{\sigma^{2}\sqrt{d_{1}d_{2}}}{p}\asymp\frac{\mu r\sigma_{1}^{\star2}}{\sqrt{d_{1}d_{2}}\,p}+\frac{\sigma^{2}\sqrt{d_{1}d_{2}}}{p},
\end{align*}
where we have used the AM-GM inequality (i.e.~$2\sigma\sigma_{1}^{\star}\sqrt{\mu r}\leq\frac{\mu r\sigma_{1}^{\star2}}{\sqrt{d_{1}d_{2}}}+\sigma^{2}\sqrt{d_{1}d_{2}}$)
in the last line. Therefore,
\begin{align*}
B^{2}\log d\lesssim\sigma_{\mathsf{row}}\sigma_{\mathsf{col}} & \qquad\text{and}\qquad B^{2}\log d\lesssim\sigma_{\mathsf{row}}^{2}
\end{align*}
 hold as long as $p\gtrsim\left(d_{1}d_{2}\right)^{-1/2}\log d$ and $p \gtrsim d_2^{-1}\log d$.

The next step is to establish (\ref{cond:col_B_rel}). Let us consider
the first inequality. By (\ref{eq:noise_value}), it is easily seen
that
\begin{align*}
\sigma_{\infty}^{2} & =\frac{\mu r\sigma_{1}^{\star2}}{d_{1}d_{2}p}+\frac{\sigma^{2}}{p};\\
B\log d\left\Vert \bm{A}^{\star}\right\Vert \sqrt{\frac{\mu r}{d_{2}}} & =\frac{\mu r\sigma_{1}^{\star2}\log d}{\sqrt{d_{1}}\,d_{2}p}+\sigma\sigma_{1}^{\star}\min\left\{ \sqrt[4]{d_{1}d_{2}},\,\sqrt{d_{2}}\right\} \sqrt{\frac{\mu r\log d}{d_{2}p}}.
\end{align*}
As a consequence, the first inequality holds as long as $\frac{\sigma}{\sigma_{1}^{\star}}\lesssim\min\left\{ \sqrt[4]{d_{1}d_{2}},\,\sqrt{d_{2}}\right\} \sqrt{\frac{\mu rp\log d}{d_{2}}}$,
which is satisfied by our noise assumption that $\frac{\sigma}{\sigma_{r}^{\star}}\ll\frac{\sqrt{p}}{\kappa\sqrt[4]{d_{1}d_{2}}\,\sqrt{\log d}}$.
To show the second inequality, we note that
\begin{align*}
B\sqrt{\log d}\left\Vert \bm{A}^{\star}\right\Vert \sqrt{\frac{\mu r}{d_{2}}} & \leq\left\{ \frac{\sigma_{1}^{\star2}\sqrt{\mu r\log d}}{d_{2}p}+\sigma\sigma_{1}^{\star}\sqrt{\frac{d_{1}}{p}}\right\} \sqrt{\frac{\mu r}{d_{1}}}.
\end{align*}
Recognizing that $\sigma_{\mathsf{col}}\left\Vert \bm{A}^{\star}\right\Vert =\sigma_{1}^{\star2}\sqrt{\frac{\mu r}{d_{2}p}}+\sigma\sigma_{1}^{\star}\sqrt{\frac{d_{1}}{p}}$,
we prove the second inequality provided that $p\gtrsim d_{2}^{-1}\log d$.

When it comes to (\ref{cond:row_B_rel}): by virtue of Lemma~\ref{lemma:incoh} and
(\ref{eq:noise_value}), one has
\[
\frac{B\log^{3/2}d\left\Vert \bm{A}^{\star}\right\Vert _{\infty}}{\left\Vert \bm{A}^{\star}\right\Vert ^{2}}\leq\frac{\mu r\log^{3/2}d}{d_{1}d_{2}p}+\frac{\sigma\sqrt{\mu r}\log d}{\sigma_{1}^{\star}\sqrt[4]{d_{1}d_{2}}\,\sqrt{p}}.
\]
Consequently, (\ref{cond:row_B_rel}) holds provided $p\gtrsim\frac{\sqrt{\mu r}\,\log^{3/2}d}{\sqrt{d_{1}}\,d_{2}}$
and $\frac{\sigma}{\sigma_{1}^{\star}}\lesssim\sqrt[4]{\frac{d_{2}}{d_{1}}}\frac{\sqrt{p}}{\log d}$,
which holds under our assumptions that $p\gg\frac{\mu\kappa^{4}r\log^{2}d}{\sqrt{d_{1}d_{2}}}$
and $\frac{\sigma}{\sigma_{r}^{\star}}\ll\frac{\sqrt{p}}{\kappa^{3}\sqrt[4]{d_{1}d_{2}}\,\sqrt{\log d}}$.

Finally, using the definitions in (\ref{eq:noise_value}) and Lemma~\ref{lemma:incoh},
one obtains the following bounds:
\begin{align*}
\sigma_{\mathsf{row}}\sigma_{\mathsf{col}}\log d & \asymp\frac{\mu r\sigma_{1}^{\star2}\log d}{\sqrt{d_{1}d_{2}}\,p}+\frac{\sigma^{2}\sqrt{d_{1}d_{2}}\log d}{p},\\
\sigma_{\mathsf{col}}\sqrt{\log d}\left\Vert \bm{A}^{\star}\right\Vert  & =\sigma_{1}^{\star2}\sqrt{\frac{\mu r\log d}{d_{2}p}}+\sigma\sigma_{1}^{\star}\sqrt{\frac{d_{1}\log d}{p}};\\
\left\Vert \bm{A}^{\star}\right\Vert _{2,\infty}^{2} & \lesssim\frac{\mu_1 r\sigma_{1}^{\star2}}{d_{1}};\\
\sigma_{\mathsf{row}}\sqrt{\frac{d_{1}\log d}{\mu r}}\left\Vert \bm{A}^{\star\top}\right\Vert _{2,\infty} & \leq\sigma_{1}^{\star2}\sqrt{\frac{\mu r\log d}{d_{2}p}}+\sigma\sigma_{1}^{\star}\sqrt{\frac{d_{1}\log d}{p}};\\
B\left\Vert \bm{A}^{\star}\right\Vert _{\infty} \log d & \leq\frac{\mu r\sigma_{1}^{\star2}\log d}{d_{1}d_{2}p}+\frac{\sigma\sigma_{1}^{\star}\sqrt{\mu r\log d}}{\sqrt[4]{d_{1}d_{2}}\,\sqrt{p}}.
\end{align*}
Therefore, it is easy to verify (\ref{cond:eig_o1}) under the
assumptions of Theorem \ref{thm:U_loss}.

\section{Proofs for auxiliary lemmas}
\label{sec:proof-auxiliary-lemmas}

This section establishes several useful technical lemmas useful for
proving our main theorems. Throughout this section, we shall frequently
use the quantities $B,\sigma_{\infty},\sigma_{\mathsf{row}}$ and
$\sigma_{\mathsf{col}}$ defined in (\ref{eq:noise_value}). In fact,
it suffices to bear in mind the following bounds
\begin{equation}
\begin{array}{cc}
 & B\geq\max_{i,j}|E_{i,j}|;\qquad\qquad\sigma_{\infty}\geq\max_{i,j}\sqrt{\mathbb{E}[E_{i,j}^{2}]};\\
 & \sigma_{\mathsf{row}}\geq\max_{i}\sqrt{\sum\nolimits _{j}\mathbb{E}[E_{i,j}^{2}]};\qquad\sigma_{\mathsf{col}}\geq\max_{j}\sqrt{\sum\nolimits _{i}\mathbb{E}[E_{i,j}^{2}]}.
\end{array}\label{eq:B-sigma-bound}
\end{equation}

\subsection{Auxiliary technical lemmas}

We first gather all technical lemmas to be established in this section,
and begin with the following lemma, which shows that the leave-one-out
sequence $\bm{G}^{\left(m\right)}$ is close to $\bm{G}$ and $\bm{G}^{\star}$
when measured by the spectral norm.

\begin{lemma}
\label{lemma:G_loo_op_diff}
Instate the assumptions of Theorem \ref{thm:U_loss}. With probability at least $1-O\left(d^{-11}\right)$,
one has
\begin{align}
\big\|\bm{G}^{\left(m\right)}-\bm{G}\big\| & \lesssim\sigma_{\mathsf{row}}\big(\sigma_{\mathsf{col}}+\left\Vert \bm{A}^{\star\top}\right\Vert _{2,\infty}\big)\sqrt{\log d},\label{claim:G_G_loo_diff_op_norm}\\
\big\|\bm{G}^{\left(m\right)}-\bm{G}^{\star}\big\| & \lesssim\delta_{\mathsf{op}}=\left(\sigma_{\mathsf{row}}+\sigma_{\mathsf{col}}\right)\big(\sigma_{\mathsf{col}}+\left\Vert \bm{A}^{\star\top}\right\Vert _{2,\infty}\big)\log d+\sigma_{\mathsf{col}}\sqrt{\log d}\left\Vert \bm{A}^{\star}\right\Vert +\left\Vert \bm{A}^{\star}\right\Vert _{2,\infty}^{2},\label{claim:G_loo_op_loss}
\end{align}
where $\sigma_{\mathsf{row}}$ and $\sigma_{\mathsf{col}}$ are defined
in (\ref{eq:noise_value}).\end{lemma}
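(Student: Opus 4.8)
The plan is to take advantage of the fact that $\bm{G}-\bm{G}^{(m)}$ is essentially a rank-two perturbation concentrated in the $m$-th row and column. Recall that in the rescaled notation one has $\bm{A}^{\mathsf{s},(m)}=\bm{A}^{\star}+\mathcal{P}_{-m,:}(\bm{E})$, which agrees with $\bm{A}^{\mathsf{s}}=\bm{A}^{\star}+\bm{E}$ everywhere except the $m$-th row, so $\bm{A}^{\mathsf{s}}=\bm{A}^{\mathsf{s},(m)}+\bm{e}_{m}\bm{E}_{m,:}$. Expanding the Gram matrix and applying $\mathcal{P}_{\mathsf{off}\text{-}\mathsf{diag}}$ annihilates the purely diagonal term $\bm{e}_{m}\bm{E}_{m,:}\bm{E}_{m,:}^{\top}\bm{e}_{m}^{\top}$, leaving
\[
\bm{G}-\bm{G}^{(m)}=\mathcal{P}_{\mathsf{off}\text{-}\mathsf{diag}}\big(\bm{e}_{m}\bm{v}^{\top}+\bm{v}\bm{e}_{m}^{\top}\big),\qquad\bm{v}:=\bm{A}^{\mathsf{s},(m)}\bm{E}_{m,:}^{\top}\in\mathbb{R}^{d_{1}}.
\]
Since a matrix of this form has spectral norm $\lesssim\|\bm{v}\|_{2}$, the whole task reduces to bounding $\|\bm{v}\|_{2}$, which makes the argument an $\ell_{2}$-analogue of (a sub-collection of the terms handled in) the proof of Lemma~\ref{lemma:G_dev_W_2_norm}.

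Next I would split $\bm{v}=\bm{A}^{\star}\bm{E}_{m,:}^{\top}+\mathcal{P}_{-m,:}(\bm{E})\bm{E}_{m,:}^{\top}$ and control the two pieces via the matrix Bernstein inequality. For the signal piece, $\bm{A}^{\star}\bm{E}_{m,:}^{\top}=\sum_{j}E_{m,j}\bm{A}^{\star}_{:,j}$ is a sum of independent zero-mean vectors with $\max_{j}\|E_{m,j}\bm{A}^{\star}_{:,j}\|_{2}\leq B\|\bm{A}^{\star\top}\|_{2,\infty}$ and variance proxy $\sum_{j}\mathbb{E}[E_{m,j}^{2}]\|\bm{A}^{\star}_{:,j}\|_{2}^{2}\leq\sigma_{\mathsf{row}}^{2}\|\bm{A}^{\star\top}\|_{2,\infty}^{2}$, so Bernstein gives $\|\bm{A}^{\star}\bm{E}_{m,:}^{\top}\|_{2}\lesssim\big(B\log d+\sigma_{\mathsf{row}}\sqrt{\log d}\big)\|\bm{A}^{\star\top}\|_{2,\infty}\asymp\sigma_{\mathsf{row}}\sqrt{\log d}\,\|\bm{A}^{\star\top}\|_{2,\infty}$ after invoking $B\sqrt{\log d}\lesssim\sigma_{\mathsf{row}}$ from \eqref{cond:B_UB}. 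For the noise piece, $\mathcal{P}_{-m,:}(\bm{E})\bm{E}_{m,:}^{\top}=\sum_{i\neq m,\,j}E_{i,j}E_{m,j}\bm{e}_{i}$; here I condition on the row $\bm{E}_{m,:}$ (independent of $\{\bm{E}_{i,:}\}_{i\neq m}$), so that $L\leq B^{2}$ and the variance proxy is $\sum_{j}E_{m,j}^{2}\sum_{i\neq m}\mathbb{E}[E_{i,j}^{2}]\leq\sigma_{\mathsf{col}}^{2}\|\bm{E}_{m,:}\|_{2}^{2}\lesssim\sigma_{\mathsf{col}}^{2}\sigma_{\mathsf{row}}^{2}$, where the last step uses the high-probability bound $\|\bm{E}_{m,:}\|_{2}^{2}\lesssim\sigma_{\mathsf{row}}^{2}+B^{2}\log d\asymp\sigma_{\mathsf{row}}^{2}$ supplied by Lemma~\ref{lemma:row_col_2_norm}; Bernstein then yields a bound $\lesssim B^{2}\log d+\sigma_{\mathsf{col}}\sigma_{\mathsf{row}}\sqrt{\log d}\asymp\sigma_{\mathsf{col}}\sigma_{\mathsf{row}}\sqrt{\log d}$ using $B^{2}\log d\lesssim\sigma_{\mathsf{row}}\sigma_{\mathsf{col}}$ (again \eqref{cond:B_UB}). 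Adding the two estimates gives $\|\bm{v}\|_{2}\lesssim\sigma_{\mathsf{row}}\big(\sigma_{\mathsf{col}}+\|\bm{A}^{\star\top}\|_{2,\infty}\big)\sqrt{\log d}$, i.e.\ \eqref{claim:G_G_loo_diff_op_norm}; taking each failure probability to be $O(d^{-12})$ and union-bounding over $m\in[d_{1}]$ makes the bound hold for all $m$ simultaneously with probability $1-O(d^{-11})$.

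Finally, \eqref{claim:G_loo_op_loss} follows by the triangle inequality $\|\bm{G}^{(m)}-\bm{G}^{\star}\|\leq\|\bm{G}^{(m)}-\bm{G}\|+\|\bm{G}-\bm{G}^{\star}\|$: the second term is $\lesssim\delta_{\mathsf{op}}$ by Lemma~\ref{lemma:G_op_loss} (cf.\ \eqref{def:delta_op}), while the first term, being $\lesssim\sigma_{\mathsf{row}}(\sigma_{\mathsf{col}}+\|\bm{A}^{\star\top}\|_{2,\infty})\sqrt{\log d}$, is trivially dominated by $(\sigma_{\mathsf{row}}+\sigma_{\mathsf{col}})(\sigma_{\mathsf{col}}+\|\bm{A}^{\star\top}\|_{2,\infty})\log d\leq\delta_{\mathsf{op}}$ for $d$ large. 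The only genuinely delicate point is the bookkeeping around the conditioning in the $\langle\bm{E}_{m,:},\bm{E}_{i,:}\rangle$ term: one must first fix, once and for all, the event of Lemma~\ref{lemma:row_col_2_norm} that controls $\|\bm{E}_{m,:}\|_{2}$ for every row, and only then run the conditional Bernstein bound over the (independent) remaining rows for each $m$ before union-bounding; beyond that, everything is routine matrix-Bernstein computation driven by the inequalities collected in Lemma~\ref{lemma:noise_cond}.
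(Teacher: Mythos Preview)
Your proposal is correct and follows essentially the same route as the paper: both exploit that $\bm{G}-\bm{G}^{(m)}$ is supported on the $m$-th row/column, reduce \eqref{claim:G_G_loo_diff_op_norm} to bounding $\|\bm{E}_{m,:}[\mathcal{P}_{-m,:}(\bm{A}^{\mathsf{s}})]^{\top}\|_2$ (equivalently your $\|\bm{v}\|_2$), split into the $\bm{A}^{\star}$ and $\mathcal{P}_{-m,:}(\bm{E})$ pieces, and invoke the conditional matrix-Bernstein bounds already computed as \eqref{eq:B_dev_W_2_norm_term1} and \eqref{eq:B_dev_W_2_norm_term3} in the proof of Lemma~\ref{lemma:G_dev_W_2_norm}; then \eqref{claim:G_loo_op_loss} follows by the triangle inequality with Lemma~\ref{lemma:G_op_loss}. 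The only cosmetic difference is that the paper bounds the spectral norm via $\|\cdot\|_{\mathrm{F}}\lesssim\|(\bm{G}-\bm{G}^{(m)})_{m,:}\|_2$ rather than via the explicit rank-two representation $\mathcal{P}_{\mathsf{off}\text{-}\mathsf{diag}}(\bm{e}_m\bm{v}^{\top}+\bm{v}\bm{e}_m^{\top})$ you wrote down.
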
\begin{proof}See Appendix~\ref{subsec:pf:G_loo_op_diff}.\end{proof}

Similar to $\bm{H}$ (defined in (\ref{eq:defn-H})), we also introduce
the following matrices for each $\left(m,l\right)\in\left[d_{1}\right]\times\left[d_{2}\right]$:
\begin{subequations}
\begin{align}
\bm{H}^{\left(m\right)} & :=\bm{U}^{\left(m\right)\top}\bm{U}^{\star},\label{def:H_loo}\\
\bm{H}^{\left(m,l\right)} & :=\bm{U}^{\left(m,l\right)\top}\bm{U}^{\star},\label{def:H_loo_col}
\end{align}
\end{subequations}where $\bm{U}^{\left(m\right)}$ and $\bm{U}^{\left(m,l\right)}$
are defined in Algorithm~\ref{alg:init_loo} and Algorithm \ref{alg:init_loo_col},
respectively.

Lemma~\ref{lemma:A_dev_row_A_U_2_norm} serves a crucial step towards
proving Lemma~\ref{lemma:U_U_loo_dist}.

\begin{lemma}
\label{lemma:A_dev_row_A_U_2_norm}
Instate the assumptions
of Theorem \ref{thm:U_loss}. For any fixed $1\leq m \leq d_{1}$, with probability
at least $1-O(d^{-11})$, one has
\begin{align*}
 & \big\|\left(\bm{A}^{\mathsf{s}}-\bm{A}^{\star}\right)_{m,:}\mathcal{\mathcal{P}}_{-m,:}\left(\bm{A}^{\mathsf{s}}\right)^{\top}\bm{U}^{\left(m\right)}\bm{H}^{\left(m\right)}\big\|_{2}\\
 & \qquad\lesssim\sigma_{\mathsf{col}}\big(\sigma_{\mathsf{row}}\log d+\left\Vert \bm{A}^{\star}\right\Vert \sqrt{\log d}\big)\left(\big\|\bm{U}^{\left(m\right)}\bm{H}^{\left(m\right)}\big\|_{\mathrm{2,\infty}}+\sqrt{\frac{\mu r}{d_{1}}}\right),
\end{align*}
where $\bm{A}^{\mathsf{s}}$ is defined in (\ref{eq:A-E}), and $\sigma_{\mathsf{row}}$
and $\sigma_{\mathsf{col}}$ are both defined in (\ref{eq:noise_value}).\end{lemma}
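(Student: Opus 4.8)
\textbf{Proof proposal for Lemma~\ref{lemma:A_dev_row_A_U_2_norm}.}

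The plan is to bound the quantity by first introducing a leave-two-out decoupling, and then invoking standard concentration once all dependencies have been severed. The core difficulty is that $\bm{U}^{(m)}$ is built from the entire matrix $\bm{A}$ except its $m$-th row, so it still depends on every column of $\bm{A}$; meanwhile $(\bm{A}^{\mathsf{s}}-\bm{A}^{\star})_{m,:} = \bm{E}_{m,:}$ is independent of $\bm{U}^{(m)}$, but $\mathcal{P}_{-m,:}(\bm{A}^{\mathsf{s}})$ shares randomness with $\bm{U}^{(m)}$. First I would write
\begin{align}
\big\|\bm{E}_{m,:}\,\mathcal{P}_{-m,:}\left(\bm{A}^{\mathsf{s}}\right)^{\top}\bm{U}^{\left(m\right)}\bm{H}^{\left(m\right)}\big\|_{2}
&\le\big\|\bm{E}_{m,:}\big(\mathcal{P}_{-m,:}\left(\bm{A}^{\star}\right)\big)^{\top}\bm{U}^{\left(m\right)}\bm{H}^{\left(m\right)}\big\|_{2}\nonumber\\
&\quad+\big\|\bm{E}_{m,:}\big(\mathcal{P}_{-m,:}\left(\bm{E}\right)\big)^{\top}\bm{U}^{\left(m\right)}\bm{H}^{\left(m\right)}\big\|_{2},\nonumber
\end{align}
so that the two remaining pieces can be attacked separately. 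In the first (the ``signal'' cross term), $\bm{A}^{\star}$ is deterministic and $\bm{U}^{(m)}$ is independent of $\bm{E}_{m,:}$, so conditioning on $\bm{U}^{(m)}$ reduces it to a linear form $\sum_j E_{m,j}(\bm{A}^{\star\top}\bm{U}^{(m)}\bm{H}^{(m)})_{j,:}$; the matrix Bernstein (vector concentration) inequality, together with $\sqrt{\sum_j\mathbb{E}[E_{m,j}^2]}\le\sigma_{\mathsf{row}}$, $\max_j|E_{m,j}|\le B$, and $\|\bm{A}^{\star\top}\bm{U}^{(m)}\bm{H}^{(m)}\|_{2,\infty}\le\|\bm{A}^{\star}\|\,\|\bm{U}^{(m)}\bm{H}^{(m)}\|_{2,\infty}$, yields a bound of order $\sigma_{\mathsf{col}}\|\bm{A}^{\star}\|\sqrt{\log d}\,\|\bm{U}^{(m)}\bm{H}^{(m)}\|_{2,\infty}$ after using Lemma~\ref{lemma:noise_cond}; here I also use $\|\bm{A}^{\star}\| = \sigma_1^{\star}$ and the spectral relations to absorb constants, and the extra $\sqrt{\mu r/d_1}$ slack can be inserted freely.

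The genuinely delicate piece is the second (``noise-noise'') term $\bm{E}_{m,:}\big(\mathcal{P}_{-m,:}(\bm{E})\big)^{\top}\bm{U}^{(m)}\bm{H}^{(m)}=\sum_{i\ne m}\langle\bm{E}_{m,:},\bm{E}_{i,:}\rangle(\bm{U}^{(m)}\bm{H}^{(m)})_{i,:}$. Even conditional on $\bm{E}_{m,:}$, the rows $\bm{E}_{i,:}$ for $i\ne m$ are not independent of $\bm{U}^{(m)}$, since $\bm{U}^{(m)}$ sees all of them. This is exactly where the leave-two-out sequences enter: for each column index $l$ I would further replace $\bm{U}^{(m)}$ by $\bm{U}^{(m,l)}$ (which drops the $l$-th column as well), so that the $l$-th coordinate of each $\bm{E}_{i,:}$ becomes independent of $\bm{U}^{(m,l)}$. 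Concretely, I would split $\langle\bm{E}_{m,:},\bm{E}_{i,:}\rangle=\sum_l E_{m,l}E_{i,l}$, and for each $l$ write $\bm{U}^{(m)}\bm{H}^{(m)}=\bm{U}^{(m,l)}\bm{H}^{(m,l)}+(\bm{U}^{(m)}\bm{H}^{(m)}-\bm{U}^{(m,l)}\bm{H}^{(m,l)})$. The main term $\sum_{i\ne m}\sum_l E_{m,l}E_{i,l}(\bm{U}^{(m,l)}\bm{H}^{(m,l)})_{i,:}$ is now — after conditioning on $\bm{E}_{m,:}$ and the $l$-th columns being swapped out appropriately — a sum of conditionally independent mean-zero vectors, to which matrix Bernstein applies, giving a bound of order $\sigma_{\mathsf{col}}\sigma_{\mathsf{row}}\log d\,(\|\bm{U}^{(m)}\bm{H}^{(m)}\|_{2,\infty}+\sqrt{\mu r/d_1})$; I anticipate needing Lemma~\ref{lemma:row_col_2_norm} to control $\sum_l E_{m,l}^2$ and $B^2\log d\lesssim\sigma_{\mathsf{row}}\sigma_{\mathsf{col}}$ from \eqref{cond:B_UB}. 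The residual term requires a bound on $\|\bm{U}^{(m)}\bm{H}^{(m)}-\bm{U}^{(m,l)}\bm{H}^{(m,l)}\|$, which in turn follows from a Davis--Kahan argument comparing $\bm{G}^{(m)}$ and $\bm{G}^{(m,l)}$ (entirely analogous to the proof of Lemma~\ref{lemma:U_U_loo_dist}, since $\bm{G}^{(m)}-\bm{G}^{(m,l)}$ is again low-rank and supported on a single row/column), showing it is of order $(\sigma_{\mathsf{row}}+\|\bm{A}^{\star\top}\|_{2,\infty})\sqrt{\log d}\,/\,\sigma_r^{\star 2}\cdot(\text{something small})$ — small enough that, when multiplied by the trivial bound $\sum_{i,l}|E_{m,l}E_{i,l}|\lesssim d\cdot(\text{column norms})$, it is dominated by the main term.

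I expect the main obstacle to be the bookkeeping in the second term: making precise the conditional-independence structure after the double leave-out, keeping track of which events hold simultaneously over all $m$ and $l$ (so that a union bound over $O(d^2)$ indices still leaves failure probability $O(d^{-11})$, which forces the high-probability bounds in the intermediate concentration steps to be at the $d^{-20}$ level), and verifying that the residual leave-two-out correction terms are genuinely of lower order under the assumptions of Theorem~\ref{thm:U_loss} via Lemma~\ref{lemma:noise_cond}. Once the decoupling is set up correctly, each individual concentration bound is routine matrix Bernstein, and combining the pieces via the triangle inequality and simplifying through \eqref{eq:noise_cond} gives the stated bound with $\|\bm{U}^{(m)}\bm{H}^{(m)}\|_{2,\infty}+\sqrt{\mu r/d_1}$ as the governing row-size quantity.
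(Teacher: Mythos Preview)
Your proposal misidentifies where the dependence lies and therefore applies the leave-two-out device at the wrong level. You observe that $\bm{E}_{m,:}$ is independent of $\bm{U}^{(m)}$ but then worry that $\mathcal{P}_{-m,:}(\bm{A}^{\mathsf{s}})$ ``shares randomness with $\bm{U}^{(m)}$''. That is true but irrelevant: \emph{both} $\mathcal{P}_{-m,:}(\bm{A}^{\mathsf{s}})$ and $\bm{U}^{(m)}\bm{H}^{(m)}$ are measurable with respect to $\{E_{i,j}\}_{i\neq m}$, so the entire matrix $\bm{W}:=\big[\mathcal{P}_{-m,:}(\bm{A}^{\mathsf{s}})\big]^{\top}\bm{U}^{(m)}\bm{H}^{(m)}$ is independent of $\bm{E}_{m,:}$. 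The paper simply conditions on $\{E_{i,j}\}_{i\neq m}$ (not on $\bm{E}_{m,:}$ as you propose) and writes the target as $\sum_{j} E_{m,j}\bm{W}_{j,:}$, which is a bona fide sum of independent mean-zero vectors with $L\leq B\|\bm{W}\|_{2,\infty}$ and $V\leq\sigma_{\infty}^{2}\|\bm{W}\|_{\mathrm{F}}^{2}$. The splitting $\bm{A}^{\mathsf{s}}=\bm{A}^{\star}+\bm{E}$ and the separate ``signal'' and ``noise-noise'' treatments are unnecessary.

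Your leave-two-out construction for the noise-noise term is where the argument actually breaks. You propose the ``main term'' $\sum_{i\neq m}\sum_{l}E_{m,l}E_{i,l}\big(\bm{U}^{(m,l)}\bm{H}^{(m,l)}\big)_{i,:}$ and claim that, after conditioning on $\bm{E}_{m,:}$, it is a sum of conditionally independent summands. It is not: for $l_{1}\neq l_{2}$ the factor $\bm{U}^{(m,l_{1})}$ depends on the entire column $\bm{E}_{:,l_{2}}$, so the $l_{1}$- and $l_{2}$-summands are coupled, and matrix Bernstein does not apply. The paper does need leave-two-out, but at a different place: it is invoked in Lemma~\ref{lemma:W_loo_2inf_norm} to bound $\|\bm{W}\|_{2,\infty}$, one column at a time (replace $\bm{U}^{(m)}$ by $\bm{U}^{(m,l)}$ when bounding the $l$-th row of $\bm{W}$, and control the residual via the Davis--Kahan bound in Lemma~\ref{lemma:U_loo_col_dist}). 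Once $\|\bm{W}\|_{2,\infty}$ is in hand, the outer Bernstein-over-$j$ and the crude bound $\|\bm{W}\|_{\mathrm{F}}\leq\sqrt{d_{1}}\,\|\bm{A}^{\mathsf{s}}\|\,\|\bm{U}^{(m)}\bm{H}^{(m)}\|_{2,\infty}$ (together with $\sqrt{d_{1}}\,\sigma_{\infty}\asymp\sigma_{\mathsf{col}}$) finish the job. As a minor point, your claimed inequality $\|\bm{A}^{\star\top}\bm{U}^{(m)}\bm{H}^{(m)}\|_{2,\infty}\leq\|\bm{A}^{\star}\|\,\|\bm{U}^{(m)}\bm{H}^{(m)}\|_{2,\infty}$ is false in general; the correct route to the $\sigma_{\mathsf{col}}\|\bm{A}^{\star}\|\sqrt{\log d}$ term is through the Frobenius-norm variance bound above, not through a $\|\cdot\|_{2,\infty}$ bound.
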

\begin{proof}See Appendix~\ref{subsec:pf:A_dev_row_A_U_2_norm}.\end{proof}

The proof of Lemma~\ref{lemma:A_dev_row_A_U_2_norm} relies on an upper bound on the $\ell_{2,\infty}$ norm of $\mathcal{\mathcal{P}}_{-m,:}\left(\bm{A}^{\mathsf{s}}\right)^{\top}\bm{U}^{\left(m\right)}\bm{H}^{\left(m\right)}$,
which is formalized below in Lemma~\ref{lemma:W_loo_2inf_norm}. This is built upon a leave-two-out argument. 

\begin{lemma}
\label{lemma:W_loo_2inf_norm}
Instate the assumptions
of Theorem \ref{thm:U_loss}. With probability at least $1-O\big(d^{-10}\big)$,
the following holds simultaneously for all $m\in[d_{1}]$,
\begin{align*}
\big\|\mathcal{\mathcal{P}}_{-m,:}\left(\bm{A}^{\mathsf{s}}\right)^{\top}\bm{U}^{\left(m\right)}\bm{H}^{\left(m\right)}\big\|_{2,\infty} & \lesssim\big(B\log d+\sigma_{\mathsf{col}}\sqrt{\log d}\big)\big\|\bm{U}^{\left(m\right)}\bm{H}^{\left(m\right)}\big\|_{2,\infty}+\left\Vert \bm{A}^{\star}\right\Vert \sqrt{\frac{\mu r}{d_{2}}},
\end{align*}
where $\bm{A}^{\mathsf{s}}$ is defined in (\ref{eq:A-E}), and $B$
and $\sigma_{\mathsf{col}}$ are defined in (\ref{eq:noise_value}).
\end{lemma}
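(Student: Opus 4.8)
The plan is to fix $m\in[d_{1}]$, bound each of the $d_{2}$ rows of $\mathcal{P}_{-m,:}(\bm{A}^{\mathsf{s}})^{\top}\bm{U}^{(m)}\bm{H}^{(m)}$ separately, and then take a union bound over all pairs $(m,l)\in[d_{1}]\times[d_{2}]$; since $d_{1}d_{2}\leq d^{2}$, a per-pair failure probability of $O(d^{-12})$ suffices. Writing $\bm{A}^{\mathsf{s}}=\bm{A}^{\star}+\bm{E}$ and recalling $\bm{U}^{(m)}\bm{H}^{(m)}=\bm{U}^{(m)}\bm{U}^{(m)\top}\bm{U}^{\star}$, the $l$-th row decomposes as $\sum_{i\neq m}A_{i,l}^{\star}(\bm{U}^{(m)}\bm{H}^{(m)})_{i,:}+\sum_{i\neq m}E_{i,l}(\bm{U}^{(m)}\bm{H}^{(m)})_{i,:}$. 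The first (signal) sum equals $\bm{A}_{:,l}^{\star\top}\bm{U}^{(m)}\bm{U}^{(m)\top}\bm{U}^{\star}-A_{m,l}^{\star}(\bm{U}^{(m)}\bm{H}^{(m)})_{m,:}$, whose Euclidean norm is at most $\|\bm{A}_{:,l}^{\star}\|_{2}+\|\bm{A}^{\star}\|_{\infty}\|\bm{U}^{(m)}\bm{H}^{(m)}\|_{2,\infty}$; invoking the incoherence bounds of Lemma~\ref{lemma:incoh} (namely $\|\bm{A}_{:,l}^{\star}\|_{2}\leq\sigma_{1}^{\star}\sqrt{\mu r/d_{2}}=\|\bm{A}^{\star}\|\sqrt{\mu r/d_{2}}$ and $\|\bm{A}^{\star}\|_{\infty}\lesssim B$), this already produces terms of the advertised form.

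The crux is the second (noise) sum, in which $E_{i,l}$ and $\bm{U}^{(m)}$ are statistically dependent because $\bm{U}^{(m)}$ is still built from the $l$-th column of $\bm{A}$. To decouple, I would bring in the leave-two-out estimate $\bm{U}^{(m,l)}$ of Algorithm~\ref{alg:init_loo_col} and split $\sum_{i\neq m}E_{i,l}(\bm{U}^{(m)}\bm{H}^{(m)})_{i,:}$ into a main term $\text{(I)}:=\sum_{i\neq m}E_{i,l}(\bm{U}^{(m,l)}\bm{H}^{(m,l)})_{i,:}$ and an error term $\text{(II)}:=\sum_{i\neq m}E_{i,l}\big[(\bm{U}^{(m)}\bm{H}^{(m)})-(\bm{U}^{(m,l)}\bm{H}^{(m,l)})\big]_{i,:}$. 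Term (I) is now a sum of independent zero-mean random vectors conditional on $\bm{U}^{(m,l)}$, which is independent of column $l$ of $\bm{A}$; applying the matrix Bernstein inequality with magnitude parameter $B\,\|\bm{U}^{(m,l)}\bm{H}^{(m,l)}\|_{2,\infty}$ and variance proxy $\sigma_{\mathsf{col}}^{2}\,\|\bm{U}^{(m,l)}\bm{H}^{(m,l)}\|_{2,\infty}^{2}$ (using $B\geq\max_{i,j}|E_{i,j}|$ and $\sigma_{\mathsf{col}}^{2}\geq\sum_{i}\mathbb{E}[E_{i,l}^{2}]$) yields $\|\text{(I)}\|_{2}\lesssim(B\log d+\sigma_{\mathsf{col}}\sqrt{\log d})\,\|\bm{U}^{(m,l)}\bm{H}^{(m,l)}\|_{2,\infty}$ with probability $1-O(d^{-12})$. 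Term (II) is bounded by Cauchy--Schwarz as $\|\text{(II)}\|_{2}\leq\|\bm{E}_{:,l}\|_{2}\cdot\|\bm{U}^{(m)}\bm{U}^{(m)\top}-\bm{U}^{(m,l)}\bm{U}^{(m,l)\top}\|_{\mathrm{F}}$, where $\|\bm{E}_{:,l}\|_{2}\lesssim\sigma_{\mathsf{col}}+B\sqrt{\log d}$ (established exactly as in the proof of Lemma~\ref{lemma:G_op_loss}) and the subspace distance is controlled by a leave-two-out perturbation estimate — an application of the Davis--Kahan theorem to $\bm{G}^{(m)}$ versus $\bm{G}^{(m,l)}$ (two matrices that differ only along row/column $m$ and $l$), whose eigengap denominator exceeds $\sigma_{r}^{\star2}/2$ since $\|\bm{G}^{(m,l)}-\bm{G}^{\star}\|\lesssim\delta_{\mathsf{op}}\ll\sigma_{r}^{\star2}$ by Lemma~\ref{lemma:G_loo_op_diff} and Condition~(\ref{cond:eig_o1}).

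To finish, I would (a) pass from $\|\bm{U}^{(m,l)}\bm{H}^{(m,l)}\|_{2,\infty}$ back to $\|\bm{U}^{(m)}\bm{H}^{(m)}\|_{2,\infty}$ via the triangle inequality and the same leave-two-out closeness bound, and (b) verify, using Lemma~\ref{lemma:noise_cond}, that every residual contribution — term (II) itself, together with the $\sqrt{\mu r/d_{1}}$ pieces generated along the way — is dominated by $(B\log d+\sigma_{\mathsf{col}}\sqrt{\log d})\,\|\bm{U}^{(m)}\bm{H}^{(m)}\|_{2,\infty}+\|\bm{A}^{\star}\|\sqrt{\mu r/d_{2}}$. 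The main obstacle is precisely step (II) and its accompanying leave-two-out perturbation bound on $\|\bm{U}^{(m)}\bm{U}^{(m)\top}-\bm{U}^{(m,l)}\bm{U}^{(m,l)\top}\|_{\mathrm{F}}$: proving it requires first certifying that $\bm{G}^{(m,l)}$ stays close to $\bm{G}^{\star}$ and that $\bm{H}^{(m,l)}$ is well-conditioned, then controlling the row-and-column perturbation $(\bm{G}^{(m)}-\bm{G}^{(m,l)})\bm{U}^{(m,l)}$, which itself carries a mild circular dependence on $\|\bm{U}^{(m,l)}\bm{H}^{(m,l)}\|_{2,\infty}$ that has to be absorbed through the smallness conditions of Lemma~\ref{lemma:noise_cond}.
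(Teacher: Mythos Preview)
Your proposal is correct and follows essentially the same route as the paper's proof: fix $(m,l)$, introduce the leave-two-out surrogate $\bm{U}^{(m,l)}\bm{H}^{(m,l)}$ to decouple the noise column $\bm{E}_{:,l}$ from the subspace, apply matrix Bernstein to the now-independent sum, and absorb the surrogate error via the Davis--Kahan bound on $\|\bm{U}^{(m)}\bm{U}^{(m)\top}-\bm{U}^{(m,l)}\bm{U}^{(m,l)\top}\|$ (the paper packages this last step as Lemma~\ref{lemma:U_loo_col_dist}, including the circularity you flagged). The only cosmetic difference is that the paper swaps in $\bm{U}^{(m,l)}\bm{H}^{(m,l)}$ for the \emph{entire} column $\bm{A}^{\mathsf{s},(m),0}_{:,l}$ before splitting into mean and deviation (its $\alpha_{1},\alpha_{2},\alpha_{3}$), whereas you first separate the deterministic signal part and only apply leave-two-out to the noise; both orderings yield the same estimates.
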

\begin{proof}See Appendix~\ref{subsec:pf:W_loo_2inf_norm}.\end{proof}

The proof of Lemma~\ref{lemma:W_loo_2inf_norm} requires the proximity
between $\bm{U}^{(m)}$ and $\bm{U}^{(m,l)}$, which is demonstrated
below in Lemma~\ref{lemma:U_loo_col_dist}.

\begin{lemma}\label{lemma:U_loo_col_dist}Instate the assumptions
of Theorem \ref{thm:U_loss}. With probability at least $1-O\left(d^{-10}\right)$,
the following holds simultaneously for any $m\in[d_{1}]$ and $l\in[d_{2}]$,
\begin{align}
\big\|\bm{U}^{\left(m\right)}\bm{U}^{\left(m\right)\top}-\bm{U}^{\left(m,l\right)}\bm{U}^{\left(m,l\right)\top}\big\| & \lesssim\frac{1}{\sigma_{r}^{\star2}}\big(B\log d+\sigma_{\mathsf{col}}\sqrt{\log d}\big)^{2}\big\|\bm{U}^{\left(m\right)}\bm{H}^{\left(m\right)}\big\|_{2,\infty}\nonumber \\
 & \quad+\frac{\sigma_{\infty}^{2}}{\sigma_{r}^{\star2}}+\frac{1}{\sigma_{r}^{\star2}}\big(B\log d+\sigma_{\mathsf{col}}\sqrt{\log d}\big)\left\Vert \bm{A}^{\star\top}\right\Vert _{2,\infty},\label{claim:U_loo_U_loo_col_dist}
\end{align}
where $B,\sigma_{\infty}$ and $\sigma_{\mathsf{col}}$
are defined in (\ref{eq:noise_value}). \end{lemma}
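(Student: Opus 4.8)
The plan is to compare the two eigenspaces through the Davis--Kahan $\sin\bm{\Theta}$ theorem, exploiting the extra independence created by deleting the $l$-th column, in the same spirit as the proof of Lemma~\ref{lemma:U_U_loo_dist}. The first step is to pin down the perturbation $\bm{G}^{(m)}-\bm{G}^{(m,l)}$ explicitly. Writing $\bm{v}:=\mathcal{P}_{-m,:}(\bm{E})\bm{e}_{l}$ for the $l$-th column of $\bm{E}$ with its $m$-th coordinate set to zero, one reads off from \eqref{def:A_loo}, \eqref{def:A_loo_col} and the rescaling in \eqref{eq:A-E} that $\bm{A}^{\mathsf{s},(m)}-\bm{A}^{\mathsf{s},(m,l)}=\bm{v}\bm{e}_{l}^{\top}$, while the $l$-th column of $\bm{A}^{\mathsf{s},(m,l)}$ is fully populated and hence equals $\bm{A}^{\star}_{:,l}$. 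Consequently
\[
\bm{G}^{(m)}-\bm{G}^{(m,l)}=\mathcal{P}_{\mathsf{off}\text{-}\mathsf{diag}}\big(\bm{v}(\bm{A}_{:,l}^{\star})^{\top}+\bm{A}_{:,l}^{\star}\bm{v}^{\top}+\bm{v}\bm{v}^{\top}\big).
\]
The key structural fact is that $\bm{A}^{\mathsf{s},(m,l)}$ --- and therefore $\bm{G}^{(m,l)}$, $\bm{U}^{(m,l)}$ and $\bm{H}^{(m,l)}$ (cf.~\eqref{def:H_loo_col}) --- is a deterministic function of all the data except the $l$-th column of $\bm{E}$, so that $\bm{v}$ is statistically independent of $\bm{U}^{(m,l)}\bm{H}^{(m,l)}$. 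I would also record, via \cite[Lemma~3]{abbe2017entrywise} together with $\big\|\bm{G}^{(m,l)}-\bm{G}^{\star}\big\|\ll\sigma_{r}^{\star 2}$ --- a consequence of Lemma~\ref{lemma:G_loo_op_diff}, the crude estimate $\big\|\bm{G}^{(m)}-\bm{G}^{(m,l)}\big\|\lesssim\big(\sigma_{\mathsf{col}}+B\sqrt{\log d}\big)\big(\|\bm{A}^{\star\top}\|_{2,\infty}+\sigma_{\mathsf{col}}+B\sqrt{\log d}\big)\ll\sigma_{r}^{\star 2}$ (guaranteed by \eqref{cond:eig_o1}), and the triangle inequality --- that $\big\|(\bm{H}^{(m,l)})^{-1}\big\|\lesssim 1$ and $\|\bm{H}^{(m,l)}\|\le 1$.

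With this in place, Weyl's inequality and the above spectral-norm bounds give an eigengap $\lambda_{r}(\bm{G}^{(m,l)})-\lambda_{r+1}(\bm{G}^{(m)})\gtrsim\sigma_{r}^{\star 2}$, so Davis--Kahan (in the form used for \eqref{eq:U_loo_DK}, hitting the perturbation with the independent factor $\bm{U}^{(m,l)}$) yields
\[
\big\|\bm{U}^{(m)}\bm{U}^{(m)\top}-\bm{U}^{(m,l)}\bm{U}^{(m,l)\top}\big\|\lesssim\frac{1}{\sigma_{r}^{\star 2}}\big\|\big(\bm{G}^{(m)}-\bm{G}^{(m,l)}\big)\bm{U}^{(m,l)}\big\|.
\]
It remains to bound the numerator; I would expand $\mathcal{P}_{\mathsf{off}\text{-}\mathsf{diag}}(\bm{M})=\bm{M}-\mathcal{P}_{\mathsf{diag}}(\bm{M})$ and treat the three rank-one-type pieces together with their diagonal corrections separately. (i) The cross terms are controlled via $\|\bm{A}_{:,l}^{\star}\|_{2}\le\|\bm{A}^{\star\top}\|_{2,\infty}$ together with a matrix Bernstein bound applied to $\bm{v}^{\top}\bm{U}^{(m,l)}\bm{H}^{(m,l)}=\sum_{i\neq m}v_{i}\big(\bm{U}^{(m,l)}\bm{H}^{(m,l)}\big)_{i,:}$, which is a sum of independent zero-mean vectors by the independence just noted; bounding its variance proxy by $\sigma_{\mathsf{col}}^{2}\big\|\bm{U}^{(m,l)}\bm{H}^{(m,l)}\big\|_{2,\infty}^{2}$ gives $\big\|\bm{v}^{\top}\bm{U}^{(m,l)}\big\|_{2}\lesssim\big(B\log d+\sigma_{\mathsf{col}}\sqrt{\log d}\big)\big\|\bm{U}^{(m,l)}\bm{H}^{(m,l)}\big\|_{2,\infty}$ after using $\big\|(\bm{H}^{(m,l)})^{-1}\big\|\lesssim1$. (ii) The quadratic piece $\bm{v}\bm{v}^{\top}$ combines this with $\|\bm{v}\|_{2}\lesssim B\sqrt{\log d}+\sigma_{\mathsf{col}}$, which holds with probability $1-O(d^{-12})$ by a one-sided Bernstein inequality for $\sum_{i}E_{i,l}^{2}$. (iii) For the diagonal corrections I would split $\mathcal{P}_{\mathsf{diag}}(\bm{v}\bm{v}^{\top})=\mathsf{diag}(\mathbb{E}[v_{i}^{2}])+\mathsf{diag}(v_{i}^{2}-\mathbb{E}[v_{i}^{2}])$, bound the first (mean) part in spectral norm by $\max_{i}\mathbb{E}[v_{i}^{2}]\le\sigma_{\infty}^{2}$ (see \eqref{def:sigma_inf_UB}), and bound the fluctuation part together with $\mathcal{P}_{\mathsf{diag}}\big(\bm{v}(\bm{A}_{:,l}^{\star})^{\top}+\bm{A}_{:,l}^{\star}\bm{v}^{\top}\big)$ through their Frobenius norms, each of which is $\lesssim\big(B\log d+\sigma_{\mathsf{col}}\sqrt{\log d}\big)^{2}\big\|\bm{U}^{(m,l)}\bm{H}^{(m,l)}\big\|_{2,\infty}$ after using $\|\bm{A}^{\star}\|_{\infty}\le B$ and $\sum_{i}v_{i}^{2}\lesssim B^{2}\log d+\sigma_{\mathsf{col}}^{2}$. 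Summing these contributions (and absorbing via $\sigma_{\mathsf{col}}\le\sigma_{\mathsf{col}}\sqrt{\log d}$) produces the claimed bound, but with $\big\|\bm{U}^{(m,l)}\bm{H}^{(m,l)}\big\|_{2,\infty}$ in place of $\big\|\bm{U}^{(m)}\bm{H}^{(m)}\big\|_{2,\infty}$.

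To finish, the triangle inequality $\big\|\bm{U}^{(m,l)}\bm{H}^{(m,l)}\big\|_{2,\infty}\le\big\|\bm{U}^{(m,l)}\bm{U}^{(m,l)\top}-\bm{U}^{(m)}\bm{U}^{(m)\top}\big\|+\big\|\bm{U}^{(m)}\bm{H}^{(m)}\big\|_{2,\infty}$, combined with the fact that the coefficient $\big(B\log d+\sigma_{\mathsf{col}}\sqrt{\log d}\big)^{2}/\sigma_{r}^{\star 2}$ multiplying $\big\|\bm{U}^{(m,l)}\bm{H}^{(m,l)}\big\|_{2,\infty}$ in our bound is $\ll1$ under \eqref{cond:eig_o1}, lets us absorb that coefficient into the left-hand side, leaving exactly the three advertised terms in $\big\|\bm{U}^{(m)}\bm{H}^{(m)}\big\|_{2,\infty}$, $\sigma_{\infty}^{2}$, and $\|\bm{A}^{\star\top}\|_{2,\infty}$. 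A union bound over the $\le d^{2}$ pairs $(m,l)$ then upgrades the per-pair failure probability $O(d^{-12})$ to $O(d^{-10})$. The main obstacle I anticipate lies in step (iii): one must route each Bernstein estimate so that every ``small'' contribution retains the factor $\big\|\bm{U}^{(m,l)}\bm{H}^{(m,l)}\big\|_{2,\infty}$ --- which is precisely what allows the self-consistent bootstrap to close --- while recognizing that the single term $\sigma_{\infty}^{2}/\sigma_{r}^{\star 2}$, originating from the conditional mean of $\mathsf{diag}(v_{i}^{2})$, is genuinely irreducible and cannot be improved by incoherence.
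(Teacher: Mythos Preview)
Your proposal is correct and follows essentially the same route as the paper: apply Davis--Kahan with the perturbation hitting $\bm{U}^{(m,l)}$, exploit the independence of $\bm{v}$ from $\bm{U}^{(m,l)}\bm{H}^{(m,l)}$ to bound the cross and quadratic pieces via Bernstein, isolate the $\sigma_{\infty}^{2}$ term from the mean of $\mathsf{diag}(v_i^2)$, and close with the self-consistent bootstrap $\big\|\bm{U}^{(m,l)}\bm{H}^{(m,l)}\big\|_{2,\infty}\le\big\|\bm{U}^{(m)}\bm{H}^{(m)}\big\|_{2,\infty}+\big\|\bm{U}^{(m)}\bm{U}^{(m)\top}-\bm{U}^{(m,l)}\bm{U}^{(m,l)\top}\big\|$. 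The only cosmetic differences are that the paper first splits off the $m$-th row/column via $\mathcal{P}_m/\mathcal{P}_{-m}$ (your formulation with $v_m=0$ handles this automatically), and the paper applies Bernstein rather than a Frobenius-norm argument to the diagonal fluctuation $\mathsf{diag}(v_i^2-\mathbb{E}[v_i^2])$; both yield the same bound. One small omission: in step (i) you only spell out the direction $\bm{A}^{\star}_{:,l}\bm{v}^{\top}$, but the companion piece $\bm{v}(\bm{A}^{\star}_{:,l})^{\top}\bm{U}^{(m,l)}$ is what actually produces the $\big(B\log d+\sigma_{\mathsf{col}}\sqrt{\log d}\big)\|\bm{A}^{\star\top}\|_{2,\infty}$ term without the $\|\bm{U}^{(m,l)}\bm{H}^{(m,l)}\|_{2,\infty}$ factor --- it is handled trivially by $\|\bm{v}\|_2\|\bm{A}^{\star}_{:,l}\|_2$, so this does not affect the argument.
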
\begin{proof}See
Appendix~\ref{subsec:pf:U_loo_col_dist}.\end{proof}

Finally, Lemma~\ref{lemma:G_row_UH_Utrue} stated below constitutes the main part of Lemma~\ref{lemma:UH_BUtrue}
(recalling the decomposition in (\ref{eq:UH-UB2}) and (\ref{eq:UH-UB3})).

\begin{lemma}\label{lemma:G_row_UH_Utrue}Instate the assumptions
of Theorem \ref{thm:U_loss}. For each fixed $m\in[d_{1}]$, the following
holds with probability exceeding $1-O\left(d^{-11}\right)$,
\begin{align*}
\left\Vert \bm{G}_{m,:}\big(\bm{U}^{\left(m\right)}\bm{H}^{\left(m\right)}-\bm{U}^{\star}\big)\right\Vert _{2} & \lesssim\delta_{\mathsf{loo}}\,\big\|\bm{U}^{\left(m\right)}\bm{H}^{\left(m\right)}-\bm{U}^{\star}\big\|_{2,\infty}+\delta_{\mathsf{op}}\kappa^{2}\sqrt{\frac{\mu r}{d_{1}}},
\end{align*}
where $\delta_{\mathsf{op}}$ and $\delta_{\mathsf{loo}}$ are defined
in (\ref{claim:G_op_loss}) and (\ref{def:delta_loo}), respectively.\end{lemma}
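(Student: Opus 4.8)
The plan is to control $\bm G_{m,:}(\bm U^{(m)}\bm H^{(m)}-\bm U^\star)$ by exploiting the one feature that makes the argument tractable: the leave-one-out quantities $\bm U^{(m)},\bm H^{(m)}$, and hence $\bm W^{(m)}:=\bm U^{(m)}\bm H^{(m)}-\bm U^\star$, are statistically independent of $\bm E_{m,:}$ (the aggregate perturbation in row $m$). Before using this I would record three facts about $\bm W^{(m)}$. Writing $\bm W^{(m)}=(\bm U^{(m)}\bm U^{(m)\top}-\bm U^\star\bm U^{\star\top})\bm U^\star$, the Davis--Kahan $\sin\bm{\Theta}$ theorem together with Lemma~\ref{lemma:G_loo_op_diff} and Fact~\ref{fact:rel_err_o1} yield $\|\bm W^{(m)}\|\le\|\bm U^{(m)}\bm U^{(m)\top}-\bm U^\star\bm U^{\star\top}\|\lesssim\delta_{\mathsf{op}}/\sigma_r^{\star2}\ll\kappa^{-2}$; consequently $\bm U^{\star\top}\bm W^{(m)}=(\bm H^{(m)})^\top\bm H^{(m)}-\bm I_r$ has operator norm $\|\sin\bm{\Theta}(\bm U^{(m)},\bm U^\star)\|^2\lesssim(\delta_{\mathsf{op}}/\sigma_r^{\star2})^2$; and $\|\bm W^{(m)}\|_{2,\infty}\le\|\bm W^{(m)}\|$ is available as a crude fallback while $\|\bm U^{(m)}\bm H^{(m)}\|_{2,\infty}\lesssim\|\bm U\bm H\|_{2,\infty}+\sqrt{\mu r/d_1}$ (eq.~\eqref{eq:U_loo_2inf_UB}) is kept whenever a bare $\ell_{2,\infty}$ norm survives.

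I would then split $\bm G_{m,:}=\bm G^\star_{m,:}+(\bm G-\bm G^\star)_{m,:}$. For the population part, $\bm G^\star_{m,:}\bm W^{(m)}=\bm U^\star_{m,:}\bm\Sigma^{\star2}(\bm U^{\star\top}\bm W^{(m)})$, so $\|\bm G^\star_{m,:}\bm W^{(m)}\|_2\le\|\bm U^\star\|_{2,\infty}\sigma_1^{\star2}\|\bm U^{\star\top}\bm W^{(m)}\|\lesssim\sqrt{\mu r/d_1}\,\sigma_1^{\star2}(\delta_{\mathsf{op}}/\sigma_r^{\star2})^2\ll\delta_{\mathsf{op}}\kappa^2\sqrt{\mu r/d_1}$ — this is where the quadratic smallness of the misalignment enters, and it sits comfortably inside the budget. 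The bulk of the work is the fluctuation part $(\bm G-\bm G^\star)_{m,:}\bm W^{(m)}$, which I would expand entrywise exactly as in the proof of Lemma~\ref{lemma:G_dev_W_2_norm}: it equals $\sum_{i\neq m}\langle\bm E_{m,:},\bm E_{i,:}\rangle\bm W^{(m)}_{i,:}+\sum_{i\neq m}\langle\bm A^\star_{m,:},\bm E_{i,:}\rangle\bm W^{(m)}_{i,:}+\sum_{i\neq m}\langle\bm E_{m,:},\bm A^\star_{i,:}\rangle\bm W^{(m)}_{i,:}-\|\bm A^\star_{m,:}\|_2^2\,\bm W^{(m)}_{m,:}$. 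The last (diagonal-deletion) term is bounded by $\|\bm A^\star\|_{2,\infty}^2\|\bm W^{(m)}\|_{2,\infty}$, and here I would note the deterministic inequality $\|\bm A^\star\|_{2,\infty}^2\le\mu_1 r\sigma_1^{\star2}/d_1\lesssim\sigma_{\mathsf{col}}\sigma_{\mathsf{row}}\log d\le\delta_{\mathsf{loo}}$ (valid since $p\le1$ and $d_2\ge d_1$), giving the $\delta_{\mathsf{loo}}\|\bm W^{(m)}\|_{2,\infty}$ contribution. For the two terms that are \emph{linear} in $\bm E_{m,:}$ (the first and third, after rewriting each as $\sum_j E_{m,j}(\cdot)_{j,:}$), I would condition on $\{\bm E_{i,:}\}_{i\neq m}$ — which freezes $\bm W^{(m)}$ — and apply the matrix Bernstein inequality; the variance/range proxies involve $\|\bm A^{\star\top}\bm W^{(m)}\|_{2,\infty}\le\|\bm A^{\star\top}\|_{2,\infty}\|\bm W^{(m)}\|$, $\max_j\|\bm E_{:,j}\|_2^2\lesssim\sigma_{\mathsf{col}}^2$ (eq.~\eqref{eq:n_l_2_norm}), $\sigma_{\mathsf{row}},\sigma_{\mathsf{col}},B$, and $\|\bm W^{(m)}\|\lesssim\delta_{\mathsf{op}}/\sigma_r^{\star2}$; after invoking \eqref{cond:B_UB} to absorb the $B$-terms and \eqref{cond:eig_o1} to compare with $\sigma_r^{\star2}$ (e.g.\ $\sigma_{\mathsf{row}}\sqrt{\log d}\,\|\bm A^{\star\top}\|_{2,\infty}\ll\sigma_r^{\star2}\sqrt{\mu r/d_1}$), each collapses to $\lesssim\delta_{\mathsf{op}}\kappa^2\sqrt{\mu r/d_1}$.

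The genuinely hard term is $\sum_{i\neq m}\langle\bm A^\star_{m,:},\bm E_{i,:}\rangle\bm W^{(m)}_{i,:}=\sum_j A^\star_{m,j}\big(\mathcal P_{-m,:}(\bm E)\big)_{:,j}^\top\bm W^{(m)}$: here each $\bm E_{i,:}$, $i\neq m$, appears both explicitly in the weights and implicitly inside $\bm W^{(m)}$, so conditioning is useless and a naive triangle-inequality decoupling loses a factor of order $\sqrt{d_2/d_1}$ — precisely the unbalancedness we are trying to exploit. This is the main obstacle, and it is handled using the leave-two-out estimates: for each column index $l$ one passes from $\bm U^{(m)}$ to $\bm U^{(m,l)}$, which is independent of the $l$-th column, paying the error $\|\bm U^{(m)}\bm U^{(m)\top}-\bm U^{(m,l)}\bm U^{(m,l)\top}\|$ controlled by Lemma~\ref{lemma:U_loo_col_dist}, and combining this with the $\ell_{2,\infty}$ control of $\mathcal P_{-m,:}(\bm A^{\mathsf s})^\top\bm U^{(m)}\bm H^{(m)}$ supplied by Lemma~\ref{lemma:W_loo_2inf_norm} (itself proved via the same leave-two-out device). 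Managing the residuals — via $\delta_{\mathsf{row}}\lesssim\delta_{\mathsf{loo}}$ (a term-by-term consequence of \eqref{cond:B_UB}, \eqref{cond:eig_o1} and Lemma~\ref{lemma:incoh}), $\|\bm A^\star\|_{2,\infty}^2\lesssim\delta_{\mathsf{loo}}$, and $\|\bm W^{(m)}\|\lesssim\delta_{\mathsf{op}}/\sigma_r^{\star2}$ — gives $\lesssim\delta_{\mathsf{loo}}\|\bm W^{(m)}\|_{2,\infty}+\delta_{\mathsf{op}}\kappa^2\sqrt{\mu r/d_1}$ for this piece too. Summing the five contributions, and noting that $m$ is fixed (so no union over $m$ is required, only over the $O(d_2)$ column events in the leave-two-out step, each of probability $1-O(d^{-20})$), recovers the claimed bound since $\|\bm W^{(m)}\|_{2,\infty}=\|\bm U^{(m)}\bm H^{(m)}-\bm U^\star\|_{2,\infty}$.
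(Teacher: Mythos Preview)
You have the right architecture and the right toolbox (Bernstein conditional on $\{\bm E_{i,:}\}_{i\neq m}$, Davis--Kahan for $\|\bm W^{(m)}\|$, and leave-two-out for the column dependence), but you have \emph{swapped} which term actually needs the leave-two-out surgery, and this creates a genuine gap.

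Your ``hard'' term~2, namely $\sum_{i\neq m}\langle\bm A^\star_{m,:},\bm E_{i,:}\rangle\bm W^{(m)}_{i,:}=\bm A^\star_{m,:}\mathcal P_{-m,:}(\bm E)^\top\bm W^{(m)}$, is in fact the easy one: it is bounded crudely by $\|\bm A^\star_{m,:}\bm E^\top\|_2\,\|\bm W^{(m)}\|$, and Lemma~\ref{lemma:sum_row_square_sum_col} together with \eqref{cond:row_B_rel}--\eqref{cond:eig_o1} gives $\|\bm A^\star_{m,:}\bm E^\top\|_2\lesssim\sigma_1^{\star2}\sqrt{\mu r/d_1}$, so this term lands directly in $\delta_{\mathsf{op}}\kappa^2\sqrt{\mu r/d_1}$ with no leave-two-out at all. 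This is exactly how the paper treats its $\beta_1$.

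Conversely, your ``easy'' term~1, $\sum_j E_{m,j}\big(\mathcal P_{-m,:}(\bm E)^\top\bm W^{(m)}\big)_{j,:}$, is where the real difficulty sits. Your Bernstein argument needs the range proxy $L=B\max_j\|\mathcal P_{-m,:}(\bm E)_{:,j}^\top\bm W^{(m)}\|_2$, and the only bound you offer is $\|\bm E_{:,j}\|_2\|\bm W^{(m)}\|$. This yields $L\log d\lesssim\sigma_{\mathsf{row}}\sigma_{\mathsf{col}}\sqrt{\log d}\cdot\delta_{\mathsf{op}}/\sigma_r^{\star2}$, which does \emph{not} collapse into $\delta_{\mathsf{op}}\kappa^2\sqrt{\mu r/d_1}$: take $d_1=d_2=d$, $\mu=\kappa=r=1$, $\sigma=0$, $p=c\log^2 d/d$; then $\sigma_{\mathsf{row}}\sigma_{\mathsf{col}}\sqrt{\log d}\asymp\sigma_1^{\star2}/\log^{3/2}d$, while $\sigma_1^{\star2}\sqrt{\mu r/d_1}=\sigma_1^{\star2}/\sqrt d$, and the former dominates. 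What is actually required here is a bound of the form $\|\mathcal P_{-m,:}(\bm E)_{:,j}^\top\bm W^{(m)}\|_2\lesssim(B\log d+\sigma_{\mathsf{col}}\sqrt{\log d})\|\bm W^{(m)}\|_{2,\infty}+(\text{small})$, and since $\bm W^{(m)}$ depends on $\bm E_{:,j}$ this is precisely the leave-two-out estimate (the analogue of Lemma~\ref{lemma:W_loo_2inf_norm} with $\bm W^{(m)}$ in place of $\bm U^{(m)}\bm H^{(m)}$; this is the paper's display~\eqref{eq:A_UH_Utrue_2inf_norm}). In short: the leave-two-out device belongs in term~1 (and implicitly term~3 via $\bm A^{\mathsf s}=\bm A^\star+\bm E$), not in term~2.

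Two minor points. First, your claim $\|\bm A^\star\|_{2,\infty}^2\lesssim\sigma_{\mathsf{col}}\sigma_{\mathsf{row}}\log d$ is not valid in general (it fails once $p\gg\sqrt{d_1/d_2}\log d$); the diagonal-deletion piece should instead be absorbed into $\delta_{\mathsf{op}}\kappa^2\sqrt{\mu r/d_1}$ via $\|\bm A^\star\|_{2,\infty}^2\le\delta_{\mathsf{op}}$ and $\|\bm W^{(m)}\|_{2,\infty}\lesssim\sqrt{\mu r/d_1}$. Second, the paper avoids your $\bm G^\star$/term-4 split altogether by writing $\bm G_{m,:}\bm W^{(m)}=\bm A^{\mathsf s}_{m,:}[\mathcal P_{-m,:}(\bm A^{\mathsf s})]^\top\bm W^{(m)}$ and splitting $\bm A^{\mathsf s}_{m,:}=\bm A^\star_{m,:}+\bm E_{m,:}$; this is cleaner and sidesteps both issues.
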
\begin{proof}See
Appendix~\ref{subsec:pf:G_row_UH_Utrue}.\end{proof}

\subsection{Proof of Lemma~\ref{lemma:G_loo_op_diff}}

\label{subsec:pf:G_loo_op_diff}

Recall the definitions of $\bm{G}$ and $\bm{G}^{\left(m\right)}$
in \eqref{def:G} and \eqref{def:G_loo}. As shown in \eqref{eq:G_G_loo_entry}
in the proof of Lemma~\ref{lemma:U_U_loo_dist} in Appendix~\ref{subsec:pf:U_U_loo_dist},
we know that $\bm{G}-\bm{G}^{\left(m\right)}$ is a rank-$2$ symmetric
matrix with nonzero entries located only in the $m$-th row and the
$m$-th column. In particular, one has
\begin{align*}
\big(\bm{G}-\bm{G}^{\left(m\right)}\big)_{m,i} & =\big\langle\bm{E}_{m,:},\bm{A}_{i,:}^{\mathsf{s}}\big\rangle,\qquad i\neq m,\\
\big(\bm{G}-\bm{G}^{\left(m\right)}\big)_{m,m} & =0,
\end{align*}
thus indicating that
\[
\big(\bm{G}-\bm{G}^{\left(m\right)}\big)_{m,:}=\bm{E}_{m,:}\big[\mathcal{\mathcal{P}}_{-m,:}\left(\bm{A}^{\mathsf{s}}\right)\big]^{\top}.
\]
This allows us to upper bound
\begin{align*}
\big\|\bm{G}-\bm{G}^{\left(m\right)}\big\| & \leq\big\|\bm{G}-\bm{G}^{\left(m\right)}\big\|_{\mathrm{F}}\lesssim\Big\|\big(\bm{G}-\bm{G}^{\left(m\right)}\big)_{m,:}\Big\|_{2}\\
 & =\left\Vert \bm{E}_{m,:}\big[\mathcal{\mathcal{P}}_{-m,:}\left(\bm{A}^{\mathsf{s}}\right)\big]^{\top}\right\Vert _{2}\\
 & \lesssim\sigma_{\mathsf{row}}\big(\sigma_{\mathsf{col}}+\left\Vert \bm{A}^{\star\top}\right\Vert _{2,\infty}\big)\sqrt{\log d}.
\end{align*}
Here, the last line follows the following. First, notice $\bm{A}^{\mathsf{s}}=\bm{A}^{\star}+\bm{E}$
and
\[
\bm{E}_{m,:}\big[\mathcal{\mathcal{P}}_{-m,:}\left(\bm{A}^{\star}+\bm{E}\right)\big]^{\top}=\sum\nolimits _{i:i\neq m}\left\langle \bm{E}_{m,:},\bm{E}_{i,:}\right\rangle \bm{e}_{i}^\top+\sum\nolimits _{i:i\neq m}\left\langle \bm{E}_{m,:},\bm{A}_{i,:}^{\star}\right\rangle \bm{e}_{i}^\top,
\]
where $\bm{e}_{i}$ is the $i$-th standard basis in $\mathbb{R}^{d_1}$. It follows from \eqref{eq:B_dev_W_2_norm_term1} and \eqref{eq:B_dev_W_2_norm_term3}
shown in the proof of Lemma~\ref{lemma:G_dev_W_2_norm} (cf.~Appendix~\ref{subsec:pf:G_dev_W_2_norm})
that with probability at least $1-O(d^{-11})$,
\begin{align*}
\left\Vert \bm{E}_{m,:}\big[\mathcal{\mathcal{P}}_{-m,:}\left(\bm{A}^{\star}+\bm{E}\right)\big]^{\top}\right\Vert _{2} & \leq\Big\|\sum\nolimits _{i:i\neq m}\left\langle \bm{E}_{m,:},\bm{E}_{i,:}\right\rangle \bm{e}_{i}^\top\Big\|_{2}+\Big\|\sum\nolimits _{i:i\neq m}\left\langle \bm{E}_{m,:},\bm{A}_{i,:}^{\star}\right\rangle \bm{e}_{i}^\top\Big\|_{2}\\
 & \lesssim\sigma_{\mathsf{col}}\sigma_{\mathsf{row}}\sqrt{\log d}+\sigma_{\mathsf{row}}\sqrt{\log d}\left\Vert \bm{A}^{\star\top}\right\Vert _{2,\infty}.
\end{align*}
In addition, the above bound combined with Lemma~\ref{lemma:G_op_loss}
immediately yields \eqref{claim:G_loo_op_loss}. The proof is complete
by taking the union bound over $1\leq m\leq d_{1}$.

\subsection{Proof of Lemma~\ref{lemma:A_dev_row_A_U_2_norm}\label{subsec:pf:A_dev_row_A_U_2_norm}}

By construction, the $m$-th row of $\bm{A}^{\mathsf{s}}-\bm{A}^{\star}$
is independent of $\big[\mathcal{\mathcal{P}}_{-m,:}\left(\bm{A}^{\mathsf{s}}\right)\big]^{\top}\bm{U}^{\left(m\right)}\bm{H}^{\left(m\right)}$.
As a result,
\[
\left(\bm{A}^{\mathsf{s}}-\bm{A}^{\star}\right)_{m,:}\big[\mathcal{\mathcal{P}}_{-m,:}\left(\bm{A}^{\mathsf{s}}\right)\big]^{\top}\bm{U}^{\left(m\right)}\bm{H}^{\left(m\right)}=\sum_{j\in\left[d_{2}\right]}E_{m,j}\left(\big[\mathcal{\mathcal{P}}_{-m,:}\left(\bm{A}^{\mathsf{s}}\right)\big]^{\top}\bm{U}^{\left(m\right)}\bm{H}^{\left(m\right)}\right)_{j,:}
\]
can be viewed as a sum of independent zero-mean random vectors (where
the randomness comes from $\left\{ E_{m,j}\right\} _{j\in\left[d_{2}\right]}$).
It is straightforward to calculate that
\begin{align*}
L & :=\max_{j\in\left[d_{2}\right]}\left\Vert E_{m,j}\left(\big[\mathcal{\mathcal{P}}_{-m,:}\left(\bm{A}^{\mathsf{s}}\right)\big]^{\top}\bm{U}^{\left(m\right)}\bm{H}^{\left(m\right)}\right)_{j,:}\right\Vert _{2}\leq B\left\Vert \mathcal{\mathcal{P}}_{-m,:}\left(\bm{A}^{\mathsf{s}}\right)^{\top}\bm{U}^{\left(m\right)}\bm{H}^{\left(m\right)}\right\Vert _{2,\infty},\\
V & :=\sum_{j\in\left[d_{2}\right]}\mathbb{E}\left[E_{m,j}^{2}\right]\left\Vert \left(\big[\mathcal{\mathcal{P}}_{-m,:}\left(\bm{A}^{\mathsf{s}}\right)\big]^{\top}\bm{U}^{\left(m\right)}\bm{H}^{\left(m\right)}\right)_{j,:}\right\Vert _{2}^{2}\leq\sigma_{\infty}^{2}\left\Vert \big[\mathcal{\mathcal{P}}_{-m,:}\left(\bm{A}^{\mathsf{s}}\right)\big]^{\top}\bm{U}^{\left(m\right)}\bm{H}^{\left(m\right)}\right\Vert _{\mathrm{F}}^{2}.
\end{align*}

In view of the matrix Bernstein inequality, it boils down to controlling
$L$ and $V$. To this end, let us first bound $L$. From Lemma~\ref{lemma:W_loo_2inf_norm},
one has that with probability at least $1-O\left(d^{-11}\right)$,
\begin{align}
L & \lesssim B\big(B\log d+\sigma_{\mathsf{col}}\sqrt{\log d}\big)\big\|\bm{U}^{\left(m\right)}\bm{H}^{\left(m\right)}\big\|_{2,\infty}+B\left\Vert \bm{A}^{\star}\right\Vert \sqrt{\frac{\mu r}{d_{2}}}.\label{eq:A_noise_dev_W_L_UB}
\end{align}
Regarding $V$, Lemma~\ref{lemma:A_op_norm} guarantees the following
upper bound with probability exceeding $1-O\left(d^{-11}\right)$,
\begin{align}
\left\Vert \big[\mathcal{\mathcal{P}}_{-m,:}\left(\bm{A}^{\mathsf{s}}\right)\big]^{\top}\bm{U}^{\left(m\right)}\bm{H}^{\left(m\right)}\right\Vert _{\mathrm{F}} & \leq\big\|\mathcal{\mathcal{P}}_{-m,:}\left(\bm{A}^{\mathsf{s}}\right)\big\|\big\|\bm{U}^{\left(m\right)}\bm{H}^{\left(m\right)}\big\|_{\mathrm{F}}\nonumber \\
 & \leq\sqrt{d_{1}}\left\Vert \bm{A}^{\mathsf{s}}\right\Vert \big\|\bm{U}^{\left(m\right)}\bm{H}^{\left(m\right)}\big\|_{\mathrm{2,\infty}}\nonumber \\
 & \lesssim\sqrt{d_{1}}\,\big(B\log d + \left(\sigma_{\mathsf{row}}+\sigma_{\mathsf{col}}\right)\sqrt{\log d}+\left\Vert \bm{A}^{\star}\right\Vert \big)\big\|\bm{U}^{\left(m\right)}\bm{H}^{\left(m\right)}\big\|_{\mathrm{2,\infty}}\nonumber \\
 & \lesssim\sqrt{d_{1}}\,\big(\sigma_{\mathsf{row}}\sqrt{\log d}+\left\Vert \bm{A}^{\star}\right\Vert \big)\big\|\bm{U}^{\left(m\right)}\bm{H}^{\left(m\right)}\big\|_{\mathrm{2,\infty}},\label{eq:A_U_fro_norm}
\end{align}
where the last inequality follows from the condition~\eqref{cond:eig_o1}
that $B\log d + \sigma_{\mathsf{col}}\sqrt{\log d}\ll\sigma_{r}^{\star}$. Applying
the matrix Bernstein inequality yields that with probability at least
$1-O\left(d^{-11}\right)$: one has
\begin{align}
 & \left\Vert \left(\bm{A}^{\mathsf{s}}-\bm{A}^{\star}\right)_{m,:}\big[\mathcal{\mathcal{P}}_{-m,:}\left(\bm{A}^{\mathsf{s}}\right)\big]^{\top}\bm{U}^{\left(m\right)}\bm{H}^{\left(m\right)}\right\Vert _{2}\lesssim L\log d+\sqrt{V\log d}\nonumber \\
 & \qquad\lesssim\big(B^{2}\log^{2}d+B\sigma_{\mathsf{col}}\log^{3/2}d\big)\big\|\bm{U}^{\left(m\right)}\bm{H}^{\left(m\right)}\big\|_{2,\infty}+B\log d\left\Vert \bm{A}^{\star}\right\Vert \sqrt{\frac{\mu r}{d_{2}}}\nonumber \\
 & \qquad\quad+\sqrt{d_{1}}\,\sigma_{\infty}\big(\sigma_{\mathsf{row}}\log d+\left\Vert \bm{A}^{\star}\right\Vert \sqrt{\log d}\big)\big\|\bm{U}^{\left(m\right)}\bm{H}^{\left(m\right)}\big\|_{\mathrm{2,\infty}}\nonumber \\
 & \qquad\overset{\left(\mathrm{i}\right)}{\lesssim}\sigma_{\mathsf{col}}\big(\sigma_{\mathsf{row}}\log d+\left\Vert \bm{A}^{\star}\right\Vert \sqrt{\log d}\big)\big\|\bm{U}^{\left(m\right)}\bm{H}^{\left(m\right)}\big\|_{\mathrm{2,\infty}}+\sigma_{\mathsf{col}}\left\Vert \bm{A}^{\star}\right\Vert \sqrt{\log d}\sqrt{\frac{\mu r}{d_{1}}}\nonumber \\
 & \qquad\lesssim\sigma_{\mathsf{col}}\big(\sigma_{\mathsf{row}}\log d+\left\Vert \bm{A}^{\star}\right\Vert \sqrt{\log d}\big)\left(\big\|\bm{U}^{\left(m\right)}\bm{H}^{\left(m\right)}\big\|_{\mathrm{2,\infty}}+\sqrt{\frac{\mu r}{d_{1}}}\right),\label{eq:A_dev_row_A_U_2_norm_UB1}
\end{align}
where (i) uses \eqref{eq:noise_value} that $\sigma_{\mathsf{col}}^{2}\asymp d_{1}\sigma_{\infty}^{2}$
as well as the conditions~\eqref{cond:B_UB}, \eqref{cond:eig_o1}
and \eqref{cond:col_B_rel} (namely, $B\lesssim\sqrt{\sigma_{\mathsf{row}}\sigma_{\mathsf{col}}/\log d},$
$B\log d\ll\left\Vert \bm{A}^{\star}\right\Vert $ and $B\log d\sqrt{\mu r/d_{2}}\lesssim\sigma_{\mathsf{col}}\sqrt{\log d}\sqrt{\mu r/d_{1}}$).

\subsection{Proof of Lemma~\ref{lemma:W_loo_2inf_norm}\label{subsec:pf:W_loo_2inf_norm}}

For notational convenience, we denote
\[
\bm{A}^{\mathsf{s},\left(m\right),0}:=\mathcal{P}_{-m,:}\left(\bm{A}^{\mathsf{s}}\right).
\]
Fix an arbitrary $l\in[d_{2}]$, and we would like to upper bound
$\big\|\bm{A}_{:,l}^{\mathsf{s},\left(m\right),0\top}\bm{U}^{\left(m\right)}\bm{H}^{\left(m\right)}\big\|_{2}.$
The main difficulty here lies in the complicated statistical dependence
between $\bm{A}_{:,l}^{\mathsf{s},\left(m\right),0}$ and $\bm{U}^{\left(m\right)}\bm{H}^{\left(m\right)}$.
Recall the definitions of the auxiliary matrices $\bm{U}^{\left(m,l\right)}$
and $\bm{H}^{\left(m,l\right)}$ in Algorithm~\ref{alg:init_loo_col}
and \eqref{def:H_loo_col}, respectively. By construction, $\bm{A}_{:,l}^{\mathsf{s},\left(m\right),0}$
is independent of $\bm{U}^{\left(m,l\right)}$ and $\bm{H}^{\left(m,l\right)}$.
Moreover, Lemma~\ref{lemma:U_loo_col_dist} guarantees that $\bm{U}^{\left(m\right)}\bm{H}^{\left(m\right)}$
is extremely close to $\bm{U}^{\left(m,l\right)}\bm{H}^{\left(m,l\right)}$.
Thus, invoke the triangle inequality to upper bound
\begin{align*}
\left\Vert \bm{A}_{:,l}^{\mathsf{s},\left(m\right),0\top}\bm{U}^{\left(m\right)}\bm{H}^{\left(m\right)}\right\Vert _{2} & \leq\underbrace{\left\Vert \left(\bm{A}_{:,l}^{\mathsf{s},\left(m\right),0}-\mathbb{E}\left[\bm{A}_{:,l}^{\mathsf{s},\left(m\right),0}\right]\right)^{\top}\bm{U}^{\left(m,l\right)}\bm{H}^{\left(m,l\right)}\right\Vert _{2}}_{=:\alpha_{1}}+\underbrace{\left\Vert \mathbb{E}\left[\bm{A}_{:,l}^{\mathsf{s},\left(m\right),0}\right]^{\top}\bm{U}^{\left(m,l\right)}\bm{H}^{\left(m,l\right)}\right\Vert _{2}}_{=:\alpha_{2}}\\
 & \quad+\underbrace{\left\Vert \bm{A}_{:,l}^{\mathsf{s},\left(m\right),0}\right\Vert _{2}\left\Vert \bm{U}^{\left(m\right)}\bm{H}^{\left(m\right)}-\bm{U}^{\left(m,l\right)}\bm{H}^{\left(m,l\right)}\right\Vert }_{=:\alpha_{3}}.
\end{align*}
Before moving on, we make note of the following two useful upper bounds
on $\big\|\bm{U}^{\left(m\right)}\bm{U}^{\left(m\right)\top}-\bm{U}^{\left(m,l\right)}\bm{U}^{\left(m,l\right)\top}\big\|$
based on Lemma~\ref{lemma:U_loo_col_dist}. On the one hand, one
has with probability at least $1-O\left(d^{-13}\right)$,
\begin{align}
\big\|\bm{U}^{\left(m\right)}\bm{U}^{\left(m\right)\top}-\bm{U}^{\left(m,l\right)}\bm{U}^{\left(m,l\right)\top}\big\| & \lesssim\frac{1}{\sigma_{r}^{\star2}}\big(B\log d+\sigma_{\mathsf{col}}\sqrt{\log d}\big)^{2}\big\|\bm{U}^{\left(m\right)}\bm{H}^{\left(m\right)}\big\|_{2,\infty}\nonumber \\
 & \quad+\frac{\sigma_{\infty}^{2}}{\sigma_{r}^{\star2}}+\frac{1}{\sigma_{r}^{\star2}}\big(B\log d+\sigma_{\mathsf{col}}\sqrt{\log d}\big)\left\Vert \bm{A}^{\star\top}\right\Vert _{2,\infty}\nonumber \\
 & \overset{\left(\mathrm{i}\right)}{\text{\ensuremath{\lesssim}}}\frac{1}{\sigma_{r}^{\star2}}\left(B^{2}\log^{2}d+\sigma_{\mathsf{col}}^{2}\log d+B\log d\left\Vert \bm{A}^{\star\top}\right\Vert _{2,\infty}+\sigma_{\mathsf{col}}\sqrt{\log d}\left\Vert \bm{A}^{\star}\right\Vert \right)\nonumber \\
 & \lesssim\frac{\delta_{\mathsf{op}}}{\sigma_{r}^{\star2}}\ll1.\label{eq:U_loo_col_dist_o1_UB1}
\end{align}
where (i) follows from the facts that $\sigma_{\infty}\leq\sigma_{\mathsf{col}}$,
$\big\|\bm{U}^{\left(m\right)}\bm{H}^{\left(m\right)}\big\|_{2,\infty}\leq\big\|\bm{U}^{\left(m\right)}\bm{H}^{\left(m\right)}\big\|\leq1$
and $\left\Vert \bm{A}^{\star\top}\right\Vert _{2,\infty}\leq\left\Vert \bm{A}^{\star}\right\Vert $;
\eqref{eq:U_loo_col_dist_o1_UB1} is due to the definition of $\sigma_{\mathsf{col}}$
in \eqref{def:sigma_col_UB}, the definition of $\delta_{\mathsf{op}}$
(cf.~\eqref{claim:G_op_loss}) as well as conditions \eqref{cond:B_UB}
and \eqref{cond:eig_o1}. On the other hand, we can also bound
\begin{align}
\big\|\bm{U}^{\left(m\right)}\bm{U}^{\left(m\right)\top}-\bm{U}^{\left(m,l\right)}\bm{U}^{\left(m,l\right)\top}\big\| & \overset{\left(\mathrm{i}\right)}{\lesssim}\frac{1}{\sigma_{r}^{\star2}}\big(B\log d+\sigma_{\mathsf{col}}\sqrt{\log d}\big)^{2}\big\|\bm{U}^{\left(m\right)}\bm{H}^{\left(m\right)}\big\|_{2,\infty}\nonumber \\
 & \quad+\frac{1}{\sigma_{r}^{\star2}}\big(B\log d+\sigma_{\mathsf{col}}\sqrt{\log d}\big)\left\Vert \bm{A}^{\star}\right\Vert \sqrt{\frac{\mu r}{d_{2}}}\nonumber \\
 & =o\left(1\right)\big\|\bm{U}^{\left(m\right)}\bm{H}^{\left(m\right)}\big\|_{2,\infty}+\frac{1}{\sigma_{r}^{\star2}}\big(B\log d+\sigma_{\mathsf{col}}\sqrt{\log d}\big)\left\Vert \bm{A}^{\star}\right\Vert \sqrt{\frac{\mu r}{d_{2}}}\label{eq:U_loo_col_dist_o1_UB2}
\end{align}
where (i) arises from \eqref{cond:col_B_rel} and the inequality that
$\left\Vert \bm{A}^{\star\top}\right\Vert _{2,\infty}\leq\left\Vert \bm{A}^{\star}\right\Vert \left\Vert \bm{V}^{\star}\right\Vert _{2,\infty}\leq\left\Vert \bm{A}^{\star}\right\Vert \sqrt{\mu r/d_{2}}$,
and \eqref{eq:U_loo_col_dist_o1_UB2} is due to conditions~\eqref{cond:B_UB}
and \eqref{cond:eig_o1}. In the sequel, we control the $\alpha_{i}$'s
separately.
\begin{itemize}
\item For $\alpha_{1},$ it is easy to see that
\[
\left(\bm{A}_{:,l}^{\mathsf{s},\left(m\right),0}-\mathbb{E}\left[\bm{A}_{:,l}^{\mathsf{s},\left(m\right),0}\right]\right)^{\top}\bm{U}^{\left(m,l\right)}\bm{H}^{\left(m,l\right)}=\sum_{i: i \neq m }E_{i,l}\big(\bm{U}^{\left(m,l\right)}\bm{H}^{\left(m,l\right)}\big)_{i,:}
\]
is a sum of independent zero-mean random vectors conditional on $\big\{ E_{i,j}\big\}_{i\in[d_{1}]\setminus\left\{ m\right\} ,j\in[d_{2}]\setminus\left\{ l\right\} }$.
Straightforward calculation gives that
\begin{align*}
L & :=\max_{ i: i \neq m }\left\Vert E_{i,l}\big(\bm{U}^{\left(m,l\right)}\bm{H}^{\left(m,l\right)}\big)_{i,:}\right\Vert _{2}\leq B\,\big\|\bm{U}^{\left(m,l\right)}\bm{H}^{\left(m,l\right)}\big\|_{2,\infty},\\
V & :=\sum_{i: i \neq m }\mathbb{E}\left[E_{i,l}^{2}\right]\left\Vert \big(\bm{U}^{\left(m,l\right)}\bm{H}^{\left(m,l\right)}\big)_{i,:}\right\Vert _{2}^{2}\leq\sigma_{\mathsf{col}}^{2}\,\big\|\bm{U}^{\left(m,l\right)}\bm{H}^{\left(m,l\right)}\big\|_{2,\infty}^{2}.
\end{align*}
Then we apply the matrix Bernstein inequality to obtain that with
probability at least $1-O\big(d^{-13}\big)$,
\begin{align}
 & \left\Vert \left(\bm{A}_{:,l}^{\mathsf{s},\left(m\right),0}-\mathbb{E}\left[\bm{A}_{:,l}^{\mathsf{s},\left(m\right),0}\right]\right)^{\top}\bm{U}^{\left(m,l\right)}\bm{H}^{\left(m,l\right)}\right\Vert _{2}\lesssim L\log d+\sqrt{V\log d}\nonumber \\
 & \qquad\leq\big(B\log d+\sigma_{\mathsf{col}}\sqrt{\log d}\big)\big\|\bm{U}^{\left(m,l\right)}\bm{H}^{\left(m,l\right)}\big\|_{2,\infty}\nonumber \\
 & \qquad\leq\big(B\log d+\sigma_{\mathsf{col}}\sqrt{\log d}\big)\Big(\big\|\bm{U}^{\left(m\right)}\bm{H}^{\left(m\right)}\big\|_{2,\infty}+\big\|\bm{U}^{\left(m\right)}\bm{U}^{\left(m\right)\top}-\bm{U}^{\left(m,l\right)}\bm{U}^{\left(m,l\right)\top}\big\|\Big),\label{eq:A_dev_obs_U_2_norm}
\end{align}
where the last line results from the following observation:
\begin{align}
\big\|\bm{U}^{\left(m,l\right)}\bm{H}^{\left(m,l\right)}\big\|_{2,\infty} & \leq\big\|\bm{U}^{\left(m\right)}\bm{H}^{\left(m\right)}\big\|_{2,\infty}+\big\|\bm{U}^{\left(m\right)}\bm{U}^{\left(m\right)\top}-\bm{U}^{\left(m,l\right)}\bm{U}^{\left(m,l\right)\top}\big\|_{2,\infty}\nonumber \\
 & \leq\big\|\bm{U}^{\left(m\right)}\bm{H}^{\left(m\right)}\big\|_{2,\infty}+\big\|\bm{U}^{\left(m\right)}\bm{U}^{\left(m\right)\top}-\bm{U}^{\left(m,l\right)}\bm{U}^{\left(m,l\right)\top}\big\|.\label{eq:U_loo_col_2inf__UB_temp}
\end{align}

\item Turning to $\alpha_{2},$ we obtain the simple upper bound
\begin{align}
\left\Vert \mathbb{E}\left[\bm{A}_{:,l}^{\mathsf{s},\left(m\right),0}\right]^{\top}\bm{U}^{\left(m,l\right)}\bm{H}^{\left(m,l\right)}\right\Vert _{2} & \leq\left\Vert \bm{A}_{:,l}^{\star}\right\Vert _{2}\big\|\bm{U}^{\left(m,l\right)}\bm{H}^{\left(m,l\right)}\big\|\leq\left\Vert \bm{A}^{\star\top}\right\Vert _{2,\infty}.\label{eq:A_exp_U_2_norm}
\end{align}
\item With regards to $\alpha_{3}$, Lemma~\ref{lemma:row_col_2_norm}
 reveals that with probability
at least $1-O\left(d^{-13}\right)$,
\begin{align}
\big\Vert \bm{A}_{:,l}^{\mathsf{s},\left(m\right),0}\big\Vert _{2}\big\Vert \bm{U}^{\left(m\right)}\bm{H}^{\left(m\right)}-\bm{U}^{\left(m,l\right)}\bm{H}^{\left(m,l\right)}\big\Vert  & \leq\left\Vert \bm{A}_{:,l}\right\Vert _{2}\big\|\bm{U}^{\left(m\right)}\bm{U}^{\left(m\right)\top}-\bm{U}^{\left(m,l\right)}\bm{U}^{\left(m,l\right)\top}\big\|\nonumber \\
 & \lesssim\big(\left\Vert \bm{A}^{\star\top}\right\Vert _{2,\infty}+B\sqrt{\log d}+\sigma_{\mathsf{col}}\big)\big\|\bm{U}^{\left(m\right)}\bm{U}^{\left(m\right)\top}-\bm{U}^{\left(m,l\right)}\bm{U}^{\left(m,l\right)\top}\big\|\nonumber \\
 & \lesssim\left\Vert \bm{A}^{\star\top}\right\Vert _{2,\infty}+\big(B\sqrt{\log d}+\sigma_{\mathsf{col}}\big)\big\|\bm{U}^{\left(m\right)}\bm{U}^{\left(m\right)\top}-\bm{U}^{\left(m,l\right)}\bm{U}^{\left(m,l\right)\top}\big\|,\label{eq:A_obs_U_loo_col_diff_2_norm}
\end{align}
where we use \eqref{eq:U_loo_col_dist_o1_UB1} in the last step.
\end{itemize}
Combining \eqref{eq:A_dev_obs_U_2_norm}, \eqref{eq:A_exp_U_2_norm},
\eqref{eq:A_obs_U_loo_col_diff_2_norm} implies that with probability
greater than $1-O\left(d^{-13}\right)$,
\begin{align*}
 & \left\Vert \bm{A}_{:,l}^{\mathsf{s},\left(m\right),0\top}\bm{U}^{\left(m\right)}\bm{H}^{\left(m\right)}\right\Vert _{2}\\
 & \qquad\lesssim\big(B\log d+\sigma_{\mathsf{col}}\sqrt{\log d}\big)\left(\big\|\bm{U}^{\left(m\right)}\bm{H}^{\left(m\right)}\big\|_{2,\infty}+\big\|\bm{U}^{\left(m\right)}\bm{U}^{\left(m\right)\top}-\bm{U}^{\left(m,l\right)}\bm{U}^{\left(m,l\right)\top}\big\|\right)+\left\Vert \bm{A}^{\star\top}\right\Vert _{2,\infty}\\
 & \qquad\overset{\left(\mathrm{i}\right)}{\lesssim}\big(B\log d+\sigma_{\mathsf{col}}\sqrt{\log d}\big)\big\|\bm{U}^{\left(m\right)}\bm{H}^{\left(m\right)}\big\|_{2,\infty}+\frac{1}{\sigma_{r}^{\star2}}\big(B\log d+\sigma_{\mathsf{col}}\sqrt{\log d}\big)^{2}\left\Vert \bm{A}^{\star}\right\Vert \sqrt{\frac{\mu r}{d_{2}}}+\left\Vert \bm{A}^{\star}\right\Vert \sqrt{\frac{\mu r}{d_{2}}}\\
 & \qquad\overset{\left(\mathrm{ii}\right)}{\lesssim}\big(B\log d+\sigma_{\mathsf{col}}\sqrt{\log d}\big)\big\|\bm{U}^{\left(m\right)}\bm{H}^{\left(m\right)}\big\|_{2,\infty}+\left\Vert \bm{A}^{\star}\right\Vert \sqrt{\frac{\mu r}{d_{2}}},
\end{align*}
where (i) is by \eqref{eq:U_loo_col_dist_o1_UB2} and $\left\Vert \bm{A}^{\star\top}\right\Vert _{2,\infty}\leq\left\Vert \bm{A}^{\star}\right\Vert \sqrt{\mu r/d_{2}}$,
and (ii) follows from conditions~\eqref{cond:B_UB} and \eqref{cond:eig_o1}.
The proof is complete by taking the union bound over $1\leq l\leq d_{2}$.

\subsection{Proof of Lemma~\ref{lemma:U_loo_col_dist}}

\label{subsec:pf:U_loo_col_dist}

Fix arbitrary $m\in[d_{1}]$ and $l\in[d_{2}]$. Recalling the definitions
of $\bm{G}^{\left(m\right)}$ and $\bm{G}^{\left(m,l\right)}$ in
\eqref{def:G_loo} and \eqref{def:G_loo_col}, we see that $\bm{G}^{\left(m\right)}-\bm{G}^{\left(m,l\right)}$
is symmetric with entries
\begin{align*}
\big(\bm{G}^{\left(m\right)}-\bm{G}^{\left(m,l\right)}\big)_{i,j} & =E_{i,l}E_{j,l}+A_{i,l}^{\star}E_{j,l}+E_{i,l}A_{j,l}^{\star},\qquad i\neq m,j\neq m,i\neq j,\\
\big(\bm{G}^{\left(m\right)}-\bm{G}^{\left(m,l\right)}\big)_{i,m} & =A_{m,l}^{\star}E_{i,l},\qquad i\neq m,\\
\big(\bm{G}^{\left(m\right)}-\bm{G}^{\left(m,l\right)}\big)_{i,i} & =0,\qquad1\leq i\leq d_{1}.
\end{align*}
Note that $\bm{G}^{\left(m\right)}-\bm{G}^{\left(m,l\right)}$ depends
only on $\left\{ E_{i,l}\right\} _{i\in[d_{1}]\setminus\left\{ m\right\} }$
and is hence statistically independent of $\bm{U}^{\left(m,l\right)}$
and $\bm{H}^{\left(m,l\right)}$. In particular, we can express
\[
\mathcal{P}_{-m}\big(\bm{G}^{\left(m\right)}-\bm{G}^{\left(m,l\right)}\big)=\mathcal{P}_{\mathsf{off}\text{-}\mathsf{diag}}\mathcal{P}_{-m}\big(\bm{E}_{:,l}\bm{E}_{:,l}^{\top}+\bm{E}_{:,l}\bm{A}_{:,l}^{\star\top}+\bm{A}_{:,l}^{\star}\bm{E}_{:,l}^{\top}\big),
\]
where $\mathcal{P}_{-m}$ is the projection onto the subspace of matrices
supported on $\left\{ \left(i,j\right)\in\left[d_{1}\right]\times\left[d_{2}\right]\colon i\neq m\enskip\text{and}\enskip j\neq m\right\} $
and $\mathcal{P}_{\mathsf{off}\text{-}\mathsf{diag}}$ extracts the
off-diagonal part. In addition, 
\[
\big(\bm{G}^{\left(m\right)}-\bm{G}^{\left(m,l\right)}\big)_{m,:}=A_{m,l}^{\star}\bm{E}_{:,l}^{\top}-A_{m,l}^{\star}E_{m,l}\bm{e}_{m}^{\top},
\]
where $\bm{e}_{m}$ stands for the $m$-th standard basis in $\mathbb{R}^{d_{1}}$.

In the sequel, we shall apply the Davis-Kahan $\sin\bm{\Theta}$ theorem
to prove the claim. Towards this end, we need to control $\big\|\bm{G}^{\left(m\right)}-\bm{G}^{\left(m,l\right)}\big\|$
and \textbf{$\big\|\big(\bm{G}^{\left(m\right)}-\bm{G}^{\left(m,l\right)}\big)\bm{U}^{\left(m,l\right)}\big\|$.}

\subsubsection{Step 1: controlling $\big\|\bm{G}^{\left(m\right)}-\bm{G}^{\left(m,l\right)}\big\|$}

Recall that we have already dealt with $\mathcal{P}_{-m}\big(\bm{G}^{\left(m\right)}-\bm{G}^{\left(m,l\right)}\big)$
in the proof of Lemma~\ref{lemma:G_op_loss} in Appendix~\ref{subsec:pf:G_op_loss}.
Straightforward computation gives
\begin{align*}
\big\|\mathcal{P}_{-m}\big(\bm{G}^{\left(m\right)}-\bm{G}^{\left(m,l\right)}\big)\big\| & \lesssim\left\Vert \bm{E}_{:,l}\right\Vert _{2}^{2}+\left\Vert \bm{E}_{:,l}\right\Vert _{2}\left\Vert \bm{A}^{\star\top}\right\Vert _{2,\infty}\leq\left\Vert \bm{E}_{:,l}\right\Vert _{2}^{2}+\left\Vert \bm{E}_{:,l}\right\Vert _{2}\left\Vert \bm{A}^{\star}\right\Vert ,\\
\big\|\mathcal{P}_{m}\big(\bm{G}^{\left(m\right)}-\bm{G}^{\left(m,l\right)}\big)\big\| & \lesssim\left\Vert \big(\bm{G}^{\left(m\right)}-\bm{G}^{\left(m,l\right)}\big)_{m,:}\right\Vert _{2}\leq\left\Vert \bm{E}_{:,l}\right\Vert _{2}\left\Vert \bm{A}^{\star}\right\Vert _{\infty},
\end{align*}
where $\mathcal{P}_{m}$ is the projection onto the subspace of matrices
supported on $\left\{ \left(i,j\right)\in\left[d_{1}\right]\times\left[d_{2}\right]\colon i=m\enskip\text{or}\enskip j=m\right\} $.
In view of \eqref{eq:n_l_2_norm} (shown in the proof of Lemma~\ref{lemma:G_op_loss}
in Appendix~\ref{subsec:pf:G_op_loss}), we know that
\begin{align*}
\big\|\bm{G}^{\left(m\right)}-\bm{G}^{\left(m,l\right)}\big\| & \leq\big\|\mathcal{P}_{-m}\big(\bm{G}^{\left(m\right)}-\bm{G}^{\left(m,l\right)}\big)\big\|+\big\|\mathcal{P}_{m}\big(\bm{G}^{\left(m\right)}-\bm{G}^{\left(m,l\right)}\big)\big\|\\
 & \lesssim\left\Vert \bm{E}_{:,l}\right\Vert _{2}^{2}+\left\Vert \bm{E}_{:,l}\right\Vert _{2}\left\Vert \bm{A}^{\star}\right\Vert \\
 & \lesssim B^{2}\log d+\sigma_{\mathsf{col}}^{2}+\big(B\sqrt{\log d}+\sigma_{\mathsf{col}}\big)\left\Vert \bm{A}^{\star}\right\Vert \\
 & \ll\sigma_{r}^{\star2},
\end{align*}
where the last step results from the conditions~\eqref{cond:B_UB}
and \eqref{cond:eig_o1}. Since $\big\|\bm{G}^{\left(m\right)}-\bm{G}^{\star}\big\|\lesssim\delta_{\mathsf{op}}\ll\sigma_{r}^{\star2}$
by Lemma~\ref{lemma:G_loo_op_diff} and the condition~\eqref{cond:eig_o1},
this also implies
\begin{equation}
\big\|\bm{G}^{\left(m,l\right)}-\bm{G}^{\star}\big\|\ll\sigma_{r}^{\star2}\quad\text{and}\quad\big\|\big(\bm{H}^{\left(m,l\right)}\big)^{-1}\big\|\lesssim1,\label{eq:H_loo_col_inv_op_UB}
\end{equation}
according to \cite[Lemma~2]{abbe2017entrywise}. Moreover, it follows
from Weyl's inequality that
\begin{align}
\lambda_{r}\big(\bm{\bm{G}}^{\left(m\right)}\big)-\lambda_{r+1}\big(\bm{\bm{G}}^{\left(m\right)}\big)-\big\|\bm{G}^{\left(m\right)}-\bm{G}^{\left(m,l\right)}\big\| & \geq\lambda_{r}\big(\bm{\bm{G}}^{\star}\big)-\lambda_{r+1}\big(\bm{\bm{G}}^{\star}\big)-2\,\big\|\bm{G}^{\left(m\right)}-\bm{G}^{\star}\big\|-\big\|\bm{G}^{\left(m\right)}-\bm{G}^{\left(m,l\right)}\big\|\nonumber \\
 & \gtrsim\sigma_{r}^{\star2}.\label{eq:U_loo_col_DK_deno}
\end{align}

\subsubsection{Step 2: controlling $\big\|\big(\bm{G}^{\left(m\right)}-\bm{G}^{\left(m,l\right)}\big)\bm{U}^{\left(m,l\right)}\big\|$}

In view of \eqref{eq:H_loo_col_inv_op_UB}, we can obtain
\begin{align}
\big\|\big(\bm{G}^{\left(m\right)}-\bm{G}^{\left(m,l\right)}\big)\bm{U}^{\left(m,l\right)}\big\| & \leq\big\|\big(\bm{G}^{\left(m\right)}-\bm{G}^{\left(m,l\right)}\big)\bm{U}^{\left(m,l\right)}\bm{H}^{\left(m,l\right)}\big\|\big\|\big(\bm{H}^{\left(m,l\right)}\big)^{-1}\big\| \nonumber\\
 & \lesssim\big\|\big(\bm{G}^{\left(m\right)}-\bm{G}^{\left(m,l\right)}\big)\bm{U}^{\left(m,l\right)}\bm{H}^{\left(m,l\right)}\big\| \nonumber\\
 & \leq\underbrace{\big\|\mathcal{P}_{m}\big(\bm{G}^{\left(m\right)}-\bm{G}^{\left(m,l\right)}\big)\bm{U}^{\left(m,l\right)}\bm{H}^{\left(m,l\right)}\big\|_{\mathrm{F}}}_{=:\alpha_{1}}+\underbrace{\big\|\mathcal{P}_{-m}\big(\bm{G}^{\left(m\right)}-\bm{G}^{\left(m,l\right)}\big)\bm{U}^{\left(m,l\right)}\bm{H}^{\left(m,l\right)}\big\|}_{=:\alpha_{2}} \label{eq:G_loo_col_diff_U_loo_col_decomp}.
\end{align}
Therefore, it suffices to control $\alpha_{1}$ and $\alpha_{2}$
separately.
\begin{itemize}
\item Regarding $\alpha_{1}$, Lemma~\ref{lemma:row_col_2_norm} reveals
that, with probability at least $1-O(d^{-13})$,
\begin{align}
 & \big\|\mathcal{P}_{m}\big(\bm{G}^{\left(m\right)}-\bm{G}^{\left(m,l\right)}\big)\bm{U}^{\left(m,l\right)}\bm{H}^{\left(m,l\right)}\big\|_{\mathrm{F}}\nonumber \\
 & \quad\leq\left\Vert \big(\bm{G}^{\left(m\right)}-\bm{G}^{\left(m,l\right)}\big)_{m,:}\bm{U}^{\left(m,l\right)}\bm{H}^{\left(m,l\right)}\right\Vert _{2}+\left\Vert \big(\bm{G}^{\left(m\right)}-\bm{G}^{\left(m,l\right)}\big)_{m,:}\right\Vert _{2}\big\|\bm{U}^{\left(m,l\right)}\bm{H}^{\left(m,l\right)}\big\|_{2,\infty}\nonumber \\
 & \quad\leq\left\Vert \bm{A}^{\star}\right\Vert _{\infty}\big\|\left(\bm{E}_{:,l}\right)^{\top}\bm{U}^{\left(m,l\right)}\bm{H}^{\left(m,l\right)}\big\|_{2}+\left\Vert \bm{A}^{\star}\right\Vert _{\infty}\left\Vert \bm{E}_{:,l}\right\Vert _{2}\big\|\bm{U}^{\left(m,l\right)}\bm{H}^{\left(m,l\right)}\big\|_{2,\infty}\nonumber \\
 & \quad\lesssim\big(B\log d+\sigma_{\mathsf{col}}\sqrt{\log d}\big)\left\Vert \bm{A}^{\star}\right\Vert _{\infty}\big\|\bm{U}^{\left(m,l\right)}\bm{H}^{\left(m,l\right)}\big\|_{2,\infty}.\label{eq:B_B_loo_col_diff_UH_part1}
\end{align}
\item When it comes to $\alpha_{2}$, since the spectral norm of a submatrix is always less than that of its original matrix, we can further upper bound
\begin{align*}
\big\|\mathcal{P}_{-m}\big(\bm{G}^{\left(m\right)}-\bm{G}^{\left(m,l\right)}\big)\bm{U}^{\left(m,l\right)}\bm{H}^{\left(m,l\right)}\big\| & \leq\underbrace{\left\Vert \left(\bm{E}_{:,l}\bm{E}_{:,l}^{\top}-\bm{D}_{l}\right)\bm{U}^{\left(m,l\right)}\bm{H}^{\left(m,l\right)}\right\Vert }_{=:\beta_{1}}\\
 & \quad+\underbrace{\left\Vert \left(\bm{A}_{:,l}^{\star}\bm{E}_{:,l}^{\top}+\bm{E}_{:,l}\bm{A}_{:,l}^{\star\top}-2\widehat{\bm{D}}_{l}\right)\bm{U}^{\left(m,l\right)}\bm{H}^{\left(m,l\right)}\right\Vert }_{=:\beta_{2}},
\end{align*}
where $\bm{D}_{l}$ and $\widehat{\bm{D}}_{l}$ are defined in \eqref{def:D_l} and \eqref{def:D_l_hat} in Appendix~\ref{subsec:pf:G_op_loss}.
In what follows, let us bound $\beta_{1}$ and $\beta_{2}$.
\begin{itemize}
\item To bound $\beta_{1}$, we have
\begin{align*}
\big\|\left(\bm{E}_{:,l}\bm{E}_{:,l}^{\top}-\bm{D}_{l}\right)\bm{U}^{\left(m,l\right)}\bm{H}^{\left(m,l\right)}\big\| & \leq\left\Vert \bm{E}_{:,l}\right\Vert _{2}\big\|\left(\bm{E}_{:,l}\right)^{\top}\bm{U}^{\left(m,l\right)}\bm{H}^{\left(m,l\right)}\big\|_{2}+\big\|\bm{D}_{l}\bm{U}^{\left(m,l\right)}\bm{H}^{\left(m,l\right)}\big\|.
\end{align*}
By Lemma~\ref{lemma:row_col_2_norm}, one has with probability at
least $1-O(d^{-13})$, 
\begin{align*}
\left\Vert \bm{E}_{:,l}\right\Vert _{2}\big\|\left(\bm{E}_{:,l}\right)^{\top}\bm{U}^{\left(m,l\right)}\bm{H}^{\left(m,l\right)}\big\|_{2} & \lesssim\big(B\sqrt{\log d}+\sigma_{\mathsf{col}}\big)\big(B\log d+\sigma_{\mathsf{col}}\sqrt{\log d}\big)\big\|\bm{U}^{\left(m,l\right)}\bm{H}^{\left(m,l\right)}\big\|_{2,\infty}\\
 & \leq\big(B\log d+\sigma_{\mathsf{col}}\sqrt{\log d}\big)^{2}\big\|\bm{U}^{\left(m,l\right)}\bm{H}^{\left(m,l\right)}\big\|_{2,\infty}.
\end{align*}
As for the second term $\big\|\bm{D}_{l}\bm{U}^{\left(m,l\right)}\bm{H}^{\left(m,l\right)}\big\|$,
we first observe that 
\[
\big\|\mathbb{E}\left[\bm{D}_{l}\right]\bm{U}^{\left(m,l\right)}\bm{H}^{\left(m,l\right)}\big\|\leq\left\Vert \mathbb{E}\left[\bm{D}_{l}\right]\right\Vert \big\|\bm{U}^{\left(m,l\right)}\bm{H}^{\left(m,l\right)}\big\|\leq\max_{i\in\left[d_{1}\right]}\mathbb{E}\left[E_{i,l}^{2}\right]\big\|\bm{U}^{\left(m,l\right)}\bm{H}^{\left(m,l\right)}\big\|\leq\sigma_{\infty}^{2}.
\]
Additionally, the deviation $\big(\bm{D}_{l}-\mathbb{E}\left[\bm{D}_{l}\right]\big)\bm{U}^{\left(m,l\right)}\bm{H}^{\left(m,l\right)}=\sum_{i\in\left[d_{1}\right]}\big(E_{i,l}^{2}-\mathbb{E}[E_{i,l}^{2}]\big)\bm{e}_{i}\bm{e}_{i}^{\top}\bm{U}^{\left(m,l\right)}\bm{H}^{\left(m,l\right)}$
is a sum of independent zero-mean random matrices. By the matrix Bernstein
inequality, we have with probability at least $1-O(d^{-13})$,
\begin{align*}
 & \left\Vert \left(\bm{D}_{l}-\mathbb{E}\left[\bm{D}_{l}\right]\right)\bm{U}^{\left(m,l\right)}\bm{H}^{\left(m,l\right)}\right\Vert \\
 &\qquad \qquad	 \lesssim\Big(\max_{i\in\left[d_{1}\right]}\left|E_{i,l}^{2}-\mathbb{E}[E_{i,l}^{2}]\right|\log d+\sqrt{\sum\nolimits_{i\in\left[d_{1}\right]}\mathbb{E}\big[E_{i,l}^{4}\big]\log d}\Big)\max_{i\in \left[ d_1 \right]}\big\|\bm{e}_{i}\bm{e}_{i}^{\top}\bm{U}^{\left(m,l\right)}\bm{H}^{\left(m,l\right)}\big\|_{2,\infty}\\
 &\qquad \qquad	 \lesssim\Big(\max_{i\in\left[d_{1}\right]}\left|E_{i,l}^{2}-\mathbb{E}[E_{i,l}^{2}]\right|\log d+\sqrt{\sum\nolimits_{i\in\left[d_{1}\right]}\mathbb{E}\big[E_{i,l}^{4}\big]\log d}\Big)\big\|\bm{U}^{\left(m,l\right)}\bm{H}^{\left(m,l\right)}\big\|_{2,\infty}\\
 &\qquad \qquad	 \lesssim\Big(B^{2}\log d+\sqrt{\left(B^{2}\sigma_{\mathsf{col}}^{2}\right)\log d}\Big)\big\|\bm{U}^{\left(m,l\right)}\bm{H}^{\left(m,l\right)}\big\|_{2,\infty}\\
 &\qquad \qquad	 \overset{(\mathrm{i})}{\lesssim}\Big(B^{2}\log d+\left(B^{2}+\sigma_{\mathsf{col}}^{2}\right)\sqrt{\log d}\Big)\big\|\bm{U}^{\left(m,l\right)}\bm{H}^{\left(m,l\right)}\big\|_{2,\infty}\\
 &\qquad \qquad	 \asymp\big(B^{2}\log d+\sigma_{\mathsf{col}}^{2}\sqrt{\log d}\big)\big\|\bm{U}^{\left(m,l\right)}\bm{H}^{\left(m,l\right)}\big\|_{2,\infty},
\end{align*}
where we have used the AM-GM inequality in (i). Combining the estimates above
yields
\begin{equation}
\beta_{1}=\left\Vert \left(\bm{E}_{:,l}\bm{E}_{:,l}^{\top}-\bm{D}_{l}\right)\bm{U}^{\left(m,l\right)}\bm{H}^{\left(m,l\right)}\right\Vert \lesssim\big(B\log d+\sigma_{\mathsf{col}}\sqrt{\log d}\big)^{2}\big\|\bm{U}^{\left(m,l\right)}\bm{H}^{\left(m,l\right)}\big\|_{2,\infty}+\sigma_{\infty}^{2}.\label{eq:U_loo_col_beta1}
\end{equation}
\item Turning to $\beta_{2}$, we see from Lemma~\ref{lemma:row_col_2_norm}
that with probability at least $1-O\left(d^{-13}\right)$, one has
\begin{align*}
\left\Vert \bm{A}_{:,l}^{\star}\bm{E}_{:,l}^{\top}\bm{U}^{\left(m,l\right)}\bm{H}^{\left(m,l\right)}\right\Vert  & \leq\left\Vert \bm{A}^{\star\top}\right\Vert _{2,\infty}\big\|\bm{E}_{:,l}^{\top}\bm{U}^{\left(m,l\right)}\bm{H}^{\left(m,l\right)}\big\|\\
 & \lesssim\big(B\log d+\sigma_{\mathsf{col}}\sqrt{\log d}\big)\left\Vert \bm{A}^{\star\top}\right\Vert _{2,\infty}\big\|\bm{U}^{\left(m,l\right)}\bm{H}^{\left(m,l\right)}\big\|_{2,\infty}
\end{align*}
and
\begin{align*}
\left\Vert \bm{E}_{:,l}\bm{A}_{:,l}^{\star\top}\bm{U}^{\left(m,l\right)}\bm{H}^{\left(m,l\right)}\right\Vert  & \leq\left\Vert \bm{E}_{:,l}\right\Vert _{2}\left\Vert \bm{A}^{\star\top}\right\Vert _{2,\infty}\big\|\bm{U}^{\left(m,l\right)}\bm{H}^{\left(m,l\right)}\big\|\\
 & \lesssim\big(B\sqrt{\log d}+\sigma_{\mathsf{col}}\big)\left\Vert \bm{A}^{\star\top}\right\Vert _{2,\infty}.
\end{align*}
In addition, $\widehat{\bm{D}}_{l}\bm{U}^{\left(m,l\right)}\bm{H}^{\left(m,l\right)}=\sum_{i\in\left[d_{1}\right]}A_{i,l}^{\star}E_{i,l}\bm{e}_{i}\bm{e}_{i}^{\top}\bm{U}^{\left(m,l\right)}\bm{H}^{\left(m,l\right)}$
is a sum of independent zero-mean random matrices. It then follows
from the matrix Bernstein inequality that with probability at least
$1-O\left(d^{-13}\right)$,
\begin{align*}
\big\|\widehat{\bm{D}}_{l}\bm{U}^{\left(m,l\right)}\bm{H}^{\left(m,l\right)}\big\| & \lesssim\Big(\max_{i\in\left[d_{1}\right]}\left|A_{i,l}^{\star}E_{i,l}\right|\log d+\sqrt{\sum\nolimits_{i\in\left[d_{1}\right]}A_{i,l}^{\star2}\mathbb{E}\big[E_{i,l}^{2}\big]\log d}\Big)\big\|\bm{U}^{\left(m,l\right)}\bm{H}^{\left(m,l\right)}\big\|_{2,\infty}\\
 & \lesssim\big(B\log d+\sigma_{\mathsf{col}}\sqrt{\log d}\big)\left\Vert \bm{A}^{\star}\right\Vert _{\infty}\big\|\bm{U}^{\left(m,l\right)}\bm{H}^{\left(m,l\right)}\big\|_{2,\infty}.
\end{align*}
Hence, we know that
\begin{align}
\beta_{2} & =\left\Vert \big(\bm{A}_{:,l}^{\star}\bm{E}_{:,l}^{\top}+\bm{E}_{:,l}\bm{A}_{:,l}^{\star\top}-2\widehat{\bm{D}}_{l}\big)\bm{U}^{\left(m,l\right)}\bm{H}^{\left(m,l\right)}\right\Vert \nonumber \\
 & \lesssim\big(B\log d+\sigma_{\mathsf{col}}\sqrt{\log d}\big)\left\Vert \bm{A}^{\star\top}\right\Vert _{2,\infty}\big\|\bm{U}^{\left(m,l\right)}\bm{H}^{\left(m,l\right)}\big\|_{2,\infty}+\big(B\sqrt{\log d}+\sigma_{\mathsf{col}}\big)\left\Vert \bm{A}^{\star\top}\right\Vert _{2,\infty}\nonumber \\
 & \lesssim\big(B\log d+\sigma_{\mathsf{col}}\sqrt{\log d}\big)\left\Vert \bm{A}^{\star\top}\right\Vert _{2,\infty},\label{eq:U_loo_col_beta2}
\end{align}
which results from the facts that $\left\Vert \bm{A}^{\star}\right\Vert _{\infty}\leq\left\Vert \bm{A}^{\star\top}\right\Vert _{2,\infty}$
and $\left\Vert \bm{U}^{\left(m,l\right)}\bm{H}^{\left(m,l\right)}\right\Vert _{2,\infty}\leq\left\Vert \bm{U}^{\left(m,l\right)}\bm{H}^{\left(m,l\right)}\right\Vert \leq1$.
\end{itemize}
\end{itemize}
Putting \eqref{eq:U_loo_col_beta1} and \eqref{eq:U_loo_col_beta2}
together yields that
\begin{align}
\big\|\mathcal{P}_{-m}\big(\bm{G}^{\left(m\right)}-\bm{G}^{\left(m,l\right)}\big)\bm{U}^{\left(m,l\right)}\bm{H}^{\left(m,l\right)}\big\| & \lesssim\big(B\log d+\sigma_{\mathsf{col}}\sqrt{\log d}\big)^{2}\big\|\bm{U}^{\left(m,l\right)}\bm{H}^{\left(m,l\right)}\big\|_{2,\infty}\nonumber \\
 & \quad+\sigma_{\infty}^{2}+\big(B\log d+\sigma_{\mathsf{col}}\sqrt{\log d}\big)\left\Vert \bm{A}^{\star\top}\right\Vert _{2,\infty}.\label{eq:B_B_loo_col_diff_UH_part2}
\end{align}
This combined with \eqref{eq:B_B_loo_col_diff_UH_part1} and \eqref{eq:G_loo_col_diff_U_loo_col_decomp} implies

\begin{align}
\big\|\big(\bm{G}^{\left(m\right)}-\bm{G}^{\left(m,l\right)}\big)\bm{U}^{\left(m,l\right)}\big\| & \lesssim\big(B\log d+\sigma_{\mathsf{col}}\sqrt{\log d}\big)^{2}\big\|\bm{U}^{\left(m,l\right)}\bm{H}^{\left(m,l\right)}\big\|_{2,\infty} + \sigma_\infty^2 \nonumber \\
 & \quad+\big(B\log d+\sigma_{\mathsf{col}}\sqrt{\log d}\big)\left\Vert \bm{A}^{\star\top}\right\Vert _{2,\infty}+\big(B\log d+\sigma_{\mathsf{col}}\sqrt{\log d}\big)\left\Vert \bm{A}^{\star}\right\Vert _{\infty}\big\|\bm{U}^{\left(m,l\right)}\bm{H}^{\left(m,l\right)}\big\|_{2,\infty}\nonumber \\
 & \overset{\left(\mathrm{i}\right)}{\leq}\big(B\log d+\sigma_{\mathsf{col}}\sqrt{\log d}\big)^{2}\big\|\bm{U}^{\left(m,l\right)}\bm{H}^{\left(m,l\right)}\big\|_{2,\infty}+\sigma_{\infty}^{2}\nonumber \\
 & \quad+\big(B\log d+\sigma_{\mathsf{col}}\sqrt{\log d}\big)\left\Vert \bm{A}^{\star\top}\right\Vert _{2,\infty}+\big(B\log d+\sigma_{\mathsf{col}}\sqrt{\log d}\big)\left\Vert \bm{A}^{\star}\right\Vert _{\infty}\nonumber \\
 & \overset{\left(\mathrm{ii}\right)}{\asymp}\big(B\log d+\sigma_{\mathsf{col}}\sqrt{\log d}\big)^{2}\big\|\bm{U}^{\left(m,l\right)}\bm{H}^{\left(m,l\right)}\big\|_{2,\infty}+\sigma_{\infty}^{2}\nonumber \\
 & \quad+\big(B\log d+\sigma_{\mathsf{col}}\sqrt{\log d}\big)\left\Vert \bm{A}^{\star\top}\right\Vert _{2,\infty},\label{eq:U_loo_col_DK_nume}
\end{align}
where (i) is due to the facts that $\big\|\bm{U}^{\left(m,l\right)}\bm{H}^{\left(m,l\right)}\big\|_{2,\infty}\leq\big\|\bm{U}^{\left(m,l\right)}\bm{H}^{\left(m,l\right)}\big\|\leq1$,
and \eqref{eq:U_loo_col_DK_nume} arises from the inequality $\left\Vert \bm{A}^{\star}\right\Vert _{\infty}\leq\left\Vert \bm{A}^{\star\top}\right\Vert _{2,\infty}$.

\subsubsection{Step 3: combining Step 1 and Step 2}

From \eqref{eq:U_loo_col_DK_deno} and \eqref{eq:U_loo_col_DK_nume},
we apply the Davis-Kahan $\sin\bm{\Theta}$ theorem to obtain that
with probability exceeding $1-O\left(d^{-13}\right)$,
\begin{align*}
 & \big\|\bm{U}^{\left(m\right)}\bm{U}^{\left(m\right)\top}-\bm{U}^{\left(m,l\right)}\bm{U}^{\left(m,l\right)\top}\big\|\leq\frac{\big\|\big(\bm{G}^{\left(m\right)}-\bm{G}^{\left(m,l\right)}\big)\bm{U}^{\left(m,l\right)}\big\|}{\lambda_{r}\left(\bm{G}^{\left(m\right)}\right)-\lambda_{r+1}\left(\bm{G}^{\left(m\right)}\right)-\big\|\bm{G}^{\left(m\right)}-\bm{G}^{\left(m,l\right)}\big\|}\\
 & \qquad\lesssim\frac{1}{\sigma_{r}^{\star2}}\big\|\big(\bm{G}^{\left(m\right)}-\bm{G}^{\left(m,l\right)}\big)\bm{U}^{\left(m,l\right)}\big\|\\
 & \qquad \overset{(\mathrm{i})}{\lesssim}\frac{1}{\sigma_{r}^{\star2}}\big(B\log d+\sigma_{\mathsf{col}}\sqrt{\log d}\big)^{2}\left(\big\|\bm{U}^{\left(m\right)}\bm{H}^{\left(m\right)}\big\|_{2,\infty}+\big\|\bm{U}^{\left(m\right)}\bm{U}^{\left(m\right)\top}-\bm{U}^{\left(m,l\right)}\bm{U}^{\left(m,l\right)\top}\big\|\right)\\
 & \qquad\quad+\frac{\sigma_{\infty}^{2}}{\sigma_{r}^{\star2}}+\frac{1}{\sigma_{r}^{\star2}}\big(B\log d+\sigma_{\mathsf{col}}\sqrt{\log d}\big)\left\Vert \bm{A}^{\star\top}\right\Vert _{2,\infty}\\
 & \qquad\overset{(\mathrm{ii})}{\lesssim}\frac{1}{\sigma_{r}^{\star2}}\big(B\log d+\sigma_{\mathsf{col}}\sqrt{\log d}\big)^{2}\big\|\bm{U}^{\left(m\right)}\bm{H}^{\left(m\right)}\big\|_{2,\infty}+o\left(1\right)\big\|\bm{U}^{\left(m\right)}\bm{U}^{\left(m\right)\top}-\bm{U}^{\left(m,l\right)}\bm{U}^{\left(m,l\right)\top}\big\|\\
 & \qquad\quad+\frac{\sigma_{\infty}^{2}}{\sigma_{r}^{\star2}}+\frac{1}{\sigma_{r}^{\star2}}\big(B\log d+\sigma_{\mathsf{col}}\sqrt{\log d}\big)\left\Vert \bm{A}^{\star\top}\right\Vert _{2,\infty}.
\end{align*}
Here, we have used \eqref{eq:U_loo_col_2inf__UB_temp} in (i), and the condition~\eqref{cond:eig_o1} (i.e.~$\max\left\{ B\log d,\sigma_{\mathsf{col}}\sqrt{\log d}\right\} \ll\sigma_{r}^{\star}$) in (ii). Rearrange the inequalities and taking the union bound over $m\in\left[d_{1}\right]$
and $l\in\left[d_{2}\right]$ complete the proof.

\subsection{Proof of Lemma~\ref{lemma:G_row_UH_Utrue}}

\label{subsec:pf:G_row_UH_Utrue}

Recall the definition of $\bm{G}$ in \eqref{eq:defn-G}. We can express
\[
\bm{G}_{m,:}\big(\bm{U}^{\left(m\right)}\bm{H}^{\left(m\right)}-\bm{U}^{\star}\big)=\bm{A}_{m,:}^{\mathsf{s}}\big[\mathcal{\mathcal{P}}_{-m,:}\left(\bm{A}^{\mathsf{s}}\right)\big]^{\top}\big(\bm{U}^{\left(m\right)}\bm{H}^{\left(m\right)}-\bm{U}^{\star}\big).
\]
Consequently, one can upper bound
\begin{align*}
\left\Vert \bm{G}_{m,:}\big(\bm{U}^{\left(m\right)}\bm{H}^{\left(m\right)}-\bm{U}^{\star}\big)\right\Vert _{2} & \leq\underbrace{\left\Vert \bm{A}_{m,:}^{\star}\big[\mathcal{\mathcal{P}}_{-m,:}\left(\bm{A}^{\mathsf{s}}\right)\big]^{\top}\big(\bm{U}^{\left(m\right)}\bm{H}^{\left(m\right)}-\bm{U}^{\star}\big)\right\Vert _{2}}_{=:\beta_{1}}\\
 & \quad+\underbrace{\left\Vert \left(\bm{A}^{\mathsf{s}}-\bm{A}^{\star}\right)\big[\mathcal{\mathcal{P}}_{-m,:}\left(\bm{A}^{\mathsf{s}}\right)\big]^{\top}\big(\bm{U}^{\left(m\right)}\bm{H}^{\left(m\right)}-\bm{U}^{\star}\big)\right\Vert _{2}}_{=:\beta_{2}}.
\end{align*}
In what follows, we shall control $\beta_{1}$ and $\beta_{2}$ separately.
\begin{itemize}
\item To upper bound $\beta_{1}$, we have
\begin{align*}
\left\Vert \bm{A}_{m,:}^{\star}\big[\mathcal{\mathcal{P}}_{-m,:}\left(\bm{A}^{\mathsf{s}}\right)\big]^{\top}\big(\bm{U}^{\left(m\right)}\bm{H}^{\left(m\right)}-\bm{U}^{\star}\big)\right\Vert _{2} & \leq\Big\|\bm{A}_{m,:}^{\star}\big[\mathcal{\mathcal{P}}_{-m,:}\left(\bm{A}^{\mathsf{s}}\right)\big]^{\top}\Big\|_{2}\big\|\bm{U}^{\left(m\right)}\bm{H}^{\left(m\right)}-\bm{U}^{\star}\big\|.
\end{align*}
It is straightforward to derive
\[
\Big\|\bm{A}_{m,:}^{\star}\big[\mathcal{\mathcal{P}}_{-m,:}\left(\bm{A}^{\mathsf{s}}\right)\big]^{\top}\Big\|_{2}\leq\left\Vert \bm{A}_{m,:}^{\star}\bm{A}^{\mathsf{s}\top}\right\Vert _{2}\leq\left\Vert \bm{A}_{m,:}^{\star}\bm{A}^{\star\top}\right\Vert _{2}+\left\Vert \bm{A}_{m,:}^{\star}\bm{E}^{\top}\right\Vert _{2},
\]
whose first term can be bounded by
\[
\left\Vert \bm{A}_{m,:}^{\star}\bm{A}^{\star\top}\right\Vert _{2}\leq\left\Vert \bm{A}_{m,:}^{\star}\right\Vert _{2}\left\Vert \bm{A}^{\star}\right\Vert \leq\sigma_{1}^{\star}\left\Vert \bm{A}^{\star}\right\Vert _{2,\infty}.
\]
In addition, Lemma~\ref{lemma:sum_row_square_sum_col} indicates
that
\begin{align*}
\left\Vert \bm{A}_{m,:}^{\star}\bm{E}^{\top}\right\Vert _{2}^{2} & =\sum\nolimits _{i}\Big(\sum\nolimits _{j}A_{m,j}^{\star}E_{i,j}\Big)^{2}\lesssim\left(\sigma_{\mathsf{col}}^{2}+\sigma_{\infty}^{2}\log^{2}d\right)\left\Vert \bm{A}^{\star}\right\Vert _{2,\infty}^{2}+B^{2}\left\Vert \bm{A}^{\star}\right\Vert _{\infty}^{2}\log^{3}d\\
 & \leq\left(\sigma_{\mathsf{col}}^{2}+B^{2}\log^{2}d\right)\left\Vert \bm{A}^{\star}\right\Vert _{2,\infty}^{2}+B^{2}\left\Vert \bm{A}^{\star}\right\Vert _{\infty}^{2}\log^{3}d
\end{align*}
holds with probability at least $1-O\left(d^{-11}\right)$. Hence,
we have
\begin{align}
\Big\|\bm{A}_{m,:}^{\star}\big[\mathcal{\mathcal{P}}_{-m,:}\left(\bm{A}^{\mathsf{s}}\right)\big]^{\top}\Big\|_{2} & \leq\left\Vert \bm{A}_{m,:}^{\star}\bm{A}^{\star\top}\right\Vert _{2}+\left\Vert \bm{A}_{m,:}^{\star}\bm{E}^{\top}\right\Vert _{2}\nonumber \\
 & \lesssim\sigma_{1}^{\star}\left\Vert \bm{A}^{\star}\right\Vert _{2,\infty}+\left(\sigma_{\mathsf{col}}+B\log d\right)\left\Vert \bm{A}^{\star}\right\Vert _{2,\infty}+B\log^{3/2}d\left\Vert \bm{A}^{\star}\right\Vert _{\infty}\nonumber \\
 & \lesssim\sigma_{1}^{\star}\left\Vert \bm{A}^{\star}\right\Vert _{2,\infty}+\sigma_{1}^{\star2}\sqrt{\frac{\mu r}{d_{1}}}\lesssim\sigma_{1}^{\star2}\sqrt{\frac{\mu r}{d_{1}}},\label{eq:A_true_row_A_top_2_norm}
\end{align}
using conditions~\eqref{cond:B_UB}, \eqref{cond:row_B_rel} and \eqref{cond:eig_o1}.
Moreover, from Lemma~\ref{lemma:G_op_loss} and Lemma~\ref{lemma:G_loo_op_diff},
we know that
\begin{align}
\big\|\bm{U}^{\left(m\right)}\bm{H}^{\left(m\right)}-\bm{U}^{\star}\big\| & \lesssim\big\|\bm{U}^{\left(m\right)}\bm{U}^{\left(m\right)\top}-\bm{U}^{\star}\bm{U}^{\star\top}\big\|\leq\frac{\big\|\bm{G}^{\left(m\right)}-\bm{G}^{\star}\big\|}{\lambda_{r}\left(\bm{G}^{\star}\right)-\lambda_{r+1}\big(\bm{G}^{\left(m\right)}\big)}\nonumber \\
 & \leq\frac{\big\|\bm{G}^{\left(m\right)}-\bm{G}^{\star}\big\|}{\lambda_{r}\left(\bm{G}^{\star}\right)-\lambda_{r+1}\left(\bm{G}^{\star}\right)-\big\|\bm{G}^{\left(m\right)}-\bm{G}^{\star}\big\|}\nonumber \\
 & \lesssim\frac{1}{\sigma_{r}^{\star2}}\big\|\bm{G}^{\left(m\right)}-\bm{G}^{\star}\big\|\lesssim\frac{\delta_{\mathsf{op}}}{\sigma_{r}^{\star2}},\label{eq:UH_loo_op_loss_UB}
\end{align}
where $\delta_{\mathsf{op}}$ is defined in \eqref{claim:G_op_loss}.
Combining \eqref{eq:A_true_row_A_top_2_norm} and \eqref{eq:UH_loo_op_loss_UB}
yields
\begin{align}
\left\Vert \bm{A}_{m,:}^{\star}\mathcal{\mathcal{P}}_{-m,:}\left(\bm{A}^{\mathsf{s}}\right)^{\top}\big(\bm{U}^{\left(m\right)}\bm{H}^{\left(m\right)}-\bm{U}^{\star}\big)\right\Vert _{2} & \lesssim\big\|\bm{A}_{m,:}^{\star}\mathcal{\mathcal{P}}_{-m,:}\left(\bm{A}^{\mathsf{s}}\right)^{\top}\big\|_{2}\big\|\bm{U}^{\left(m\right)}\bm{H}^{\left(m\right)}-\bm{U}^{\star}\big\|\lesssim\delta_{\mathsf{op}}\kappa^{2}\sqrt{\frac{\mu r}{d_{1}}}.\label{eq:A_true_row_A_top_UH_Utrue_op}
\end{align}
\item Next, we look at $\beta_{2}$. Before we start, we pause to note that
by \eqref{eq:U_loo_col_dist_o1_UB2}, one has
\begin{align}
\big\|\bm{U}^{\left(m\right)}\bm{U}^{\left(m\right)\top}-\bm{U}^{\left(m,l\right)}\bm{U}^{\left(m,l\right)\top}\big\| & \lesssim\frac{1}{\sigma_{r}^{\star2}}\big(B\log d+\sigma_{\mathsf{col}}\sqrt{\log d}\big)^{2}\big(\big\|\bm{U}^{\left(m\right)}\bm{H}^{\left(m\right)}-\bm{U}^{\star}\big\|_{2,\infty}+\left\Vert \bm{U}^{\star}\right\Vert _{2,\infty}\big)\nonumber \\
 & \quad+\frac{1}{\sigma_{r}^{\star2}}\big(B\log d+\sigma_{\mathsf{col}}\sqrt{\log d}\big)\left\Vert \bm{A}^{\star}\right\Vert \sqrt{\frac{\mu r}{d_{2}}}.\label{eq:U_loo_col_diff_temp_2}
\end{align}
We now ready to control $\left(\bm{A}^{\mathsf{s}}-\bm{A}^{\star}\right)_{m,:}\big[\mathcal{\mathcal{P}}_{-m,:}\left(\bm{A}^{\mathsf{s}}\right)\big]^{\top}\big(\bm{U}^{\left(m\right)}\bm{H}^{\left(m\right)}-\bm{U}^{\star}\big)$,
which can be accomplished in the same way as in the proof of Lemma~\ref{lemma:A_dev_row_A_U_2_norm}
in Appendix~\ref{subsec:pf:A_dev_row_A_U_2_norm}. We omit the proof
details for conciseness here and only give the proof sketch. First,
we can use $\bm{U}^{\left(m,l\right)}\bm{H}^{\left(m,l\right)}-\bm{U}^{\star}$
as the surrogate for $\bm{U}^{\left(m\right)}\bm{H}^{\left(m\right)}-\bm{U}^{\star}$
to deal with the statistical dependence issue, and apply the Bernstein
inequality to show that with probability at least $1-O\left(d^{-11}\right)$,
\begin{align}
 & \big\|\big[\mathcal{\mathcal{P}}_{-m,:}\left(\bm{A}^{\mathsf{s}}\right)\big]^{\top}\big(\bm{U}^{\left(m\right)}\bm{H}^{\left(m\right)}-\bm{U}^{\star}\big)\big\|_{2,\infty}\nonumber \\
 & \qquad\lesssim\big(B\log d+\sigma_{\mathsf{col}}\sqrt{\log d}\big)\big\|\bm{U}^{\left(m\right)}\bm{H}^{\left(m\right)}-\bm{U}^{\star}\big\|_{2,\infty}\nonumber \\
 & \qquad\quad+\left\Vert \bm{A}^{\star\top}\right\Vert _{2,\infty}\big\|\bm{U}^{\left(m\right)}\bm{H}^{\left(m\right)}-\bm{U}^{\star}\big\|\nonumber \\
 & \qquad\quad+\big(\left\Vert \bm{A}^{\star\top}\right\Vert _{2,\infty}+B \sqrt{\log d}+\sigma_{\mathsf{col}}\big)\big\|\bm{U}^{\left(m\right)}\bm{U}^{\left(m\right)\top}-\bm{U}^{\left(m,l\right)}\bm{U}^{\left(m,l\right)\top}\big\|\nonumber \\
 & \qquad\overset{\left(\mathrm{i}\right)}{\lesssim}\big(B\log d+\sigma_{\mathsf{col}}\sqrt{\log d}\big)\big\|\bm{U}^{\left(m\right)}\bm{H}^{\left(m\right)}-\bm{U}^{\star}\big\|_{2,\infty}+\frac{\delta_{\mathsf{op}}}{\sigma_{r}^{\star2}}\left\Vert \bm{A}^{\star}\right\Vert \sqrt{\frac{\mu r}{d_{2}}}\nonumber \\
 & \qquad\quad+\frac{1}{\sigma_{r}^{\star2}}\big(B\log d+\sigma_{\mathsf{col}}\sqrt{\log d}\big)^{3}\big(\big\|\bm{U}^{\left(m\right)}\bm{H}^{\left(m\right)}-\bm{U}^{\star}\big\|_{2,\infty}+\left\Vert \bm{U}^{\star}\right\Vert _{2,\infty}\big)\nonumber \\
 & \qquad\quad+\frac{1}{\sigma_{r}^{\star2}}\big(B\log d+\sigma_{\mathsf{col}}\sqrt{\log d}\big)^{2}\left\Vert \bm{A}^{\star}\right\Vert \sqrt{\frac{\mu r}{d_{2}}}\nonumber \\
 & \qquad\lesssim\big(B\log d+\sigma_{\mathsf{col}}\sqrt{\log d}\big)\big\|\bm{U}^{\left(m\right)}\bm{H}^{\left(m\right)}-\bm{U}^{\star}\big\|_{2,\infty}+\frac{\delta_{\mathsf{op}}}{\sigma_{r}^{\star}}\left(\left\Vert \bm{U}^{\star}\right\Vert _{2,\infty}+\kappa\sqrt{\frac{\mu r}{d_{2}}}\right),\label{eq:A_UH_Utrue_2inf_norm}
\end{align}
where (i) follows from \eqref{eq:U_loo_col_dist_o1_UB1}, \eqref{eq:UH_loo_op_loss_UB}
and \eqref{eq:U_loo_col_diff_temp_2} and the inquality $\left\Vert \bm{A}^{\star\top}\right\Vert _{2,\infty}\leq\left\Vert \bm{A}^{\star}\right\Vert \sqrt{\mu r/d_{2}}$;
\eqref{eq:A_UH_Utrue_2inf_norm} arises from the definition of $\delta_{\mathsf{op}}$
in \eqref{def:delta_op} and conditions~\eqref{cond:B_UB} and \eqref{cond:eig_o1}
(namely, $B\log d+\sigma_{\mathsf{col}}\sqrt{\log d}\ll\sigma_{r}^{\star}/\kappa$
and $\big(B\log d+\sigma_{\mathsf{col}}\sqrt{\log d}\big)^{2}\lesssim\delta_{\mathsf{op}}\ll\sigma_{r}^{\star2}$).
Applying the matrix Bernstein inequality yields that with probability
at least $1-O\left(d^{-11}\right)$,
\begin{align*}
 & \left\Vert \left(\bm{A}^{\mathsf{s}}-\bm{A}^{\star}\right)_{m,:}\big[\mathcal{\mathcal{P}}_{-m,:}\left(\bm{A}^{\mathsf{s}}\right)\big]^{\top}\big(\bm{U}^{\left(m\right)}\bm{H}^{\left(m\right)}-\bm{U}^{\star}\big)\right\Vert _{2}\\
 & \qquad\lesssim B\log d\left\Vert \big[\mathcal{\mathcal{P}}_{-m,:}\left(\bm{A}^{\mathsf{s}}\right)\big]^{\top}\big(\bm{U}^{\left(m\right)}\bm{H}^{\left(m\right)}-\bm{U}^{\star}\big)\right\Vert _{2,\infty}\\
 & \qquad\quad+\sqrt{d_{1}}\,\sigma_{\infty}\big(\sigma_{\mathsf{row}}\log d+\left\Vert \bm{A}^{\star}\right\Vert \sqrt{\log d}\big)\big\|\bm{U}^{\left(m\right)}\bm{H}^{\left(m\right)}-\bm{U}^{\star}\big\|_{2,\infty}\\
 & \qquad\overset{\left(\mathrm{i}\right)}{\lesssim}B\log d\,\big(B\log d+\sigma_{\mathsf{col}}\sqrt{\log d}\big)\big\|\bm{U}^{\left(m\right)}\bm{H}^{\left(m\right)}-\bm{U}^{\star}\big\|_{2,\infty}\\
 & \qquad\quad+\delta_{\mathsf{op}}\frac{B\log d}{\sigma_{r}^{\star}}\left(\left\Vert \bm{U}^{\star}\right\Vert _{2,\infty}+\kappa\sqrt{\frac{\mu r}{d_{2}}}\right)\\
 & \qquad\quad+\sigma_{\mathsf{col}}\big(\sigma_{\mathsf{row}}\log d+\left\Vert \bm{A}^{\star}\right\Vert \sqrt{\log d}\big)\big\|\bm{U}^{\left(m\right)}\bm{H}^{\left(m\right)}-\bm{U}^{\star}\big\|_{2,\infty}\\
 & \qquad\overset{\left(\mathrm{ii}\right)}{\lesssim}\sigma_{\mathsf{col}}\big(\sigma_{\mathsf{row}}\log d+\left\Vert \bm{A}^{\star}\right\Vert \sqrt{\log d}\big)\big\|\bm{U}^{\left(m\right)}\bm{H}^{\left(m\right)}-\bm{U}^{\star}\big\|_{2,\infty}\\
 & \qquad\quad+\delta_{\mathsf{op}}\frac{B\log d}{\sigma_{r}^{\star}}\left(\left\Vert \bm{U}^{\star}\right\Vert _{2,\infty}+\kappa\sqrt{\frac{\mu r}{d_{2}}}\right)\\
 & \qquad\overset{\left(\mathrm{iii}\right)}{\lesssim}\sigma_{\mathsf{col}}\big(\sigma_{\mathsf{row}}\log d+\left\Vert \bm{A}^{\star}\right\Vert \sqrt{\log d}\big)\big\|\bm{U}^{\left(m\right)}\bm{H}^{\left(m\right)}-\bm{U}^{\star}\big\|_{2,\infty}+o\left(1\right)\delta_{\mathsf{op}}\left\Vert \bm{U}^{\star}\right\Vert _{2,\infty}+\delta_{\mathsf{op}}\frac{\kappa\sigma_{\mathsf{col}}\sqrt{\log d}}{\sigma_{r}^{\star}}\sqrt{\frac{\mu r}{d_{1}}}\\
 & \qquad\overset{\left(\mathrm{iv}\right)}{\lesssim}\sigma_{\mathsf{col}}\big(\sigma_{\mathsf{row}}\log d+\left\Vert \bm{A}^{\star}\right\Vert \sqrt{\log d}\big)\big\|\bm{U}^{\left(m\right)}\bm{H}^{\left(m\right)}-\bm{U}^{\star}\big\|_{2,\infty}+o\left(1\right)\delta_{\mathsf{op}}\sqrt{\frac{\mu r}{d_{1}}}.
\end{align*}
Here, (i) follows from \eqref{eq:noise_value} and \eqref{eq:A_UH_Utrue_2inf_norm}; (ii) is due
to conditions~\eqref{cond:B_UB} and \eqref{cond:eig_o1} that $B^{2}\log d\lesssim\sigma_{\mathsf{col}}\sigma_{\mathsf{row}}$,
$B\log d\ll\sigma_{r}^{\star}$ and $B\log d\left(B\log d+\sigma_{\mathsf{col}}\sqrt{\log d}\right)\lesssim B\log^{2}d+\sigma_{\mathsf{col}}^{2}\log d\leq\delta_{\mathsf{\mathsf{op}}}$;
(iii) holds true because of \eqref{cond:col_B_rel} and \eqref{cond:eig_o1}
that $B\log d\ll\sigma_{r}^{\star}$; and (iv) arises from \eqref{cond:eig_o1}
that $\sigma_{\mathsf{col}}\sqrt{\log d}\ll\sigma_{r}^{\star}/\kappa$.
Recalling the definition of $\delta_{\mathsf{loo}}$ in \eqref{def:delta_loo},
we obtain that
\begin{equation}
\left\Vert \left(\bm{A}^{\mathsf{s}}-\bm{A}^{\star}\right)_{m,:}\big[\mathcal{\mathcal{P}}_{-m,:}\left(\bm{A}^{\mathsf{s}}\right)\big]^{\top}\big(\bm{U}^{\left(m\right)}\bm{H}^{\left(m\right)}-\bm{U}^{\star}\big)\right\Vert _{2}\lesssim\delta_{\mathsf{loo}}\,\big\|\bm{U}^{\left(m\right)}\bm{H}^{\left(m\right)}-\bm{U}^{\star}\big\|_{2,\infty}+o\left(1\right)\delta_{\mathsf{op}}\sqrt{\frac{\mu r}{d_{1}}}.\label{eq:A_dev_row_A_top_UH_Utrue_op}
\end{equation}
\end{itemize}
Putting \eqref{eq:A_true_row_A_top_UH_Utrue_op} and \eqref{eq:A_dev_row_A_top_UH_Utrue_op}
together, we arrive at the advertised bound.

\section{Proofs for lower bounds}
\label{sec:proof-minimax-lower-bounds}

\subsection{Proof of Theorem~\ref{thm:minimax-lower-bound}}
\label{sec:proof-minimax-lower-bounds-noise}

Without loss of generality, it suffices to focus on the set of matrices
with $\sigma_{r}(\bm{A}^{\star})\in[0.9,1.1]$; otherwise one can
always rescale the matrices $\bm{A}^{\star}$ and $\bm{N}$ by the
same factor $1/\sigma_{r}(\bm{A}^{\star})$ simultaneously. 

Let us start with the minimax spectral norm  bound \eqref{eq:minimax-L2-subspace}. Recognizing the elementary fact that
\[
\left\Vert \bm{U}\bm{U}^{\top}-\bm{U}^{\star}\bm{U}^{\star\top}\right\Vert \asymp\min_{\bm{R}\in\mathcal{O}^{r\times r}}\left\Vert \bm{U}\bm{R}-\bm{U}^{\star}\right\Vert ,
\]
we have
\begin{align*}
\inf_{\widehat{\bm{U}}}\sup_{\bm{A}^{\star}\in\mathcal{M}^{\star}}\mathbb{E}\Big[\min_{\bm{R}\in\mathcal{O}^{r\times r}}\big\|\widehat{\bm{U}}\bm{R}-\bm{U}(\bm{A}^{\star})\big\|\Big] & \asymp\inf_{\widehat{\bm{U}}}\sup_{\bm{A}^{\star}\in\mathcal{M}^{\star}}\mathbb{E}\Big[\big\|\widehat{\bm{U}}\widehat{\bm{U}}^{\top}-\bm{U}(\bm{A}^{\star})\big(\bm{U}(\bm{A}^{\star})\big)^{\top}\big\|\Big].
\end{align*}
In light of this, we shall focus attention on bounding $\|\bm{U}\bm{U}^{\top}-\bm{U}(\bm{A}^{\star})\big(\bm{U}(\bm{A}^{\star})\big)^{\top}\|$
in the remainder of the proof. In addition, it can be easily seen
(which we omit for brevity) that it is sufficient to establish the
lower bounds for the rank-$1$ case (i.e.~$r=1$).\footnote{Suppose we wish to estimate $\bm{U}^\star \in \mathbb{R}^{d_1 \times r}$ where $\bm{U}^\star$ takes the form $\bm{U}^\star = \begin{bmatrix}
\bm{u}^\star & \bm{0}\\
\bm{0} & \bm{Q}^\star
\end{bmatrix}$, $\bm{u}^\star \in \mathbb{R}^{d_1 / 2}, \| \bm{u}^\star \|_2 = 1$ and $\bm{Q}^\star \in \mathbb{R}^{d_1/2 \times (r - 1)}$ consists of orthonormal columns. If there is an oracle informing us of $\bm{Q}^\star$, then the problem of estimating $\bm{U}^\star$ is reduced to the rank-$1$ case. This suggests that we can focus on the rank-$1$ case to derive the lower bound.}
In what follows, we assume that
\[
\bm{A}=\mathcal{P}_{\Omega}(\,\underset{=\bm{A}^{\star}}{\underbrace{\bm{u}^{\star}\bm{v}^{\star\top}}}+\bm{N}),
\]
where $\bm{v}^{\star}\sim\mathcal{N}(\bm{0},\frac{1}{d_{2}}\bm{I}_{d_{2}})$
and $N_{i,j}\overset{\mathrm{i.i.d.}}{\sim}\mathcal{N}(0,\sigma^{2})$.
Without loss of generality, we assume throughout that $d_{1}/2$ is
an integer.

\paragraph{Step 1: constructing a collection of hypotheses.} Let
us begin by constructing a family of well-separated unit vectors $\{\bm{u}^{i}\}_{1\leq i\leq M}\subseteq\mathbb{R}^{d_{1}}$.
In view of the celebrated Varshamov-Gilbert bound \cite[Lemma 4.7]{massart2007concentration},
one can find a set of vectors $\left\{ \bm{w}^{i}\right\} _{i=1}^{M}\subseteq\left\{ -1,1\right\} ^{d_{1}/2}$
obeying
\begin{align}
\log M\geq d_{1}/32\qquad\text{and}\qquad\min\left\{ \left\Vert \bm{w}^{i}\pm\bm{w}^{j}\right\Vert _{2}\right\}  & \geq\sqrt{d_{1}}/2,\quad\forall i\neq j,\label{eq:wij-distance}
\end{align}
where we denote $\min\|\bm{a}\pm\bm{b}\|_{2}=\min\{\|\bm{a}-\bm{b}\|_{2},\|\bm{a}+\bm{b}\|_{2}\}$.
For some $\delta\in(0,1)$ to be chosen later, we generate the $d_{1}$-dimensional
vectors
\begin{equation}
\bm{u}^{i}:=\frac{\delta}{\sqrt{d_{1}/2}}\begin{bmatrix}\bm{w}^{i}\\
\bm{0}
\end{bmatrix}+\sqrt{\frac{1-\delta^{2}}{d_{1}/2}}\begin{bmatrix}\bm{0}\\
\bm{1}
\end{bmatrix}\in\mathbb{R}^{d_{1}},\qquad1\leq i\leq M,\label{def:minimax-u}
\end{equation}
where $\bm{0}$ (resp.~$\bm{1}$) denotes the all-zero (resp.~all-one)
vector. By construction, it is easily seen that $\|\bm{u}^{i}\|_{2}=1$
for all $1\leq i\leq M$, and that
\begin{align}
\left\Vert \bm{u}^{i}\bm{u}^{i\top}-\bm{u}^{j}\bm{u}^{j\top}\right\Vert  & \geq\frac{1}{\sqrt{2}}\left\Vert \bm{u}^{i}\bm{u}^{i\top}-\bm{u}^{j}\bm{u}^{j\top}\right\Vert _{\mathrm{F}}=\frac{1}{\sqrt{2}}\sqrt{\mathsf{tr}\big(\bm{u}^{i}\bm{u}^{i\top}-\bm{u}^{j}\bm{u}^{j\top}\big)\big(\bm{u}^{i}\bm{u}^{i\top}-\bm{u}^{j}\bm{u}^{j\top}\big)}\nonumber \\
 & =\frac{1}{\sqrt{2}}\sqrt{2-2\langle\bm{u}^{i},\bm{u}^{j}\rangle^{2}}\geq\frac{1}{\sqrt{2}}\sqrt{2-2|\langle\bm{u}^{i},\bm{u}^{j}\rangle|}\nonumber \\
 & =\frac{1}{\sqrt{2}}\sqrt{\|\bm{u}^{i}\|_{2}^{2}+\|\bm{u}^{j}\|_{2}^{2}-2|\langle\bm{u}^{i},\bm{u}^{j}\rangle|}=\frac{1}{\sqrt{2}}\min\|\bm{u}^{i}\pm\bm{u}^{j}\|_{2}\nonumber \\
 & =\frac{1}{\sqrt{2}}\cdot\frac{\delta}{\sqrt{d_{1}/2}}\min\left\{ \left\Vert \bm{w}^{i}\pm\bm{w}^{j}\right\Vert _{2}\right\} \geq\frac{\delta}{2},\label{eq:ui-uj-spectral-norm-LB}
\end{align}
where the last inequality arises from (\ref{eq:wij-distance}). We
shall then associate each vector $\bm{u}^{i}$ $(1\leq i\leq M$)
with a hypothesis as follows: 
\[
\mathcal{H}_{i}:\quad\bm{A}=\mathcal{P}_{\Omega}(\bm{u}^{i}\bm{v}^{\star\top}+\bm{N}),\qquad1\leq i\leq M.
\]
 In the sequel, for each $1\leq i\leq M$ and $1\leq k\leq d_{2}$,
we denote
\begin{itemize}
\item $\mathbb{P}^{i}$: the distribution of $\bm{A}$ under the hypothesis
$\mathcal{H}_{i}$;
\item $\mathbb{P}_{\Omega}^{i}$: the distribution of $\bm{A}$ under the
hypothesis $\mathcal{H}_{i}$, conditional on $\Omega$; 
\item $\mathbb{P}_{\Omega,k}^{i}$: the distribution of the $k$-th column
of $\bm{A}$ under the hypothesis $\mathcal{H}_{i}$, conditional
on $\Omega$. 
\end{itemize}
Additionally, standard Gaussian concentration inequalities imply that:
with high probability, one has
\[
\|\bm{u}^{i}\bm{v}^{\star\top}\|=\left\Vert \bm{u}^{i}\right\Vert _{2}\left\Vert \bm{v}^{\star}\right\Vert _{2}=1+o(1),
\]
and hence $\bm{u}^{i}\bm{v}^{\star\top}\in\mathcal{M}^{\star}$ for
all $1\leq i\leq M$. 

\paragraph{Step 2: bounding the KL divergence between each pair of
hypotheses.} Fix any $1\leq i\neq j\leq M$. The next step lies in
upper bounding the KL divergence of $\mathbb{P}^{j}$ from $\mathbb{P}^{i}$.
Towards this, we observe that
\begin{align}
\mathsf{KL}\big(\mathbb{P}^{i}\,\|\,\mathbb{P}^{j}\big) & =\mathsf{KL}\big(\mathbb{E}_{\Omega}\big[\mathbb{P}_{\Omega}^{i}\big]\,\|\,\mathbb{E}_{\Omega}\big[\mathbb{P}_{\Omega}^{j}\big]\big)\leq\mathbb{E}_{\Omega}\big[\mathsf{KL}\big(\mathbb{P}_{\Omega}^{i}\,\|\,\mathbb{P}_{\Omega}^{j}\big)\big]\nonumber \\
 & =\mathbb{E}_{\Omega}\Big[\sum_{1\leq k\leq d_{2}}\mathsf{KL}\big(\mathbb{P}_{\Omega,k}^{i}\,\|\,\mathbb{P}_{\Omega,k}^{j}\big)\Big].\label{eq:KL-additive-identity}
\end{align}
Here, the penultimate inequality arises from the convexity of KL divergence
and Jensen's inequality, whereas the last line follows since the noise
components are independently generated and KL divergence is additive
for independent distributions. 

Before moving on, we find it convenient to introduce additional notation
to simplify presentation. For any vector $\bm{u}:=[u_{i}]_{1\leq i\leq d_{1}}$
and any index set $\mathcal{A}\subseteq[d_{1}]$, we define
\[
\bm{u}_{\mathcal{A}}:=\left[u_{i}\right]_{i\in\mathcal{A}}\in\mathbb{R}^{|\mathcal{A}|},
\]
which is obtained by maintaining only those entries of $\bm{u}$ lying
within $\mathcal{A}$. In addition, define\begin{subequations}\label{def:sampling-set}
\begin{align*}
\Omega_{k} & :=\{m\in[d_{1}]\colon(m,k)\in\Omega\};\\
\widehat{\Omega}_{k} & :=\{m\in[d_{1}/2]\colon(m,k)\in\Omega\};\\
\widetilde{\Omega}_{k} & :=\{d_{1}/2<m\leq d_{1}:(m,k)\in\Omega\};\\
\widehat{\Omega}_{k}^{(i,j),\mathsf{diff}} & :=\{m\in[d_{1}/2]\colon(m,k)\in\Omega\text{ and }u_{m}^{i}\neq u_{m}^{j}\};\\
\widehat{\Omega}_{k}^{(i,j),\mathsf{same}} & :=\{m\in[d_{1}/2]\colon(m,k)\in\Omega\text{ and }u_{m}^{i}=u_{m}^{j}\}.
\end{align*}
\end{subequations}By construction, one clearly has $\Omega_{k}=\widehat{\Omega}_{k}\cup\widetilde{\Omega}_{k}=\widehat{\Omega}_{k}^{(i,j),\mathsf{diff}}\cup\widehat{\Omega}_{k}^{(i,j),\mathsf{same}}\cup\widetilde{\Omega}_{k}$,
and 
\begin{equation}
\|\bm{u}_{\Omega_{k}}^{i}\|_{2}^{2}=\frac{2\delta^{2}}{d_{1}}|\widehat{\Omega}_{k}|+\frac{2(1-\delta^{2})}{d_{1}}\big|\widetilde{\Omega}_{k}\big|,\qquad1\leq i\leq M.\label{eq:ui-Omega-norm}
\end{equation}

With these in place, we are in a position to control the KL divergence.
We first make the observation that: conditional on the sampling set
$\Omega_{k}$ and under the hypothesis $\mathcal{H}_{i}$, the entries
of $\bm{A}_{:,k}$ within $\Omega_{k}$ follow a multivariate Gaussian
distribution $\mathcal{N}\big(\bm{0},\bm{\Sigma}_{\Omega_{k}}^{i}\big)$,
where
\[
\bm{\Sigma}_{\Omega_{k}}^{i}:=\sigma^{2}\bm{I}_{|\Omega_{k}|}+\frac{1}{d_{2}}\bm{u}_{\Omega_{k}}^{i}\bm{u}_{\Omega_{k}}^{i\top}.
\]
As a result, invoking the KL divergence for multivariate Gaussians,
we can deduce that
\begin{align}
\mathsf{KL}\big(\mathbb{P}_{\Omega,k}^{i}\,\|\,\mathbb{P}_{\Omega,k}^{j}\big) & =\mathsf{KL}\big(\mathcal{N}\big(\bm{0},\bm{\Sigma}_{\Omega_{k}}^{i}\big)\,\|\,\mathcal{N}\big(\bm{0},\bm{\Sigma}_{\Omega_{k}}^{j}\big)\big)=\frac{1}{2}\Big(\mathsf{tr}\big((\bm{\Sigma}_{\Omega_{k}}^{j})^{-1}\bm{\Sigma}_{\Omega_{k}}^{i}\big)-|\Omega_{k}|\Big).\nonumber \\
 & =\frac{1}{2}\mathsf{tr}\Big(\big(\bm{I}_{|\Omega_{k}|}+\frac{1}{\sigma^{2}d_{2}}\bm{u}_{\Omega_{k}}^{j}\bm{u}_{\Omega_{k}}^{j\top}\big)^{-1}\big(\bm{I}_{|\Omega_{k}|}+\frac{1}{\sigma^{2}d_{2}}\bm{u}_{\Omega_{k}}^{i}\bm{u}_{\Omega_{k}}^{i\top}\big)\Big)-\frac{1}{2}|\Omega_{k}|\nonumber \\
 & \overset{(\mathrm{i})}{=}\frac{1}{2}\mathsf{tr}\Big(\Big(\bm{I}_{|\Omega_{k}|}-\frac{1}{\sigma^{2}d_{2}+\big\|\bm{u}_{\Omega_{k}}^{j}\big\|_{2}^{2}}\bm{u}_{\Omega_{k}}^{j}\bm{u}_{\Omega_{k}}^{j\top}\Big)\big(\bm{I}_{|\Omega_{k}|}+\frac{1}{\sigma^{2}d_{2}}\bm{u}_{\Omega_{k}}^{i}\bm{u}_{\Omega_{k}}^{i\top}\big)\Big)-\frac{1}{2}|\Omega_{k}|\nonumber \\
 & =\frac{1}{2\sigma^{2}d_{2}}\big\|\bm{u}_{\Omega_{k}}^{i}\big\|_{2}^{2}-\frac{1}{2\big(\sigma^{2}d_{2}+\big\|\bm{u}_{\Omega_{k}}^{j}\big\|_{2}^{2}\big)}\big\|\bm{u}_{\Omega_{k}}^{j}\big\|_{2}^{2}-\frac{\big\langle\bm{u}_{\Omega_{k}}^{i},\bm{u}_{\Omega_{k}}^{j}\big\rangle^{2}}{2\sigma^{2}d_{2}\big(\sigma^{2}d_{2}+\big\|\bm{u}_{\Omega_{k}}^{j}\big\|_{2}^{2}\big)}\nonumber \\
 & \overset{(\mathrm{ii})}{=}\frac{\big\|\bm{u}_{\Omega_{k}}^{i}\big\|_{2}^{4}-\big\langle\bm{u}_{\Omega_{k}}^{i},\bm{u}_{\Omega_{k}}^{j}\big\rangle^{2}}{2\sigma^{2}d_{2}\big(\sigma^{2}d_{2}+\big\|\bm{u}_{\Omega_{k}}^{i}\big\|_{2}^{2}\big)}=\frac{\big\langle\bm{u}_{\Omega_{k}}^{i},\bm{u}_{\Omega_{k}}^{i}+\bm{u}_{\Omega_{k}}^{j}\big\rangle\big\langle\bm{u}_{\Omega_{k}}^{i},\bm{u}_{\Omega_{k}}^{i}-\bm{u}_{\Omega_{k}}^{j}\big\rangle}{2\sigma^{2}d_{2}\big(\sigma^{2}d_{2}+\big\|\bm{u}_{\Omega_{k}}^{i}\big\|_{2}^{2}\big)},\label{eq:kl-div-temp-1}
\end{align}
where (i) follows from the Woodbury matrix identity, (ii) arises since
$\big\|\bm{u}_{\Omega_{k}}^{i}\big\|_{2}=\big\|\bm{u}_{\Omega_{k}}^{j}\big\|_{2}$
(cf.~(\ref{eq:ui-Omega-norm})). Next, straightforward calculations
yield
\begin{align*}
\big\langle\bm{u}_{\Omega_{k}}^{i},\bm{u}_{\Omega_{k}}^{i}-\bm{u}_{\Omega_{k}}^{j}\big\rangle & =\frac{4\delta^{2}}{d_{1}}\big|\widehat{\Omega}_{k}^{(i,j),\mathsf{diff}}\big|,\\
\big\langle\bm{u}_{\Omega_{k}}^{i},\bm{u}_{\Omega_{k}}^{i}+\bm{u}_{\Omega_{k}}^{j}\big\rangle & =\frac{4\delta^{2}}{d_{1}}\big|\widehat{\Omega}_{k}^{(i,j),\mathsf{same}}\big|+\frac{4(1-\delta^{2})}{d_{1}}\big|\widetilde{\Omega}_{k}\big|.
\end{align*}
Substituting the above identities and the identity (\ref{eq:ui-Omega-norm})
into (\ref{eq:kl-div-temp-1}) gives
\begin{align*}
\mathsf{KL}\big(\mathbb{P}_{\Omega,k}^{i}\,\|\,\mathbb{P}_{\Omega,k}^{j}\big) & \leq\frac{\big\langle\bm{u}_{\Omega_{k}}^{i},\bm{u}_{\Omega_{k}}^{i}+\bm{u}_{\Omega_{k}}^{j}\big\rangle\big\langle\bm{u}_{\Omega_{k}}^{i},\bm{u}_{\Omega_{k}}^{i}-\bm{u}_{\Omega_{k}}^{j}\big\rangle}{2\sigma^{2}d_{2}\cdot\sigma^{2}d_{2}}\\
 & \leq\frac{16\delta^{2}\big|\widehat{\Omega}_{k}^{(i,j),\mathsf{diff}}\big|\cdot\big(\delta^{2}\big|\widehat{\Omega}_{k}^{(i,j),\mathsf{same}}\big|+(1-\delta^{2})\big|\widetilde{\Omega}_{k}\big|\big)}{\sigma^{4}d_{2}^{2}d_{1}^{2}}\\
 & \leq\frac{16\delta^{2}\big|\widehat{\Omega}_{k}^{(i,j),\mathsf{diff}}\big|\cdot\big(\big|\widehat{\Omega}_{k}^{(i,j),\mathsf{same}}\big|+\big|\widetilde{\Omega}_{k}\big|\big)}{\sigma^{4}d_{2}^{2}d_{1}^{2}},
\end{align*}
where we have used the fact that $\delta\in(0,1)$. In addition, the
elementary inequality $2\,\big\|\bm{u}_{\Omega_{k}}^{i}\big\|_{2}^{2}\geq\big\langle\bm{u}_{\Omega_{k}}^{i},\bm{u}_{\Omega_{k}}^{i}+\bm{u}_{\Omega_{k}}^{j}\big\rangle$
together with the preceding identities yields
\begin{align*}
\mathsf{KL}\big(\mathbb{P}_{\Omega,k}^{i}\,\|\,\mathbb{P}_{\Omega,k}^{j}\big) & =\frac{\big\langle\bm{u}_{\Omega_{k}}^{i},\bm{u}_{\Omega_{k}}^{i}+\bm{u}_{\Omega_{k}}^{j}\big\rangle\big\langle\bm{u}_{\Omega_{k}}^{i},\bm{u}_{\Omega_{k}}^{i}-\bm{u}_{\Omega_{k}}^{j}\big\rangle}{2\sigma^{2}d_{2}\|\bm{u}_{\Omega_{k}}^{i}\|_{2}^{2}}\leq\frac{\big\langle\bm{u}_{\Omega_{k}}^{i},\bm{u}_{\Omega_{k}}^{i}-\bm{u}_{\Omega_{k}}^{j}\big\rangle}{\sigma^{2}d_{2}}\\
 & =\frac{4\delta^{2}\big|\widehat{\Omega}_{k}^{(i,j),\mathsf{diff}}\big|}{\sigma^{2}d_{2}d_{1}}.
\end{align*}
Putting the above bounds and the inequality (\ref{eq:KL-additive-identity})
together leads to\begin{subequations}\label{eq:KL-two-upper-bounds}
\begin{align}
\mathsf{KL}\big(\mathbb{P}^{i}\,\|\,\mathbb{P}^{j}) & \leq\mathbb{E}\Bigg[\sum_{k=1}^{d_{2}}\frac{16\delta^{2}\big|\widehat{\Omega}_{k}^{(i,j),\mathsf{diff}}\big|\cdot\big(\big|\widehat{\Omega}_{k}^{(i,j),\mathsf{same}}\big|+\big|\widetilde{\Omega}_{k}\big|\big)}{\sigma^{4}d_{2}^{2}d_{1}^{2}}\Bigg]\lesssim\frac{\delta^{2}p^{2}}{\sigma^{4}d_{2}};\\
\mathsf{KL}\big(\mathbb{P}^{i}\,\|\,\mathbb{P}^{j}) & \leq\sum_{k=1}^{d_{2}}\frac{4\delta^{2}\big|\widehat{\Omega}_{k}^{(i,j),\mathsf{diff}}\big|}{\sigma^{2}d_{2}d_{1}}\lesssim\frac{\delta^{2}p}{\sigma^{2}}.
\end{align}
\end{subequations}

\paragraph{Step 3: invoking Fano's inequality.} Fano's inequality
\cite[Corollary 2.6]{tsybakov2008introduction} asserts that if
\begin{equation}
\frac{1}{M}\sum_{i=2}^{M}\mathsf{KL}\big(\mathbb{P}^{i}\,\|\,\mathbb{P}^{1})\leq\frac{\log M}{32},\label{eq:KL-16-bound}
\end{equation}
then the minimax probability of testing error necessarily obeys
\[
p_{\mathrm{e},M}:=\inf_{\psi}\max_{1\leq j\leq M}\mathbb{P}\left\{ \psi\neq j\mid\mathcal{H}_{j}\right\} \geq0.2,
\]
where the infimum is taken over all tests. In view of (\ref{eq:wij-distance})
and the upper bounds (\ref{eq:KL-two-upper-bounds}), we observe that
the bound (\ref{eq:KL-16-bound}) would hold by taking
\begin{equation}
	\delta=c_{1} \min\Bigg\{ \frac{\sigma^{2}\sqrt{d_{1}d_{2}}}{p}+\sigma\sqrt{\frac{d_{1}}{p}},\, 1\Bigg\}\label{eq:delta-condition-c1}
\end{equation}
for some sufficiently small constant $c_{1}>0$. Therefore, adopting
the standard reduction scheme as introduced in \cite[Chapter 2.2]{tsybakov2008introduction},
we arrive at
\begin{align*}
\inf_{\widehat{\bm{U}}}\sup_{\bm{A}^{\star}\in\mathcal{M}^{\star}}\mathbb{E}\Big[\big\|\widehat{\bm{U}}\widehat{\bm{U}}^{\top}-\bm{U}(\bm{A}^{\star})\big(\bm{U}(\bm{A}^{\star})\big)^{\top}\big\|\Big] & \gtrsim\min_{i\neq j}\left\Vert \bm{u}^{i}\bm{u}^{i\top}-\bm{u}^{j}\bm{u}^{j\top}\right\Vert \gtrsim\delta\\
 & \asymp\min\Bigg\{\frac{\sigma^{2}\sqrt{d_{1}d_{2}}}{p}+\sigma\sqrt{\frac{d_{1}}{p}},\,1\Bigg\},
\end{align*}
where the penultimate inequality comes from (\ref{eq:ui-uj-spectral-norm-LB}),
and the last line makes use of our choice (\ref{eq:delta-condition-c1}).
Combined with the high-probability fact that $\|\bm{u}^{i} \bm{v}^{\star\top} \| \in [0.9,1.1]$,
we establish the minimax spectral norm  bound \eqref{eq:minimax-L2-subspace}.

Given that $\|\bm{Z}\|_{2,\infty}\geq \frac{1}{\sqrt{d_1}} \|\bm{Z}\|$ holds for any $\bm{Z}\in \mathbb{R}^{d_1\times r}$, 
the advertised $\ell_{2,\infty}$ lower bound \eqref{eq:minimax-Linf-subspace} follows immediately from the spectral norm lower bound \eqref{eq:minimax-L2-subspace}.

\subsection{Proof of Theorem~\ref{thm:lower-bounds-bipartite}}
\label{sec:proof-minimax-lower-bounds-sampling}

To begin with, the sampling set $\Omega$ can be equivalently viewed
as the edge set of a random bipartite graph $\mathcal{G}(d_{1},d_{2},p)$.
Here, we recall that $\mathcal{G}(d_{1},d_{2},p)$ is generated by
(i) taking the complete bipartite graph connecting two disjoint vertex
sets $\mathcal{U}$ and $\mathcal{V}$, where $|\mathcal{U}|=d_{1}$
and $|\mathcal{V}|=d_{2}$, and (ii) removing each edge independently
with probability $1-p$. As shown in \cite[Theorem 6]{johansson2012giant},
if $p<\frac{1-\epsilon}{\sqrt{d_{1}d_{2}}}$ for some constant $0<\epsilon<1$
and if $d_{1}\leq d_{2}$, then with probability $1-o(1)$, there
is no connected component in $\mathcal{G}(d_{1},d_{2},p)$ containing
more than $O(\log d_{1})$ (resp.~$O(\sqrt{d_{1}d_{2}}\log d_{1})$)
vertices in $\mathcal{U}$ (resp.~$\mathcal{V}$). In what follows,
we let $\mathcal{C}_{1},\cdots,\mathcal{C}_{K}$ denote the collection
of connected components in $\mathcal{G}(d_{1},d_{2},p)$, and denote
by $\mathcal{U}_{i}$ (resp.~$\mathcal{V}_{i}$) the set of vertices
in $\mathcal{U}$ (resp.~$\mathcal{V}$) that reside within $\mathcal{C}_{i}$. 

Generate $\bm{u}^{\star}$ and $\bm{v}^{\star}$ such that 
\[
u_{i}^{\star}=\begin{cases}
1/\sqrt{d_{1}}, & \text{with prob. 0.5}\\
-1/\sqrt{d_{1}},\,\, & \text{else}
\end{cases}\qquad\text{and}\qquad v_{j}^{\star}=\begin{cases}
1/\sqrt{d_{2}}, & \text{with prob. 0.5}\\
-1/\sqrt{d_{2}},\,\, & \text{else}
\end{cases}
\]
for each $1\leq i\leq d_{1}$ and $1\leq j\leq d_{2}$. Letting $\bm{u}_{\mathcal{S}}\in\mathbb{R}^{|\mathcal{S}|}$
represent a vector comprising the entries of $\bm{u}$ whose indices
come from $\mathcal{S}$, we generate
\[
\widetilde{\bm{u}}_{\mathcal{C}_{i}}^{\star}=z_{i}\bm{u}_{\mathcal{C}_{i}}^{\star}\qquad\text{and }\qquad\widetilde{\bm{v}}_{\mathcal{C}_{i}}^{\star}=z_{i}\bm{v}_{\mathcal{C}_{i}}^{\star};
\]
here, $z_{i}$ is a set of independent Bernoulli variables with $z_{i}=1$
with probability $0.5$ and $z_{i}=-1$ otherwise. As one can easily
verify (which we omit for brevity), 
\begin{itemize}
\item $\bm{u}_{\mathcal{C}_{i}}^{\star}\bm{v}_{\mathcal{C}_{i}}^{\star\top}=\widetilde{\bm{u}}_{\mathcal{C}_{i}}^{\star}\widetilde{\bm{v}}_{\mathcal{C}_{i}}^{\star\top}$
for each $i$, and hence $\mathcal{P}_{\Omega}(\bm{u}^{\star}\bm{v}^{\star\top})=\mathcal{P}_{\Omega}(\widetilde{\bm{u}}^{\star}\widetilde{\bm{v}}^{\star\top})$;
\item with probability $1-o(1)$, one has $\min\|\bm{u}^{\star}\pm\widetilde{\bm{u}}^{\star}\|_{2}\asymp1$
and $\|\bm{u}^{\star}\bm{v}^{\star\top}-\widetilde{\bm{u}}^{\star}\widetilde{\bm{v}}^{\star\top}\|_{\mathrm{F}}\asymp1$.
\end{itemize}
This concludes the proof.

\section{A few more auxiliary lemmas\label{sec:more-auxiliary}}

In this section, we establish a few auxiliary facts that are useful
throughout the proof of the main theorem. We begin with some basic
properties about the truth $\bm{A}^{\star}$ and $\bm{G}^{\star}$.

\begin{lemma}\label{lemma:incoh}Recall the definition of the incoherence
parameters in Definition \ref{definition-mu0-mu1-mu2}. Then one has
\begin{align*}
 & \left\Vert \bm{A}^{\star}\right\Vert _{2,\infty}\leq\sqrt{\frac{\mu_{1}r\sigma_{1}^{\star2}}{d_{1}}},\quad\left\Vert \bm{A}^{\star\top}\right\Vert _{2,\infty}\leq\sqrt{\frac{\mu_{2}r\sigma_{1}^{\star2}}{d_{2}}},\\
 & \left\Vert \bm{G}^{\star}\right\Vert _{2,\infty}\leq\sqrt{\frac{\mu_{1}r\sigma_{1}^{\star4}}{d_{1}}},\quad\left\Vert \bm{A}^{\star}\right\Vert _{\infty}\leq\min\left\{ \sqrt{\frac{\mu_{1}\mu_{2}r^{2}}{d_{1}d_{2}}},\,\sigma_{1}^{\star}\left\Vert \bm{U}^{\star}\right\Vert _{2,\infty},\,\sigma_{1}^{\star}\left\Vert \bm{V}^{\star}\right\Vert _{2,\infty}\right\} .
\end{align*}
\end{lemma}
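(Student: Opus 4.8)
\textbf{Proof plan for Lemma~\ref{lemma:incoh}.}

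The plan is to derive each of the four bounds directly from the definitions of the incoherence parameters $\mu_0,\mu_1,\mu_2$ in Definition~\ref{definition-mu0-mu1-mu2}, using only elementary properties of the SVD $\bm{A}^{\star}=\bm{U}^{\star}\bm{\Sigma}^{\star}\bm{V}^{\star\top}$ and the relation $\left\Vert\bm{A}^{\star}\right\Vert_{\mathrm F}^2=\sum_{i=1}^r\sigma_i^{\star2}\le r\sigma_1^{\star2}$. For the first bound, I would write the $i$-th row as $\bm{A}^{\star}_{i,:}=\bm{e}_i^\top\bm{U}^{\star}\bm{\Sigma}^{\star}\bm{V}^{\star\top}$, so that $\left\Vert\bm{A}^{\star}_{i,:}\right\Vert_2\le\left\Vert\bm{U}^{\star\top}\bm{e}_i\right\Vert_2\left\Vert\bm{\Sigma}^{\star}\right\Vert\left\Vert\bm{V}^{\star}\right\Vert=\sigma_1^{\star}\left\Vert\bm{U}^{\star\top}\bm{e}_i\right\Vert_2$; taking the max over $i$ and invoking the definition $\mu_1=\frac{d_1}{r}\max_i\left\Vert\bm{U}^{\star\top}\bm{e}_i\right\Vert_2^2$ gives $\left\Vert\bm{A}^{\star}\right\Vert_{2,\infty}\le\sigma_1^{\star}\sqrt{\mu_1 r/d_1}$. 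The bound on $\left\Vert\bm{A}^{\star\top}\right\Vert_{2,\infty}$ is entirely symmetric, replacing $\bm{U}^{\star}$ by $\bm{V}^{\star}$ and $\mu_1$ by $\mu_2$.

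For the bound on $\left\Vert\bm{G}^{\star}\right\Vert_{2,\infty}$, recall $\bm{G}^{\star}=\bm{U}^{\star}\bm{\Sigma}^{\star2}\bm{U}^{\star\top}$, so the $i$-th row has norm $\left\Vert\bm{e}_i^\top\bm{U}^{\star}\bm{\Sigma}^{\star2}\bm{U}^{\star\top}\right\Vert_2\le\left\Vert\bm{U}^{\star\top}\bm{e}_i\right\Vert_2\left\Vert\bm{\Sigma}^{\star}\right\Vert^2\left\Vert\bm{U}^{\star}\right\Vert=\sigma_1^{\star2}\left\Vert\bm{U}^{\star\top}\bm{e}_i\right\Vert_2$, and again the definition of $\mu_1$ finishes it. For the entrywise $\ell_\infty$ bound, the middle term $\sigma_1^{\star}\left\Vert\bm{U}^{\star}\right\Vert_{2,\infty}$ follows from $\left\vert A_{i,j}^{\star}\right\vert=\left\vert\bm{e}_i^\top\bm{U}^{\star}\bm{\Sigma}^{\star}\bm{V}^{\star\top}\bm{e}_j\right\vert\le\left\Vert\bm{U}^{\star\top}\bm{e}_i\right\Vert_2\left\Vert\bm{\Sigma}^{\star}\right\Vert\left\Vert\bm{V}^{\star\top}\bm{e}_j\right\Vert_2\le\sigma_1^{\star}\left\Vert\bm{U}^{\star}\right\Vert_{2,\infty}$ (using $\left\Vert\bm{V}^{\star\top}\bm{e}_j\right\Vert_2\le1$); the term $\sigma_1^{\star}\left\Vert\bm{V}^{\star}\right\Vert_{2,\infty}$ is obtained the same way by bounding $\left\Vert\bm{U}^{\star\top}\bm{e}_i\right\Vert_2\le1$ instead. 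The remaining term $\sqrt{\mu_1\mu_2 r^2/(d_1 d_2)}$ comes either from combining $\left\vert A_{i,j}^{\star}\right\vert\le\sigma_1^{\star}\left\Vert\bm{U}^{\star\top}\bm{e}_i\right\Vert_2\left\Vert\bm{V}^{\star\top}\bm{e}_j\right\Vert_2$ with the definitions of $\mu_1,\mu_2$ and the bound $\left\Vert\bm{\Sigma}^{\star}\right\Vert\le\left\Vert\bm{\Sigma}^{\star}\right\Vert_{\mathrm F}=\left\Vert\bm{A}^{\star}\right\Vert_{\mathrm F}$, or more directly from the definition of $\mu_0$: since $\mu_0\left\Vert\bm{A}^{\star}\right\Vert_{\mathrm F}^2=d_1 d_2\left\Vert\bm{A}^{\star}\right\Vert_\infty^2$ and $\left\Vert\bm{A}^{\star}\right\Vert_{\mathrm F}^2\le r\sigma_1^{\star2}$, one gets a bound in terms of $\mu_0$; I would check which constant matching is intended and present whichever chain of inequalities yields the stated form cleanly.

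There is no real obstacle here — every step is a one-line application of submultiplicativity of the spectral norm together with the fact that columns of $\bm{U}^{\star},\bm{V}^{\star}$ are orthonormal (hence $\left\Vert\bm{U}^{\star}\right\Vert=\left\Vert\bm{V}^{\star}\right\Vert=1$ and each coordinate projection has norm at most $1$). The only mild bookkeeping point is making sure the $r$ versus $\sqrt r$ and $\mu_i$ versus $\mu_i^2$ powers come out exactly as written, which is why I would carry the $\bm{\Sigma}^{\star}$ factors explicitly and substitute $\left\Vert\bm{\Sigma}^{\star}\right\Vert=\sigma_1^{\star}$ at the end rather than earlier. Taking a union over all $i$ (and $j$) at the very end turns the pointwise bounds into the $\ell_{2,\infty}$ and $\ell_\infty$ bounds claimed.
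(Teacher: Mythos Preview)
Your proposal is correct and matches the paper's proof essentially line for line: both use the SVD factorization together with submultiplicativity and $\left\Vert\bm{U}^{\star}\right\Vert=\left\Vert\bm{V}^{\star}\right\Vert=1$ to bound each row norm, and the paper handles the $\ell_\infty$ bound via exactly your Cauchy--Schwarz inequality $\left\vert A_{i,j}^{\star}\right\vert\le\sigma_1^{\star}\left\Vert\bm{U}^{\star}\right\Vert_{2,\infty}\left\Vert\bm{V}^{\star}\right\Vert_{2,\infty}$. Regarding your hesitation on the first term inside the $\min$: the paper's own derivation in fact produces $\sigma_1^{\star}\sqrt{\mu_1\mu_2 r^2/(d_1 d_2)}$ (from which the other two terms follow by bounding one of $\left\Vert\bm{U}^{\star}\right\Vert_{2,\infty},\left\Vert\bm{V}^{\star}\right\Vert_{2,\infty}$ by $1$), so the missing $\sigma_1^{\star}$ in the stated lemma appears to be a typo rather than something requiring the $\mu_0$ or $\left\Vert\bm{\Sigma}^{\star}\right\Vert_{\mathrm F}$ route you contemplated.
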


Next, we summarize several facts related to the matrix $\bm{E}$ defined
in (\ref{eq:defn-E}), which contains independent zero-mean entries.

\begin{lemma}\label{lemma:row_col_2_norm}Fix any matrices $\bm{W}_{1}$
and $\bm{W}_{2}$. With probability greater than $1-O\left(d^{-20}\right)$,
the following holds
\begin{align*}
\max_{i\in\left[d_{1}\right]}\sum_{j\in\left[d_{2}\right]}E_{i,j}^{2} & \lesssim B^{2}\log d+\sigma_{\mathsf{row}}^{2},\\
\max_{i\in\left[d_{2}\right]}\left\Vert \bm{E}_{i,:}\bm{W}_{1}\right\Vert _{2} & \lesssim\left(B\log d+\sigma_{\mathsf{row}}\sqrt{\log d}\right)\left\Vert \bm{W}_{1}\right\Vert _{2,\infty},\\
\max_{j\in\left[d_{2}\right]}\sum_{i\in\left[d_{1}\right]}E_{i,j}^{2} & \lesssim B^{2}\log d+\sigma_{\mathsf{col}}^{2},\\
\max_{j\in\left[d_{2}\right]}\left\Vert \big(\bm{E}_{:,j}\big)^{\top}\bm{W}_{2}\right\Vert _{2} & \lesssim\left(B\log d+\sigma_{\mathsf{col}}\sqrt{\log d}\right)\left\Vert \bm{W}_{2}\right\Vert _{2,\infty},
\end{align*}
where $\sigma_{\mathsf{row}}$, $\sigma_{\mathsf{col}}$, and $B$
are respectively upper bounds on $\max_{i\in\left[d_{1}\right]}\sqrt{\sum_{j\in\left[d_{2}\right]}\mathbb{E}\left[E_{i,j}^{2}\right]}$,
$\max_{j\in\left[d_{2}\right]}\sqrt{\sum_{i\in\left[d_{1}\right]}\mathbb{E}\left[E_{i,j}^{2}\right]}$,
and $\max_{i\in\left[d_{1}\right],j\in\left[d_{2}\right]}\left|E_{i,j}\right|$;
see (\ref{eq:noise_value}) for precise definitions. As a result,
one has
\begin{align*}
\left\Vert \bm{E}\right\Vert _{2,\infty} & \lesssim B\sqrt{\log d}+\sigma_{\mathsf{row}},\\
\left\Vert \bm{A}^{\mathsf{s}}\right\Vert _{2,\infty} & \lesssim\left\Vert \bm{A}^{\star}\right\Vert _{2,\infty}+B\sqrt{\log d}+\sigma_{\mathsf{row}},\\
\left\Vert \bm{E}^{\top}\right\Vert _{2,\infty} & \lesssim B\sqrt{\log d}+\sigma_{\mathsf{col}},\\
\left\Vert \bm{A}^{\mathsf{s}\top}\right\Vert _{2,\infty} & \lesssim\left\Vert \bm{A}^{\star\top}\right\Vert _{2,\infty}+B\sqrt{\log d}+\sigma_{\mathsf{col}},
\end{align*}
where $\bm{A}^{\mathsf{s}}=\bm{A}^{\star}+\bm{E}$ is defined in (\ref{eq:A-E}).\end{lemma}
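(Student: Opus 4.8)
\textbf{Proof plan for Lemma~\ref{lemma:row_col_2_norm}.}

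The plan is to treat the four displayed bounds one at a time, each of which reduces to a concentration inequality for a sum of independent (scalar or vector-valued) random variables, followed by a union bound over the $d_1$ rows or $d_2$ columns. The last four bounds on $\|\bm{E}\|_{2,\infty}$, $\|\bm{A}^{\mathsf{s}}\|_{2,\infty}$, etc., are then immediate corollaries via the triangle inequality and $\bm{A}^{\mathsf{s}} = \bm{A}^{\star} + \bm{E}$ (cf.~\eqref{eq:A-E}).

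First I would establish the bound on $\max_{i\in[d_1]}\sum_{j\in[d_2]} E_{i,j}^2$. For a fixed row $i$, the quantity $\sum_{j} E_{i,j}^2$ is a sum of independent nonnegative random variables, each bounded by $B^2$ (by \eqref{def:sigma_B_UB}), with $\sum_j \mathbb{E}[E_{i,j}^2] \le \sigma_{\mathsf{row}}^2$ (by \eqref{def:sigma_row_UB}) and $\sum_j \mathsf{Var}(E_{i,j}^2) \le B^2 \sum_j \mathbb{E}[E_{i,j}^2] \le B^2\sigma_{\mathsf{row}}^2$. Bernstein's inequality then yields $\sum_j E_{i,j}^2 \lesssim \sigma_{\mathsf{row}}^2 + B^2\log d + \sqrt{B^2 \sigma_{\mathsf{row}}^2 \log d} \asymp B^2\log d + \sigma_{\mathsf{row}}^2$ (the last step via AM-GM, $2\sqrt{B^2\sigma_{\mathsf{row}}^2\log d}\le B^2\log d+\sigma_{\mathsf{row}}^2$) with probability at least $1-O(d^{-21})$; a union bound over $i\in[d_1]$ gives the claim with probability $1-O(d^{-20})$. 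The symmetric argument over columns, using \eqref{def:sigma_col_UB}, handles $\max_{j\in[d_2]}\sum_{i\in[d_1]} E_{i,j}^2 \lesssim B^2\log d + \sigma_{\mathsf{col}}^2$.

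Next I would turn to the two vector-norm bounds. For a fixed $i$, write $\bm{E}_{i,:}\bm{W}_1 = \sum_{j\in[d_2]} E_{i,j} (\bm{W}_1)_{j,:}$ as a sum of independent zero-mean random vectors. The per-term norm is bounded by $L := \max_j |E_{i,j}|\,\|(\bm{W}_1)_{j,:}\|_2 \le B\|\bm{W}_1\|_{2,\infty}$, and the variance proxy by $V := \sum_j \mathbb{E}[E_{i,j}^2]\,\|(\bm{W}_1)_{j,:}\|_2^2 \le \sigma_{\mathsf{row}}^2\|\bm{W}_1\|_{2,\infty}^2$. The matrix Bernstein inequality (applied to the vector case, i.e.~symmetrizing into $2\times 2$-block matrices, or directly via a vector Bernstein bound) gives $\|\bm{E}_{i,:}\bm{W}_1\|_2 \lesssim L\log d + \sqrt{V\log d} \lesssim (B\log d + \sigma_{\mathsf{row}}\sqrt{\log d})\|\bm{W}_1\|_{2,\infty}$ with probability $1-O(d^{-21})$; union bound over $i$ completes it. The column-side statement $\max_j \|\bm{E}_{:,j}^\top \bm{W}_2\|_2 \lesssim (B\log d + \sigma_{\mathsf{col}}\sqrt{\log d})\|\bm{W}_2\|_{2,\infty}$ is entirely analogous with $\sigma_{\mathsf{col}}$ in place of $\sigma_{\mathsf{row}}$. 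Finally, the corollaries follow: taking $\bm{W}_1 = \bm{I}$ gives $\|\bm{E}\|_{2,\infty} = \max_i \|\bm{E}_{i,:}\|_2 \lesssim B\sqrt{\log d} + \sigma_{\mathsf{row}}$ (here the $\log d$ versus $\sqrt{\log d}$ gap is absorbed since $\|\bm{E}_{i,:}\|_2 = (\sum_j E_{i,j}^2)^{1/2} \lesssim B\sqrt{\log d}+\sigma_{\mathsf{row}}$ directly from the first bound, which is tighter than the generic Bernstein estimate), and then $\|\bm{A}^{\mathsf{s}}\|_{2,\infty} \le \|\bm{A}^{\star}\|_{2,\infty} + \|\bm{E}\|_{2,\infty}$; the transposed versions are symmetric.

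There is no serious obstacle here — this is a routine concentration lemma. The only mild care needed is bookkeeping: ensuring the probability budget ($O(d^{-20})$ overall) survives the union bounds over up to $d$ rows/columns, which forces the per-event failure probability to be $O(d^{-21})$ and hence the $\log d$ factors (rather than, say, $\log^{1/2} d$) in the tail bounds; and invoking the bounds \eqref{def:sigma_B_UB}--\eqref{def:sigma_col_UB} (which themselves hold with probability $1-O(d^{-12})$, comfortably within budget) to control $B$, $\sigma_{\mathsf{row}}$, $\sigma_{\mathsf{col}}$. The AM-GM simplification $B^2\log d + \sqrt{B^2\sigma^2\log d} \asymp B^2\log d + \sigma^2$ is used repeatedly to present the final bounds cleanly.
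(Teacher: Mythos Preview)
Your proposal is correct and follows essentially the same approach as the paper: scalar Bernstein for the sums of squares (with the same $L_1\lesssim B^2$, $V_1\lesssim B^2\sigma^2$ bookkeeping and AM--GM simplification), matrix Bernstein for the vector quantities $\bm{E}_{i,:}\bm{W}_1$ and $\bm{E}_{:,j}^\top\bm{W}_2$, and a union bound over rows/columns. The paper presents only the column-side argument and declares the row-side symmetric, but otherwise your plan matches it step for step, including deriving $\|\bm{E}\|_{2,\infty}$ from the square root of the first bound rather than from the Bernstein estimate with $\bm{W}_1=\bm{I}$.
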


\begin{lemma}\label{lemma:A_op_norm} With probability greater than
$1-O\left(d^{-20}\right)$, one has
\[
\left\Vert \bm{E}\right\Vert \lesssim B\log d+\left(\sigma_{\mathsf{row}}+\sigma_{\mathsf{col}}\right)\sqrt{\log d},
\]
where $B$, $\sigma_{\mathsf{row}}$ and $\sigma_{\mathsf{col}}$
are defined in (\ref{eq:noise_value}).\end{lemma}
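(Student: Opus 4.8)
The plan is to bound $\|\bm{E}\|$ via the standard matrix Bernstein inequality (or its truncated variant, since $N_{i,j}$ may only satisfy a high-probability magnitude bound rather than an almost-sure one), applied to the decomposition $\bm{E}=\sum_{i,j}E_{i,j}\bm{e}_i\bm{e}_j^\top$ as a sum of independent zero-mean random matrices of dimension $d_1\times d_2$. The three governing quantities are: the per-summand operator norm bound, for which $\|E_{i,j}\bm{e}_i\bm{e}_j^\top\|=|E_{i,j}|\le B$ by \eqref{def:sigma_B_UB}; and the two variance proxies $\|\sum_{i,j}\mathbb{E}[E_{i,j}^2]\bm{e}_i\bm{e}_i^\top\|=\max_i\sum_j\mathbb{E}[E_{i,j}^2]\le\sigma_{\mathsf{row}}^2$ (from \eqref{def:sigma_row_UB}) and $\|\sum_{i,j}\mathbb{E}[E_{i,j}^2]\bm{e}_j\bm{e}_j^\top\|=\max_j\sum_i\mathbb{E}[E_{i,j}^2]\le\sigma_{\mathsf{col}}^2$ (from \eqref{def:sigma_col_UB}), so that the effective matrix variance statistic is $v:=\max\{\sigma_{\mathsf{row}}^2,\sigma_{\mathsf{col}}^2\}$.

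First I would recall the observation made at the top of Appendix~\ref{sec:proof-auxiliary-lemmas} (cf.~\eqref{eq:B-sigma-bound}) that $B$, $\sigma_{\mathsf{row}}$, and $\sigma_{\mathsf{col}}$ are valid upper bounds on the relevant maxima, so that the concentration argument need not re-derive the bounds in \eqref{eq:noise_value}. Next I would handle the subtlety in Assumption~\ref{assumption:random-noise}(3): in case (a) the entries of $\bm{E}$ are bounded by $B$ deterministically and plain matrix Bernstein applies; in case (b) one first truncates $N_{i,j}$ at level $R$, notes that on an event of probability $1-O(d^{-10})$ no truncation occurs (union bound over $O(d^2)$ entries, using $\mathbb{P}\{|N_{i,j}|>R\}\le c_{\mathrm{r}}d^{-12}$), and controls the (tiny) bias introduced by truncation using symmetry of the noise distribution — this is exactly the role played by the truncated matrix Bernstein inequality \cite[Proposition~A.7]{hopkins2016fast} invoked elsewhere in the paper. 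Then applying matrix Bernstein yields, with probability at least $1-O(d^{-20})$,
\[
\|\bm{E}\|\lesssim \sqrt{v\log d}+B\log d=\big(\sigma_{\mathsf{row}}+\sigma_{\mathsf{col}}\big)\sqrt{\log d}+B\log d,
\]
where I have bounded $\sqrt{v}=\max\{\sigma_{\mathsf{row}},\sigma_{\mathsf{col}}\}\le\sigma_{\mathsf{row}}+\sigma_{\mathsf{col}}$, which is precisely the claimed bound.

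The main obstacle, such as it is, is purely bookkeeping rather than conceptual: making sure that in the spiky-noise case (b) the truncation step is carried out cleanly, i.e.\ that replacing $N_{i,j}$ by its truncated version $N_{i,j}\mathds{1}\{|N_{i,j}|\le R\}$ changes $\bm{E}$ by a matrix that is zero with high probability, and that the expectation of the truncated variable remains zero (by symmetry) so that the mean-zero hypothesis of matrix Bernstein is not violated. One should also double-check that the dimension factor $\log(d_1+d_2)\asymp\log d$ appearing in matrix Bernstein is absorbed into the stated $\log d$, which is immediate from $d=\max\{d_1,d_2\}$. Everything else — the variance computations, the union bound, the final algebraic simplification — is routine.
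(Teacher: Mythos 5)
Your proposal is correct and follows essentially the same route as the paper's own proof: decompose $\bm{E}=\sum_{i,j}E_{i,j}\bm{e}_i\bm{e}_j^\top$, bound the per-summand operator norm by $B$, bound the two matrix variance proxies by $\sigma_{\mathsf{row}}^2$ and $\sigma_{\mathsf{col}}^2$, and invoke matrix Bernstein to obtain $\|\bm{E}\|\lesssim B\log d+(\sigma_{\mathsf{row}}+\sigma_{\mathsf{col}})\sqrt{\log d}$. The only difference is that you take explicit care of the truncation issue for the spiky-noise case (b) of Assumption~\ref{assumption:random-noise}(3), spelling out how symmetry preserves zero-mean after truncation and how the union bound over entries controls the probability of any truncation actually occurring; the paper's proof silently folds this into the high-probability definitions in~\eqref{eq:noise_value} and does not re-argue it here, so your version is a slightly more careful write-up of the identical argument.
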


\begin{lemma}\label{lemma:sum_row_square_sum_col}Fix any vector
$\bm{w}\in\mathbb{R}^{d_{2}}$. With probability at least $1-O\left(d^{-20}\right)$,
one has
\[
\sum_{i\in\left[d_{1}\right]}\Bigg(\sum_{j\in\left[d_{2}\right]}w_{j}E_{i,j}\Bigg)^{2}\lesssim\left\Vert \bm{w}\right\Vert _{2}^{2}\left(\sigma_{\mathsf{col}}^{2}+\sigma_{\infty}^{2}\log^{2}d\right)+\left\Vert \bm{w}\right\Vert _{\infty}^{2}B^{2}\log^{3}d,
\]
where $B$, $\sigma_{\infty}$ and $\sigma_{\mathsf{col}}$ are defined
in (\ref{eq:noise_value}).\end{lemma}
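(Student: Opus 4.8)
The plan is to control $S_i := \sum_{j\in[d_2]} w_j E_{i,j}$ for each fixed row index $i$, and then sum the squares over $i\in[d_1]$. Since the entries $\{E_{i,j}\}_j$ are independent, zero-mean, with $|E_{i,j}|\leq B$ and $\sum_j \mathbb{E}[E_{i,j}^2] \leq \sigma_{\mathsf{row}}^2$ (recall (\ref{eq:B-sigma-bound})), the scalar Bernstein inequality gives, for each fixed $i$,
\[
\mathbb{P}\left\{ |S_i| \geq t \right\} \leq 2\exp\left(-c\min\left\{\frac{t^2}{\sigma_w^2},\,\frac{t}{\|\bm{w}\|_\infty B}\right\}\right),
\qquad \sigma_w^2 := \sum_{j} w_j^2 \mathbb{E}[E_{i,j}^2] \leq \|\bm{w}\|_\infty^2 \sum_j \mathbb{E}[E_{i,j}^2]\wedge\ \cdots,
\]
where a cleaner bound is $\sigma_w^2\leq \|\bm{w}\|_\infty^2 \sigma_{\mathsf{row}}^2$ and also $\sigma_w^2\leq \|\bm{w}\|_2^2\,\sigma_\infty^2$; I will keep whichever is more convenient at each place. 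This already yields a high-probability uniform bound $\max_i |S_i|^2 \lesssim \|\bm{w}\|_\infty^2 B^2\log^2 d + \|\bm{w}\|_\infty^2\sigma_{\mathsf{row}}^2\log d$, but that is too lossy when summed over $d_1$ rows; the point of the lemma is to exploit the averaging over $i$.

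The key device is a Bernstein-type concentration for the sum $T := \sum_{i\in[d_1]} S_i^2 = \|\bm{E}\bm{w}\|_2^2$ viewed as a function of the independent columns. I would instead proceed column-by-column: write $\bm{E}\bm{w} = \sum_{j\in[d_2]} w_j \bm{E}_{:,j}$, a sum of independent zero-mean random vectors in $\mathbb{R}^{d_1}$. First bound its mean: $\mathbb{E}\|\bm{E}\bm{w}\|_2^2 = \sum_{i}\sum_j w_j^2 \mathbb{E}[E_{i,j}^2] \leq \|\bm{w}\|_\infty^2 \sum_j \sum_i \mathbb{E}[E_{i,j}^2] \leq \|\bm{w}\|_\infty^2 d_2 \cdot(\text{something})$ — actually the tight bound is $\mathbb{E}\|\bm{E}\bm{w}\|_2^2 = \sum_j w_j^2 (\sum_i \mathbb{E}[E_{i,j}^2]) \leq \|\bm{w}\|_2^2\,\sigma_{\mathsf{col}}^2$. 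For the deviation, I would apply the matrix Bernstein inequality (or the truncated version, \cite[Proposition~A.7]{hopkins2016fast}) to the sum $\sum_j (w_j^2 \bm{E}_{:,j}\bm{E}_{:,j}^\top - w_j^2 \mathbb{E}[\bm{E}_{:,j}\bm{E}_{:,j}^\top])$, whose operator norm controls $|\,\|\bm{E}\bm{w}\|_2^2 - \mathbb{E}\|\bm{E}\bm{w}\|_2^2\,|$; the per-summand bound uses $\|w_j^2\bm{E}_{:,j}\bm{E}_{:,j}^\top\| \leq \|\bm{w}\|_\infty^2 \|\bm{E}_{:,j}\|_2^2 \lesssim \|\bm{w}\|_\infty^2(B^2\log d + \sigma_{\mathsf{col}}^2)$ (invoking (\ref{eq:n_l_2_norm}) / Lemma~\ref{lemma:row_col_2_norm}), and the variance statistic is $\sum_j w_j^4 \mathbb{E}[\|\bm{E}_{:,j}\|_2^2]\,\|\mathbb{E}[\bm{E}_{:,j}\bm{E}_{:,j}^\top]\| \lesssim \|\bm{w}\|_2^2\|\bm{w}\|_\infty^2\,\sigma_{\mathsf{col}}^2\,\sigma_\infty^2$. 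Collecting the mean term, the $L\log d$ term, and the $\sqrt{V\log d}$ term, then applying AM-GM to absorb cross terms, should land exactly at $\|\bm{w}\|_2^2(\sigma_{\mathsf{col}}^2 + \sigma_\infty^2\log^2 d) + \|\bm{w}\|_\infty^2 B^2\log^3 d$.

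The main obstacle I anticipate is getting the $\log$ powers right while keeping the bound in the stated two-term form rather than a messier four- or five-term expression. In particular, one must be careful that the truncation event (needed because $\|\bm{E}_{:,j}\|_2^2$ is only sub-exponential, not bounded) contributes negligibly — this is where the $d_2 R$ term in the truncated matrix Bernstein bound must be shown to be $\ll$ the other terms, exactly as in the proof of Lemma~\ref{lemma:G_op_loss} (Step 1), using that $\min\{t^2/V_1, t/L_1\}\gtrsim t/\max\{\sqrt{V_1/\log d},L_1\}$ for $t$ above the truncation level. A secondary subtlety is that $\sigma_w^2$ (the per-row variance proxy) admits two different upper bounds — $\|\bm{w}\|_\infty^2\sigma_{\mathsf{row}}^2$ versus $\|\bm{w}\|_2^2\sigma_\infty^2$ — and one must choose the right one so that the final constants combine into precisely the two advertised terms; I expect the $\|\bm{w}\|_2^2\sigma_\infty^2\log^2 d$ term to come from the variance statistic $V$ and the $\|\bm{w}\|_\infty^2 B^2\log^3 d$ term from the Bernstein tail $L\log d$ squared, which is why the cube appears. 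Finally, the union bound over the single fixed $\bm{w}$ is trivial (no net is needed since $\bm{w}$ is fixed), so the $1-O(d^{-20})$ probability follows directly from the Bernstein tail with the $\log d$ factors already built in.
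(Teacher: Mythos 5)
Your proposal contains a genuine gap at its central step. You write $\bm{E}\bm{w}=\sum_{j}w_j\bm{E}_{:,j}$ and claim that the operator norm of $\sum_j\big(w_j^2\bm{E}_{:,j}\bm{E}_{:,j}^\top-w_j^2\mathbb{E}[\bm{E}_{:,j}\bm{E}_{:,j}^\top]\big)$ controls $\big|\,\|\bm{E}\bm{w}\|_2^2-\mathbb{E}\|\bm{E}\bm{w}\|_2^2\,\big|$. This is false. Expanding, one has
\[
\|\bm{E}\bm{w}\|_2^2=\sum_{j}w_j^2\|\bm{E}_{:,j}\|_2^2+\sum_{j\neq j'}w_jw_{j'}\langle\bm{E}_{:,j},\bm{E}_{:,j'}\rangle,
\]
and only the diagonal sum equals $\mathsf{tr}\big(\sum_j w_j^2\bm{E}_{:,j}\bm{E}_{:,j}^\top\big)$. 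The cross terms with $j\neq j'$ form a zero-mean second-order chaos that is generically of the same order as the diagonal piece, and the matrix you apply Bernstein to does not see them at all. To control that chaos along the column-wise route you would need a Hanson--Wright type inequality, which your outline does not supply and which would not land cleanly on the stated two-term bound.

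The paper instead decomposes along the rows: write the target as $\sum_{i\in[d_1]}X_i^2$ with $X_i:=\sum_j w_j E_{i,j}$. Unlike the columns, the $X_i$ are themselves independent scalar random variables, so $\sum_i X_i^2$ is a genuine sum of independent terms with no cross structure to worry about. You noticed in your first paragraph that $d_1\cdot\max_i|X_i|^2$ is too lossy and for that reason abandoned the row viewpoint --- but the correct move is to keep the rows and apply a scalar Bernstein inequality to $\sum_i Y_i^2$ directly, after truncating $Y_i:=X_i\,\mathds{1}\{|X_i|\le CR\}$ with $R\asymp\|\bm{w}\|_\infty B\log d+\|\bm{w}\|_2\sigma_\infty\sqrt{\log d}$, so that $Y_i=X_i$ for all $i$ on an event of probability $1-O(d^{-10})$. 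Then the mean $\sum_i\mathbb{E}[Y_i^2]\le\|\bm{w}\|_2^2\sigma_{\mathsf{col}}^2$ enters with no log factor, the range $L_2\lesssim R^2$ produces (after multiplying by $\log d$) the terms $\|\bm{w}\|_2^2\sigma_\infty^2\log^2 d+\|\bm{w}\|_\infty^2B^2\log^3 d$, and the $\sqrt{V_2\log d}$ contribution is absorbed by AM--GM into the others. That is exactly how the two-term bound and the $\log^3 d$ power arise; once the decomposition is by rows the bookkeeping is straightforward.
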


\subsection{Proof of Lemma~\ref{lemma:incoh}}

\label{subsec:pf:lemma:incoh}

Given the SVD of $\bm{A}^{\star}=\bm{U}^{\star}\bm{\Sigma}^{\star}\bm{V}^{\star\top}$,
one has $\bm{G}^{\star}=\bm{A}^{\star}\bm{A}^{\star\top}=\bm{U}^{\star}\bm{\Sigma}^{\star2}\bm{U}^{\star\top}$.
Using the definition of the incoherence parameters, one can derive
\begin{align*}
\left\Vert \bm{A}^{\star}\right\Vert _{2,\infty} & =\max_{i\in\left[d_{1}\right]}\left\Vert \bm{U}_{i,:}^{\star}\bm{\Sigma}^{\star}\bm{V}^{\star\top}\right\Vert _{2}\leq\max_{i\in\left[d_{1}\right]}\left\Vert \bm{U}_{i,:}^{\star}\right\Vert _{2}\left\Vert \bm{\Sigma}^{\star}\right\Vert \left\Vert \bm{V}^{\star}\right\Vert \leq\sigma_{1}^{\star}\left\Vert \bm{U}^{\star}\right\Vert _{2,\infty}\leq\sqrt{\frac{\mu_{1}r\sigma_{1}^{\star2}}{d_{1}}};\\
\left\Vert \bm{A}^{\star\top}\right\Vert _{2,\infty} & =\max_{j\in\left[d_{2}\right]}\left\Vert \bm{V}_{i,:}^{\star}\bm{\Sigma}^{\star}\bm{U}^{\star\top}\right\Vert _{2}\leq\max_{j\in\left[d_{2}\right]}\left\Vert \bm{V}_{j,:}^{\star}\right\Vert _{2}\left\Vert \bm{\Sigma}^{\star}\right\Vert \left\Vert \bm{U}^{\star}\right\Vert \leq\sigma_{1}^{\star}\left\Vert \bm{V}^{\star}\right\Vert _{2,\infty}\leq\sqrt{\frac{\mu_{2}r\sigma_{1}^{\star2}}{d_{2}}};\\
\left\Vert \bm{G}^{\star}\right\Vert _{2,\infty} & =\max_{i\in\left[d_{1}\right]}\left\Vert \bm{U}_{i,:}^{\star}\bm{\Sigma}^{\star2}\bm{U}^{\star\top}\right\Vert _{2}\leq\max_{i\in\left[d_{1}\right]}\left\Vert \bm{U}_{i,:}^{\star}\right\Vert _{2}\left\Vert \bm{\Sigma}^{\star2}\right\Vert \left\Vert \bm{U}^{\star}\right\Vert \leq\sigma_{1}^{\star2}\left\Vert \bm{U}^{\star}\right\Vert _{2,\infty}\leq\sqrt{\frac{\mu_{1}r\sigma_{1}^{\star4}}{d_{1}}}.
\end{align*}
Moreover, the Cauchy-Schwartz inequality allows one to upper bound
\[
\left\Vert \bm{A}^{\star}\right\Vert _{\infty}=\max_{\left(i,j\right)\in\left[d_{1}\right]\times\left[d_{2}\right]}\left|\bm{U}_{i,:}^{\star}\bm{\Sigma}^{\star}\left(\bm{V}_{j,:}^{\star}\right)^{\top}\right|\leq\left\Vert \bm{U}^{\star}\right\Vert _{2,\infty}\left\Vert \bm{\Sigma}^{\star}\right\Vert \left\Vert \bm{V}^{\star}\right\Vert _{2,\infty}\leq\sigma_{1}^{\star}\left\Vert \bm{U}^{\star}\right\Vert _{2,\infty}\left\Vert \bm{V}^{\star}\right\Vert _{2,\infty}.
\]
In view of the simple bounds $\left\Vert \bm{U}^{\star}\right\Vert _{2,\infty}\leq\left\Vert \bm{U}^{\star}\right\Vert \leq1$
and $\left\Vert \bm{V}^{\star}\right\Vert _{2,\infty}\leq\left\Vert \bm{V}^{\star}\right\Vert \leq1$,
we conclude that
\[
\left\Vert \bm{A}^{\star}\right\Vert _{\infty}\leq\sigma_{1}^{\star}\left\Vert \bm{U}^{\star}\right\Vert _{2,\infty}\qquad\text{and}\qquad\left\Vert \bm{A}^{\star}\right\Vert _{\infty}\leq\sigma_{1}^{\star}\left\Vert \bm{V}^{\star}\right\Vert _{2,\infty}.
\]

\subsection{Proof of Lemma~\ref{lemma:row_col_2_norm}}

\label{subsec:pf:lemma:row_col_2_norm}

We shall only prove the results concerning $\sigma_{\mathsf{col}}$;
the results concerning $\sigma_{\mathsf{row}}$ follow immediately
via nearly identical arguments.

In view of the Bernstein inequality, we have
\begin{align*}
\mathbb{P}\left\{ \Big|\sum\nolimits _{i\in[d_{1}]}E_{i,j}^{2}-M_{1}\Big|\geq t\right\}  & \leq2\exp\left(-\frac{3}{8}\min\left\{ \frac{t^{2}}{V_{1}},\frac{t}{L_{1}}\right\} \right),\quad t>0,
\end{align*}
where $M_{1},L_{1}$ and $S_{1}$ are given respectively by
\begin{align*}
M_{1} & :=\sum\nolimits _{i\in[d_{1}]}\mathbb{E}\left[E_{i,l}^{2}\right]\leq\sigma_{\mathsf{col}}^{2},\\
L_{1} & :=\max_{i\in\left[d_{1}\right]}\left|E_{i,l}^{2}-\mathbb{E}\left[E_{i,l}^{2}\right]\right|\leq B^{2}+\sigma_{\infty}^{2}\leq2B^{2},\\
V_{1} & :=\sum\nolimits _{i\in[d_{1}]}\mathsf{Var}\left(E_{i,l}^{2}\right)\leq\sum\nolimits _{i\in[d_{1}]}\mathbb{E}\left[E_{i,l}^{4}\right]\leq B^{2}\sigma_{\mathsf{col}}^{2}.
\end{align*}
Here, we have made use of the fact that $\sigma_{\infty}\leq B$.
As a result, one has
\begin{align*}
\sum\nolimits _{i\in[d_{1}]}E_{i,j}^{2} & \lesssim M_{1}+L_{1}\log d+\sqrt{V_{1}\log d}\lesssim\sigma_{\mathsf{col}}^{2}+B^{2}\log d+B\sigma_{\mathsf{col}}\sqrt{\log d}\\
 & \asymp\sigma_{\mathsf{col}}^{2}+B^{2}\log d
\end{align*}
with probability exceeding $1-O\left(d^{-20}\right)$, where the last
line arises from the AM-GM inequality (namely, $2B\sigma_{\mathsf{col}}\sqrt{\log d}\leq\sigma_{\mathsf{col}}^{2}+B^{2}\log d$).
As an immediate consequence, with probability at least $1-O\left(d^{-20}\right)$,
\begin{align*}
\left\Vert \bm{E}_{:,j}\right\Vert _{2} & =\sqrt{\sum\nolimits _{i\in[d_{1}]}E_{i,j}^{2}}\lesssim\sigma_{\mathsf{col}}+B\sqrt{\log d},\\
\left\Vert \bm{A}_{:,j}^{\mathsf{s}}\right\Vert _{2} & \leq\left\Vert \bm{A}_{:,j}^{\star}\right\Vert _{2}+\left\Vert \bm{E}_{:,j}\right\Vert _{2}\lesssim\left\Vert \bm{A}^{\star\top}\right\Vert _{2,\infty}+\sigma_{\mathsf{col}}+B\sqrt{\log d}.
\end{align*}

Next, we turn to the claim concerning a fixed matrix $\bm{W}_{2}$.
Observe that $\left(\bm{E}_{:,l}\right)^{\top}\bm{W}_{2}=\sum_{i\in\left[d_{1}\right]}E_{i,l}(\bm{W}_{2})_{i,:}$
is a sum of independent zero-mean random vectors. In order to invoke
standard concentration inequalities, we compute
\begin{align*}
L_{2} & :=\max_{i\in\left[d_{1}\right]}\left\Vert E_{i,l}(\bm{W}_{2})_{i,:}\right\Vert _{2}\leq B\left\Vert \bm{W}_{2}\right\Vert _{2,\infty},\\
V_{2} & :=\sum\nolimits _{i\in[d_{1}]}\mathbb{E}\left[E_{i,l}^{2}\right]\left\Vert (\bm{W}_{2})_{i,:}\right\Vert _{2}^{2}\leq\sigma_{\mathsf{col}}^{2}\left\Vert \bm{W}_{2}\right\Vert _{2,\infty}^{2}.
\end{align*}
Invoking the matrix Bernstein inequality yields that with probability
exceeding $1-O\left(d^{-20}\right)$,
\[
\big\|\big(\bm{E}_{:,l}\big)^{\top}\bm{W}_{2}\big\|_{2}\lesssim L_{2}\log d+\sqrt{V_{2}\log d}\lesssim\big(B\log d+\sigma_{\mathsf{col}}\sqrt{\log d}\big)\left\Vert \bm{W}_{2}\right\Vert _{2,\infty}.
\]

\subsection{Proof of Lemma~\ref{lemma:A_op_norm}\label{subsec:pf:A_op_norm}}

First, we can write
\[
\bm{E}=\sum_{i\in\left[d_{1}\right],j\in\left[d_{2}\right]}E_{i,j}\bm{e}_{i}\bm{e}_{j}^{\top}
\]
as a sum of independent zero-mean random matrices (since $\mathbb{E}[E_{i,j}]=0$).
We make the observation that
\begin{align*}
L & :=\max_{i\in\left[d_{1}\right],j\in\left[d_{2}\right]}\left\Vert E_{i,j}\bm{e}_{i}\bm{e}_{j}^{\top}\right\Vert \leq B;\\
V & :=\max\left\{ \Bigg\|\sum_{i\in\left[d_{1}\right],j\in\left[d_{2}\right]}\mathbb{E}\left[E_{i,j}^{2}\right]\bm{e}_{i}\bm{e}_{i}^{\top}\Bigg\|,\Bigg\|\sum_{i\in\left[d_{1}\right],j\in\left[d_{2}\right]}\mathbb{E}\left[E_{i,j}^{2}\right]\bm{e}_{j}\bm{e}_{j}^{\top}\Bigg\|\right\} \leq\sigma_{\mathsf{row}}^{2}+\sigma_{\mathsf{col}}^{2}.
\end{align*}
It then follows from the matrix Bernstein inequality that, with probability
at least $1-O\left(d^{-10}\right),$
\begin{align*}
\left\Vert \bm{E}\right\Vert  & \lesssim L\log d+\sqrt{V\log d}\lesssim B\log d+\left(\sigma_{\mathsf{row}}+\sigma_{\mathsf{col}}\right)\sqrt{\log d}.
\end{align*}

\subsection{Proof of Lemma~\ref{lemma:sum_row_square_sum_col}\label{subsec:pf:sum_row_square_sum_col}}

Let us define a sequence of independent zero-mean random variables
$\left\{ X_{i}\right\} _{1\leq i\leq d_{1}}$ as follows
\[
X_{i}:=\sum_{j\in\left[d_{2}\right]}w_{j}E_{i,j}.
\]
It is easily seen that
\[
\max_{j\in\left[d_{2}\right]}\left|w_{j}E_{i,j}\right|\leq\left\Vert \bm{w}\right\Vert _{\infty}B;\quad\mathbb{E}\left[X_{i}^{2}\right]=\sum_{j\in\left[d_{2}\right]}w_{j}^{2}\sigma_{i,j}^{2}\leq\left\Vert \bm{w}\right\Vert _{2}^{2}\sigma_{\infty}^{2}.
\]
We can therefore apply the Bernstein inequality to show that, with
probability at least $1-O\left(d^{-11}\right)$,
\begin{equation}
\left|X_{i}\right|\lesssim\left(\left\Vert \bm{w}\right\Vert _{\infty}B\right)\log d+\sqrt{\big(\left\Vert \bm{w}\right\Vert _{2}^{2}\sigma_{\infty}^{2}\big)\log d}=:R.\label{eq:X_i_UB}
\end{equation}

Next, let us introduce a sequence of independent random variables
$\left\{ Y_{i}\right\} _{1\leq i\leq d_{1}}$, obtained by truncating
$X_{i}$ 
\[
Y_{i}\triangleq X_{i}\ind\left\{ \left|X_{i}\right|\leq CR\right\} 
\]
for some sufficiently large absolute constant $C>0$. From (\ref{eq:X_i_UB})
and the union bound, we know that $Y_{i}=X_{i}$ holds simultaneously
for all $1\leq i\leq d_{1}$ with probability at least $1-O\left(d^{-10}\right)$.

Further, it is straightforward to compute that
\begin{align*}
M_{2} & :=\sum_{i\in\left[d_{1}\right]}\mathbb{E}\left[Y_{i}^{2}\right]\leq\sum_{i\in\left[d_{1}\right]}\mathbb{E}\left[X_{i}^{2}\right]\leq\sum_{i\in\left[d_{1}\right],j\in\left[d_{2}\right]}w_{j}^{2}\sigma_{i,j}^{2}\leq\left\Vert \bm{w}\right\Vert _{2}^{2}\sigma_{\mathsf{col}}^{2};\\
L_{2} & :=\max_{i\in\left[d_{1}\right]}\left|Y_{i}^{2}-\mathbb{E}\left[Y_{i}^{2}\right]\right|\lesssim R^{2}\lesssim\left\Vert \bm{w}\right\Vert _{\infty}^{2}B^{2}\log^{2}d+\left\Vert \bm{w}\right\Vert _{2}^{2}\sigma_{\infty}^{2}\log d;\\
V_{2} & :=\sum_{i\in\left[d_{1}\right]}\mathsf{Var}\left(Y_{i}^{2}\right)\leq\sum_{i\in\left[d_{1}\right]}\mathbb{E}\left[Y_{i}^{4}\right]\leq\sum_{i\in\left[d_{1}\right]}\mathbb{E}\left[X_{i}^{4}\right]\lesssim\sum_{i\in\left[d_{1}\right]}\sum_{j\in\left[d_{2}\right]}w_{j}^{4}\mathbb{E}\left[E_{i,j}^{4}\right]+\sum_{i\in\left[d_{1}\right]}\sum_{j_{1}\neq j_{2}}w_{j_{1}}^{2}w_{j_{2}}^{2}\mathbb{E}\left[E_{i,j_{1}}^{2}\right]\mathbb{E}\left[E_{i,j_{2}}^{2}\right]\\
 & \lesssim\left\Vert \bm{w}\right\Vert _{\infty}^{2}\left\Vert \bm{w}\right\Vert _{2}^{2}B^{2}\sigma_{\mathsf{col}}^{2}+\left\Vert \bm{w}\right\Vert _{2}^{4}\sigma_{\infty}^{2}\sigma_{\mathsf{col}}^{2}.
\end{align*}
We then apply the Bernstein inequality to conclude that with probability
at least $1-O\left(d^{-10}\right)$:
\begin{align*}
\sum_{i\in\left[d_{1}\right]}Y_{i}^{2} & \lesssim M_{2}+L_{2}\log d+\sqrt{V_{2}\log d}\\
 & \lesssim\left\Vert \bm{w}\right\Vert _{2}^{2}\sigma_{\mathsf{col}}^{2}+\left\Vert \bm{w}\right\Vert _{\infty}^{2}B^{2}\log^{3}d+\left\Vert \bm{w}\right\Vert _{2}^{2}\sigma_{\infty}^{2}\log^{2}d+\left(\left\Vert \bm{w}\right\Vert _{\infty}\left\Vert \bm{w}\right\Vert _{2}B\sigma_{\mathsf{col}}+\left\Vert \bm{w}\right\Vert _{2}^{2}\sigma_{\infty}\sigma_{\mathsf{col}}\right)\sqrt{\log d}\\
 & \asymp\left\Vert \bm{w}\right\Vert _{2}^{2}\left(\sigma_{\mathsf{col}}^{2}+\sigma_{\infty}^{2}\log^{2}d\right)+\left\Vert \bm{w}\right\Vert _{\infty}^{2}B^{2}\log^{3}d,
\end{align*}
where the last line arises from the AM-GM inequality (namely, $2\left\Vert \bm{w}\right\Vert _{\infty}\left\Vert \bm{w}\right\Vert _{2}B\sigma_{\mathsf{col}}\sqrt{\log d}\leq\left\Vert \bm{w}\right\Vert _{\infty}^{2}B^{2}\log d+\left\Vert \bm{w}\right\Vert _{2}^{2}\sigma_{\mathsf{col}}^{2}$
and $2\left\Vert \bm{w}\right\Vert _{2}^{2}\sigma_{\infty}\sigma_{\mathsf{col}}\sqrt{\log d}\leq\left\Vert \bm{w}\right\Vert _{2}^{2}\sigma_{\infty}^{2}\log d+\left\Vert \bm{w}\right\Vert _{2}^{2}\sigma_{\mathsf{col}}^{2}$).

\bibliographystyle{alphaabbr}
\bibliography{bibfileNonconvex}

\end{document}